\tikzset{snakeit/.style={decorate, decoration={snake, amplitude=.2mm,segment length=1mm}}}
\tikzset{ext/.style={circle, draw,inner sep=1pt},int/.style={circle,draw,fill,inner sep=1.4pt},nil/.style={inner sep=1pt}}
\tikzset{cy/.style={circle,draw,fill,inner sep=2pt},scy/.style={circle,draw,inner sep=2pt},scyx/.style={draw,cross out,inner sep=2pt},scyt/.style={draw,regular polygon,regular polygon sides=3,inner sep=0.95pt}}
\tikzset{exte/.style={circle, draw,inner sep=3pt},inte/.style={circle,draw,fill,inner sep=3pt}}
\tikzset{diagram/.style={matrix of math nodes, row sep=3em, column sep=2.5em, text height=1.5ex, text depth=0.25ex}}
\tikzset{diagram2/.style={matrix of math nodes, row sep=0.5em, column sep=0.5em, text height=1.5ex, text depth=0.25ex}}
\theoremstyle{plain}
\newtheorem{introthm}{Theorem}      % The theorems of the introduction
\renewcommand{\theintrothm}{\Alph{introthm}}
\newtheorem{thm}[subsubsection]{Theorem}
\newtheorem{prop}[subsubsection]{Proposition}
\newtheorem{lemm}[subsubsection]{Lemma}
\theoremstyle{definition}
\newtheorem{defn}[subsubsection]{Definition}
\newtheorem{constr}[subsubsection]{Construction}
\newtheorem{rem}[subsubsection]{Recollections}
\newtheorem{remark}[subsubsection]{Remark}
\numberwithin{subsubsection}{section}
\renewcommand{\thesubsubsection}{\thesection.\arabic{subsubsection}}
\numberwithin{equation}{subsubsection}
\title[The Intrinsic Formality of $\mathrm{E_n}$-operads]{The Intrinsic Formality of $\mathbf{E_n}$-operads}
\date{April 4, 2017 (Revised on February 17, 2018)}
\author{Benoit Fresse}
\address{Laboratoire Paul Painlev\'e\\
Universit\'e de Lille\\
Cit\'e Scientifique - B\^ atiment M2\\
F-59655 Villeneuve d'Ascq Cedex, France}
\email{Benoit.Fresse@univ-lille.fr}
\author{Thomas Willwacher}
\address{ETH Z\"urich\\
Department of Mathematics\\
R\"amistrasse 101\\
CH-8092 Zürich Z\"urich, Switzerland}
\email{thomas.willwacher@math.ethz.ch}
\thanks{B.F. acknowledges support by grant ANR-11-BS01-002 ``HOGT'' and by Labex ANR-11-LABX-0007-01 ``CEMPI''.
T.W. has been partially supported by the Swiss National Science foundation, grant 200021-150012,
the SwissMAP NCCR funded by the Swiss National Science foundation,
and the European Research Council, ERC StG 678156-GRAPHCPX.
The authors thank Victor Turchin for fruitful and motivating exchanges which are at the origin of this work. We
are also grateful to the referee for her/his thorough reading of this paper and for her/his accurate
comments on our work.}
\subjclass{Primary: 18D50; Secondary: 55P62, 55S35, 18C15, 18G55}
\DeclareMathOperator{\kk}{\mathbb{k}}   % the ground ring
\DeclareMathOperator{\NN}{\mathbb{N}}   % the non-negative integers
\DeclareMathOperator{\ZZ}{\mathbb{Z}}   % the integers
\DeclareMathOperator{\QQ}{\mathbb{Q}}   % the rational numbers
\DeclareMathOperator{\RR}{\mathbb{R}}   % the real numbers
\DeclareMathOperator{\II}{\mathbb{I}}   % the augmentation ideal prefix in the symmetric algebra
\DeclareMathOperator{\mymod}{mod}       % modulo
\DeclareMathOperator{\coker}{coker}     % cokernels
\DeclareMathOperator{\colim}{colim}     % colimits
\DeclareMathOperator{\ICat}{\mathcal{I}}            % a small category
\DeclareMathOperator{\MCat}{\mathcal{M}}            % a monoidal category
\DeclareMathOperator{\Mod}{\mathcal{M}\mathit{od}}  % modules
\DeclareMathOperator{\Simp}{\mathit{s}\mathcal{S}\mathit{et}}   % simplicial sets
\DeclareMathOperator{\Hopf}{\mathcal{H}\mathit{opf}}    % Hopf objects
\DeclareMathOperator{\Seq}{\mathcal{S}\mathit{eq}}  % symmetric sequences
\DeclareMathOperator{\Op}{\mathcal{O}\mathit{p}}    % operads
\DeclareMathOperator{\ComCat}{\mathcal{C}\mathit{om}}   % commutative algebras
\DeclareMathOperator{\f}{\mathit{f}}        % filtered objects
\DeclareMathOperator{\dg}{\mathit{dg}}      % dg-objects
\DeclareMathOperator{\gr}{\mathit{gr}}      % graded objects
\DeclareMathOperator{\cosimp}{\mathit{c}}   % cosimplicial objects
\DeclareMathOperator{\simp}{\mathit{s}}     % simplicial objects
\DeclareMathOperator{\Map}{\mathtt{Map}}    % function spaces
\DeclareMathOperator{\Mor}{\mathtt{Mor}}    % morphism sets
\DeclareMathOperator{\Hom}{\mathtt{Hom}}    % hom-objects
\DeclareMathOperator{\Der}{\mathtt{Der}}    % derivation modules
\DeclareMathOperator{\CoDer}{\mathtt{CoDer}}    % coderivation modules
\DeclareMathOperator{\BiDer}{\mathtt{BiDer}}    % biderivation modules
\DeclareMathOperator{\Def}{\mathtt{Def}}    % deformation complex
\DeclareMathOperator{\CoDef}{\mathtt{CoDef}}    % codeformation complex
\DeclareMathOperator{\BiDef}{\mathtt{BiDef}}    % deformation bicomplex
\DeclareMathOperator{\Id}{\mathit{Id}}      % the identity morphism
\DeclareMathOperator{\id}{\mathit{id}}      % the identity map
\DeclareMathOperator{\pt}{\mathit{pt}}      % the one-point set
\DeclareMathOperator{\unit}{\mathbb{1}}     % the monoidal units
\DeclareMathOperator{\FreeOp}{\mathbb{F}}   % free operads
\DeclareMathOperator{\Sym}{\mathbb{S}}      % symmetric (co)algebras
\DeclareMathOperator{\LLie}{\mathbb{L}}     % the free Lie algebra
\DeclareMathOperator{\BB}{\mathbb{B}}       % the (co)bar construction
\DeclareMathOperator{\KK}{\mathbb{K}}       % the Koszul construction
\DeclareMathOperator{\DGOmega}{\mathtt{\Omega}}     % the Sullivan functor
\DeclareMathOperator{\DGSigma}{\mathtt{\Sigma}} % suspensions
\DeclareMathOperator{\Res}{\mathtt{Res}}        % resolutions
\DeclareMathOperator{\ecell}{\mathbb{e}}        % suspension factor
\DeclareMathOperator{\DGB}{\mathtt{B}}          % (co)bar complexes
\DeclareMathOperator{\DGC}{\mathtt{C}}          % complexes
\DeclareMathOperator{\DGD}{\mathtt{D}}          % filtration subquotients and spectral sequence terms
\DeclareMathOperator{\DGE}{\mathtt{E}}          % filtration subquotients and spectral sequence terms
\DeclareMathOperator{\DGF}{\mathtt{F}}          % filtration layers
\DeclareMathOperator{\DGG}{\mathtt{G}}          % the Sullivan realization functor
\DeclareMathOperator{\DGH}{\mathtt{H}}          % the homology
\DeclareMathOperator{\DGI}{\mathtt{I}}          % the augmentation ideal in a complex
\DeclareMathOperator{\DGK}{\mathtt{K}}          % the Koszul complex
\DeclareMathOperator{\DGL}{\mathtt{L}}          % derived functors
\DeclareMathOperator{\DGN}{\mathtt{N}}          % normalized complexes
\DeclareMathOperator{\DGS}{\mathtt{S}}          % generating subobjects
\DeclareMathOperator{\DGMC}{\mathtt{MC}}        % Maurer-Cartan sets
\DeclareMathOperator{\Diag}{\mathtt{Diag}}      % the diagonal of a bi(co)simplicial object
\DeclareMathOperator{\Tot}{\mathtt{Tot}}        % the totalization functor
\DeclareMathOperator{\palg}{\mathfrak{p}}       % the Drinfeld-Kohno Lie algebras
\DeclareMathAlphabet{\mathsfit}{OT1}{cmss}{m}{sl}   % font for operads
\DeclareMathOperator{\AOp}{\mathsfit{A}}        % any generic Hopf collection/cooperad
\DeclareMathOperator{\BOp}{\mathsfit{B}}        % any generic Hopf collection/cooperad
\DeclareMathOperator{\COp}{\mathsfit{C}}        % any generic cooperad
\DeclareMathOperator{\DOp}{\mathsfit{D}}        % any generic cooperad
\DeclareMathOperator{\EOp}{\mathsfit{E}}        % the Barratt-Eccles operad
\DeclareMathOperator{\FMOp}{\mathsfit{FM}}      % the Fulton-MacPherson operad
\DeclareMathOperator{\PiOp}{\mathsfit{\Pi}}        % a short notation for the n-Poisson cooperad
\DeclareMathOperator{\IOp}{\mathsfit{I}}        % the unit operad/the augmentation ideal in some Hopf cooperads
\DeclareMathOperator{\KOp}{\mathsfit{K}}        % a fixed Hopf cooperad
\DeclareMathOperator{\HOp}{\mathsfit{H}}        % a fixed Hopf cooperad
\DeclareMathOperator{\MOp}{\mathsfit{M}}        % any (co)module over Hopf/(co)operads
\DeclareMathOperator{\NOp}{\mathsfit{N}}        % any (co)module over Hopf/(co)operads
\DeclareMathOperator{\POp}{\mathsfit{P}}        % any generic operad in simplicial sets
\DeclareMathOperator{\QOp}{\mathsfit{Q}}        % the fibrant coresolution of a (Hopf) cooperad
\DeclareMathOperator{\ROp}{\mathsfit{R}}        % the cofibrant resolution of a Hopf collection/cooperad
\DeclareMathOperator{\SOp}{\mathsfit{S}}        % the generating symmetric collection of a Lambda-cooperad
\DeclareMathOperator{\SuspOp}{\mathsf{\Lambda}}     % the operadic suspension
\DeclareMathOperator{\ComOp}{\mathsfit{Com}}    % the commutative (co)operad
\DeclareMathOperator{\AsOp}{\mathsfit{As}}      % the associative (co)operad
\DeclareMathOperator{\LieOp}{\mathsfit{Lie}}    % the Lie operad
\DeclareMathOperator{\PoisOp}{\mathsfit{Pois}}      % the (graded) Poisson (co)operads
\DeclareMathOperator{\gra}{gra}                     % the set of graphs
\DeclareMathOperator{\GraOp}{\mathsfit{Gra}}        % the plain graph operad
\DeclareMathOperator{\FGraphOp}{\mathsfit{fGraphs}}   % the full twisted graph operad
\DeclareMathOperator{\GraphOp}{\mathsfit{Graphs}}   % the twisted graph operad
\DeclareMathOperator{\hoe}{\mathsfit{hoe}}      % a reference to Thomas's paper
\DeclareMathOperator{\GCOp}{\mathit{GC}}      % the graph complex
\DeclareMathOperator{\FGCOp}{\mathit{fGC}}      % the full graph complex
\DeclareMathOperator{\HGCOp}{\mathit{HGC}}    % the hairy graph complex
\DeclareMathOperator{\FHGCOp}{\mathit{fHGC}}    % the full hairy graph complex
\DeclareMathOperator{\ICGOp}{\mathsfit{ICG}}    % the complex of internally connected graphs
\DeclareMathOperator{\kset}{\underline{\mathsf{k}}}     % the finite set {1<...<k}
\DeclareMathOperator{\lset}{\underline{\mathsf{l}}}     % the finite set {1<...<l}
\DeclareMathOperator{\rset}{\underline{\mathsf{r}}}     % the finite set {1<...<r}
\DeclareMathOperator{\ttree}{\underline{\mathsf{T}}}    % a tree
\DeclareMathOperator{\gammatree}{\underline{\mathsf{\Gamma}}}    % a tree with two vertices
\begin{document}

\begin{abstract}
We establish that $E_n$-operads satisfy a rational intrinsic formality theorem for $n\geq 3$.
We gain our results in the category of Hopf cooperads in cochain graded dg-modules
which defines a model for the rational homotopy of operads
in spaces.
We consider, in this context, the dual cooperad of the $n$-Poisson operad $\PoisOp_n^c$,
which represents the cohomology of the operad of little $n$-discs $\DOp_n$.
We assume $n\geq 3$.
We explicitly prove that a Hopf cooperad in cochain graded dg-modules $\KOp$
is weakly-equivalent (quasi-isomorphic) to $\PoisOp_n^c$
as a Hopf cooperad
as soon as we have an isomorphism at the cohomology level $\DGH^*(\KOp)\simeq\PoisOp_n^c$
when $4\nmid n$.
We just need the extra assumption that $\KOp$ is equipped with an involutive isomorphism
mimicking the action of a hyperplane reflection on the little $n$-discs operad
in order to extend this formality statement
in the case $4\mid n$.
We deduce from these results that any operad in simplicial sets $\POp$
which satisfies the relation $\DGH^*(\POp,\QQ)\simeq\PoisOp_n^c$
in rational cohomology (and an analogue of our extra involution requirement in the case $4\mid n$)
is rationally weakly equivalent to an operad in simplicial sets $\DGL\DGG_{\bullet}(\PoisOp_n^c)$
which we determine from the $n$-Poisson cooperad $\PoisOp_n^c$.
We also prove that the morphisms $\iota: \DOp_m\rightarrow\DOp_n$, which link the little discs operads together,
are rationally formal as soon as $n-m\geq 2$.

These results enable us to retrieve the (real) formality theorems of Kontsevich by a new approach,
and to sort out the question of the existence of formality quasi-isomorphisms
defined over the rationals (and not only over the reals)
in the case of the little discs operads
of dimension $n\geq 3$.
\end{abstract}

\maketitle

\tableofcontents

\section*{Introduction}
The applications of $E_n$-operads, as objects governing homotopy commutative structures, have recently multiplied.
Let us mention: the second generation of proofs of the existence of deformation
quantizations of Poisson manifolds by Tamar\-kin \cite{Tamarkin}
and Kontsevich~\cite{KontsevichMotives}
which has hinted for the existence of an action of the Grothendieck-Teichm\"uller group
on moduli spaces of deformation
quantizations (see~\cite{Dolgushev,KontsevichMotives,WillwacherGraphs});
the interpretation of the Goodwillie-Weiss tower of embedding spaces
in terms of the homotopy of $E_n$-operads (see notably the results of~\cite{AroneLambrechtsVolic,AroneTurchin});
and the factorization homology of manifolds (see notably~\cite{Francis,LurieHigherAlgebra}
for an introduction to this relationship).

To be more precise, the just cited results on deformation quantization of Poisson manifolds concern the case of $E_2$-operads.
However higher dimensional generalizations of the deformation quantization problem,
which involve applications of $E_n$-operads for all $n\geq 2$ in this subject,
have recently emerged in the works of Calaque-Pantev-To\"en-Vaqui\'e-Vezzosi~\cite{CalaqueAl}
(see also the introduction of~\cite{PantevAl} for an outline of this higher dimensional deformation quantization program).
These applications in deformation quantization (as well as some of the already cited results of~\cite{AroneLambrechtsVolic})
rely on the important observation that $E_n$-operads are formal as operads.
The formality of $E_2$-operads was actually proved by Tamarkin, by using the existence of Drinfeld's associators (see~\cite{TamarkinFormality}),
while Kontsevich established a real version of the formality result in the case $n\geq 2$ (see~\cite{KontsevichMotives}).
Let us mention that the full development of the program of~\cite{CalaqueAl},
which takes place in the setting of (derived) algebraic geometry,
requires to work over arbitrary $\QQ$-algebras.
The understanding of the rational homotopy type of embedding spaces through the Goodwillie-Weiss calculus in~\cite{AroneTurchin}
also requires an understanding of the rational homotopy type of a unitary version of $E_n$-operads (where a term of arity zero is considered)
though the real case of the formality theorem of $E_n$-operads is sufficient for the homological results
of~\cite{AroneLambrechtsVolic}.

This paper fits these new developments of the theory of $E_n$-operads.
Our goal is to prove a rational intrinsic formality theorem, which implies the formality of $E_n$-operads (over the rationals),
and which asserts that $E_n$-operads are characterized by their homology when $n\geq 3$
and when we work up to rational homotopy equivalence of operads.
We establish this result in the category of operads in topological spaces (equivalently, in simplicial sets),
and in the category of Hopf cooperads in cochain differential graded $\QQ$-modules (the category of Hopf cochain dg-cooperads for short).
The class of $E_n$-operads is naturally defined in the category of topological spaces
and consists of objects which are weakly-equivalent
to the little $n$-discs operads $\DOp_n$ (see~\cite{BoardmanVogt,May}),
while the category of Hopf cochain dg-cooperads contains algebraic models
for the rational homotopy of these operads.

%Briefly recall that the homology of the little $n$-discs operad $\DGH_*(\DOp_n)$ is identified with the operad $\PoisOp_n$
%that governs Poisson structures of degree $n-1$ when $n\geq 2$.
%In this paper, we generally deal with the (rational) cohomology $\DGH^*(-) = \DGH^*(-,\QQ)$ (rather than with the homology)
%and we consider the dual cooperad of the $n$-Poisson operad $\PoisOp_n^c$.
%We then have the cooperad identity $\DGH^*(\DOp_n) = \PoisOp_n^c$
%in the category of cochain graded modules.

%In the simplicial set context, we get that, under mild extra requirements, every operad in simplicial sets $\POp$
%whose rational cohomology is equal to the $n$-Poisson cooperad $\PoisOp_n^c$
%for some $n\geq 3$
%is rationally equivalent (as an operad) to an operad in simplicial sets $\DGL\DGG_{\bullet}(\PoisOp_n^c)$
%which we determine from this cohomology cooperad $\DGH^*(\DOp_n) = \PoisOp_n^c$. We will give more details
%on our assumptions on the operad $\POp$
%later on in this introduction. Simply say for the moment that we notably require the existence of an involution $J: \POp\xrightarrow{\simeq}\POp$
%that mimics the action of a hyperplane reflection on the little $n$-discs operad $\DOp_n$
%in the case $4\mid n$.

%In the algebraic context, we obtain that, under a similar extra structure requirement, any Hopf cochain dg-cooperad $\KOp$
%that satisfies the identity $\DGH^*(\KOp) = \PoisOp_n^c$
%at the cohomology level, for some $n\geq 3$,
%is weakly-equivalent (quasi-isomorphic) to the dual cooperad of the $n$-Poisson operad $\PoisOp_n^c$.

We give more details about our statements in the next paragraphs.
We get our results by new obstruction theory methods, which differ from Kontsevich's approach,
and which we can moreover use to prove the homotopy uniqueness
of our formality weak-equivalences
for all $n\geq 3$,
as well as the rational formality of the morphisms $\iota: \DOp_m\rightarrow\DOp_n$
which link the little discs operads together
for $n-m\geq 2$.
We already mentioned that the formality of $E_2$-operads follows from the existence of Drinfeld's associators.
The intrinsic formality of $E_2$-operads is still an open problem however and is related to a question
raised by Vladimir Drinfeld (see~\cite[\S 5, Remarks]{Drinfeld})
about the vanishing of certain obstructions to the existence
of associators.

\subsection*{Background}
Recall that a Hopf operad in a symmetric monoidal category $\MCat$ consists of an operad
in the category of (counitary cocommutative) coalgebras in $\MCat$ (see for instance~\cite[\S I.3.2]{FresseBook}).
The name `Hopf cooperad' refers to the structure, dual to a Hopf operad in the categorical sense,
which consists of a cooperad in the category of (unitary commutative) algebras
in $\MCat$.
We use Hopf cooperads (rather than Hopf operads) in order to handle convergence problems which generally arise
when we deal with coalgebra structures,
and because the definition of our models for the rational homotopy of operads
naturally relies on contravariant functors.

In what follows, we generally use the prefix dg to refer to objects formed in a base category of differential graded modules.
We use the expression `cochain graded', or just the prefix `cochain', to refer to dg-objects equipped with a non-negative upper grading,
while the expression `chain graded' symmetrically refers to dg-objects equipped with a non-negative lower grading.
In fact, the constructions of this paper make sense as soon as we work in a category of dg-modules
over a field of characteristic zero,
but we prefer to take the field of rational numbers as ground ring in the account of this introduction,
since, as we just explained, one motivating feature of our methods is the proof of formality results
that hold over this primary field.
Similarly, we only consider the homology (respectively, the cohomology) with rational coefficients for the moment,
and we also set $\DGH_*(-) = \DGH_*(-,\QQ)$ (respectively, $\DGH^*(-) = \DGH^*(-,\QQ)$) for short.

The homology of the little $n$-discs operads $\DGH_*(\DOp_n)$
naturally forms a Hopf operad in the symmetric monoidal category of chain graded modules $\MCat = \gr_*\Mod$.
For $n\geq 2$, we have an identity of Hopf operads
%\begin{equation*}
$\DGH_*(\DOp_n) = \PoisOp_n$,
%\end{equation*}
where $\PoisOp_n$ is the $n$-Poisson operad (also called the $n$-Gerstenhaber operad).
%alluded to at the beginning of this introduction.
This operad $\PoisOp_n$ is generated by a commutative product operation of degree zero $\mu = \mu(x_1,x_2)\in\PoisOp_n(2)$
together with a Poisson bracket operation $\lambda = \lambda(x_1,x_2)\in\PoisOp_n(2)$,
of degree $\deg(\lambda) = n-1$,
and which is symmetric when $n$ is even, antisymmetric when $n$ is odd.
To get our formality statement, we also use that the little $n$-discs operad is equipped with an orientation reversing
involution $J: \DOp_n\xrightarrow{\simeq}\DOp_n$.
The morphism $J_*: \PoisOp_n\rightarrow\PoisOp_n$ induced by this involution at the homology level
can be determined by the formulas $J_*(\mu) = \mu$ and $J_*(\lambda) = -\lambda$
on our generating operations $\mu,\lambda\in\PoisOp_n(2)$.

In what follows, we generally consider a non-unitary version of the little $n$-discs operad $\DOp_n$, with nothing in arity zero $\DOp_n(0) = \varnothing$,
but we use an extension of the standard symmetric structure of an operad in order to encode the structure
of the standard unitary little $n$-discs operad $\DOp_n^+$
inside this non-unitary operad $\DOp_n$.
In short, the unitary operad $\DOp_n^+$ differs from $\DOp_n$ by the arity zero term $\DOp_n^+(0) = *$.
The idea is that the operadic composition operations associated to this extra arity zero element $*$
can be determined by giving restriction operators $u^*: \DOp_n^+(l)\rightarrow\DOp_n^+(k)$,
associated to the injective maps $u: \{1,\dots,k\}\rightarrow\{1,\dots,l\}$, for $k,l>0$,
together with augmentation morphisms $\epsilon: \DOp_n^+(r)\rightarrow\DOp_n^+(0)$ (which are just trivial in our case
since we have $\DOp_n^+(0) = *$).
These operations are clearly defined within the non-unitary operad $\DOp_n$ underlying $\DOp_n^+$.
Furthermore, by considering this extra structure on $\DOp_n$,
we get an object which is equivalent to the unitary little discs operad $\DOp_n^+$.

We similarly deal with a non-unitary version of the $n$-Poisson operad $\PoisOp_n$
equipped with extra structures which we associate to a unitary version
of this operad $\PoisOp_n^+$.
We explicitly consider restriction operators $u^*: \PoisOp_n(l)\rightarrow\PoisOp_n(k)$ and augmentation morphisms $\epsilon: \PoisOp_n(r)\rightarrow\QQ$
which model composition operations with an extra arity-zero element $e\in\PoisOp_n^+(0)$
such that $\mu(e,x_1) = x_1 = \mu(x_1,e)$ and $\lambda(e,x_1) = 0 = \lambda(x_1,e)$
in the unitary $n$-Poisson operad $\PoisOp_n^+$.

We generally say that an operad $\POp$ in a symmetric monoidal category $\MCat$
is non-unitary when we have $\POp(0) = \varnothing$ (the initial object of the category or just nothing)
and unitary when we have in contrast $\POp(0) = \unit$ (the unit object of our symmetric monoidal category).
We use the phrase `augmented $\Lambda$-operad' to refer to the enrichment of the standard structure
of an operad defined by our restriction operators.
We just omit to mention the augmentation in the context of simplicial sets,
because the augmentations are trivially given by terminal maps
with values in the one-point set $*$
in this case. We simplify our terminology similarly
in the case of Hopf operads.
The main observation is that the category of unitary operads in any symmetric monoidal category $\MCat$
is isomorphic to the category of augmented non-unitary $\Lambda$-operads in $\MCat$ (see~\cite[\S\S I.2.2-2.4]{FresseBook}).

\subsection*{The topological interpretation of the formality results}
We already mentioned that the category of Hopf cochain dg-cooperads defines a model for the rational homotopy theory of operads.
We rely on the classical Sullivan rational homotopy theory of spaces to get such a result.
We start with the Sullivan realization functor $\DGG_{\bullet}: \dg^*\ComCat_+\rightarrow\Simp^{op}$,
which goes from the category of unitary commutative cochain dg-algebras $\dg^*\ComCat_+$
to the category of simplicial sets $\Simp$,
and which is classically used to retrieve the rational homotopy of a space
from a model in $\dg^*\ComCat_+$.
We explicitly have $\DGG_{\bullet}(A) = \Mor_{\dg^*\ComCat_+}(A,\DGOmega^*(\Delta^{\bullet}))$, for any $A\in\dg^*\ComCat_+$,
where $\DGOmega^*(\Delta^n)$ denotes the algebra of piece-wise linear forms
on the simplex $\Delta^n$, for any $n\in\NN$.
We just have to consider a derived version $\DGL\DGG_{\bullet}(-)$ of this functor $\DGG_{\bullet}(-)$
in order to get a homotopy meaningful result.
The functor $\DGG_{\bullet}: \dg^*\ComCat_+\rightarrow\Simp^{op}$ is (contravariant) symmetric monoidal,
and as a byproduct, carries a cooperad in unitary commutative cochain dg-algebras (thus, a Hopf cochain dg-cooperad)
to an operad in simplicial sets.

We apply this construction to the Hopf cooperad in cochain graded modules $\AOp = \DGH^*(\DOp_n)$ which we identify with a Hopf cochain dg-cooperad
equipped with a trivial differential $\delta = 0$.
We now have
%\begin{equation*}
$\DGH^*(\DOp_n) = \PoisOp_n^c$,
%\end{equation*}
where $\PoisOp_n^c$ denotes the dual Hopf cooperad in graded modules
of the $n$-Poisson operad $\PoisOp_n$.
This object $\DGH^*(\DOp_n) = \PoisOp_n^c$ admits a simple cofibrant resolution in the category of Hopf cochain dg-cooperads
which is defined by the Chevalley-Eilenberg cochain complex $\DGC_{CE}^*(\palg_n)$
of an operad in Lie algebras $\palg_n$ whose terms $\palg_n(r)$, $r>0$,
are graded versions of the classical Lie algebras
of infinitesimal braids (the graded Drinfeld-Kohno Lie algebras).
We then set
%\begin{equation*}
$\DGL\DGG_{\bullet}(\PoisOp_n^c) := \DGG_{\bullet}(\DGC_{CE}^*(\palg_n))$
%\end{equation*}
to get an operad in simplicial sets $\DGG_{\bullet}(\DGC_{CE}^*(\palg_n))$
associated to the cohomology Hopf cooperad $\DGH^*(\DOp_n) = \PoisOp_n^c$.
For this choice of cofibrant resolution, we actually have an identity $\DGG_{\bullet}(\DGC_{CE}^*(\palg_n)) = \DGMC_{\bullet}(\palg_n)$,
where $\DGMC_{\bullet}(\palg_n)$ denotes the operad in simplicial sets whose components $\DGMC_{\bullet}(\palg_n(r))$
are simplicial sets of Maurer-Cartan elements
associated to the Lie dg-algebras $\palg_n(r)\hat{\otimes}\DGOmega^*(\Delta^{\bullet})$, $r>0$.
Hence, we eventually have
%\begin{equation*}
$\DGL\DGG_{\bullet}(\PoisOp_n^c) = \DGMC_{\bullet}(\palg_n)$.
%\end{equation*}
By construction, we also have the relation $\DGH^*(\DGMC_{\bullet}(\palg_n))\simeq\PoisOp_n^c$
which is equivalent to the identity $\DGH_*(\DGMC_{\bullet}(\palg_n),\QQ)\simeq\PoisOp_n$
at the homology level.

We still consider Hopf cooperads equipped with $\Lambda$-structures in order to get a counterpart
of the category of $\Lambda$-operads
in our model (and a model of unitary operads therefore).
We can see that the Hopf cooperad $\DGC_{CE}^*(\palg_n)$ actually forms a cofibrant resolution
of the object $\DGH^*(\DOp_n)$
in the category of Hopf $\Lambda$-cooperads
in cochain graded dg-modules.
The operad $\DGL\DGG_{\bullet}(\PoisOp_n^c) = \DGG_{\bullet}(\DGC_{CE}^*(\palg_n))$ accordingly inherits a $\Lambda$-structure,
and is therefore associated to a unitary operad in simplicial sets
in the classical sense.

The simplicial set version of our formality result reads as follows:

\begin{introthm}\label{Result:SimplicialSetIntrinsicFormality}
Let $\POp$ be any $\Lambda$-operad in simplicial sets.
We assume that the spaces $\POp(r)$ underlying this operad are good with respect to the Bousfield-Kan $\QQ$-localization.
We also assume that $\POp$ is connected as an operad in the sense that we have the identity $\POp(1) = *$
in arity one.

If we have an isomorphism of Hopf $\Lambda$-operads at the rational homology level $\DGH_*(\POp,\QQ)\simeq\PoisOp_n$, for some $n\geq 3$,
and if we moreover assume that $\POp$ is equipped with an involution $J: \POp\xrightarrow{\simeq}\POp$
reflecting the involution of the $n$-Poisson operad $J_*: \PoisOp_n\xrightarrow{\simeq}\PoisOp_n$
in the case $4\mid n$,
then $\POp$ is rationally weakly-equivalent to $\DGL\DGG_{\bullet}(\PoisOp_n^c) = \DGMC_{\bullet}(\palg_n)$
as an operad in simplicial sets. We more explicitly have a chain of morphisms of $\Lambda$-operads
\begin{equation*}
\xymatrix{ \POp & \ROp\ar[l]_-{\sim}\ar@{.>}[r]^-{\sim^{\QQ}}_-{\exists} & \DGMC_{\bullet}(\palg_n) },
\end{equation*}
where $\ROp$ represents a cofibrant resolution of the object $\POp$ in the category of $\Lambda$-operads in simplicial sets,
and the right-hand side map $\ROp\rightarrow\DGMC_{\bullet}(\palg_n)$
defines a realization, at the simplicial set level,
of our rational homology isomorphism $\DGH_*(\ROp,\QQ)\simeq\DGH_*(\POp,\QQ)\simeq\PoisOp_n$.
\end{introthm}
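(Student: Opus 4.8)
The strategy is to transport the statement across the Sullivan-type rational homotopy theory of operads: one replaces $\POp$ by an algebraic model in the category of Hopf $\Lambda$-cooperads in cochain graded dg-modules, applies there the intrinsic formality theorem for the cohomology cooperad $\PoisOp_{n-1}^c$ (the main algebraic result of this paper), and then pushes the resulting chain of quasi-isomorphisms back to $\Lambda$-operads in simplicial sets by the derived realization functor $\DGL\DGG_{\bullet}(-)$. All the topological content is thereby concentrated in a single comparison statement — that the functors $\DGOmega^*_\sharp(-)$ and $\DGL\DGG_{\bullet}(-)$ realize the rational homotopy category of good connected $\Lambda$-operads in simplicial sets inside the homotopy category of Hopf $\Lambda$-cooperads — which we take from the companion development of this theory (see~\cite{FresseBook}).

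First I would take a cofibrant resolution $\ROp\xrightarrow{\sim}\POp$ in the category of $\Lambda$-operads in simplicial sets and form the Hopf $\Lambda$-cooperad $\KOp = \DGOmega^*_\sharp(\ROp)$ of piece-wise linear forms, the operadic upgrade of the Sullivan functor $A\mapsto\Mor_{\dg^*\ComCat_+}(A,\DGOmega^*(\Delta^{\bullet}))$. Since $\DGOmega^*(\Delta^{\bullet})$ computes rational cohomology and the forms functor can be analyzed arity-wise, the hypothesis $\DGH_*(\POp)\simeq\PoisOp_{n-1}$ of Hopf $\Lambda$-operads dualizes to an isomorphism of Hopf $\Lambda$-cooperads $\DGH^*(\KOp)\simeq\PoisOp_{n-1}^c$; the connectedness assumption $\POp(1)=*$ places $\KOp$ in the range where the rational homotopy theory applies. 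Moreover the involution $J\colon\POp\xrightarrow{\simeq}\POp$ induces an involutive automorphism of $\KOp$ which, under this identification, realizes the hyperplane-reflection automorphism of $\PoisOp_{n-1}^c$ determined by $J_*$, so that the extra datum required in the case $4\mid n$ is supplied.

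Next I would invoke the intrinsic formality theorem for $\PoisOp_{n-1}^c$ in the category of Hopf $\Lambda$-cooperads: it provides a zigzag of quasi-isomorphisms of Hopf $\Lambda$-cooperads joining $\KOp$ to $\PoisOp_{n-1}^c$, hence, after composition with the cofibrant resolution $\DGC_{CE}^*(\palg_n)\xrightarrow{\sim}\PoisOp_{n-1}^c$, a zigzag joining $\KOp$ to $\DGC_{CE}^*(\palg_n)$. Applying $\DGL\DGG_{\bullet}(-)$, which preserves weak equivalences and sends Hopf $\Lambda$-cooperads to $\Lambda$-operads in simplicial sets, turns this into a chain of rational weak equivalences from $\DGL\DGG_{\bullet}(\KOp)$ to $\DGL\DGG_{\bullet}(\PoisOp_{n-1}^c)=\DGG_{\bullet}(\DGC_{CE}^*(\palg_n))=\DGMC_{\bullet}(\palg_n)$, using the identities recorded in the introduction. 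Finally, the unit of the derived adjunction gives a morphism of $\Lambda$-operads $\ROp\rightarrow\DGL\DGG_{\bullet}(\DGOmega^*_\sharp(\ROp))=\DGL\DGG_{\bullet}(\KOp)$ which is a rational weak equivalence because each $\POp(r)$ is good with respect to the Bousfield-Kan $\QQ$-localization and connected; splicing the resolution $\ROp\xrightarrow{\sim}\POp$, this morphism and the chain just obtained yields the asserted
\begin{equation*}
\xymatrix{ \POp & \ROp \ar[l]_-{\sim} \ar[r]^-{\sim^{\QQ}} & \DGMC_{\bullet}(\palg_n) }.
\end{equation*}

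The hard part is not this transport, which is formal once the rational homotopy theory of $\Lambda$-operads is in place, but the two inputs it feeds on. The first is the algebraic intrinsic formality theorem itself, whose proof — by obstruction theory in the deformation complex of $\PoisOp_{n-1}^c$, reduced to the vanishing of the relevant graph-cohomology groups (with the involution used to kill the residual class when $4\mid n$) — is the substance of the rest of the paper. The second is the careful verification, on the topological side, that $\DGOmega^*_\sharp$ and $\DGG_{\bullet}$ indeed form a Quillen-type adjunction relating $\Lambda$-operads in simplicial sets to Hopf $\Lambda$-cooperads, that cohomology is computed arity-wise so that the Hopf $\Lambda$-structure of $\PoisOp_{n-1}$ is faithfully carried to $\PoisOp_{n-1}^c$, that the unit of the derived adjunction is a rational equivalence on good connected objects, and that the merely lax monoidality of $\DGG_{\bullet}$ causes no trouble once one works with the explicit cofibrant model $\DGC_{CE}^*(\palg_n)$ for which $\DGG_{\bullet}$ computes $\DGMC_{\bullet}(\palg_n)$ on the nose.
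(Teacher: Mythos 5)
Your proposal is correct and follows essentially the same route as the paper's own proof: pass to the Sullivan model $\KOp=\DGOmega^*_{\sharp}(\ROp)$ of a cofibrant resolution, apply the algebraic intrinsic formality theorem (Theorem~\ref{Result:HopfDGOperadIntrinsicFormality}) to obtain a zigzag of weak-equivalences to $\PoisOp_{n-1}^c$, apply $\DGL\DGG_{\bullet}$, and use that $\DGL\DGG_{\bullet}(\DGOmega^*_{\sharp}(\ROp))$ is a rationalization of $\ROp$ on $\QQ$-good components. The paper's proof is terser but identical in substance, delegating the same adjunction and rationalization facts to~\cite{FresseBook}.
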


We can actually entirely perform our constructions in the equivariant setting so that the intermediate object $\ROp$
is also equipped with an involution $J: \ROp\xrightarrow{\simeq}\ROp$
whenever $\POp$ does so,
and the morphisms occurring in our theorem
are involution preserving. We can also check that the morphism $\ROp\rightarrow\DGMC_{\bullet}(\palg_n)$
realizing our rational cohomology isomorphism is homotopically unique. (We just need an extra equivariance requirement
to get this uniqueness result in the case $4\mid n-3$.)

The model structure which we use to construct cofibrant resolutions of operads in simplicial sets is defined in~\cite[\S II.8.4]{FresseBook}.
Let us simply mention for the moment that we take the morphisms of operads that define a weak-equivalence
of simplicial sets arity-wise $\phi: \POp(r)\xrightarrow{\sim}\QOp(r)$
as class of weak-equivalences $\phi: \POp\xrightarrow{\sim}\QOp$
in our model category of $\Lambda$-operads.

We refer to Bousfield-Kan's monograph~\cite{BousfieldKan} for the notion of a good space
and to Bousfield-Guggenheim's memoir~\cite{BousfieldGugenheim}
for the relationship between the Bousfield-Kan $\QQ$-localization
and the Sullivan rationalization
of spaces. Recall simply that any simply connected (or, more generally, any nilpotent connected) space is good,
and that the Bousfield-Kan $\QQ$-localization agrees with the Sullivan rationalization
for simply connected (or, more generally, for nilpotent connected) spaces
whose rational homology is degree-wise finitely generated
as a $\QQ$-module.
In our theorem, we implicitly require that the components of our operad $\POp$ fulfill this homological finiteness condition
when we assume $\DGH_*(\POp)\simeq\PoisOp_n$.
In practice, we can also restrict ourselves to the case of operads whose components
form simply connected spaces in our theorem and forget about the difficulties
of the Bousfield-Kan $\QQ$-localization for non-nilpotent spaces.

We have an operadic enhancement of the Sullivan rationalization functor (see~\cite[\S II.10, \S II.12]{FresseBook})
which, to any cofibrant operad in simplicial sets $\ROp$, associates another operad $\ROp\sphat$
whose components are weakly-equivalent to the Sullivan rationalization
of the spaces $\ROp(r)$, $r>0$. We moreover have an operad morphism $\ROp\rightarrow\ROp\sphat$
which realizes the universal morphisms of the Sullivan rationalization
arity-wise.
The dotted map of our statement is equivalent to a weak-equivalence $\ROp\sphat\xrightarrow{\sim}\DGMC_{\bullet}(\palg_n)$
%\begin{equation*}
%\xymatrix{ \ROp\sphat\ar@{.>}[r]^-{\sim} & \DGMC_{\bullet}(\palg_n) },
%\end{equation*}
connecting this rationalization $\ROp\sphat$ to the operad $\DGL\DGG_{\bullet}(\PoisOp_n^c) := \DGMC_{\bullet}(\palg_n)$
which we deduce from our cohomology cooperad $\PoisOp_n^c$.

We can apply our result to a simplicial model $\POp = \EOp_n$ of the operad of little $n$-discs $\DOp_n$
and use the geometric realization construction to go back to topological operads. The operad of little $n$-discs $\DOp_n$
does not fulfill the operadic connectedness condition of our theorem $\POp(1) = *$,
but we have models of this operad (e.g. the Fulton-MacPherson operad~\cite{GetzlerJones})
which do so.
We then have $|\EOp_n|\sim\DOp_n$, and our topological formality statement implies the case $n\geq 3$
of the following claim (while the case $n=2$ follows from the existence
of rational Drinfeld's associators):

\begin{introthm}\label{Result:TopologicalFormality}
Let $n\geq 2$.
We have a chain of morphisms of $\Lambda$-operads in topological spaces
\begin{equation*}
\xymatrix{ \DOp_n & \ROp_n\ar[l]_-{\sim}\ar@{.>}[r]^-{\sim^{\QQ}}_-{\exists} & |\DGMC_{\bullet}(\palg_n)| },
\end{equation*}
where $\ROp_n$ represents a cofibrant resolution of the little $n$-discs operad $\DOp_n$,
and the right-hand side map $\ROp_n\rightarrow|\DGMC_{\bullet}(\palg_n)|$
defines a realization, at the topological space level,
of the rational homology isomorphism $\DGH_*(\ROp_n,\QQ)\simeq\DGH_*(\DOp_n,\QQ)\simeq\PoisOp_n$.
\end{introthm}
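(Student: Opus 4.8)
The plan is to deduce Theorem~\ref{Result:TopologicalFormality} from the simplicial set statement of Theorem~\ref{Result:SimplicialSetIntrinsicFormality} by a standard transfer through the singular/geometric-realization adjunction, together with a comparison of the little $n$-discs operad with a connected simplicial model. First I would handle the case $n=2$ separately: as the excerpt indicates, this is not covered by the intrinsic formality theorem, but it follows from Tamarkin's construction using a rational Drinfeld associator, which produces a chain of rational weak equivalences of ($\Lambda$-)operads between $\DOp_2$ and its cohomology cooperad; one then composes with the general comparison $\DGL\DGG_\bullet(\PoisOp_1^c) = \DGMC_\bullet(\palg_2)$ recorded in the Background section. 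So the substance is the reduction of the case $n\geq 3$ to Theorem~A.

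For $n\geq 3$, I would proceed in the following steps. (1) Replace $\DOp_n$ by a weakly equivalent topological $\Lambda$-operad $\DOp_n'$ whose arity-one space is a point --- e.g.\ the Fulton--MacPherson operad $\FMOp_n$ of \cite{GetzlerJones}, for which $\FMOp_n(1) = *$ and the spaces $\FMOp_n(r)$ are manifolds with corners, hence CW and in particular good and simply connected for $r\geq 2$ (and trivially for $r\leq 1$). (2) Apply the singular simplicial set functor arity-wise to obtain a $\Lambda$-operad in simplicial sets $\EOp_n := \Sing_\bullet(\FMOp_n)$; it is connected ($\EOp_n(1) = *$), its components are good with respect to Bousfield--Kan $\QQ$-localization, and $\DGH_*(\EOp_n,\QQ) \simeq \DGH_*(\DOp_n,\QQ) \simeq \PoisOp_{n-1}$ as Hopf $\Lambda$-operads. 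When $4\mid n$, the orientation-reversing involution $J\colon \DOp_n\to\DOp_n$ transported to $\FMOp_n$ (and then to $\EOp_n$ via $\Sing_\bullet$) supplies the extra involution hypothesis of Theorem~A, compatible with $J_*$ on homology. (3) Apply Theorem~A to $\POp = \EOp_n$: this yields a cofibrant resolution $\ROp$ of $\EOp_n$ in $\Lambda$-operads in simplicial sets and a chain $\EOp_n \xleftarrow{\ \sim\ } \ROp \xdashrightarrow{\ \sim^{\QQ}\ } \DGMC_\bullet(\palg_n)$. (4) Pass back to topological spaces by geometric realization $|-|$, which is (strong symmetric) monoidal and hence sends $\Lambda$-operads in simplicial sets to $\Lambda$-operads in spaces and preserves the weak equivalences in play; set $\ROp_n := |\ROp|$. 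Using the unit $\DOp_n' \to \Sing_\bullet(\DOp_n')$ being a weak equivalence (hence $|\Sing_\bullet \FMOp_n|\xrightarrow{\sim}\FMOp_n$ arity-wise) and the zig-zag $\DOp_n \xleftarrow{\sim} \cdots \xrightarrow{\sim} \FMOp_n$ of \cite{GetzlerJones}, splice everything into a chain of $\Lambda$-operad morphisms
\begin{equation*}
\xymatrix{ \DOp_n & \ROp_n \ar[l]_-{\sim} \ar@{.>}[r]^-{\sim^{\QQ}} & |\DGMC_\bullet(\palg_n)|, }
\end{equation*}
where the right-hand map realizes the rational homology isomorphism because $|-|$ preserves rational homology and $\DGH_*(\ROp_n,\QQ)\simeq\DGH_*(\DOp_n,\QQ)\simeq\PoisOp_{n-1}$.

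The main obstacle is really bookkeeping rather than a new idea: one must be careful that every comparison is compatible with the full $\Lambda$-structure (the restriction operations $u^*$ modeling the arity-zero basepoint of the unitary operad), and that $\Sing_\bullet$ and $|-|$ interact well with these $\Lambda$-structures and with the equivariant (involution-preserving) refinements --- in particular that the transported involution on $\FMOp_n$ is genuinely an involutive automorphism of $\Lambda$-operads and that $\Sing_\bullet$ carries it to the hypothesis of Theorem~A on the nose. A secondary point is the finiteness/nilpotence input: the components $\FMOp_n(r)$ are simply connected with degree-wise finitely generated rational homology for $r\geq 2$, so the Bousfield--Kan $\QQ$-localization agrees with the Sullivan rationalization and the dotted arrow of Theorem~A is, after realization, the operadic rationalization map $\ROp_n \to \ROp_n\!\sphat \xrightarrow{\sim} |\DGMC_\bullet(\palg_n)|$ discussed in the excerpt. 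Once these compatibilities are in place, the chain above is exactly the asserted statement.
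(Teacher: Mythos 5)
Your proposal is correct and follows essentially the same route as the paper: the paper's proof of this theorem is the single sentence that one applies Theorem~\ref{Result:SimplicialSetIntrinsicFormality} to a connected simplicial model of $\DOp_n$ (the introduction names the Fulton--MacPherson operad for this purpose and invokes geometric realization to return to spaces, with $n=2$ delegated to rational Drinfeld associators). Your write-up simply makes explicit the bookkeeping that the paper leaves implicit.
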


We can take the Fulton-MacPherson operad $\ROp_n = \FMOp_n$, which is a classical instance of a cofibrant resolution
of the little $n$-discs operad $\DOp_n$,
to fit the chain of this theorem.
Let us mention that Kontsevich's proof of the formality of $E_n$-operads (see~\cite{KontsevichMotives,LambrechtsVolic})
implies the existence of formality weak-equivalences
such that
\begin{equation*}
\FMOp_n\xrightarrow{\sim^{\RR}}|\DGMC_{\bullet}(\palg_n\otimes\RR)|
\end{equation*}
%\begin{equation*}
%\xymatrix{ \FMOp_n\ar@{.>}[r]^-{\sim^{\RR}} & |\DGMC_{\bullet}(\palg_n\otimes\RR)| }
%\end{equation*}
when we pass to real coefficients. This approach left open the question of the existence
of formality weak-equi\-va\-len\-ces defined over $\QQ$
in the case $n\geq 3$. This problem is just solved by our intrinsic formality theorem.

By our methods, we can also prove a rational formality statement for the morphisms $\iota: \DOp_m\hookrightarrow\DOp_n$
which link the little discs operads together.
We only have null morphisms $\palg_m\rightarrow 0\rightarrow\palg_n$
between our Lie algebra operads.
When we pass to simplicial sets, we have $\DGMC_{\bullet}(0) = *$. The operad which has these one-point sets $*$
as components is identified with the operad of commutative monoids.
We exactly get this morphism, which goes through the one-point set operad $*$,
in our relative formality statement:

\begin{introthm}\label{Result:RelativeTopologicalFormality}
The morphisms of Theorem~\ref{Result:TopologicalFormality} fit in homotopy commutative diagrams
in the category of $\Lambda$-operads:
\begin{equation*}
\xymatrix{ \DOp_m\ar[d]_{\iota} & \ROp_m\ar[l]_-{\sim}\ar@{.>}[d]\ar@{.>}[r]^-{\sim^{\QQ}} &
|\DGMC_{\bullet}(\palg_m)|\ar[d]^{\equiv *} \\
\DOp_n & \ROp_n\ar[l]_-{\sim}\ar@{.>}[r]^-{\sim^{\QQ}} &
|\DGMC_{\bullet}(\palg_n)| },
\end{equation*}
for any pair $n>m\geq 2$ such that $n-m\geq 2$.
\end{introthm}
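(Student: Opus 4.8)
The plan is to deduce the relative statement from the absolute formality theorem (Theorem~B) together with the homotopy uniqueness of the formality equivalences, applied simultaneously at the two levels $m$ and $n$. First I would observe that, since $n - m \geq 2$, the morphism $\iota_*: \PoisOp_{m-1}^c \to \PoisOp_{n-1}^c$ induced in cohomology factors through the cooperad of the commutative operad: concretely, in the $(n-1)$-Poisson operad, the bracket $\lambda$ has degree $n-1 > m-1$, so the only generator of $\PoisOp_{m-1}^c$ that can map nontrivially is the one dual to the product $\mu$, whence $\iota^*$ coincides, after identifying $\DGH^*(\DOp_m) = \PoisOp_{m-1}^c$ and $\DGH^*(\DOp_n) = \PoisOp_{n-1}^c$, with the canonical morphism $\PoisOp_{n-1}^c \to \ComOp^c = \PoisOp_{m-1}^c / (\text{ideal generated by }\lambda)$ followed by the projection to $\PoisOp_{m-1}^c$. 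On the Lie-operad side this is exactly the statement that the composite $\palg_m \to 0 \to \palg_n$ is the only thing available, and that $\DGMC_\bullet(0) = *$ is the one-point operad, i.e. (the simplicial incarnation of) $\ComOp$.

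Next I would construct the right-hand square. Applying Theorem~B at level $m$ gives a cofibrant resolution $\ROp_m \xrightarrow{\sim} \DOp_m$ and a zig-zag realizing the cohomology isomorphism; applying it at level $n$ gives $\ROp_n \xrightarrow{\sim} \DOp_n$ with the analogous zig-zag. Using the lifting property of the cofibrant $\ROp_m$ against the acyclic fibration modelling $\DOp_m \to \DOp_n$ (after fibrant replacement), I would choose $\ROp_m \to \ROp_n$ covering $\iota$ up to homotopy. The left square then commutes up to homotopy of $\Lambda$-operads by construction. For the right square, the point is that both composites $\ROp_m \to |\DGMC_\bullet(\palg_n)|$ — the one through $|\DGMC_\bullet(\palg_m)| \equiv * \to |\DGMC_\bullet(\palg_n)|$, and the one through $\ROp_n$ — are realizations of the \emph{same} cohomology morphism $\iota^*: \PoisOp_{n-1}^c \to \PoisOp_{m-1}^c$ computed in the first paragraph. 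The homotopy uniqueness part of Theorem~B (the statement following Theorem~A in the excerpt: the realization of a given rational cohomology isomorphism is homotopically unique, modulo the extra equivariance hypothesis when $4 \mid n-3$) then forces these two composites to be homotopic as morphisms of $\Lambda$-operads, which is precisely the homotopy-commutativity of the right square.

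Finally I would assemble the diagram: pasting the (homotopy-commutative) left square, the naturality square for the rationalization functor $\ROp \mapsto \ROp\sphat$, and the right square yields the asserted homotopy-commutative rectangle in the homotopy category of $\Lambda$-operads in topological spaces, with the middle vertical dotted arrow $\ROp_m \to \ROp_n$ as produced above, and the right vertical arrow the canonical map $|\DGMC_\bullet(\palg_m)| \equiv * \to |\DGMC_\bullet(\palg_n)|$ coming from $\palg_m \to 0 \to \palg_n$. The main obstacle is the second paragraph: one must be careful that the homotopy-uniqueness statement of Theorem~B applies to the \emph{relative} data, i.e. that a homotopy between the two composites $\ROp_m \to |\DGMC_\bullet(\palg_n)|$ can be chosen in the category of $\Lambda$-operads (and, when $4 \mid n$ or $4 \mid n-3$, compatibly with the reflection involutions $J$), rather than merely componentwise; this is where one genuinely needs the operadic obstruction-theoretic machinery underlying Theorem~B — in particular the vanishing of the relevant deformation-complex cohomology groups that controls both existence and uniqueness — applied to the morphism $\palg_m \to 0 \to \palg_n$, for which the source-side obstruction groups vanish because $\DGMC_\bullet(\palg_m)$ has been contracted to the terminal operad.
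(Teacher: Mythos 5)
Your overall strategy---construct the middle vertical map by cofibrancy, then reduce the commutativity of the right-hand square to the homotopy uniqueness of realizations of the cohomology morphism $\iota^*:\PoisOp_{n-1}^c\rightarrow\PoisOp_{m-1}^c$---is exactly the paper's approach (\S\ref{Background:RelativeObstructionProblem} and the proof of Theorem~\ref{Result:RelativeTopologicalFormality}). The identification of $\iota^*$ as the map factoring through $\ComOp^c$ (sending $\lambda$ to zero) is also correct. But there is a genuine gap in how you justify the key vanishing, in two related places.

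First, you appeal to ``the homotopy uniqueness part of Theorem~B''. That uniqueness statement is controlled by the deformation complex $L^{**}_n = K^{**}(\PoisOp_{n-1}^c,\PoisOp_{n-1}^c)$ based at the \emph{identity} morphism. The uniqueness you need here is for realizations of the \emph{different} morphism $\iota^*$, with a different base point $\phi^0$ in the function space, hence controlled by the different complex $K^{**}_{m n} = K^{**}(\PoisOp_{n-1}^c,\PoisOp_{m-1}^c)$. These complexes have different homology ($\DGH_0(L^{**}_n)^{J_*}\simeq\kk$ versus $\DGH_0(K^{**}_{m n})=0$), so the absolute uniqueness statement cannot simply be quoted; one must run the obstruction argument afresh at the base point $\iota^*$. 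Second, and more seriously, your stated reason for the vanishing---that ``the source-side obstruction groups vanish because $\DGMC_{\bullet}(\palg_m)$ has been contracted to the terminal operad''---is not correct and would prove too much: nothing in that reasoning uses $n-m\geq 2$, so it would apply equally to $n-m=1$, where the relative formality statement is known to \emph{fail} (the paper cites the Turchin--Willwacher obstructions precisely to show the range is optimal). The actual content is Proposition~\ref{GraphHomology:DeformationComplexHomology:RelativeObstructions}: $\DGH_0(K^{**}_{m n})\simeq\DGH_{-1}(\HGCOp_{m n})$, and the vanishing of the latter is a degree count on hairy graphs (a graph with two external vertices and one edge has degree $n-m-1$, which is $\geq 1$ exactly when $n-m\geq 2$). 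Without that computation the right-hand square has no reason to commute.
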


We mostly study the little discs operads of dimension $n\geq 2$ in this paper, but we may also consider the extension
of the above theorem to the case where we start
with the operad of little intervals $\DOp_1$ (little discs of dimension $1$)
in our morphism $\iota: \DOp_1\rightarrow\DOp_n$.
Recall that this particular operad $\DOp_1$ has contractible connected components
and satisfies $\pi_0\DOp_1 = \AsOp$,
where $\AsOp$ is the operad that governs the category of associative monoids (the associative operad).
We consequently have a chain of weak-equivalences $\DOp_1\xleftarrow{\sim}\ROp_1\xrightarrow{\sim}\AsOp$,
where we regard the operad in sets $\AsOp$ as a discrete operad in topology spaces.
We then have the following relative formality result,
where we again consider a trivial morphism (which goes through the one-point set operad)
in order to connect $\AsOp$ to the operad $|\DGMC_{\bullet}(\palg_n)|$:

\begin{introthm}\label{Result:InitialRelativeTopologicalFormality}
The morphisms of Theorem~\ref{Result:TopologicalFormality} fit in homotopy commutative diagrams
in the category of $\Lambda$-operads:
\begin{equation*}
\xymatrix{ \DOp_1\ar[d]_{\iota} & \ROp_1\ar[l]_-{\sim}\ar@{.>}[d]\ar[r]^-{\sim} &
\AsOp\ar[d]^{\equiv *} \\
\DOp_n & \ROp_n\ar[l]_-{\sim}\ar@{.>}[r]^-{\sim^{\QQ}} &
|\DGMC_{\bullet}(\palg_n)| },
\end{equation*}
for all $n\geq 3$.
\end{introthm}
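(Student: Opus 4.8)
The plan is to reduce this statement to an application of Theorem~\ref{Result:TopologicalFormality} together with the homotopy commutativity of a single square, namely the one built from the morphism $\iota: \DOp_1\rightarrow\DOp_n$. First I would pass from topological spaces to simplicial sets (using the simplicial models $\EOp_1$ and $\EOp_n$ of $\DOp_1$ and $\DOp_n$, together with geometric realization) and then further to the category of Hopf $\Lambda$-cochain dg-cooperads via the Sullivan functor $\DGG_\bullet$, exactly as in the reduction used for Theorems~\ref{Result:SimplicialSetIntrinsicFormality}--\ref{Result:RelativeTopologicalFormality}. On the algebraic side the statement amounts to producing a homotopy commutative square of Hopf $\Lambda$-cooperads whose vertical arrows are $\DGH^*(\iota): \PoisOp_{n-1}^c\rightarrow\DGH^*(\DOp_1)$ on one side and the induced map on Sullivan cochains on the other, and whose horizontal arrows are the formality quasi-isomorphisms already furnished by Theorem~\ref{Result:TopologicalFormality}. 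Since $\DGH^*(\DOp_1)$ is the (dual of the) associative operad $\AsOp$, concentrated in degree zero with trivial differential, the cofibrant resolution $\DGC_{CE}^*(\palg_1)$ collapses: the graded Drinfeld--Kohno Lie algebras $\palg_1(r)$ are abelian (no infinitesimal braid relations survive in dimension one, so $\palg_1\rightarrow 0$), whence $\DGMC_\bullet(\palg_1) = *$ and the simplicial operad built from it is the commutative (equivalently, here, associative after the $\Lambda$-structure bookkeeping) operad. This explains the right-hand vertical arrow $\equiv *$ in the diagram.

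The key steps, in order, are: (1) record that $\palg_1$ is the zero operad in Lie algebras and hence $\DGL\DGG_\bullet((\PoisOp_0)^c) = \DGMC_\bullet(\palg_1) = *$, and identify the operad with these one-point components, equipped with its canonical $\Lambda$-structure, with the realization of $\AsOp$; (2) build the square at the level of Hopf $\Lambda$-cochain dg-cooperads by choosing a cofibrant resolution $\ROp_1$ of $\DOp_1$ which admits a map to a cofibrant resolution $\ROp_n$ of $\DOp_n$ realizing $\iota$ — this is possible because cofibrant resolutions can be chosen functorially, or at least compatibly along a fixed morphism, in the model category of $\Lambda$-operads of~\cite[\S II.8.4]{FresseBook}; (3) apply Theorem~\ref{Result:TopologicalFormality} to the bottom row to get $\ROp_n\xrightarrow{\sim^\QQ}|\DGMC_\bullet(\palg_n)|$, and observe that the top row is the trivial chain $\ROp_1\xleftarrow{\sim}\ROp_1\xrightarrow{\sim}\AsOp\rightarrow *$; (4) check the homotopy commutativity of the resulting square. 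For the last point the crucial observation is that the mapping space of $\Lambda$-operad morphisms from $\ROp_1$ (equivalently, from $\AsOp$ after rationalization) into $|\DGMC_\bullet(\palg_n)|$ is, by the obstruction-theoretic analysis developed in the body of the paper, homotopically discrete — indeed the relevant deformation complex controlling maps out of the associative operad into $\DGMC_\bullet(\palg_n)$ has vanishing obstruction and uniqueness groups in the range $n\geq 3$ — so the two composites around the square, both of which rationally factor through the one-point operad, are necessarily homotopic.

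I expect the main obstacle to be precisely step~(4): verifying that the homotopy class of a morphism $\ROp_1\rightarrow|\DGMC_\bullet(\palg_n)|$ is unique, so that the square commutes up to homotopy without a separate geometric construction of the filler. This requires the same cohomological vanishing statements (for the appropriate $\Lambda$-operadic bimodule cohomology, or equivalently the homotopy groups of the relevant mapping space) that underlie the homotopy uniqueness assertion accompanying Theorem~\ref{Result:SimplicialSetIntrinsicFormality}; one must check that restricting the source from a resolution of $\DOp_n$ to a resolution of $\DOp_1$ does not introduce new obstruction classes, and in particular that the case-distinction at $4\mid n$ is harmless here because the target restricted along $\AsOp$ carries no non-trivial reflection-type phenomenon. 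A secondary, more bookkeeping-type difficulty is to carry the $\Lambda$-structures and the arity-one normalization ($\POp(1)=*$) consistently through the passage between $\DOp_1$, its simplicial model, and $\AsOp$, since $\DOp_1$ itself is not connected; this is handled, as in the remarks following Theorem~\ref{Result:TopologicalFormality}, by replacing $\DOp_1$ with a connected cofibrant resolution before applying the machinery.
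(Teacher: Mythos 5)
Your proposal follows the paper's own route: pass to Hopf $\Lambda$-cooperads via $\DGOmega^*_{\sharp}$, run the relative Bousfield obstruction theory of~\S\ref{Background:RelativeObstructionProblem} with $\AsOp^c = \DGH^*(\DOp_1)$ substituted for $\PoisOp_{m-1}^c$, and deduce the homotopy commutativity of the square from the homotopy uniqueness of the realization of $\iota^*$ on cohomology, exactly as in the proof of Theorem~\ref{Result:RelativeTopologicalFormality}. The only points to tighten are that the vanishing you defer to is not the uniqueness groups attached to Theorem~\ref{Result:SimplicialSetIntrinsicFormality} but $\DGH_0$ of the relative Koszul deformation complex $K^{* *}(\PoisOp_{n-1}^c,\AsOp^c)$ --- which the paper handles by observing that $\AsOp^c$ is Koszul with Koszul dual $\SuspOp\AsOp$, so that the whole reduction to the hairy graph complex goes through and lands in $\HGCOp_{1 n}$, whose connectivity for $n-1\geq 2$ (Proposition~\ref{GraphHomology:GraphComplexes:HairyGraphComplexHomology}) gives the result without any $4\mid n$ caveat --- and that the one-point operad is the commutative operad rather than $\AsOp$; the right-hand vertical arrow $\AsOp\rightarrow|\DGMC_{\bullet}(\palg_n)|$ merely factors through it, while the top row $\ROp_1\xrightarrow{\sim}\AsOp$ is an honest (not rational) equivalence requiring no appeal to $\palg_1$.
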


%In this statement, we assume $n\geq 3$, and our method enables us to check the commutativity claim of the theorem in this range.
Let us insist that the diagrams of the above statements commute up to homotopies of $\Lambda$-operads,
and not only up to arity-wise homotopies of spaces.
We refer to~\cite{LambrechtsVolic} and~\cite{TurchinWillwacher} for the previously known results (which involve
the Kontsevich formality weak-equivalences
and hold over the reals) about the relative formality of the little discs operads.
Let us mention that Victor Turchin and the second author of this paper have established
the existence of obstructions
to the formality of the morphisms $\iota: \DOp_{n-1}\rightarrow\DOp_n$
in~\cite{TurchinWillwacher}.
The range of our theorems is therefore optimal.

\setcounter{introthm}{0}
\renewcommand{\theintrothm}{\Alph{introthm}'}

\subsection*{The algebraic statements}
We go back to the functor $\DGG_{\bullet}: \AOp\mapsto\DGG_{\bullet}(\AOp)$
which we obtain by applying the Sullivan realization functor arity-wise
to any Hopf cochain dg-cooperad $\AOp$.
We can see that this functor has a right adjoint
%\begin{equation*}
$\DGOmega^*_{\sharp}: \POp\mapsto\DGOmega^*_{\sharp}(\POp)$,
%\end{equation*}
which maps an operad in simplicial sets $\POp$
to a Hopf cochain dg-cooperad $\DGOmega^*_{\sharp}(\POp)$,
and which we can use to define an operadic upgrading of the Sullivan functor of piece-wise
linear forms $\DGOmega^*: X\mapsto\DGOmega^*(X)$
from simplicial sets $X\in\Simp$
to unitary commutative cochain dg-algebras $\DGOmega^*(X)\in\dg^*\ComCat_+$ (see~\cite[\S II.10.1, \S II.12.1]{FresseBook}).
We just need to restrict ourselves to operads $\POp$ that satisfy the connectedness condition $\POp(1) = *$
in order to give a sense to this construction.
We can still extend our functor $\DGOmega^*_{\sharp}: \POp\mapsto\DGOmega^*_{\sharp}(\POp)$
to operads equipped with a $\Lambda$-structure.
We get that $\DGOmega^*_{\sharp}(\POp)$ inherits a Hopf $\Lambda$-cooperad structure
when $\POp$ is a $\Lambda$-operad.

We use this correspondence and the rational homotopy theory of~\cite{FresseBook}
to reduce the claim of Theorem~\ref{Result:SimplicialSetIntrinsicFormality}
to the following algebraic statement:

\begin{introthm}\label{Result:HopfDGOperadIntrinsicFormality}
Let $\KOp$ be any Hopf $\Lambda$-cooperad in cochain graded dg-modules.
If we have an isomorphism of Hopf $\Lambda$-cooperads at the cohomology level $\DGH^*(\KOp)\simeq\PoisOp_n^c$, for some $n\geq 3$,
and if we moreover assume that $\KOp$ is equipped with an involution $J: \KOp\xrightarrow{\simeq}\KOp$
reflecting the involution of the $n$-Poisson operad $J_*: \PoisOp_n\xrightarrow{\simeq}\PoisOp_n$
in the case $4\mid n$,
then we have a chain of morphisms of Hopf $\Lambda$-cooperads
\begin{equation*}
\xymatrix{ \KOp & \cdot\ar[l]_-{\sim}\ar[r]^-{\sim} & \cdot & \PoisOp_n^c\ar[l]_-{\sim} },
\end{equation*}
which induce an isomorphism in homology arity-wise, and make $\KOp$ an object equivalent to $\PoisOp_n^c = \DGH^*(\DOp_n)$
in the homotopy category of Hopf $\Lambda$-cooperads
in cochain graded dg-modules.
\end{introthm}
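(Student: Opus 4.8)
The plan is to run an obstruction-theoretic argument comparing $\KOp$ with the cofibrant model $\DGC_{CE}^*(\palg_n)$ of $\PoisOp_{n-1}^c$ in the category of Hopf $\Lambda$-cochain dg-cooperads. Since $\DGC_{CE}^*(\palg_n)$ is cofibrant and maps by a quasi-isomorphism to $\PoisOp_{n-1}^c$, and since $\KOp$ is fibrant (everything being fibrant here), it suffices to produce a single quasi-isomorphism $\DGC_{CE}^*(\palg_n)\xrightarrow{\sim}\KOp$ of Hopf $\Lambda$-cooperads realizing the given cohomology identification; the chain of weak equivalences in the statement then follows by standard factorization. First I would set up the mapping space $\Map(\DGC_{CE}^*(\palg_n),\KOp)$ as a simplicial set (or the corresponding deformation complex), exploiting the fact that the source is quasi-free as a Hopf $\Lambda$-cooperad, so that a morphism out of it is governed by a Maurer--Cartan element in a convolution-type dg Lie algebra built from the generators of $\palg_n$ (the graded Drinfeld--Kohno Lie algebras) and the cooperad $\KOp$. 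The point $0$ (coming from the quasi-isomorphism $\DGH^*(\KOp)\simeq\PoisOp_{n-1}^c$ composed with the resolution) gives a zeroth-order solution, and one deforms it degree by degree.

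Second, I would identify the obstruction groups. By the usual argument, the obstruction to extending a partial morphism from filtration stage $k$ to stage $k+1$ lies in a cohomology group of the deformation complex, and the choices of extensions form a torsor over the group one degree below. The crucial input is that these deformation/obstruction groups can be computed via the graph complex description of the homotopy automorphisms and the derivations of $\PoisOp_{n-1}^c$: they are governed by (a suitable truncation of) Kontsevich's graph complex $\GCOp_n$, or rather by $\HGCOp$ / the hairy graph complex incarnation adapted to the $\Lambda$-structure. The key cohomological fact — which must have been established earlier in the body of the paper — is that the relevant part of $H^*(\GCOp_n)$ vanishes in the degrees where obstructions live, for $n\geq 3$, with the single exception of a one-dimensional class (the "loop class'' / the Euler class $\omega$) that appears precisely when $4\mid n$. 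This is why the extra hypothesis is needed exactly in that case: the involution $J$ acts by $-1$ on that exceptional class (it reflects the action of a hyperplane reflection, which reverses orientation), so imposing $J$-equivariance kills the obstruction living in that class and also rigidifies the corresponding ambiguity.

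Third, the formality itself: once all obstructions vanish (equivariantly when $4\mid n$), one assembles the sequence of extensions into a morphism $\phi:\DGC_{CE}^*(\palg_n)\to\KOp$ of Hopf $\Lambda$-cooperads. By construction $\phi$ induces the prescribed isomorphism on $\DGH^*$ arity-wise, hence is an arity-wise quasi-isomorphism, hence a weak equivalence in the model category. Composing with the zig-zag $\KOp\xleftarrow{\sim}\cdot\xrightarrow{\sim}\cdot$ coming from a cofibrant replacement and with $\DGC_{CE}^*(\palg_n)\xrightarrow{\sim}\PoisOp_{n-1}^c$ produces the asserted chain, and functoriality of the constructions upgrades everything to the equivariant setting when an involution is present.

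The main obstacle, and the technical heart of the paper, is the computation of the obstruction groups and the verification that they vanish in the required range. Two points require care. One is bookkeeping the $\Lambda$-structure: one must work with the full Hopf $\Lambda$-cooperad deformation complex, not merely the plain cooperad one, so the relevant graph complex is the one with external legs (hairy graph complex) rather than the bare $\GCOp_n$, and its cohomology in low degrees must be pinned down. The second, more delicate point is the analysis of the exceptional $4\mid n$ case: one must show that the single obstruction class is detected by $J$-equivariance and that, under the involution hypothesis, both the obstruction and the residual indeterminacy disappear, so that the argument goes through uniformly. Getting the degree conventions and the precise identification of that class right — and checking it is the \emph{only} obstruction — is where the real work lies; the rest is a fairly standard, if intricate, small-object/obstruction induction.
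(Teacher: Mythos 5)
Your strategic skeleton matches the paper's: an obstruction-theoretic comparison whose obstruction groups reduce to (hairy) graph complex cohomology, which vanishes in the relevant degree for $n\geq 3$ except for a loop class appearing when $4\mid n$, odd under the involution and hence killed by $J$-equivariance. But two steps in your setup would fail as written. First, the parenthetical ``everything being fibrant here'' is false and not harmless: in the model category of Hopf $\Lambda$-cooperads (fibrations created through the forgetful functors down to plain cooperads), a general $\KOp$ is not fibrant, and the paper devotes its entire cooperadic triple coresolution apparatus (\S\ref{Background:TripleCoresolution} and Appendix~\ref{TripleCoresolution}) to producing the fibrant replacement $\Tot\Res_{op}^{\bullet}(\KOp)$. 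Without a fibrant target the mapping space out of a cofibrant source is not homotopy invariant and the obstruction calculus does not apply.

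Second, and more substantively: a morphism of \emph{Hopf} $\Lambda$-cooperads out of $\DGC_{CE}^*(\palg_n)$ is not governed by a Maurer--Cartan element in a convolution dg Lie algebra built on the generators of $\palg_n$. That description is appropriate for objects quasi-cofree in the cooperad direction, whereas $\DGC_{CE}^*(\palg_n)$ is (quasi-)free only in the commutative-algebra direction arity-wise, and the Hopf cooperad structure couples the two. The deformation theory is controlled by a Gerstenhaber--Schack-type \emph{bicomplex} --- a Harrison/relative-symmetric-algebra resolution in the algebra direction combined with a cobar/cofree-cooperad coresolution in the operadic direction --- which is exactly why the paper works with the bicosimplicial function space $\Map_{\dg^*\Hopf\Lambda\Op^c}(\Res^{com}_{\bullet}(\PoisOp_{n-1}^c),\Res_{op}^{\bullet}(\KOp))$ and its biderivation complex rather than a single convolution Lie algebra. (Note also that the paper explicitly avoids $\DGC_{CE}^*(\palg_n)$ in the proof and takes $|\Res^{com}_{\bullet}(\PoisOp_{n-1}^c)|$ as the cofibrant source; cofibrancy of the Chevalley--Eilenberg model as a Hopf $\Lambda$-cooperad would be an extra burden on your route.) Your identification of the obstruction groups, of the exceptional loop class for $4\mid n$, and of its oddness under the involution is accurate; but the passage from ``morphism of Hopf $\Lambda$-cooperads'' to ``class in the hairy graph complex'' goes through this two-directional resolution, and that is where your write-up has a genuine hole.
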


We can naively define the homotopy category of Hopf $\Lambda$-cooperads in cochain graded dg-modules
as the category obtained by formally inverting the morphisms
of Hopf $\Lambda$-cooperads
which induce an isomorphism in homology. We still use the terminology of `weak-equivalence', which we borrow from general homotopical algebra,
to refer to this class of morphisms.

We may use the above algebraic form of our intrinsic formality theorem in order to retrieve (and yet improve) the result of Kontsevich
about the formality of the chain operad of little $n$-discs $\DOp_n$ for $n\geq 3$ (while the case $n=2$ still follows
from the existence of rational Drinfeld's associators). We then consider the object $\DGOmega^*_{\sharp}(\EOp_n)$
associated to any simplicial model $\EOp_n$
of the operad $\DOp_n$
that satisfies our connectedness condition $\EOp_n(1) = *$.
We have $\DGH^*\DGOmega^*_{\sharp}(\EOp_n) = \DGH^*(\DOp_n) = \PoisOp_n^c$, and the result
of Theorem~\ref{Result:HopfDGOperadIntrinsicFormality}
implies that $\DGOmega^*_{\sharp}(\EOp_n)$
is connected to $\PoisOp_n^c$
by a chain of weak-equivalences of Hopf $\Lambda$-cooperads
in cochain graded dg-modules $\DGOmega^*_{\sharp}(\EOp_n)\xleftarrow{\sim}\cdot\xrightarrow{\sim}\PoisOp_n^c$.

We can still forget about algebra structures and form the dual object, in the category of dg-modules,
of the Hopf $\Lambda$-cooperad $\DGOmega^*_{\sharp}(\EOp_n)$.
We then get an augmented $\Lambda$-operad in dg-modules $\DGOmega^*_{\sharp}(\EOp_n)^{\vee}$.
We have a chain of weak-equivalences connecting this object $\DGOmega^*_{\sharp}(\EOp_n)^{\vee}$
to the usual chain operad $\DGC_*(\EOp_n)$ associated to $\EOp_n$,
and hence, to the usual chain operad of little $n$-discs $\DGC_*(\DOp_n)$,
where we again consider a chain complex with rational coefficients $\DGC_*(\DOp_n) = \DGC_*(\DOp_n,\QQ)$.

We therefore have the following result, which is a consequence of our intrinsic formality statement in the case $n\geq 3$,
and of the existence of (rational) Drinfeld's associators in the case $n=2$:

\begin{introthm}\label{Result:ChainOperadFormality}
Let $n\geq 2$. We have a chain of morphisms of augmented $\Lambda$-operads
\begin{equation*}
\xymatrix{ \DGC_*(\DOp_n,\QQ) & \cdot\ar[l]_-{\sim}\ar[r]^-{\sim} & \PoisOp_n }
\end{equation*}
which induce an isomorphism in homology arity-wise and make the rational chain operad of little $n$-discs $\DGC_*(\DOp_n) = \DGC_*(\DOp_n,\QQ)$
an object equivalent to $\PoisOp_n = \DGH_*(\DOp_n,\QQ)$
in the homotopy category of augmented $\Lambda$-operads in dg-modules.
\end{introthm}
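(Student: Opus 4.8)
The plan is to deduce the statement from the algebraic intrinsic formality theorem (Theorem~\ref{Result:HopfDGOperadIntrinsicFormality}) by transporting its conclusion along the duality and rationalization functors described above, and then dualizing back to the world of chain operads. Concretely, I would first fix a simplicial model $\EOp_n$ of the little $n$-discs operad $\DOp_n$ satisfying the connectedness condition $\EOp_n(1) = *$ (e.g. the simplicial Fulton--MacPherson operad of \cite{GetzlerJones}), so that the functor $\DGOmega^*_{\sharp}$ applies and produces a Hopf $\Lambda$-cooperad in cochain graded dg-modules $\DGOmega^*_{\sharp}(\EOp_n)$ with $\DGH^*\DGOmega^*_{\sharp}(\EOp_n) = \DGH^*(\DOp_n) = \PoisOp_{n-1}^c$ as a Hopf $\Lambda$-cooperad. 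In the case $4\mid n$ one moreover uses that the orientation-reversing involution $J$ on $\DOp_n$ (hence on $\EOp_n$ up to an equivariant resolution) induces, after applying $\DGOmega^*_{\sharp}$, an involution on $\DGOmega^*_{\sharp}(\EOp_n)$ realizing $J_*$ on cohomology, so that the hypotheses of Theorem~\ref{Result:HopfDGOperadIntrinsicFormality} are met with $\KOp = \DGOmega^*_{\sharp}(\EOp_n)$.

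Applying Theorem~\ref{Result:HopfDGOperadIntrinsicFormality} then yields a zig-zag of weak-equivalences of Hopf $\Lambda$-cooperads
\begin{equation*}
\DGOmega^*_{\sharp}(\EOp_n) \xleftarrow{\sim} \cdot \xrightarrow{\sim} \cdot \xleftarrow{\sim} \PoisOp_{n-1}^c .
\end{equation*}
Next I would forget the commutative algebra structures, keeping only the underlying $\Lambda$-cooperad in cochain graded dg-modules, and then apply the arity-wise linear dual $(-)^{\vee}$, which is exact on degreewise finite-dimensional complexes and turns $\Lambda$-cooperads into augmented $\Lambda$-operads. Since each $\DGOmega^*_{\sharp}(\EOp_n)(r)$ and $\PoisOp_{n-1}^c(r)$ has degreewise finite-dimensional cohomology, the zig-zag above dualizes to a zig-zag of weak-equivalences of augmented $\Lambda$-operads in dg-modules connecting $\DGOmega^*_{\sharp}(\EOp_n)^{\vee}$ to $\PoisOp_{n-1}^{c\,\vee} = \PoisOp_{n-1}$. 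It remains to compare $\DGOmega^*_{\sharp}(\EOp_n)^{\vee}$ with the usual rational chain operad $\DGC_*(\DOp_n,\QQ)$: by the operadic enhancement of the Sullivan theory of \cite[\S II.10, \S II.12]{FresseBook} there is a natural chain of weak-equivalences of augmented $\Lambda$-operads $\DGC_*(\EOp_n,\QQ) \xleftarrow{\sim}\cdot\xrightarrow{\sim} \DGOmega^*_{\sharp}(\EOp_n)^{\vee}$ (the piecewise-linear-forms functor being quasi-isomorphic, arity-wise, to the singular cochain functor on these good spaces), and $\DGC_*(\EOp_n,\QQ)\simeq\DGC_*(\DOp_n,\QQ)$ since $\EOp_n\xrightarrow{\sim}\DOp_n$. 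Splicing these chains gives the asserted zig-zag
\begin{equation*}
\xymatrix{ \DGC_*(\DOp_n,\QQ) & \cdot\ar[l]_-{\sim}\ar[r]^-{\sim} & \PoisOp_{n-1} },
\end{equation*}
and, because every morphism in it is a homology isomorphism arity-wise and the model structure on augmented $\Lambda$-operads in dg-modules inverts exactly such maps, it exhibits $\DGC_*(\DOp_n,\QQ)$ and $\PoisOp_{n-1}=\DGH_*(\DOp_n,\QQ)$ as isomorphic in the homotopy category. For $n=2$ the same conclusion is obtained instead from the existence of a rational Drinfeld associator, via Tamarkin's argument (cf. \cite{TamarkinFormality}).

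The main obstacle I expect is not the formal transport of structures but the verification that the duality and the comparison $\DGOmega^*_{\sharp}(\EOp_n)^{\vee}\simeq\DGC_*(\DOp_n,\QQ)$ are compatible with the full $\Lambda$-operad structure (not merely the arity-wise quasi-isomorphism type): one must track the restriction operators $u^*$ and the cooperadic (resp. operadic) composition through $\DGOmega^*_{\sharp}$, its right-adjoint relationship with $\DGG_\bullet$, and the linear dual, and check degreewise finiteness so that $(-)^{\vee}$ is homotopically well-behaved. This is precisely where one leans on the results recalled in the ``topological interpretation'' subsection and on \cite{FresseBook}; once those coherence points are granted, the proof is the composition of zig-zags described above.
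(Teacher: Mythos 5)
Your proposal is correct and follows essentially the same route as the paper's proof: apply Theorem~\ref{Result:HopfDGOperadIntrinsicFormality} to $\KOp = \DGOmega^*_{\sharp}(\EOp_n)$ for a connected, involution-equipped cofibrant simplicial model $\EOp_n$ (e.g.\ built from the Fulton--MacPherson operad), then dualize and use the weak-equivalence $\DGOmega^*_{\sharp}(\EOp_n)^{\vee}\sim\DGC_*(\EOp_n,\QQ)\sim\DGC_*(\DOp_n,\QQ)$, with the case $n=2$ handled separately via rational Drinfeld associators. The coherence points you flag (compatibility of the $\Lambda$-structure and duality) are exactly what the paper delegates to \cite{FresseBook}.
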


Recall again that we use augmented $\Lambda$-operads as models of unitary operads,
which in the context of dg-modules
are operads $\POp^+$ satisfying $\POp^+(0) = \QQ$.
The result of this theorem
is therefore equivalent
to the existence of a chain of morphisms of operads
in dg-modules $\DGC_*(\DOp_n^+,\QQ)\xleftarrow{\sim}\cdot\xrightarrow{\sim}\PoisOp_n^+$
%\begin{equation*}
%\xymatrix{ \DGC_*(\DOp_n^+,\QQ) & \cdot\ar[l]_-{\sim}\ar[r]^-{\sim} & \PoisOp_n^+ },
%\end{equation*}
which induce an isomorphism in homology arity-wise and connect the unitary version of the chain operad
of little $n$-discs $\DGC_*(\DOp_n^+) = \DGC_*(\DOp_n^+,\QQ)$
to the unitary $n$-Poisson operad $\PoisOp_n^+ = \DGH_*(\DOp_n^+,\QQ)$.

The construction of Kontsevich only returns formality weak-equivalences defined over $\RR$ (see~\cite{KontsevichMotives,LambrechtsVolic}).
The statements of~\cite{GuillenAl} imply that this formality statement descends to $\QQ$
if we drop the arity zero term and focus on the non-unitary operads
underlying our objects,
but the extension of such rational formality weak-equivalences to unitary operads
was an open question yet. Indeed, the methods of~\cite{GuillenAl} are based on the definition of minimal models for operads
in dg-modules,
but we have to forget about unitary structures in order to guarantee the existence
of such minimal models (see also~\cite{DolgushevSchneider} for another effective approach of this descent theorem
for the formality of non-unitary operads).

We also have a formality statement for the morphisms of chain operads $\iota_*: \DGC_*(\DOp_m)\rightarrow\DGC_*(\DOp_n)$
induced by the morphism of topological operads $\iota: \DOp_m\rightarrow\DOp_n$.
We then consider the morphism of operads in graded modules $\iota_*: \PoisOp_m\rightarrow\PoisOp_n$
which makes the commutative product operations correspond in our operads
and which sends the Lie bracket operation $\lambda\in\PoisOp_m(2)$
to zero. We equivalently deal with a composite $\PoisOp_m\rightarrow\ComOp\rightarrow\PoisOp_n$,
where $\ComOp$ denotes the operad (in plain $\QQ$-modules)
that governs the category of commutative algebras (the commutative operad for short).
We can still identify this morphism with the morphism
induced by the operad embedding $\iota: \DOp_m\hookrightarrow\DOp_n$
in homology, and our result reads as follows:

\begin{introthm}\label{Result:RelativeChainOperadFormality}
The morphisms of Theorem~\ref{Result:ChainOperadFormality} fit in homotopy commutative diagrams
in the category of augmented $\Lambda$-operads
in dg-modules:
\begin{equation*}
\xymatrix{ \DGC_*(\DOp_m,\QQ)\ar[d]_{\iota_*} & \cdot\ar[l]_-{\sim}\ar@{.>}[d]\ar[r]^-{\sim} &
\PoisOp_m\ar[d]^{\iota_*} \\
\DGC_*(\DOp_n,\QQ) & \cdot\ar[l]_-{\sim}\ar[r]^-{\sim} &
\PoisOp_n },
\end{equation*}
for any pair $n>m\geq 2$ such that $n-m\geq 2$.
\end{introthm}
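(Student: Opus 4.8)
The plan is to reduce the statement, by the same route used to obtain Theorem~\ref{Result:ChainOperadFormality} from Theorem~\ref{Result:HopfDGOperadIntrinsicFormality}, to a relative form of the algebraic intrinsic formality theorem for Hopf $\Lambda$-cooperads (the statement that already underlies the topological Theorem~\ref{Result:RelativeTopologicalFormality}), and then to transport the resulting ladder of quasi-isomorphisms to chain operads by $\QQ$-linear duality. First I would fix compatible simplicial models $\iota\colon\EOp_m\hookrightarrow\EOp_n$ of the little discs inclusions satisfying the connectedness condition $\EOp_k(1)=*$; the Fulton--MacPherson operads $\FMOp_k$, which are functorial with respect to the dimension-raising embeddings $\RR^m\hookrightarrow\RR^n$, serve this purpose. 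Applying the operadic Sullivan functor $\DGOmega^*_{\sharp}$ produces a morphism of Hopf $\Lambda$-cooperads $\DGOmega^*_{\sharp}(\iota)\colon\DGOmega^*_{\sharp}(\EOp_n)\to\DGOmega^*_{\sharp}(\EOp_m)$ whose effect on cohomology is the linear dual $\PoisOp_{n-1}^c\to\PoisOp_{m-1}^c$ of $\iota_*$, namely the Hopf cooperad morphism that factors as $\PoisOp_{n-1}^c\to\ComOp^c\to\PoisOp_{m-1}^c$.

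The main step is a relative version of Theorem~\ref{Result:HopfDGOperadIntrinsicFormality}: given a morphism of Hopf $\Lambda$-cooperads $\phi\colon\KOp_n\to\KOp_m$ with $\DGH^*(\KOp_k)\simeq\PoisOp_{k-1}^c$ and $\DGH^*(\phi)$ identified with the dual of $\iota_*$ (imposing the involution hypothesis whenever $4\mid n$ or $4\mid m$, which for the little discs operads is automatic via orientation reversal, chosen compatibly with $\iota$), one shows that $\phi$ is connected, by a homotopy-commutative ladder of morphisms of Hopf $\Lambda$-cooperads whose vertical arrows are weak equivalences, to the model morphism $\PoisOp_{n-1}^c\to\ComOp^c\to\PoisOp_{m-1}^c$. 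This is proved by the obstruction method of Theorem~\ref{Result:HopfDGOperadIntrinsicFormality}, the extra work being the analysis of the relative deformation complex controlling homotopies of such squares; the relevant obstruction and ambiguity groups are expressible through the hairy graph complexes of~\cite{TurchinWillwacher}, and they vanish in the required range exactly when $n-m\geq 2$, whereas for $n-m=1$ genuine obstructions persist. Specializing to $\KOp_k=\DGOmega^*_{\sharp}(\EOp_k)$ and $\phi=\DGOmega^*_{\sharp}(\iota)$ delivers the ladder of Hopf $\Lambda$-cooperads.

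It remains to pass to chains. The $\QQ$-linear dual of a Hopf $\Lambda$-cooperad in cochain graded dg-modules is an augmented $\Lambda$-operad in dg-modules, so dualizing the ladder gives a homotopy-commutative ladder of augmented $\Lambda$-operads relating $\DGOmega^*_{\sharp}(\EOp_m)^{\vee}\to\DGOmega^*_{\sharp}(\EOp_n)^{\vee}$ to the morphism $\PoisOp_{m-1}\to\PoisOp_{n-1}$ dual to $\PoisOp_{n-1}^c\to\ComOp^c\to\PoisOp_{m-1}^c$, which is $\iota_*$. Splicing in the functorial chain of weak equivalences connecting $\DGOmega^*_{\sharp}(\EOp_k)^{\vee}$ to the singular chain operad $\DGC_*(\EOp_k)$, hence to $\DGC_*(\DOp_k,\QQ)$, then yields the square of the statement. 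Homotopy-commutativity is preserved throughout, since $\QQ$-linear duality and the comparison zig-zags preserve weak equivalences and therefore descend to the homotopy categories of the relevant categories of ($\Lambda$-co)operads: the square commutes up to genuine homotopies of $\Lambda$-operads, not merely arity-wise. The cases with $m=2$ (necessarily with $n\geq 4$) are handled exactly as elsewhere in the paper, the formality of the $\DOp_2$-end of the diagram being supplied, as usual, by the existence of rational Drinfeld associators.

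I expect the relative obstruction computation of the second step to be the crux: one has to check that the homotopy class of the formality comparison can be rectified simultaneously on both rows of the square, i.e. that the mapping space of Hopf $\Lambda$-cooperad morphisms from $\PoisOp_{n-1}^c$ to a cofibrant resolution of $\PoisOp_{m-1}^c$ realizing the dual of $\iota_*$ is connected in the needed range. This requires pinning down the controlling convolution dg-Lie algebra --- assembled from $\PoisOp^c$ and the graph complexes that govern the rational deformation theory of $E_n$-operads --- and carrying out precisely the cohomological vanishing that the inequality $n-m\geq 2$ makes available. A further technical obligation, essential for the conclusion to concern genuinely unitary operads and not merely their non-unitary truncations (the point on which this improves~\cite{GuillenAl}), is to run the entire argument $\Lambda$-equivariantly, so that augmentations and arity-zero terms are respected at every stage.
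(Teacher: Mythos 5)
Your proposal is correct and follows essentially the same route as the paper: reduce to a relative form of the algebraic obstruction problem for Hopf $\Lambda$-cooperads (the setup of \S\ref{Background:RelativeObstructionProblem}), identify the uniqueness obstruction groups with the homology of the Koszul deformation complex $K^{**}(\PoisOp_{n-1}^c,\PoisOp_{m-1}^c)$, kill them via the hairy graph complex vanishing $\DGH_{-1}(\HGCOp_{m n})=0$ for $n-m\geq 2$ (Proposition~\ref{GraphHomology:DeformationComplexHomology:RelativeObstructions}), and dualize to chain operads as in the proof of Theorem~\ref{Result:ChainOperadFormality}. The only cosmetic difference is that the paper phrases the key step purely as homotopy uniqueness of realizations of the cohomology morphism $\iota^*$ (so no involution hypothesis enters the relative step), which is exactly the connectedness of the mapping space you single out as the crux.
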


We can still extend this result to the case of the morphisms $\iota_*: \DGC_*(\DOp_1)\rightarrow\DGC_*(\DOp_n)$
with the chain operad of little intervals $\DGC_*(\DOp_1) = \DGC_*(\DOp_1,\QQ)$
as source object.
We then have a chain of weak-equivalences $\DGC_*(\DOp_1)\xleftarrow{\sim}\cdot\xrightarrow{\sim}\AsOp$,
where $\AsOp$ now denotes the associative operad in the category of (plain) $\QQ$-modules.
In this case, we have the following result, where we again consider a natural morphism $\iota_*: \AsOp\rightarrow\PoisOp_n$
which goes through the commutative operad $\ComOp$
and which represents the morphism induced by the operad embedding $\iota: \DOp_1\hookrightarrow\DOp_n$
in homology:

\begin{introthm}\label{Result:InitialRelativeChainOperadFormality}
The morphisms of Theorem~\ref{Result:ChainOperadFormality} fit in homotopy commutative diagrams
in the category of augmented $\Lambda$-operads
in dg-modules:
\begin{equation*}
\xymatrix{ \DGC_*(\DOp_1,\QQ)\ar[d]_{\iota_*} & \cdot\ar[l]_-{\sim}\ar@{.>}[d]\ar[r]^-{\sim} &
\AsOp\ar[d]^{\iota_*} \\
\DGC_*(\DOp_n,\QQ) & \cdot\ar[l]_-{\sim}\ar[r]^-{\sim} &
\PoisOp_n },
\end{equation*}
for all $n\geq 3$.
\end{introthm}

If we pass to the reals, then we exactly retrieve the relative formality statements of~\cite{LambrechtsVolic}
with the improved range of~\cite{TurchinWillwacher}.
Let us mention that the relative formality of the little discs operads is used in~\cite{AroneTurchin}
in order to define small chain complexes
computing the layers of the Goodwillie-Weiss tower
of chains on embedding spaces (see notably~\cite[Proposition 7.1, Corollary 8.1]{AroneTurchin}).

\subsection*{Plan and contents of the paper}
We devote most of our efforts to the algebraic version of our intrinsic formality theorems.
We use obstruction methods to establish the existence of a morphism $\phi: \ROp\xrightarrow{\sim}\PoisOp_n^c$
fitting the claim of Theorem~\ref{Result:HopfDGOperadIntrinsicFormality},
and we use the rational homotopy theory of~\cite[\S\S II.8-12]{FresseBook}
to derive the claim of Theorem~\ref{Result:SimplicialSetIntrinsicFormality} (the topological version of our intrinsic formality theorems)
from this result.
We use a similar approach when we establish the formality of the morphisms that link the little discs operads together.
We review the background of our constructions and we explain our obstruction problem
in a preliminary section of this paper~(\S\ref{Background}).

We check afterwards that the obstruction to the existence of our morphism lies in the cohomology of a bicosimplicial biderivation complex
which we associate to the cohomology cooperad $\DGH^*(\DOp_n) = \PoisOp_n^c$.
We prove that the cohomology of this bicosimplicial biderivation complex can be computed by using a deformation bicomplex
which combines a cooperadic variant of the Harrison complex of commutative algebras
with a cooperadic cobar complex.
We address these topics in~\S\ref{DeformationComplexes}.

We establish that this deformation bicomplex reduces to a variant of the graph complexes considered in earlier works
of the second author of this article.
We elaborate on the computations carried out in these previous studies in order to establish
that the cohomology of our deformation bicomplex vanishes in negative degree (up to odd classes with respect to the action
of involution operators).
We therefore have no obstruction to the existence of our map. We address this part of our proof in~\S\ref{GraphHomology}.
We recap our constructions and complete the proof of our main theorems in the concluding section of the paper~\S\ref{MainResultProofs}.
We devote two appendices to the definition of simplicial (cotriple) resolutions
and of cosimplicial (triple) coresolutions
in the context of Hopf cooperads.

In what follows, we mostly forget about the topological interpretation of our constructions.
We only tackle the applications of our results in the context of topological spaces (actually, simplicial sets)
in the concluding section of the paper.
We also refer to~\cite[\S II.14.1.9]{FresseBook} for the definition of the operads $\DGMC_{\bullet}(\palg_n)$
which we consider in the simplicial set context.
We do not use these operads further in this paper and we do not use the graded Drinfeld-Kohno Lie algebra operads
either. Besides, we mostly study the case of $E_n$-operads such that $n\geq 2$.
We only consider the case of $E_1$-operads (and the operad of little intervals $\DOp_1$)
in the concluding section of the article.

\setcounter{section}{-1}

\section{Background and the statement of the obstruction problem}\label{Background}
The first purpose of this section is to fix our conventions on the model category of Hopf $\Lambda$-cooperads
which we use in our constructions. We essentially borrow our definitions
from the book~\cite{FresseBook}
to which we refer for more details.
Then we explain the obstruction methods which we use to prove
our intrinsic formality result.

\subsection*{The base symmetric monoidal category of dg-modules}

\subsubsection{The category of dg-modules}\label{Background:DGModules}
%We take $\kk = \QQ$ as ground ring all through this paper (as we explain in the introduction).
We assume that $\kk$ is a characteristic zero field which we take as ground ring for our categories of modules.
%We deal with objects defined in a base category of differential graded modules over $\QQ$ (recall that we also say dg-module for short).
We deal with objects defined in a base category of differential graded modules over $\kk$.

In the first instance, we consider the category $\dg\Mod$ whose objects are the $\kk$-modules $K$
equipped with a lower grading $K = \oplus_n K_n$ (which runs over $\ZZ$ in general)
together with a differential, usually denoted by $\delta: K\rightarrow K$,
which decreases degrees by one.
Recall that we also use the expression `dg-module' to refer to this generic differential
graded module structure.
%We also consider a subcategory $\dg_*\Mod\subset\dg\Mod$ of which objects, referred to as chain graded dg-modules,
%are dg-modules $K$
%equipped with a grading such that $K_n = 0$ for $n<0$.

When we form our model for the rational homotopy of operads, we rather deal with dg-modules
equipped with an upper grading $K = \oplus_n K^n$
such that $K^n = 0$ for $n<0$
together with a differential $\delta: K\rightarrow K$
that increases this upper grading by one.
We adopt the notation $\dg^*\Mod$ for this category of dg-modules, to which we also refer as the category of cochain graded dg-modules.
We can use the standard equivalence between lower and upper grading $K_n = K^{-n}$
to identify any cochain graded dg-module $K\in\dg^*\Mod$ with a lower graded dg-module such that $K_n = 0$ for $n>0$.
We accordingly identify the category of cochain graded dg-modules with a subcategory of the category of dg-modules $\dg^*\Mod\subset\dg\Mod$.
We generally use the notation $\deg(\xi)$ for the (lower) degree of any homogeneous element $\xi$ in a dg-module $K\in\dg\Mod$,
while we use the notation $\deg^*(\xi)$, with an extra upper-script $*$, to specify upper degrees.
We accordingly have $\deg(\xi) = - \deg^*(\xi)$ for any element $\xi$ in a cochain graded dg-module $K\in\dg^*\Mod$.

We also have an identity $\DGH_n(K) = \DGH^{-n}(K)$ when we take the cohomology of a cochain graded dg-module $K\in\dg^*\Mod$.
In what follows, we rather use the homology modules $\DGH_*(K)$ when we deal with general dg-modules $K\in\dg\Mod$,
and the equivalent cohomology modules $\DGH^*(K)$ when our object belongs to the category of cochain graded dg-modules $K\in\dg^*\Mod$.

We still consider a category of graded modules $\gr\Mod$ which we can identify with dg-modules
equipped with a trivial differential $\delta = 0$.
%We accordingly have $\gr\Mod\subset\dg\Mod$ and we also consider a subcategory of chain graded (respectively, cochain graded) modules
%such that $\gr_*\Mod = \gr\Mod\cap\dg_*\Mod$ (respectively, $\gr^*\Mod = \gr\Mod\cap\dg^*\Mod$).
We accordingly have $\gr\Mod\subset\dg\Mod$ and we also consider a subcategory of cochain graded modules
such that $\gr^*\Mod = \gr\Mod\cap\dg^*\Mod$.

We equip the category of dg-modules $\dg\Mod$ with its standard model structure, where a morphism is a weak-equivalence
when this morphism induces an isomorphism in homology, a fibration when it is degree-wise surjective,
and a cofibration when it has the left lifting property with respect to any acyclic fibration.
In fact, we just get the class of all degree-wise injective morphisms as cofibrations in $\dg\Mod$
since we assume that our ground ring is a field.
We use the same definition to provide the category of cochain graded dg-modules $\dg^*\Mod$
with a model structure,
with the morphisms that induce an isomorphism in homology as class of weak-equivalences,
the morphisms that are degree-wise surjective as class of fibrations,
and the morphisms that have the left lifting property with respect to the acyclic fibrations
as class of cofibrations.
Recall simply that a morphism of cochain graded dg-modules is a cofibration in this model category $\dg^*\Mod$
if and only if this morphism is injective in positive degrees (under our assumption
that we take a field as ground ring again, see~\cite[Proposition II.5.1.11]{FresseBook}).

\subsubsection{Symmetric monoidal structures and structured objects in the category of dg-modules}\label{Background:DGModulesTensorStructure}
We equip the base category of dg-modules with its usual symmetric monoidal structure,
where we have a symmetry isomorphism, involving a sign,
which we determine by the usual rules
of homological algebra.
We also deal with an obvious restriction of this symmetric monoidal structure
to the category of cochain graded dg-modules $\dg^*\Mod\subset\dg\Mod$.

We use this symmetric monoidal structure to define our categories of structured objects, such as the category of (co)operads,
the category of (co)unitary (co)commutative (co)algebras, {\dots}
Recall that the category of unitary commutative algebras in a base symmetric monoidal category inherits a symmetric monoidal structure again,
with a tensor product operation formed in the base category. We have a similar statement when we deal with counitary cocommutative coalgebras.

\subsection*{Hopf $\Lambda$-cooperads}

\subsubsection{The notion of a coaugmented $\Lambda$-cooperad}\label{Background:LambdaCooperads}
We mainly work in the category of cochain graded dg-modules when we deal with cooperads.
We basically define a cooperad in cochain graded dg-modules as a collection of cochain graded dg-modules $\COp(r)\in\dg^*\Mod$, $r>0$,
where each object $\COp(r)$ is equipped with an action of the symmetric group on $r$ letters $\Sigma_r$,
together with composition coproducts
%\begin{equation*}
$\circ_i^*: \COp(k+l-1)\rightarrow\COp(k)\otimes\COp(l)$,
%\end{equation*}
defined for every $k,l>0$, and $i = 1,\dots,k$, and which fulfill an obvious dual of the standard equivariance, unit,
and associativity relations of operads.
We also generally assume $\COp(1) = \kk$
when we deal with cooperads
and the counit morphism $\epsilon: \COp(1)\rightarrow\kk$,
which we associate to the composition structure of our object,
is given by the identity of the ground field.
We use this technical requirement to sort out convergence issues which usually occur with comultiplicative structures.

We say that $\COp$ forms a coaugmented $\Lambda$-cooperad when we have coaugmentation morphisms $\epsilon_*: \kk\rightarrow\COp(r)$,
defined for all arities $r>0$, together with corestriction operators $u_*: \COp(k)\rightarrow\COp(l)$,
which we associate to the injective maps $u: \{1<\dots<k\}\rightarrow\{1<\dots<l\}$, $k,l>0$,
and which satisfy natural compatibility relations with respect to the other structure
operations of our cooperad (we refer to~\cite[\S II.11.1]{FresseBook}
for details on this definition).
The letter $\Lambda$ in our name of this category of cooperads refers to the category which has the finite ordinals
as objects $\rset = \{1<\dots<r\}$, $r\in\NN$,
and the injective maps between finite ordinals as morphisms $u: \{1<\dots<k\}\rightarrow\{1<\dots<l\}$.
Let us insist that we consider all injective maps (and not only the monotonous ones)
as morphisms in $\Lambda$.
In what follows, we also use the full subcategory $\Lambda_{>1}\subset\Lambda$
spanned by the finite ordinals
such that $r>1$.
We just get that the underlying collection of a $\Lambda$-cooperad forms a covariant diagram over the category $\Lambda$.
We still set $\Sigma$ for the category which has the same objects $\rset = \{1<\dots<r\}$, $r\in\NN$, as the category $\Lambda$
but where we only retain the bijective maps $u: \{1<\dots<r\}\xrightarrow{\simeq}\{1<\dots<r\}$
as morphisms. We accordingly have the identity $\Sigma = \coprod_r\Sigma_r$, where we identify the symmetric group $\Sigma_r$
with the full subcategory of $\Sigma$ generated by the object $\rset = \{1<\dots<r\}$, for any $r\geq 0$.

We use the notation $\ComOp^c/\dg^*\Lambda\Op^c$ for the category of coaugmented cooperads in cochain graded dg-modules.
The notation $\ComOp^c$ refers to the dual cooperad (in the category of $\kk$-modules) of the operad of commutative algebras $\ComOp$.
In what follows, we also call this object $\ComOp^c$ the `commutative cooperad' for short.
The expression $\ComOp^c/-$ in the notation of our category of coaugmented cooperads $\ComOp^c/\dg^*\Lambda\Op^c$
refers to the observation that the coaugmentation morphisms of a coaugmented $\Lambda$-cooperad $\epsilon: \kk\rightarrow\COp(r)$, $r>0$,
define a morphism over this cooperad $\epsilon_*: \ComOp^c\rightarrow\COp$.

\subsubsection{The $n$-Poisson cooperad}\label{Background:PoissonCooperad}
Recall that the $n$-Poisson operad $\PoisOp_n$ is defined for any $n\geq 1$ (if we want to stay within
the category of chain graded modules, otherwise we may consider the case of an arbitrary $n\in\ZZ$).
But $\PoisOp_n$ represents the homology of the little $n$-discs operad
only when $n\geq 2$,
and therefore, we only consider this case $n\geq 2$
in what follows.

Let $\PoisOp_n^c$ be the collection formed by the dual modules $\PoisOp_n^c(r) = \PoisOp_n(r)^{\vee}$
of the components of the $n$-Poisson operad $\PoisOp_n$,
for any $n\geq 2$.
This object inherits a natural cooperad structure, because each module $\PoisOp_n(r)$, $r>0$,
has a finite dimension over the ground field
in each degree.
Indeed, in this situation, we can dualize the composition products
of the $n$-Poisson operad $\PoisOp_n$
to retrieve cooperad coproducts
on the objects $\PoisOp_n^c(r) = \PoisOp_n(r)^{\vee}$.

In the introduction of this paper, we also mentioned that the $n$-Poisson operad $\PoisOp_n$
is equipped with restriction operators $u^*: \PoisOp_n(l)\rightarrow\PoisOp_n(k)$,
associated to the injective maps $u: \{1<\dots<k\}\rightarrow\{1<\dots<l\}$, $k,l>0$, which determine composition operations involving an extra arity zero element $e\in\PoisOp_n^+(0)$
in a unitary extension of the $n$-Poisson operad $\PoisOp_n^+$.
In short, we can represent these restriction operators by the substitution formula $(u^* p)(x_1,\dots,x_k) = p(y_1,\dots,y_l)$,
for any operation $p\in\PoisOp_n(l)$,
where we set $y_j = x_{u^{-1}(j)}$ if $j\in\{u(1),\dots,u(k)\}$ and $y_j = e$
otherwise.
Then we just assume that we have the relations $\mu(e,x_1) = x_1 = \mu(x_1,e)$ and $\lambda(e,x_1) = 0 = \lambda(x_1,e)$
for the generating operations of our operad $\mu,\lambda\in\PoisOp_n(2)$,
and we use the usual equivariance and associativity relations
of operads
to determine the image of any operation $p\in\PoisOp_n(l)$ under our restriction operator $u^*: \PoisOp_n(l)\rightarrow\PoisOp_n(k)$.
We similarly have augmentation morphisms $\epsilon: \PoisOp_n(r)\rightarrow\kk$,
defined for all arities $r>0$,
and which intuitively model the full composites $\epsilon(p) = p(e,\dots,e)$
with our extra unit element $e$
in the unitary Poisson operad $\PoisOp_n^+$.
By duality, we get corestriction operators $u_*: \PoisOp_n^c(k)\rightarrow\PoisOp_n^c(l)$ and coaugmentations $\epsilon_*: \kk\rightarrow\PoisOp_n^c(r)$
when we pass to the $n$-Poisson cooperad $\PoisOp_n^c$, which therefore inherits a coaugmented $\Lambda$-cooperad structure
in our sense.
We back to the definition of these operations in~\S\ref{Background:PoissonFreeStructure}.

\subsubsection{The notion of a Hopf $\Lambda$-cooperad}\label{Background:HopfLambdaCooperads}
We already recalled in~\S\ref{Background:DGModulesTensorStructure} that the category of unitary commutative algebras
in a base symmetric monoidal category inherits
a symmetric monoidal structure.
We deal with unitary commutative algebras in cochain graded dg-modules, and we use the notation $\dg^*\ComCat_+$
for this category of unitary commutative algebras. (For short, we also use the phrase `unitary commutative cochain dg-algebra'
to refer to the objects of this category.)

The notion of a coaugmented $\Lambda$-cooperad actually makes sense in any base symmetric monoidal category $\MCat$.
We generally use the notation $\ComOp^c/\MCat\Lambda\Op^c$ for the category of coaugmented $\Lambda$-cooperads in such a category $\MCat$.
We define our category of Hopf $\Lambda$-cooperads $\dg^*\Hopf\Lambda\Op^c$
as the category of coaugmented $\Lambda$-cooperads $\dg^*\Hopf\Lambda\Op^c = \ComOp^c/\MCat\Lambda\Op^c$
in the category of unitary commutative cochain dg-algebras $\MCat = \dg^*\ComCat_+$
equipped with the symmetric monoidal structure inherited from the base category of dg-modules (see~\S\ref{Background:DGModulesTensorStructure}).
We therefore get that a Hopf $\Lambda$-cooperad consists of a collection of unitary commutative cochain dg-algebras $\AOp(r)$, $r>0$,
equipped with the structure operations of a coaugmented $\Lambda$-cooperad,
all formed in the category of unitary commutative cochain dg-algebras.
We may equivalently assume that the unit morphism $\eta: \kk\rightarrow\AOp(r)$
and the product operation $\mu: \AOp(r)\otimes\AOp(r)\rightarrow\AOp(r)$,
which determine the commutative algebra structure of each object $\AOp(r)$,
define morphisms of coaugmented $\Lambda$-cooperads,
where we regard the ground field $\kk$ as the components of the commutative cooperad $\ComOp^c(r) = \kk$, $r>0$,
and we equip the tensor products of dg-modules $\AOp(r)\otimes\AOp(r)$, $r>0$,
with the obvious diagonal cooperad structure.

Let us observe that the coaugmentation morphisms of a Hopf $\Lambda$-cooperad $\AOp\in\dg^*\Hopf\Lambda\Op^c$
are necessarily identified with the natural unit morphisms $\eta: \kk\rightarrow\AOp(r)$
of the objects $\AOp(r)\in\dg^*\ComCat_+$.
We therefore generally omit to specify coaugmentations
when we deal with Hopf $\Lambda$-cooperads.

\subsubsection{The Hopf structure on the $n$-Poisson cooperad}\label{Background:PoissonHopfCooperad}
The $n$-Poisson operad $\PoisOp_n$ inherits a Hopf operad structure because this operad
is identified with the homology of an operad
in topological spaces $\PoisOp_n = \DGH_*(\DOp_n)$.
We have the coproduct formulas $\Delta(\mu) = \mu\otimes\mu$ and $\Delta(\lambda) = \lambda\otimes\mu + \mu\otimes\lambda$
for the generating operations of this operad $\mu,\lambda\in\PoisOp_n(2)$.
We get a Hopf $\Lambda$-cooperad structure when we pass to the dual object $\PoisOp_n^c$.

We also have an explicit presentation of the graded algebras $\PoisOp_n^c(r) = \DGH^*(\DOp_n(r))$
by generators and relations
which is given by a graded version of the classical Arnold presentation
of the cohomology of configuration spaces (see for instance~\cite[\S I.4.2]{FresseBook}
for a survey and references on this statement).
We only use consequences of this observation in what follows.
Namely, we will explain in~\S\ref{GraphHomology} that the $n$-Poisson operad
is weakly-equivalent to an operad of graphs. The proof of this claim relies on the Arnold presentation (see the bibliographical
references cited in this subsequent section), but we do not need more details
on this proof.

\subsubsection{The adjunctions between cooperads and collections}\label{Background:Collections}
Besides cooperads, we consider the category, denoted by $\dg^*\Sigma\Seq_{>1}^c$, formed by collections $\MOp = \{\MOp(r),r>1\}$
whose terms are cochain graded dg-modules $\MOp(r)\in\dg^*\Mod$
equipped with an action of the symmetric group $\Sigma_r$,
for all $r>1$.
We use the name `symmetric collection' when we want to specifically refer to an object of this category $\MOp\in\dg^*\Sigma\Seq_{>1}^c$.
We just use the word collection otherwise, with a more general meaning which we may use for any category of collections
shaped on the sequence of the non-negative integers (like the category of $\Lambda$-collections
which we introduce next).
In our reference~\cite{FresseBook}, the expression 'symmetric sequence' is generally used for this category of collections.
The notation $\Sigma\Seq$ is motivated by this terminology. The superscript $c$ is added to the notation in order to indicate
that we regard our category as the category of collections underlying cooperads,
whereas the subscript $>1$ refers to the fact that our collections
are only defined in arity $r>1$.

To a cooperad $\COp$, we associate the object $\overline{\COp}\in\dg^*\Sigma\Seq_{>1}^c$
such that:
\begin{equation*}
\overline{\COp}(r) = \begin{cases} 0, & \text{if $r=0,1$}, \\
\COp(r), & \text{otherwise}.
\end{cases}
\end{equation*}
We refer to this collection $\overline{\COp}$ as the coaugmentation coideal of our cooperad $\COp$.
The mapping $\overline{\omega}: \COp\mapsto\overline{\COp}$ obviously gives a functor $\overline{\omega}: \dg^*\Op^c\rightarrow\dg^*\Sigma\Seq_{>1}^c$
from the category of cooperads $\dg^*\Op^c$
to the category of symmetric collections $\dg^*\Sigma\Seq_{>1}^c$.
This functor admits a right adjoint
\begin{equation*}
\FreeOp^c: \dg^*\Sigma\Seq_{>1}^c\rightarrow\dg^*\Op^c
\end{equation*}
which associates a cofree cooperad $\FreeOp^c(\MOp)\in\dg^*\Op^c$
to any symmetric collection $\MOp\in\dg^*\Sigma\Seq_{>1}^c$ (see~\cite[\S C.1]{FresseBook} for a detailed survey of this construction).

The mapping $\overline{\omega}: \COp\mapsto\overline{\COp}$
also induces a functor $\overline{\omega}: \ComOp^c/\dg^*\Lambda\Op^c\rightarrow\overline{\ComOp}{}^c/\dg^*\Lambda\Seq_{>1}^c$
from the category of coaugmented cooperads $\ComOp^c/\dg^*\Lambda\Op^c$
towards the category, denoted by $\overline{\ComOp}{}^c/\dg^*\Lambda\Seq_{>1}^c$,
whose objects are covariant $\Lambda_{>1}$-diag\-rams
equipped with a coaugmentation over the coaugmentation coideal of the commutative cooperad $\overline{\ComOp}{}^c$.
Indeed, we immediately see that the coaugmentation coideal $\overline{\COp}$
of a coaugmented $\Lambda$-cooperad $\COp$
inherits such a diagram
structure.
In what follows, we call `coaugmented $\Lambda$-collections' the objects of this category $\MOp\in\overline{\ComOp}{}^c/\dg^*\Lambda\Seq_{>1}^c$.
(The terminology of~\cite{FresseBook} for this category is the category of `coaugmented covariant $\Lambda$-sequences'.)
Recall nonetheless that we may use the simple word `collection' to refer to a generic structure defined by a collection
shaped on the sequence of the non-negative integers,
and this form of structure includes the category of coaugmented $\Lambda$-collections
as a particular case.
The cofree cooperad functor $\FreeOp^c: \MOp\mapsto\FreeOp^c(\MOp)$
lifts to a functor
\begin{equation*}
\FreeOp^c: \overline{\ComOp}{}^c/\dg^*\Lambda\Seq_{>1}^c\rightarrow\ComOp/\dg^*\Lambda\Op^c
\end{equation*}
from the category of coaugmented $\Lambda$-collections $\overline{\ComOp}{}^c/\dg^*\Lambda\Seq_{>1}^c$
to the category of coaugmented $\Lambda$-cooperads $\ComOp/\dg^*\Lambda\Op^c$
and this extended cofree cooperad functor defines a right adjoint
of the extended coaugmentation
coideal functor $\overline{\omega}: \ComOp^c/\dg^*\Lambda\Op^c\rightarrow\overline{\ComOp}{}^c/\dg^*\Lambda\Seq_{>1}^c$.
We refer to~\cite[\S C.1]{FresseBook} for further details on this observation.

We can also lift our cofree cooperad functor to Hopf cooperads.
We then consider the category $\dg^*\Hopf\Sigma\Seq_{>1}^c$, whose objects $\AOp = \{\AOp(r),r>1\}$
are collections of unitary commutative cochain dg-algebras $\AOp(r)\in\dg^+\ComCat_+$
equipped with an action of the symmetric groups,
together with the category $\dg^*\Hopf\Lambda\Seq_{>1}^c$, whose objects $\AOp = \{\AOp(r),r>0\}$ are $\Lambda_{>1}$-diagrams
in the category of unitary commutative cochain dg-algebras.
We use the expression `Hopf symmetric collection' for the first considered category $\dg^*\Hopf\Sigma\Seq_{>1}^c$,
and the expression `Hopf $\Lambda$-collection' for the category $\dg^*\Hopf\Sigma\Seq_{>1}^c$
considered in second. We may also use the expression `Hopf collection' as a generic name for both categories,
or when the context makes clear which category of Hopf collections we consider.
Let us observe that any Hopf $\Lambda$-collection $\AOp\in\dg^*\Hopf\Lambda\Seq_{>1}^c$
trivially inherits a coaugmentation morphism $\epsilon_*: \overline{\ComOp}{}^c\rightarrow\AOp$
which is given by the unit morphism $\eta: \kk\rightarrow\AOp(r)$ of the algebra $\AOp(r)$
in each arity $r>1$.
We just get that the plain cofree cooperad functor $\FreeOp^c: \MOp\mapsto\FreeOp^c(\MOp)$
lifts as functors $\FreeOp^c: \dg^*\Hopf\Sigma\Seq_{>1}^c\rightarrow\dg^*\Hopf\Op^c$
and $\FreeOp^c: \dg^*\Hopf\Lambda\Seq_{>1}^c\rightarrow\dg^*\Hopf\Lambda\Op^c$
which are right adjoint to the obvious lifting
of the coaugmentation coideal functors $\overline{\omega}: \dg^*\Hopf\Op^c\rightarrow\dg^*\Hopf\Sigma\Seq_{>1}^c$
and $\overline{\omega}: \dg^*\Hopf\Lambda\Op^c\rightarrow\dg^*\Hopf\Lambda\Seq_{>1}^c$
(see~\cite[Proposition II.9.3.4 and Proposition II.11.4.2]{FresseBook}).

\subsubsection{The algebraic adjunction relations}\label{Background:AlgebraicAdjunctions}
The categories of cooperads and collections which we consider in this paper can be arranged on two parallel squares,
which we depict in Figure~\ref{Fig:AlgebraicAdjunctions}.
\begin{figure}[t]
\begin{equation*}
\xymatrix@!R=5em@!C=5em{ & \overline{\ComOp}{}^c/\dg^*\Sigma\Seq_{>1}^c\ar@<+3pt>@{.>}[rr]^-{\overline{\ComOp}{}^c/\Lambda\otimes_{\Sigma}-}
\ar@<+3pt>@{.>}[dd]|(0.5){\vbox to 4em{}}^(0.3){\overline{\ComOp}{}^c/\Sym(-)} &&
\overline{\ComOp}{}^c/\dg^*\Lambda\Seq_{>1}^c\ar@<+3pt>@{.>}[dd]^(0.3){\overline{\ComOp}{}^c/\Sym(-)}\ar@<+3pt>[ll]^-{(1')} \\
\ComOp^c/\dg^*\Op^c\ar@<+3pt>@{.>}[rr]^-{\ComOp^c/\Lambda\otimes_{\Sigma}-}
\ar@<+3pt>@{.>}[dd]^(0.3){\ComOp^c/\Sym(-)}\ar[ur]^(0.5){\overline{\omega}} &&
\ComOp^c/\dg^*\Lambda\Op^c\ar@<+3pt>@{.>}[dd]^(0.3){\ComOp^c/\Sym(-)}\ar@<+3pt>[ll]^-{(1)}\ar[ur]^(0.5){\overline{\omega}} & \\
& \dg^*\Hopf\Sigma\Seq_{>1}^c\ar@<+3pt>@{.>}[rr]|(0.5){\qquad}\ar@<+3pt>[uu]|(0.5){\vbox to 4em{}}^(0.7){(3')} &&
\dg^*\Hopf\Lambda\Seq_{>1}^c\ar@<+3pt>[uu]^(0.7){(2')}\ar@<+3pt>[ll]|(0.5){\qquad}^(0.6){(4')} \\
\dg^*\Hopf\Op^c\ar@<+3pt>@{.>}[rr]\ar@<+3pt>[uu]^(0.7){(3)}\ar[ur]^(0.5){\overline{\omega}} &&
\dg^*\Hopf\Lambda\Op^c\ar@<+3pt>[uu]^(0.7){(2)}\ar@<+3pt>[ll]^-{(4)}\ar[ur]^(0.5){\overline{\omega}} & }
\end{equation*}
\caption{}\label{Fig:AlgebraicAdjunctions}
%\caption{The squares of adjunction relations connecting the categories of cooperads
%and collections}\label{Fig:AlgebraicAdjunctions}
\end{figure}
The diagonal arrows materialize the coaugmentation coideal functors that link the cooperad categories of the foreground
to the collection categories of the background.
The vertical and horizontal solid arrows in the foreground and background squares
materialize the obvious forgetful functors
that link these categories of cooperads
and collections.
The dotted arrows represent the left adjoint functors of these forgetful functors.
This cubical diagram entirely commutes in both the forgetful functor and the adjoint functor directions.

The horizontal adjoint functors of the figure are given by a coend construction.
To be explicit, for an object $\COp\in\ComOp^c/\dg^*\Op^c$,
we first set:
\begin{equation*}
(\Lambda\otimes_{\Sigma}\COp)(r) := \int^{\kset\in\Sigma}\Mor_{\Lambda}(\kset,\rset)\otimes\COp(k),
\end{equation*}
for each $r>0$, where we use the notation $S\otimes\COp(r)$ to refer to a coproduct of copies of the object $\COp(r)$
over the set $S = \Mor_{\Lambda}(\kset,\rset)$.
Then we perform the relative coproducts
\begin{equation*}
(\ComOp^c/\Lambda\otimes_{\Sigma}\COp)(r) := \ComOp^c(r)\bigoplus_{(\Lambda\otimes_{\Sigma}\ComOp^c)(r)}(\Lambda\otimes_{\Sigma}\COp)(r)
\end{equation*}
in order to collapse the morphism $(\Lambda\otimes_{\Sigma}\epsilon_*): (\Lambda\otimes_{\Sigma}\ComOp^c)(r)\rightarrow(\Lambda\otimes_{\Sigma}\COp)(r)$
induced by the coaugmentation of our object $\epsilon_*: \ComOp^c\rightarrow\COp$
into a single coaugmentation map $\epsilon_*: \ComOp^c(r)\rightarrow(\ComOp^c/\Lambda\otimes_{\Sigma}\COp)(r)$,
for each $r>0$. We clearly have $\COp(1) = \kk$ and we can just check that the composition coproducts
of the cooperad $\COp$ extend to this object $\ComOp^c/\Lambda\otimes_{\Sigma}\COp$,
which therefore forms a coaugmented $\Lambda$-cooperad in our sense (we refer to~\cite[\S II.11.2]{FresseBook}
for more details on this construction).
The adjunction relation between this mapping $\ComOp^c/\Lambda\otimes_{\Sigma}-$
and the forgetful functor from coaugmented $\Lambda$-cooperads
to coaugmented cooperads
follows from the abstract definition of coends.
We perform a similar coend construction when we start with an object of the category of Hopf cooperads.
We just form our coend in the category of unitary commutative cochain dg-algebras (instead of the category of cochain graded dg-modules)
as well as our relative coproducts in the second step of the construction (thus, we replace
the relative direct sum in the above formula by a relative tensor product).

We use the same constructions in the collection setting.
We just forget about components of arity $r=1$ (and about composition coproducts as well)
in this case.
We accordingly deal with the coaugmentation coideal $\overline{\ComOp}{}^c$ rather than with the full commutative cooperad $\ComOp^c$
when we perform our constructions for collections, and we simply mark this change
in our notation.

The vertical adjoint functors of our diagram are given by a relative symmetric algebra
construction.
To be explicit, in both cases cooperads and collections, we forget about $\Lambda$-structures in a first step, and we form our objects arity-wise,
by the relative tensor products $\kk/\Sym(M) = \kk\otimes_{\Sym(\kk)}\Sym(M)$
in the category of plain unitary commutative algebras,
where $M$ denotes any cochain graded dg-module equipped with a coaugmentation $\epsilon_*: \kk\rightarrow M$, and $\Sym(-)$
refers to the standard symmetric algebra functor on the category of dg-modules.
Thus, for an object $\COp$ of the category of coaugmented cooperads $\ComOp^c/\dg^*\Op^c$, we explicitly set:
\begin{equation*}
\ComOp^c/\Sym(\COp)(r) = \ComOp^c(r)\otimes_{\Sym(\ComOp^c(r))}\Sym(\COp(r)),
\end{equation*}
for each $r>0$. The relative tensor product has the effect of identifying the image
of the morphism $\Sym(\kk) = \Sym(\ComOp^c(r))\rightarrow\Sym(\COp(r))$
induced by the coaugmentation $\epsilon_*: \ComOp^c(r)\rightarrow\COp(r)$
of our object $\COp\in\ComOp^c/\dg^*\Op^c$
with the natural unit morphism of the symmetric algebra $\ComOp^c(r) = \kk\rightarrow\Sym(\COp(r))$.
Then we check that these objects $\ComOp^c/\Sym(\COp) = \{\ComOp^c/\Sym(\COp)(r),r>0\}$
inherit a natural Hopf cooperad structure (see~\cite[\S II.11.4.4 and Proposition II.11.4.5]{FresseBook} for details).
In the case where $\COp\in\ComOp^c/\dg^*\Lambda\Op^c$, we check that this Hopf cooperad $\ComOp^c/\Sym(\COp)$
is also provided with corestriction operators
which we define by taking the obvious extension of the corestriction operators
of our object $u_*: \COp(k)\rightarrow\COp(l)$
to the symmetric algebra $\Sym(\COp(k))$ (together with the constant map on the factor $\ComOp^c(k) = \kk$
in our relative tensor product),
and we accordingly get that the object $\ComOp^c/\Sym(\COp)$ forms a Hopf $\Lambda$-cooperad
(see again~\cite[\S II.11.4.4 and Proposition II.11.4.5]{FresseBook} for more details).
In both cases, coaugmented cooperads and coaugmented $\Lambda$-cooperads, the adjunction relation with the obvious
forgetful functor follows from the interpretation of the symmetric algebra as a free object
in the category of unitary commutative algebras and from the analogous categorical interpretation
of the relative tensor product of our formula.

In the collection setting, we use similar constructions.
In this case, we simply forget about the (composition coproducts and the) components of arity $r=1$ of objects.
For this reason, we use the coaugmentation coideal $\overline{\ComOp}{}^c$ (yet again)
rather than the full commutative cooperad $\ComOp^c$
in the version of our relative symmetric algebra construction
for collections (and we adapt our notation accordingly).

\subsubsection{Model structures on cooperads and collections}\label{Background:CooperadModelCategories}
We use the adjunction relations of the previous paragraph to provide our categories of cooperads
and collections with a model structure.
We define a model structure on the category of plain cooperads in cochain graded dg-modules first.
We just assume that a morphism $\phi: \COp\rightarrow\DOp$ in $\dg^*\Op^c$
is a weak-equivalence if this morphism defines a weak-equivalence of cochain graded dg-modules arity-wise $\phi: \COp(r)\xrightarrow{\sim}\DOp(r)$,
a cofibration if this morphism defines a cofibration of cochain graded dg-modules arity-wise $\phi: \COp(r)\rightarrowtail\DOp(r)$
(thus, if $\phi: \COp(r)\rightarrow\DOp(r)$ is injective in positive degrees, for every arity $r\geq 1$),
and we characterize the class of fibrations by the right lifting property with respect to the class of acyclic cofibrations.
We refer to~\cite[\S II.9.2]{FresseBook} for the proof that these classes of morphisms
fulfill the axioms of model categories. Recall simply that $\dg^*\Op^c$ has a set of generating (acyclic) cofibrations
which consists of the (acyclic) cofibrations $\phi: \COp\rightarrow\DOp$
whose domains and codomains $\COp,\DOp\in\dg^*\Op^c$ vanish in arity $r\gg 0$
and form bounded dg-modules of finite dimension over the ground field
in each arity $r>0$.

We provide the category of under objects $\ComOp^c/\dg^*\Op^c$ with the canonical model structure induced by our model structure on $\dg^*\Op^c$
so that a morphism in $\ComOp^c/\dg^*\Op^c$
forms a weak-equivalence (respectively, a cofibration, a fibration)
if and only if this morphism defines a weak-equivalence (respectively, a cofibration, a fibration)
in $\dg^*\Op^c$.
We then use our square of adjunction relations (1-4) in~\S\ref{Background:AlgebraicAdjunctions}
to transport this model structure to our other categories of cooperads.
We basically assume that the forgetful functors create the class of weak-equivalences and fibrations
in each case.
We also use our left adjoint functors to transport our sets of generating (acyclic) cofibrations
to each of our model categories,
which are all cofibrantly generated therefore.
We refer to~\cite[\S\S II.9.2-9.3, \S\S II.11.3-11.4]{FresseBook} for the proof of the validity of these constructions.

We follow the same procedure in the collection context. We start with the category of plain symmetric collections $\dg^*\Sigma\Seq_{>1}^c$,
for which we use the same definitions as in the case of plain cooperads.
%We assume, as in the case of plain cooperads, that a morphism $f: \MOp\rightarrow\NOp$
%is a weak-equivalence in this category $\dg^*\Sigma\Seq_{>1}^c$ if this morphism defines
%a weak-equivalence of cochain graded dg-modules arity-wise $f: \MOp(r)\xrightarrow{\sim}\NOp(r)$,
%a cofibration if this morphism defines an injective map arity-wise $f: \MOp(r)\rightarrowtail\NOp(r)$
%(and hence, a cofibration of cochain graded dg-modules under our assumption),
%and a fibration if this morphism has the right lifting property with respect to the class of acyclic cofibrations.
We actually retrieve the injective model structure of a category of diagrams in this case (see for instance~\cite[Proposition A.2.8.2]{Lurie}).
We can also identify the fibrations of the model category of symmetric collections $\dg^*\Sigma\Seq_{>1}^c$
with the morphisms of symmetric collections which are surjective in all degrees
%since our ground field is the characteristic zero field $\kk = \QQ$
because our ground field has characteristic zero by assumption.
We again transport this model structure on symmetric collections to the category of under objects $\overline{\ComOp}{}^c/\dg^*\Sigma\Seq_{>1}^c$,
and to our other categories of collections
afterwards,
by assuming that the forgetful functors in the square of adjunctions
of~\S\ref{Background:AlgebraicAdjunctions}
create weak-equivalences
and fibrations.

We readily check that the diagonal coaugmentation coideal functors in the diagram of Figure~\ref{Fig:AlgebraicAdjunctions}
fit in Quillen adjunctions with the cofree cooperad functor as right adjoint.
We may actually see that the coaugmentation coideal functors create cofibrations
in our model categories of cooperads (and not only preserve cofibrations):
a morphism $\phi: \COp\rightarrow\DOp$ forms a cofibration in any of our model categories of cooperads
if and only if this morphism induces, on coaugmentation coideals,
a morphism which forms a cofibration $\overline{\phi}: \overline{\COp}\rightarrowtail\overline{\DOp}$
in the corresponding model category of collections (see~\cite[Proposition II.11.3.7]{FresseBook}
for a particular case of this statement).

\subsubsection{The $n$-Poisson cooperad as a cofibrant $\Lambda$-cooperad}\label{Background:PoissonFreeStructure}
We can easily analyze the $\Lambda$-diagram structure of the $n$-Poisson cooperad $\PoisOp_n^c$
in order to check that $\PoisOp_n^c$ is cofibrant as a coaugmented $\Lambda$-collection (and hence, as a coaugmented $\Lambda$-cooperad
though $\PoisOp_n^c$
is certainly not cofibrant as a Hopf $\Lambda$-cooperad).

For convenience, we prefer to examine the structure of the $n$-Poisson operad $\PoisOp_n$ first.
We dualize our constructions afterwards.
We use that the graded module $\PoisOp_n(r)$, which defines the component of arity $r$ of the Poisson operad $\PoisOp_n$,
is identified with the module freely spanned by monomials
\begin{equation*}
\pi(x_1,\dots,x_r) = \pi_1(x_{1 j_1},\dots,x_{j_{1 n_1}})\cdot\ldots\cdot\pi_s(x_{s j_1},\dots,x_{s j_{n_s}}),
\end{equation*}
whose factors $\pi_i = \pi_i(x_{i j_1},\dots,x_{i j_{n_i}})$, $i = 1,\dots,s$, represent Lie monomials
on sets of variables $\{x_{i j_1},\dots,x_{i j_{n_i}}\}$
such that $\{x_1,\dots,x_r\} = \{x_{1 j_1},\dots,x_{1 j_{n_1}}\}\amalg\cdots\amalg\{x_{s j_1},\dots,x_{s j_{n_s}}\}$
and which have degree one in each variable $x_{i j_k}$.
To give an example, the expression $\pi(x_1,\dots,x_6) = [[x_1,x_6],x_2]\cdot x_3\cdot [x_5,x_4]$
represents an element of $\PoisOp_n(6)$. We then use standard algebraic notation for the product $x_1\cdot x_2 = \mu(x_1,x_2)$
and the Lie bracket $[x_1,x_2] = \lambda(x_1,x_2)$.
We now consider the submodule $\SOp\PoisOp_n(r)\subset\PoisOp_n(r)$ spanned by the monomials $\pi = \pi(x_1,\dots,x_r)$
whose factors $\pi_i = \pi_i(x_{i j_1},\dots,x_{i j_{n_i}})$, $i = 1,\dots,s$,
are Lie monomials of weight $n_i>1$,
for each $r>1$.
We have for instance $[[x_1,x_5],x_2]\cdot [x_4,x_3]\in\SOp\PoisOp(5)$, but $[[x_1,x_6],x_2]\cdot x_3\cdot [x_5,x_4]\not\in\SOp\PoisOp(6)$.
We actually regard these graded modules $\SOp\PoisOp_n(r)$ as quotient objects of the components
of the Poisson operad $\PoisOp_n(r)$.
When we dualize, we get a symmetric collection of graded modules $\SOp\PoisOp_n^c = \{\SOp\PoisOp_n^c(r),r>1\}$
which forms a subobject of the coaugmentation coideal $\overline{\PoisOp}{}_n^c$
in $\gr^*\Sigma\Seq_{>1}^c$.

Let $\pi(x_1,\dots,x_k)^{\vee}\in\PoisOp_n^c(k)$ denote the dual basis of the basis of the Poisson monomials $\pi(x_1,\dots,x_k)\in\PoisOp_n(k)$
in the $n$-Poisson cooperad $\PoisOp_n^c(k)$.
Recall that the restriction operators of the $n$-Poisson operad $u^*: \PoisOp_n(l)\rightarrow\PoisOp_n(k)$
model operations involving a variable permutation together with composition operations
with an extra arity zero element $e$
such that $\mu(e,x_1) = x_1 = \mu(x_1,e)$ and $\lambda(e,x_1) = 0 = \lambda(x_1,e)$,
where we go back to operadic notation for the product $x_1\cdot x_2 = \mu(x_1,x_2)$
and the Lie bracket $[x_1,x_2] = \lambda(x_1,x_2)$ (see~\S\ref{Background:PoissonCooperad}).
To give a simple example, for the element $\pi(x_1,\dots,x_6) = [[x_1,x_6],x_2]\cdot x_3\cdot [x_5,x_4]$
and the injective map $u: \{1<\dots<5\}\rightarrow\{1<\dots<6\}$
such that $u(1) = 6$, $u(2) = 4$, $u(3) = 2$, $u(4) = 1$, $u(5) = 5$,
we get $u^*\pi(x_1,\dots,x_6) = \pi(x_4,x_3,e,x_2,x_5,x_1) = [[x_4,x_1],x_3]\cdot e\cdot [x_5,x_2] = [[x_4,x_1],x_3]\cdot [x_5,x_2]$.
But for the injective map $u: \{1<\dots<5\}\rightarrow\{1<\dots<6\}$
such that $u(1) = 5$, $u(2) = 4$, $u(3) = 1$, $u(4) = 6$, $u(5) = 3$,
we get $u^*\pi(x_1,\dots,x_6) = \pi(x_3,e,x_5,x_2,x_1,x_4) = [[x_3,x_4],e]\cdot x_5\cdot [x_1,x_2] = 0$.

From this description of the restriction operators on $\PoisOp_n$, we readily get that the dual corestriction operators
of our cooperad $u_*: \PoisOp_n^c(k)\rightarrow\PoisOp_n^c(l)$
are defined on our dual basis of the Poisson monomials $\pi(x_1,\dots,x_k)^{\vee}\in\PoisOp_n^c(k)$
by the mapping such that:
\begin{equation*}
u_*(\pi(x_1,\dots,x_k)^{\vee}) = (\pi(x_{u(1)},\dots,x_{u(k)})\cdot x_{j_{k+1}}\cdot\ldots\cdot x_{j_{l}})^{\vee}
\end{equation*}
where $\{j_{k+1},\dots,j_{l}\}$ represents the complement of the set $\{u(1),\dots,u(k)\}$
inside $\lset = \{1,\dots,l\}$.
We similarly get the formula:
%\begin{equation*}
$\epsilon_*(1) = (x_1\cdot\ldots\cdot x_r)^{\vee}$
%\end{equation*}
for the coaugmentations $\epsilon_*: \ComOp^c(r)\rightarrow\PoisOp_n^c(r)$, $r>0$.
We easily deduce from this description of the coaugmentations and of the corestriction operators on $\PoisOp_n^c$
that the inclusion of symmetric collections $\SOp\PoisOp_n^c\hookrightarrow\overline{\PoisOp}{}_n^c$
induces an isomorphism of coaugmented $\Lambda$-collections in graded modules:
\begin{equation*}
\overline{\ComOp}{}^c/\Lambda\otimes_{\Sigma}(\overline{\ComOp}{}^c\oplus\SOp\PoisOp_n^c)\xrightarrow{\simeq}\overline{\PoisOp}{}_n^c,
\end{equation*}
where we regard $\overline{\ComOp}{}^c\oplus\SOp\PoisOp_n^c$ as an object of the category of coaugmented symmetric collections $\overline{\ComOp}{}^c/\Sigma\Seq_{>1}^c$
and we use the functor $\overline{\ComOp}{}^c/\Lambda\otimes_{\Sigma}-$
defined in~\S\ref{Background:AlgebraicAdjunctions}.

The observation that the object $\overline{\PoisOp}{}_n^c$ is cofibrant as a coaugmented $\Lambda$-collection
immediately follows from the existence of this decomposition
in $\overline{\ComOp}{}^c/\dg^*\Lambda\Seq_{>1}^c$ (by definition of our model structure).

\subsubsection{The (algebraic) augmentation ideal of the $n$-Poisson cooperad}\label{Background:PoissonAugmentationIdeal}
The commutative monomials $\mu(x_1,\dots,x_r) = x_1\cdot\ldots\cdot x_r$ also define canonical group-like elements
in the coalgebras $\PoisOp_n(r)$, $r>0$, and the collection of coalgebra maps $\eta: \kk\rightarrow\PoisOp_n(r)$
such that $\eta(1) = \mu(x_1,\dots,x_r)$
actually defines an operad morphism from the operad of commutative algebras $\ComOp$
towards the $n$-Poisson operad $\PoisOp_n$.
We moreover have a decomposition $\PoisOp_n(r) = \ComOp(r)\oplus\IOp\PoisOp_n(r)$, where we identify $\ComOp(r)$
with the summand spanned by these monomials $\mu(x_1,\dots,x_r) = x_1\cdot\ldots\cdot x_r$
inside $\PoisOp_n(r)$,
while $\IOp\PoisOp_n(r)$ is the graded module spanned by the basis elements of the Poisson operad $\pi = \pi(x_1,\dots,x_r)$
which have at least one Lie monomial of weight $n_i>1$
as factor $\pi_i = \pi_i(x_{i j_1},\dots,x_{i j_{n_i}})$.

When we dualize this structure, we get morphisms of unitary commutative algebras $\eta_*: \PoisOp_n^c(r)\rightarrow\kk$,
right inverse to the natural coaugmentation map $\epsilon_*: \kk\rightarrow\PoisOp_n^c(r)$,
and which define a morphism of Hopf $\Lambda$-cooperads $\eta_*: \PoisOp_n^c\rightarrow\ComOp^c$.
We can also identify the graded module such that $\IOp\PoisOp_n^c(r) = \IOp\PoisOp_n(r)^{\vee}$
with the augmentation ideal
of this augmented algebra structure on $\PoisOp_n^c(r)$.
We accordingly have a splitting formula:
\begin{equation*}
\PoisOp_n^c(r) = \ComOp^c(r)\oplus\IOp\PoisOp_n^c(r),
\end{equation*}
for each arity $r>0$, where we identify $\ComOp^c(r)$ with the summand of the module $\PoisOp_n^c(r)$
spanned by the basis element $\epsilon_*(1) = (x_1\cdot\ldots\cdot x_r)^{\vee}$.
We should note that the collection $\IOp\PoisOp_n^c$ is concentrated in arity $r>1$
since we have $\PoisOp_n^c(1) = \ComOp^c(1) = \kk$
by definition of our cooperads.
We also have an identity:
\begin{equation*}
\IOp\PoisOp_n^c(r) = \Lambda\otimes_{\Sigma}\SOp\PoisOp_n^c(r),
\end{equation*}
for each $r>1$, because in the description of~\S\ref{Background:PoissonFreeStructure}, we can identify the module $\IOp\PoisOp_n^c(r)$
with the submodule of $\PoisOp_n^c(r)$ spanned by the dual basis elements $\pi^{\vee} = \pi(x_1,\dots,x_r)^{\vee}$
of the monomials $\pi = \pi(x_1,\dots,x_r)$
which have at least one Lie monomial of weight $n_i>1$ as factor $\pi_i = \pi_i(x_{i j_1},\dots,x_{i j_{n_i}})$.
We then use the basic definition, for general symmetric collections, of the coend construction of~\S\ref{Background:AlgebraicAdjunctions}.
We also consider the obvious restriction of the corestriction operators of the $n$-Poisson cooperad to the augmentation ideals $\IOp\PoisOp_n^c(r)$, $r>1$,
to regard the collection of these graded modules $\IOp\PoisOp_n^c$
as an object of the category of graded $\Lambda$-collections $\gr^*\Lambda\Seq_{>1}^c$.
We readily see that we actually have an identity $\IOp\PoisOp_n^c = \Lambda\otimes_{\Sigma}\SOp\PoisOp_n^c$ in the category of $\Lambda$-collections.

In what follows, we say that $\IOp\PoisOp_n^c$ represents the free $\Lambda$-collection generated by the symmetric collection $\SOp\PoisOp_n^c$
to depict this relation $\IOp\PoisOp_n^c = \Lambda\otimes_{\Sigma}\SOp\PoisOp_n^c$
in the category of $\Lambda$-collections.
We use this structure result in our study of the deformation complex of the $n$-Poisson cooperad.
In~\S\ref{GraphHomology}, we also deal with a dual expression of the object $\IOp\PoisOp_n$
underlying the $n$-Poisson operad $\PoisOp_n$. (We explain this dual construction with more details in this subsequent section.)

\subsection*{The definition of resolutions and the obstruction problem}

\subsubsection{The algebraic cotriple resolution}\label{Background:CotripleResolution}
We apply the standard cotriple construction to the adjunction
%\begin{equation*}
$\ComOp^c/\Sym(-): \ComOp^c/\dg^*\Lambda\Op^c\rightleftarrows\dg^*\Hopf\Lambda\Op^c :\omega$
%\end{equation*}
in order to get a simplicial resolution
%\begin{equation*}
$\ROp_{\bullet} = \Res^{com}_{\bullet}(\PoisOp_n^c)$
%\end{equation*}
of the object~$\PoisOp_n^c$
in the category of Hopf $\Lambda$-cooperads. We give full details on this construction
in the appendices (more specifically, in~\S\ref{CotripleResolution}).

Briefly say for the moment that this cotriple resolution $\ROp_{\bullet} = \Res^{com}_{\bullet}(\PoisOp_n^c)$
forms a Reedy cofibrant simplicial object
in the category of Hopf $\Lambda$-cooperads in cochain graded dg-modules (because $\AOp = \PoisOp_n^c$
is cofibrant as a coaugmented $\Lambda$-cooperad).
Recall also that the geometric realization of this simplicial object
\begin{equation*}
\ROp = |\Res^{com}_{\bullet}(\PoisOp_n^c)|
\end{equation*}
(in the sense of model categories) forms a cofibrant resolution of the object~$\AOp = \PoisOp_n^c$
in~$dg^*\Hopf\Lambda\Op^c$.

In~\S\ref{Background:AlgebraicAdjunctions}, we explain that our relative symmetric algebra functor on cooperads $\ComOp^c/\Sym(-)$
is given, in each arity, by a relative symmetric algebra construction $\kk/\Sym(-)$
in the category of plain unitary commutative algebras.
We can also identify the components of the cotriple resolution $\Res^{com}_{\bullet}(\PoisOp_n^c)(r)$, $r>0$,
with simplicial objects $\Res^{com}_{\bullet}(\PoisOp_n^c(r))\in\simp\dg^*\ComCat_+$
which we form within the category of unitary commutative algebras $\dg^*\ComCat_+$
by applying the cotriple resolution construction
to the functor $\kk/\Sym(-): \kk/\dg^*\Mod\rightarrow\dg^*\ComCat_+$
and to the objects $\PoisOp_n^c(r)\in\dg^*\ComCat_+$.
We similarly have an identity:
\begin{equation*}
|\Res^{com}_{\bullet}(\PoisOp_n^c)|(r) = |\Res^{com}_{\bullet}(\PoisOp_n^c(r))|
\end{equation*}
when we pass to geometric realizations.
We use this correspondence in our verification that the geometric realization $\ROp = |\Res^{com}_{\bullet}(\PoisOp_n^c)|$
forms an object weakly-equivalent to $\PoisOp_n^c$ (see~\S\ref{CotripleResolution})
and in our study of deformation complexes (when we prove that the bicosimplicial deformation complex of the $n$-Poisson cooperad
reduces to a Harrison cohomology in the algebraic direction).

\subsubsection{The cooperadic triple coresolution}\label{Background:TripleCoresolution}
We also need coresolutions in the cooperad direction.

Let $\KOp$ be any object in the category of Hopf $\Lambda$-cooperads $\dg^*\Hopf\Lambda\Op^c$.
In a first step, we apply the triple coresolution construction to the adjunction
%\begin{equation*}
$\overline{\omega}: \dg^*\Hopf\Lambda\Op^c\rightleftarrows\dg^*\Hopf\Lambda\Seq_{>1}^c :\FreeOp^c$
%\end{equation*}
in order to get a cosimplicial coresolution
%\begin{equation*}
$\QOp^{\bullet} = \Res_{op}^{\bullet}(\KOp)$
%\end{equation*}
of this object $\KOp$ in the category $\dg^*\Hopf\Lambda\Op^c$. We give full details on this construction
in the appendices (more specifically, in~\S\ref{TripleCoresolution}).
Briefly say for the moment that this triple coresolution $\QOp^{\bullet} = \Res_{op}^{\bullet}(\KOp)$
forms a Reedy fibrant cosimplicial object in the category of Hopf $\Lambda$-cooperads
in cochain graded dg-modules (without any further assumption on $\KOp$).

In a second step, we perform the totalization of this cosimplicial object
\begin{equation*}
\QOp = \Tot\Res_{op}^{\bullet}(\KOp)
\end{equation*}
(in the sense of model categories) in order to obtain a fibrant coresolution of~$\KOp$
in~$dg^*\Hopf\Lambda\Op^c$.

\subsubsection{The application of Bousfield's obstruction theory}\label{Background:BousfieldObstructionTheory}
We now consider the function space:
\begin{equation*}
T = \Map_{\dg^*\Hopf\Lambda\Op^c}(|\Res^{com}_{\bullet}(\PoisOp_n^c)|,\Tot\Res_{op}^{\bullet}(\KOp)),
\end{equation*}
where we take our cofibrant resolution of the $n$-Poisson cooperad on the source $\ROp = |\Res^{com}_{\bullet}(\PoisOp_n^c)|$
and the just defined fibrant resolution of our Hopf $\Lambda$-cooperad $\KOp\in\dg^*\Hopf\Lambda\Op^c$
on the target $\QOp = \Tot\Res_{op}^{\bullet}(\KOp)$.
We then have
\begin{equation*}
T = \Tot^h\Tot^v\underbrace{\Map_{\dg^*\Hopf\Lambda\Op^c}(\Res^{com}_{\bullet}(\PoisOp_n^c),\Res_{op}^{\bullet}(\KOp))}_{= X^{\bullet\,\bullet}}
\end{equation*}
by end interchange, where we consider the bicosimplicial space
such that
\begin{equation*}
X^{k l} = \Map_{\dg^*\Hopf\Lambda\Op^c}(\Res^{com}_k(\PoisOp_n^c),\Res_{op}^l(\KOp)),
\end{equation*}
for any $(k,l)\in\NN^2$, and $\Tot^h$ refers to the totalization of this space in the horizontal direction $k\in\NN$
of our bicosimplicial structure
while $\Tot^v$ refers to the totalization in the vertical direction $l\in\NN$.

We already mentioned that $\Res^{com}_{\bullet}(\PoisOp_n^c)$ is Reedy cofibrant as a simplicial object in $\dg^*\Hopf\Lambda\Op^c$
and that $\Res_{op}^{\bullet}(\KOp)$ is Reedy fibrant as a cosimplicial object in $\dg^*\Hopf\Lambda\Op^c$.
We easily deduce from these assertions and general properties of function spaces that our bicosimplicial space $X = X^{\bullet\,\bullet}$
is Reedy fibrant as a bicosimplicial object of the category of simplicial sets.

Let $\Tot^{\Delta}$ denote the totalization space of the diagonal object of our cosimplicial space $\Diag(X)^k = X^{k k}$.
We still have:
\begin{equation*}
T = \Tot^{\Delta}\Map_{\dg^*\Hopf\Lambda\Op^c}(\Res^{com}_{\bullet}(\PoisOp_n^c),\Res_{op}^{\bullet}(\KOp)),
\end{equation*}
because we have a general identity $\Tot^{\Delta}(X) = \Tot^h\Tot^v(X) = \Tot^v\Tot^h(X)$
for any bicosimplicial space $X = X^{\bullet\,\bullet}\in\cosimp\cosimp\Simp$ (see for instance~\cite[Proposition 8.1]{Shipley}).
We often omit to mark the diagonalization operation $\Diag(-)$ in our formulas.
We notably write $\pi^s\pi_t(X)$ for the degree $s$ cohomotopy of the diagonal cosimplicial sets $\pi_t(\Diag(X)^{\bullet})$,
where we consider the homotopy of the object $X = X^{\bullet\,\bullet}\in\cosimp\cosimp\Simp$
in degree $t\in\NN$.

We now assume that we have a degree zero cocycle $z\in\pi^0\pi_0(X)$ in the cohomotopy of the cosimplicial set $\pi_0(X) = \pi_0(\Diag(X)^{\bullet})$.
We check in the next section that each space $\Diag(X)^k = X^{k k}$, $k\in\NN$ (actually, each space $X^{k l}$, $k,l\in\NN$),
%is isomorphic to a simplicial abelian group (actually, a simplicial $\QQ$-module).
is isomorphic to a simplicial abelian group (actually, to a simplicial module over the ground field).
In this situation, the Bousfield obstruction theory~\cite{BousfieldObstructions} (see also the textbook \cite[\S VIII.4]{GoerssJardine})
implies that the obstructions to (homotopically) lifting a representative $\phi\in\Diag(X)^0$
of our cocycle $z = [\phi]\in\pi^0\pi_0(X)$
to the whole towers of totalization spaces $\Tot^{\Delta}(X) = \lim_s\Tot_s^{\Delta}(X)$
lie in the cohomotopy groups
%\begin{equation*}
$\DGE^2_{s+1 s} = \pi^{s+1}\pi_s(X,\phi^0)$
%\end{equation*}
taken at the base points $\phi^0 = d^k\cdots d^1(\phi)\in\Diag(X)^k$
and where $s>0$.
We also get that the obstructions to the (homotopy) uniqueness of our liftings lie in the cohomotopy groups $\DGE^2_{s s} = \pi^s\pi_s(X,\phi)$,
for $s>0$.

\subsubsection{The obstruction problem}\label{Background:ObstructionProblem}
We now consider the case where we have an isomorphism
at the cohomology level:
\begin{equation*}
\chi: \PoisOp_n^c\xrightarrow{\simeq}\DGH^*(\KOp),
\end{equation*}
where we still assume that $\KOp$ is an object in the category of Hopf $\Lambda$-cooperads $\dg^*\Hopf\Lambda\Op^c$.
We check in the next section that we have a natural isomorphism:
\begin{equation*}
\pi^0\pi_0(X)\xrightarrow{\simeq}\Mor_{\gr^*\Hopf\Lambda\Op^c}(\PoisOp_n^c,\DGH^*(\KOp))
\end{equation*}
from the set of degree zero cocycles $z\in\pi^0\pi_0(X)$ of our bicosimplicial function space in~\S\ref{Background:BousfieldObstructionTheory}
to the set of morphisms of Hopf $\Lambda$-cooperads $f: \PoisOp_n^c\rightarrow\DGH^*(\KOp)$
(see Theorem~\ref{DeformationComplexes:BicosimplicialComplex:DegreeZeroCocycles}).
We therefore pick a degree zero cocycle $z = [\phi]\in\pi^0\pi_0(X)$
that corresponds to our isomorphism $\chi: \PoisOp_n^c\xrightarrow{\simeq}\DGH^*(\KOp)$.
In the first instance, we aim to check that the cohomotopy groups $\DGE^0_{s+1 s} = \pi^{s+1}\pi_s(X,\phi^0)$
vanish for all $s>0$. We are going to prove that this statement holds when $4\nmid n$.
If we have such a vanishing relation, then we can lift a representative of our cocycle to the whole tower
of totalization spaces $\Tot^{\Delta}(X) = \lim_s\Tot_s^{\Delta}(X)$
and, as a by-product, we get a morphism
\begin{equation*}
\phi: |\Res^{com}_{\bullet}(\PoisOp_n^c)|\rightarrow\Tot\Res_{op}^{\bullet}(\KOp)
\end{equation*}
which realizes our cohomology isomorphism.
%We will similarly check the vanishing of the modules $\pi^s\pi_s(X)^{J_*}$
%in order to prove the uniqueness of this morphism in the homotopy category of Hopf $\Lambda$-cooperads.

We may also assume that $\KOp$ is equipped with an involution $J: \KOp\xrightarrow{\simeq}\KOp$,
and in parallel, we consider the involution inherited by the $n$-Poisson cooperad $J_*: \PoisOp_n^c\rightarrow\PoisOp_n^c$
from the little $n$-discs operad $\DOp_n$ (as we explained in the introduction of the paper).
We require, in this context, that our isomorphism $\chi: \PoisOp_n^c\xrightarrow{\simeq}\DGH^*(\KOp)$
preserves the involution operations at the cohomology level.
We then have isomorphisms induced by the involution operations on the source and target objects of our function space.
We consider the conjugate of these operations.
We accordingly get a map on our bicosimplicial space $J: X\xrightarrow{\simeq} X$, which satisfies $J^2=\id$,
and whose fixed points represent involution preserving functions.

In this context, we can also use the Bousfield obstruction theory equivariantly with respect to the action of involutions.
We then use that the objects $X^{k l}$ are simplicial modules over our characteristic zero ground field (and not only abelian groups) for all $k,l\in\NN$,
%We then use that the objects $X^{k l}$ are simplicial $\QQ$-modules (and not only abelian groups) for all $k,l\in\NN$,
and that the codegeneracy operators of the bicosimplicial object $X$ preserve this module structure
%and that the codegeneracy operators of the bicosimplicial object $X$ preserve this $\QQ$-module structure,
as well as our involution operator $J: X^{k l}\rightarrow X^{k l}$.
We essentially deduce from this requirement that the subspaces of fixed points $(X^{k k})^J$ inside $X^{k k}$, $k\in\NN$,
still form a Reedy fibrant cosimplicial object in the category of simplicial sets,
while we have $\Tot_s^{\Delta}(X)^J = \Tot_s^{\Delta}(X^J)$ and $\Tot^{\Delta}(X)^J = \Tot^{\Delta}(X^J) = \lim_s\Tot_s^{\Delta}(X^J)$
by interchange of limits.
In this setting, we can assume that we take a representative of our degree zero cocycle $[\phi]\in\pi^0\pi_0(X)$
which is invariant under the action of the involution $J: X\xrightarrow{\simeq} X$
on the bicosimplicial function space $X$, because we have the interchange formula $\pi^0\pi_0(X)^{J_*} = \pi^0(\pi_0(X)^J) = \pi^0(\pi_0(X^J))$.
We also have the identity $\pi^s\pi_t(X^J) = \pi^s\pi_t(X)^{J_*}$
for each $t>0$ and for every $s\in\NN$,
%because these cohomotopy groups are determined by the homology of a conormalized complex of cosimplicial $\QQ$-modules.
because these cohomotopy groups are determined by the homology of a conormalized complex of cosimplicial modules
over the ground field $\kk$.

Then we aim to check that the submodules of fixed points $\pi^{s+1}\pi_s(X)^{J_*}$
vanish for all $s>0$
inside our cohomotopy groups $\DGE^0_{s+1 s} = \pi^{s+1}\pi_s(X,\phi^0)$.
We will prove that this result holds without any condition on the dimension parameter $n$.
If we have such a vanishing relation, then we can lift a representative of our cocycle equivariantly to the tower of totalization spaces
$\Tot^{\Delta}(X) = \lim_s\Tot_s^{\Delta}(X)$ and we accordingly get that our cohomology isomorphism
is realized by a $J$-equivariant morphism.
%\begin{equation*}
%\phi: |\Res^{com}_{\bullet}(\PoisOp_n^c)|\rightarrow\Tot\Res_{op}^{\bullet}(\KOp)
%\end{equation*}
%
We are similarly going to check the vanishing of the modules $\pi^s\pi_s(X)^{J_*}$
in order to prove the homotopy uniqueness of our morphisms.

\subsubsection{The obstruction problem for morphisms}\label{Background:RelativeObstructionProblem}
To check our formality theorem for morphisms, we consider the objects $\KOp_n = \DGOmega^*_{\sharp}(\EOp_n)$,
associated to cofibrant models of $E_n$-operads in the category of simplicial sets $\EOp_n$,
and which we take to define the homotopy type of $E_n$-operads
in the category of Hopf $\Lambda$-cooperads, for any $n\geq 2$.
We may still assume that these Hopf $\Lambda$-cooperads are equipped with involutive isomorphisms $J: \KOp_n\xrightarrow{\simeq}\KOp_n$
mimicking the action of hyperplane reflections on the little discs spaces.
We also assume that we have morphisms $\iota^*: \KOp_n\rightarrow\KOp_m$, preserving the involution operations,
and which model the embeddings $\iota: \DOp_m\rightarrow\DOp_n$
on the little discs operads $\DOp_n$.
We form the diagram:
\begin{equation*}
\xymatrix{ %\PoisOp_m^c\ar[d]_{\iota^*} &
|\Res^{com}_{\bullet}(\PoisOp_n^c)|
%\ar[l]_-{\sim}
\ar[d]_{\iota^*}\ar@{.>}[r]^-{\sim} &
\Tot\Res_{op}^{\bullet}(\KOp_n)\ar[d]^{\iota^*}
%\ar[r]^-{\sim}
% & \KOp_n\ar[d]_{\iota^*}
\\
% \PoisOp_{n-2}^c &
|\Res^{com}_{\bullet}(\PoisOp_m^c)|
%\ar[l]_-{\sim}
\ar@{.>}[r]^-{\sim} &
\Tot\Res_{op}^{\bullet}(\KOp_m)
%\ar[r]^-{\sim}
% &
% \KOp_{m-1}
},
\end{equation*}
for any $n>m>1$, where the dotted arrows represent formality weak-equivalences which we produce by working out the obstruction
problem of the previous paragraph.

We can apply the constructions of the previous paragraph to the Hopf $\Lambda$-cooperad $\KOp = \KOp_m$.
We then replace the isomorphism $\chi$, which we consider in this previous construction,
by the morphism
\begin{equation*}
\PoisOp_n^c\xrightarrow{\iota^*}\PoisOp_m^c = \DGH^*(\KOp_m),
\end{equation*}
which we deduce from the relation $\DGH^*(\KOp_m) = \DGH^*\DGOmega^*_{\sharp}(\EOp_m) = \PoisOp_m^c$.
We aim to check in this case that the cohomotopy groups $\DGE^0_{s s} = \pi^s\pi_s(X,\phi^0)$
vanish for all $s>0$. We are going to prove that this statement holds as soon $n-m\geq 2$.
If we have such a vanishing relation, then we can conclude that the morphism $\iota^*$
has a unique realization up to homotopy in the category of Hopf $\Lambda$-cooperads.
This result implies that the above diagram commutes (up to homotopy yet),
which is the claim of our formality statement
for morphisms.

We use a similar method to address the case $m=1$, where our operad in simplicial sets $\EOp_1$ is weakly-equivalent
to the operad of associative algebras $\AsOp$.
We then have $\DGH^*(\KOp_1) = \DGH^*(\EOp_1) = \AsOp^c$, where $\AsOp^c$ denotes the dual cooperad
of the associative operad in the category $\kk$-modules.
We just replace the $m$-Poisson cooperad $\PoisOp_m^c$
by this cooperad $\AsOp^c$
in our construction. We will see that our cohomotopy groups $\DGE^0_{s s} = \pi^s\pi_s(X,\phi^0)$
also vanish in this case, for all $s>0$, as soon as we consider a Poisson cooperad $\PoisOp_n^c$
such that $n\geq 3$ as target object.
We tackle this particular case in the concluding section of this paper only. We focus on the case $m\geq 2$,
where we deal with a Poisson cooperad $\PoisOp_m^c$,
otherwise.

\renewcommand{\thesubsection}{\thesection.\arabic{subsection}}
\renewcommand{\thesubsubsection}{\thesubsection.\arabic{subsubsection}}
\numberwithin{subsubsection}{subsection}

\section{From biderivations to deformation bicomplexes of Hopf cooperads}\label{DeformationComplexes}
In this section, we give an effective description, in terms of the homology of a deformation complex, of the homotopy of the cosimplicial object
which captures the obstruction to the existence of our formality map in~\S\ref{Background:ObstructionProblem}.

We use that our cosimplicial object is defined by the diagonal of a bicosimplicial function space.
In a first step, we prove that the homotopy of this bicosimplicial space
is isomorphic to the homology of a bicosimplicial complex of biderivations
which we determine from the algebraic cotriple resolution
and from the cooperadic triple coresolution
of our Hopf cooperads.
In a second step, we establish that this bicosimplicial biderivation complex is weakly-equivalent
to a deformation bicomplex which we define by combining the classical Harrison complex
of commutative algebras in one direction and the cobar complex of cooperads in the other direction.
Eventually, we prove that, in the case of the cooperads $\PoisOp_n^c$, $n\geq 2$,
we can replace the cooperadic cobar complex by a small complex
which we define by using the Koszul duality of operads.

We just explain the definition of our notion of biderivation before tackling the definition of the biderivation complex.
We devote a preliminary section to this subject.

\setcounter{subsection}{0}

\subsection{Preliminaries: modules, bicomodules and biderivations}\label{DeformationComplexes:Biderivations}
In the definition of the bicosimplicial complex we use biderivations with respect to morphisms of Hopf $\Lambda$-cooperads $\phi: \ROp\rightarrow\QOp$.
When we address the reduction of this bicosimplicial biderivation complex to the deformation bicomplex,
we also deal with comodule and module structures
that underlie our Hopf $\Lambda$-cooperads.
We can actually give a sense to the notion of a coderivation as soon as we have a comodule over a cooperad,
and we can give a sense to the notion of a derivation as soon as we have a module over a Hopf collection.
We therefore explain the definition of these concepts first and we address the definition of a biderivation
afterwards.

In the first instance, we just review the definition of the dg-modules of homomorphisms
associated to the category of $\Lambda$-collections
and which contain our dg-modules of biderivations as submodules.

\begin{rem}[Homomorphisms]\label{DeformationComplexes:Biderivations:Homomorphisms}
We consider the internal hom bifunctor of the category of dg-modules
\begin{equation}
\Hom_{\dg\Mod}(-,-): \dg\Mod^{op}\times\dg\Mod\rightarrow\dg\Mod
\end{equation}
which represents the right adjoint of the tensor product on $\dg\Mod$.
We say that a map $f: C\rightarrow D$ is a homomorphism of dg-modules (as opposed to a morphism of dg-modules)
when this map is an element of this hom-object $f\in\Hom_{\dg\Mod}(C,D)$.
Recall simply that $f$ defines an element of (lower) degree $d$ in this hom-object when $f$ raises (lower) degrees by $d$,
and that the differential of $f$ inside $\Hom_{\dg\Mod}(C,D)$
is given by the formula $\delta(f) = f\delta - \pm\delta f$,
where we take the commutator of $f$ with the internal differential of the dg-modules $C$ and $D$.

We then use the end-formula:
\begin{equation}\label{DeformationComplexes:Biderivations:Homomorphisms:Definition}
\Hom_{\dg\ICat\Seq^c}(\MOp,\NOp) = \int_{\rset\in\ICat}\Hom_{\dg\Mod}(\MOp(r),\NOp(r)),
\end{equation}
where we set $\ICat = \Sigma$ (respectively, $\ICat = \Lambda$) to provide the category of symmetric collections (respectively, of $\Lambda$-collections)
with a hom bifunctor with values in the category of dg-modules.
We also call `homomorphisms of symmetric collections (respectively, of $\Lambda$-collections)'
the elements of this dg-hom.
We have an obvious counterpart of these dg-modules of homomorphisms for symmetric collections (respectively, for $\Lambda$-collections)
in graded modules $\Hom_{\gr\ICat\Seq^c}(\MOp,\NOp)$
which follows from the identity between graded modules and dg-modules equipped with a trivial differential.

Recall that we assume that our objects are concentrated in arity $r>1$ in our definition of the category of symmetric collections
(see~\S\ref{Background:TripleCoresolution}), and similarly when we deal with $\Lambda$-collections.
In fact, we may equivalently assume that a symmetric collection (respectively, a $\Lambda$-collection) is a diagram $\MOp$
defined over the entire category $\ICat = \Sigma$ (respectively, $\ICat = \Lambda$),
but for which we have $\MOp(r) = 0$ for $r = 0,1$.
We implicitly use this correspondence when we perform our end~(\ref{DeformationComplexes:Biderivations:Homomorphisms:Definition})
over the whole category $\ICat = \Sigma$ (respectively, $\ICat = \Lambda$).

In what follows, we also form hom-objects with (Hopf) cooperads as source $\MOp = \COp$ or as target object $\NOp = \COp$.
In principle, we have to take the coaugmentation coideal $\overline{\COp}$ in to order to fulfill our connectedness
requirements for collections $\overline{\COp}(0) = \overline{\COp}(1) = 0$,
but this does not change the result of our hom-object construction
as soon as one of our objects does fulfill the connectedness relations. We therefore often perform this abuse of notation
when we deal with homomorphisms.
\end{rem}

\begin{defn}[Bicomodules over cooperads]\label{DeformationComplexes:Biderivations:Bicomodules}
Let $\QOp\in\dg^*\Op^c$. We say that a symmetric collection $\MOp\in\dg^*\Sigma\Seq_{>1}^c$
is a bicomodule over $\QOp$
when we have left and right coproduct operations
\begin{align}
\label{DeformationComplexes:Biderivations:Bicomodules:LeftCoproducts}
& \circ_i^*: \MOp(k+l-1)\rightarrow\MOp(k)\otimes\QOp(l),\\
\label{DeformationComplexes:Biderivations:Bicomodules:RightCoproducts}
& \circ_i^*: \MOp(k+l-1)\rightarrow\QOp(k)\otimes\MOp(l),
\end{align}
defined for all $k,l>1$, $i = 1,\dots,k$, and which satisfy an obvious extension
of the usual equivariance, counit, and coassociatity relations
of the coproducts of cooperads.
In what follows, we also consider the two-sided coproducts
\begin{equation}\label{DeformationComplexes:Biderivations:Bicomodules:TwoSidedCoproducts}
\circ_i^*: \MOp(k+l-1)\rightarrow\MOp(k)\otimes\QOp(l)\oplus\QOp(k)\otimes\MOp(l),
\end{equation}
whose components are defined by the above one-sided operations.
This notion of a bicomodule over a cooperad is just dual to the notion of an operadic infinitesimal bimodule,
such as considered in~\cite{AroneTurchin,MerkulovVallette},
in~\cite{DwyerHess} (under the name `linear bimodule')
and in~\cite[\S III.2.1]{FresseBook} (under the name `abelian bimodule').

If we assume $\QOp\in\ComOp^c/\dg^*\Lambda\Op^c$ and $\MOp\in\dg^*\Lambda\Seq_{>1}^c$, then we still require
that our left and right coproduct operations fulfill an obvious analogue, for bicomodules,
of the extended equivariance relations of the coproducts
of coaugmented $\Lambda$-cooperads
with respect to the action of corestriction operators (see also~\cite[\S III.2.1.1]{FresseBook} for the formulation
of the dual relations in the context of abelian bimodules over augmented $\Lambda$-operads).
We just assume that our bicomodules are equipped with null coaugmentations $\epsilon_*: 0\rightarrow\MOp(r)$
in the degenerate case of the equivariance relations
which involve the application of coaugmentation maps
on our objects (see~\cite[\S II.11.1.1(d)]{FresseBook} for the expression of these equivariance relations).
We omit to specify these extended equivariance requirements in general.
We may just say that the object $\MOp$ forms a bicomodule over $\QOp$ in the category of $\Lambda$-collections
when the context makes such a precision necessary.
\end{defn}

\begin{defn}[Coderivations]\label{DeformationComplexes:Biderivations:Coderivations}
In the context of the previous paragraph~\S\ref{DeformationComplexes:Biderivations:Bicomodules},
we say that a homomorphism $\theta: \MOp\rightarrow\QOp$ is a coderivation
when $\theta$ makes the following diagrams commute:
\begin{equation}
\xymatrix{ \MOp(k+l-1)\ar[rr]^-{\theta}\ar[d]_{\circ_i^*} && \QOp(k+l-1)\ar[d]^{\circ_i^*} \\
\MOp(k)\otimes\QOp(l)\oplus\QOp(k)\otimes\MOp(l)\ar[rr]^-{\theta\otimes\id+\id\otimes\theta} && \QOp(k)\otimes\QOp(l) }
\end{equation}
for all $k,l>1$, and $i = 1,\dots,k$.

In what follows, we mostly deal with the case where $\QOp$ is a coaugmented $\Lambda$-cooperad and $\MOp$ is a bicomodule over $\QOp$
in the category of $\Lambda$-collections.
We generally require, in this context, that $\theta: \MOp\rightarrow\QOp$ preserves the corestriction operators attached to our objects,
and hence, is defined by a homomorphism of $\Lambda$-collections.
We adopt the notation
\begin{equation}
\CoDer_{\ComOp^c/\dg^*\Lambda\Op^c}(\MOp,\QOp)\subset\Hom_{\dg\Lambda\Seq^c}(\MOp,\QOp)
\end{equation}
for this module of coderivations. We readily check that the differential of the dg-module of homomorphisms preserves coderivations.
Our module of coderivations inherits a natural dg-module structure therefore.
\end{defn}

We have the following observation:

\begin{prop}\label{DeformationComplexes:Biderivations:CofreeCoderivations}
If we have $\QOp = \FreeOp^c(\NOp)$ for some $\NOp\in\overline{\ComOp}{}^c/\dg^*\Lambda\Seq_{>1}^c$,
then we have
an isomorphism:
\begin{equation*}
\CoDer_{\ComOp^c/\dg^*\Lambda\Op^c}(\MOp,\QOp)\simeq\Hom_{\dg\Lambda\Seq^c}(\MOp,\NOp),
\end{equation*}
for any bicomodule $\MOp$ over the coaugmented $\Lambda$-cooperad $\QOp$.
\end{prop}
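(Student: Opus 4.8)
The statement is the cooperadic analogue of the classical fact that derivations into a free object are freely determined by their composite with the projection onto the generators. The plan is to exploit the cofree cooperad adjunction of \S\ref{Background:Collections}, namely that $\FreeOp^c$ is right adjoint to the coaugmentation coideal functor $\bar\omega$, together with the universal projection $\varpi\colon\FreeOp^c(\NOp)\to\NOp$ onto the cogenerators. Given a coderivation $\theta\colon\MOp\to\QOp=\FreeOp^c(\NOp)$, the first step is to define the candidate homomorphism of $\Lambda$-collections by postcomposition, $\rho(\theta):=\varpi\circ\theta\colon\MOp\to\NOp$. This is manifestly natural, additive, and compatible with differentials and corestriction operators, so it yields a morphism of dg-modules
\begin{equation*}
\rho\colon\CoDer_{\ComOp^c/\dg^*\Lambda\Op^c}(\MOp,\QOp)\to\Hom_{\dg\Lambda\Seq^c}(\MOp,\NOp).
\end{equation*}

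The heart of the argument is to construct the inverse. Given $f\colon\MOp\to\NOp$, one must produce a coderivation $\theta_f\colon\MOp\to\FreeOp^c(\NOp)$ with $\varpi\circ\theta_f=f$ and show it is the unique such coderivation. Here I would use the explicit ``tree-wise'' description of the cofree cooperad from \cite[\S C]{FresseBook}: an element of $\FreeOp^c(\NOp)(r)$ is a family of components indexed by trees with $r$ leaves, with $\NOp$-decorations at the vertices, and the coproduct $\circ_i^*$ is read off by splitting trees. The coderivation property forces, for each tree component, that $\theta_f$ be obtained by iterating the two-sided coproduct of the bicomodule $\MOp$ down to the ``corolla level'', applying $f$ at the $\MOp$-slot and $\varpi\circ(\text{cooperad structure of }\QOp)$, i.e. the counit-normalized cogenerator projection, at the other slots; coassociativity of the bicomodule coproducts and of the cooperad coproducts guarantees these tree-indexed formulas glue into a well-defined homomorphism, and the counit relations guarantee convergence (only finitely many trees contribute in each arity, since $\MOp$ and $\NOp$ are concentrated in arities $>1$). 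One then checks directly from the defining diagram of \S\ref{DeformationComplexes:Biderivations:Coderivations} that $\theta_f$ is a coderivation, that it is $\Lambda$-equivariant because $f$ is and the bicomodule coproducts respect corestrictions, and that $\rho(\theta_f)=f$.

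For bijectivity it remains to see $\theta_{\rho(\theta)}=\theta$ for every coderivation $\theta$; this is again a tree-by-tree verification, proved by induction on the number of vertices of the tree using the coderivation square to peel off one cogenerator at a time, the base case being the corolla where $\theta$ agrees with $\varpi\circ\theta=\rho(\theta)=f$ by definition. I expect the main obstacle to be purely bookkeeping: setting up the tree-indexed formula for $\theta_f$ so that the nested coproducts, signs, and $\Sigma$- and $\Lambda$-equivariance are handled cleanly, and checking that the $\Lambda$-extended equivariance relations (including the degenerate ones involving the null coaugmentations of $\MOp$ recalled in Definition~\ref{DeformationComplexes:Biderivations:Bicomodules}) are automatically satisfied. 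The conceptual content, however, is exactly the cofree adjunction, so no genuinely new idea is needed beyond transcribing the classical derivation-into-a-free-algebra argument into the bicomodule-over-a-cooperad setting; one may also phrase the whole proof adjunction-theoretically, identifying $\CoDer(\MOp,\FreeOp^c(\NOp))$ with maps of bicomodules from $\MOp$ into the square-zero (``trivial'') extension and then with $\Hom_{\dg\Lambda\Seq^c}(\MOp,\NOp)$.
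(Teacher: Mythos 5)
Your proposal is correct and follows essentially the same route as the paper: the paper's proof likewise sends a coderivation to its composite with the canonical projection onto the cogenerating collection and obtains the inverse by dualizing the classical construction of derivations on free operads (citing Theorem III.2.1.7 of the reference), which is exactly the tree-wise argument you sketch. The only difference is that the paper delegates the bookkeeping to that reference rather than spelling it out.
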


\begin{proof}
This isomorphism maps a coderivation $\theta: \MOp\rightarrow\FreeOp^c(\NOp)$
to its composite with the canonical projection $\pi: \overline{\FreeOp}^c(\NOp)\rightarrow\NOp$.
The converse mapping is defined by dualizing the construction~\cite[Theorem III.2.1.7]{FresseBook}
for derivations on free operads.
\end{proof}

\begin{defn}[Modules over Hopf collections]\label{DeformationComplexes:Biderivations:Modules}
Let now $\ROp\in\dg^*\Hopf\Sigma\Seq_{>1}^c$. We say that a symmetric collection $\NOp\in\dg^*\Sigma\Seq_{>1}^c$
is a symmetric bimodule over $\ROp$ (or just a module over $\ROp$ for short)
when we have symmetric left and right product operations
\begin{equation}
\xymatrix@R=1em{ \ROp(r)\otimes\NOp(r)\ar[dr]^{\lambda}\ar[dd]_{\simeq} & \\
& \NOp(r) \\
\NOp(r)\otimes\ROp(r)\ar[ur]_{\rho} & },
\end{equation}
defined for all $r>1$, and which satisfy obvious equivariance relations with respect to the action of permutations,
as well as a natural analogue, for symmetric collections, of the usual unit and associativity relations for modules over commutative algebras.
We equivalently require that these left and right product operations provide each object $\NOp(r)\in\dg^*\Mod$, $r>1$,
with the structure of a (symmetric bi)module over the commutative algebra $\ROp(r)\in\dg^*\ComCat_+$.

In the case where we have a Hopf $\Lambda$-collection $\ROp\in\dg^*\Hopf\Lambda\Seq_{>1}^c$
and a coaugmented $\Lambda$-collection $\NOp\in\ComOp^c/\dg^*\Lambda\Seq_{>1}^c$,
we also require that our products preserve the action of corestriction operators on our objects.
We again omit to specify this extra requirement in general. We may just say that the object $\NOp$ forms a module over $\ROp$
in the category of $\Lambda$-collections when the context makes such a precision necessary.
\end{defn}

\begin{defn}[Derivations]\label{DeformationComplexes:Biderivations:Derivations}
In the context of~\S\ref{DeformationComplexes:Biderivations:Modules}, we say that a homomorphism $\theta: \ROp\rightarrow\NOp$
is a derivation when $\theta$ makes the following diagrams
commute:
\begin{equation}
\xymatrix{ \ROp(r)\otimes\ROp(r)\ar[rr]^-{\theta\otimes\id + \id\otimes\theta}\ar[d]_{\mu} &&
\NOp(r)\otimes\ROp(r)\oplus\ROp(r)\otimes\NOp(r)\ar[d]^{\rho+\lambda} \\
\ROp(r)\ar[rr]^-{\theta} && \NOp(r) }
\end{equation}
for all $r>1$. This requirement is equivalent to the assumption that the components of our homomorphism $\theta: \ROp(r)\rightarrow\NOp(r)$
are derivations in the classical sense when we regard each object $\ROp(r)$
as a plain unitary commutative algebra and each object $\NOp(r)$
as a module over $\ROp(r)$.

In what follows, we still mostly deal with the case where $\ROp$ is a Hopf $\Lambda$-collection and $\NOp$ is a module over $\ROp$
in the category of $\Lambda$-collections.
In this context, we generally require that $\theta: \ROp\rightarrow\NOp$ preserves the corestriction operators attached to our object
and hence, is defined by a homomorphism of $\Lambda$-collections.
We adopt the notation
\begin{equation}
\Der_{\dg\Hopf\Lambda\Seq^c}(\ROp,\NOp)\subset\Hom_{\dg\Lambda\Seq^c}(\ROp,\NOp)
\end{equation}
for this module of derivations. We still readily check (as in the coderivation case) that the differential of the dg-module
of homomorphisms preserves derivations. Our module of derivations therefore inherits a natural dg-module structure.
\end{defn}

We now consider the case where our Hopf collection $\ROp$ is identified with a relative symmetric algebra $\ROp = \overline{\ComOp}{}^c/\Sym(\MOp)$
for some object $\MOp\in\overline{\ComOp}{}^c/\dg^*\Lambda\Seq_{>1}^c$.
We assume for simplicity that $\MOp$ is equipped with an augmentation $\eta_*: \MOp\rightarrow\overline{\ComOp}{}^c$
that splits the canonical coaugmentation morphism $\epsilon_*: \overline{\ComOp}{}^c\rightarrow\MOp$
attached to our object.
We equivalently have a decomposition $\MOp = \overline{\ComOp}{}^c\oplus\IOp\MOp$,
where we set $\IOp\MOp = \ker(\eta_*: \MOp\rightarrow\overline{\ComOp}{}^c)$.
We then get the identity $\ROp(r) = \Sym(\IOp\MOp(r))$, for each arity $r>0$,
when we pass to our relative symmetric algebra $\ROp$.
We may also set $\IOp\MOp(r) = \coker(\epsilon_*: \kk\rightarrow\MOp(r))$
and forget about the augmentation $\eta_*: \MOp\rightarrow\ComOp^c$,
but the existence of this structure simplifies
our constructions.
We have the following observation:

\begin{prop}\label{DeformationComplexes:Biderivations:SymmetricAlgebraDerivations}
If we have $\ROp = \overline{\ComOp}{}^c/\Sym(\MOp)$ for some coaugmented $\Lambda$-collection $\MOp\in\overline{\ComOp}{}^c/\dg^*\Lambda\Seq_{>1}^c$
such that $\MOp = \overline{\ComOp}{}^c\oplus\IOp\MOp$,
then we have
an isomorphism:
\begin{equation*}
\Der_{\dg\Hopf\Lambda\Seq^c}(\ROp,\NOp)\simeq\Hom_{\dg\Lambda\Seq^c}(\IOp\MOp,\NOp)
\end{equation*}
for any module $\NOp$ over the Hopf $\Lambda$-collection $\ROp$.
\end{prop}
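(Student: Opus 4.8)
The plan is to reduce this to the classical fact that derivations out of a polynomial algebra are freely determined by their restriction to the generators, applied arity by arity, and then to check that the resulting comparison is compatible with the symmetric group actions, the corestriction operators, and the differentials. This is the exact counterpart, for the relative symmetric algebra functor, of Proposition~\ref{DeformationComplexes:Biderivations:CofreeCoderivations}, and dualizes the free-operad derivation statement of~\cite[Theorem III.2.1.7]{FresseBook}.

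First I would fix an arity $r>1$ and use the identity $\ROp(r)=\Sym(\IOp\MOp(r))$ recalled just before the statement together with Definition~\ref{DeformationComplexes:Biderivations:Derivations}, by which a homomorphism $\theta\colon\ROp\to\NOp$ is a derivation precisely when each component $\theta\colon\ROp(r)\to\NOp(r)$ is a derivation of the commutative algebra $\Sym(\IOp\MOp(r))$ with values in the module $\NOp(r)$. The comparison map is then restriction to the generating submodule, $\theta\mapsto\theta|_{\IOp\MOp(r)}$. Its inverse is the standard one: a homomorphism $f\colon\IOp\MOp(r)\to\NOp(r)$ of dg-modules extends to a unique derivation $\theta_f$ by the Leibniz rule (the value of $\theta_f$ on a monomial is the sum, with Koszul signs, of the terms in which $f$ is applied to one factor while the remaining factors act through the $\ROp(r)$-module structure of $\NOp(r)$). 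This assignment is a chain map, since the fact that $\delta$ is a derivation on both $\Sym(\IOp\MOp(r))$ and $\NOp(r)$ forces $\delta(\theta_f)=\theta_{\delta f}$; hence it is an isomorphism of dg-modules in each arity, and naturality in $r$ with respect to the $\Sigma_r$-actions is immediate, so the end over $\rset\in\Sigma$ already gives the $\Sigma$-collection version.

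The step I expect to be the main obstacle is the passage to $\Lambda$-collections, i.e. the compatibility with corestriction operators. One direction is easy: the restriction of a corestriction-preserving derivation is corestriction-preserving, because $\IOp\MOp\hookrightarrow\ROp$ is a morphism of $\Lambda$-collections by the construction of $\overline{\ComOp}^c/\Sym(-)$ recalled in~\S\ref{Background:AlgebraicAdjunctions}. For the converse I would invoke, from the same construction, that each operator $u_*\colon\ROp(k)\to\ROp(l)$ is the multiplicative extension of $u_*\colon\IOp\MOp(k)\to\IOp\MOp(l)$, and, from Definition~\ref{DeformationComplexes:Biderivations:Modules}, that the module structure maps of $\NOp$ intertwine the corestriction operators of $\ROp$ and of $\NOp$; an induction on the word-length of monomials in $\Sym(\IOp\MOp(k))$, combining the Leibniz rule with the multiplicativity of $u_*$ and of the $\ROp$-action, then propagates commutativity with the $u_*$ from the generators to all of $\ROp(k)$. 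Taking the end over $\rset\in\Lambda$ yields the claimed isomorphism, and it is an isomorphism of dg-modules since the arity-wise comparison maps are.
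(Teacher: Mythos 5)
Your proposal is correct and follows essentially the same route as the paper's own proof: the forward map is restriction of a derivation along $\IOp\MOp\hookrightarrow\ROp$, and the inverse sends $f$ to the derivation $\theta_f(\xi_1\cdots\xi_m)=\sum_i\pm\xi_1\cdots f(\xi_i)\cdots\xi_m$ defined by the Leibniz rule on symmetric-algebra monomials, using the $\ROp$-module structure of $\NOp$. The additional verifications you spell out (compatibility with differentials, $\Sigma_r$-actions and corestriction operators) are exactly the routine checks the paper leaves implicit.
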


\begin{proof}
This isomorphism maps a derivation $\theta: \ROp\rightarrow\NOp$
to its composite with the canonical morphism $\IOp\MOp\subset\MOp\xrightarrow{\iota}\overline{\ComOp}{}^c/\Sym(\MOp)$.

The converse mapping associates a homomorphism $f: \IOp\MOp\rightarrow\NOp$
to the derivation $\theta = \theta_f$
defined by the usual formula $\theta_f(\xi_1\cdot\ldots\cdot\xi_m) = \sum_{i=1}^m \pm \xi_1\cdots f(\xi_i)\cdots\xi_m$,
for any symmetric algebra monomials $\xi_1\cdot\ldots\cdot\xi_m\in\Sym(\IOp\MOp(r))$,
and for each arity $r>1$.
In this expression, we consider, for any $i = 1,\dots,m$, the action of the factors $\xi_j\in\IOp\MOp(r)$, $j\not=i$, on $f(\xi_i)\in\NOp(r)$
through the morphism $\iota: \IOp\MOp(r)\rightarrow\Sym(\IOp\MOp(r))$
and the action of the algebra $\ROp(r) = \Sym(\IOp\MOp(r))$
on $\NOp(r)$.
\end{proof}

\begin{defn}[Biderivations]\label{DeformationComplexes:Biderivations:Biderivations}
We now assume that we have a morphism of Hopf cooperads
\begin{equation}\label{DeformationComplexes:Biderivations:Biderivations:Morphism}
\phi: \ROp\rightarrow\QOp.
\end{equation}
We are also going to assume that $\ROp$ is equipped with an augmentation $\eta_*: \ROp\rightarrow\ComOp^c$ (for simplicity yet)
and we set $\IOp\ROp = \ker(\eta_*: \overline{\ROp}\rightarrow\overline{\ComOp}{}^c)$
for the kernel of this augmentation morphism (as in the case of the $n$-Poisson cooperad $\ROp = \PoisOp_n^c$).
We accordingly have the relation $\overline{\ROp} = \overline{\ComOp}{}^c\oplus\IOp\ROp$
in the category of symmetric collections.
We also say that $\IOp\ROp$ represents the (algebraic) augmentation ideal of the Hopf cooperad $\ROp$.

Let us observe that the augmentation kernel $\IOp\COp = \ker(\eta_*: \overline{\COp}\rightarrow\overline{\ComOp}{}^c)$ of any cooperad $\COp$
equipped with an augmentation over the cooperad of commutative coalgebras $\eta_*: \COp\rightarrow\ComOp^c$
forms a bicomodule over this cooperad $\COp$.
Dually, a Hopf symmetric collection $\AOp$ forms a module over itself.
In our situation, we can still provide the object $\IOp\ROp$
with the structure of a bicomodule
over the cooperad $\QOp$
by restriction through our morphism $\phi$.

We can symmetrically provide the object $\overline{\QOp}$ with the structure of a module over the Hopf symmetric collection $\overline{\ROp}$.
We then say that a homomorphism $\theta: \IOp\ROp\rightarrow\overline{\QOp}$ is a biderivation when it is both a coderivation
with respect to this coaugmented bicomodule structure on $\IOp\ROp$ and a derivation
with respect to this module structure on $\overline{\QOp}$.

In what follows, we also mostly deal with the case where our objects are Hopf $\Lambda$-cooperads
and $\phi$ is a morphism of Hopf $\Lambda$-cooperads.
We generally assume, in this context, that our biderivations preserve the action of corestriction operators.
We adopt the notation
\begin{equation}
\BiDer_{\dg^*\Hopf\Lambda\Op^c}(\ROp,\QOp)\subset\Hom_{\dg\Lambda\Seq^c}(\IOp\ROp,\overline{\QOp})
\end{equation}
for this module of biderivations. We easily see, once again, that the differential of the dg-module of homomorphisms
preserves biderivations. Our module of biderivations accordingly inherits a natural dg-module structure.
\end{defn}

We now consider the case where the Hopf $\Lambda$-cooperad $\ROp$ is identified with the relative symmetric algebra $\ROp = \ComOp^c/\Sym(\COp)$
associated to a coaugmented $\Lambda$-cooperad $\COp\in\ComOp^c/\dg^*\Lambda\Op^c$
and $\QOp$ is identified with a cofree cooperad $\QOp = \FreeOp^c(\NOp)$
on some Hopf $\Lambda$-collection $\NOp\in\dg^*\Hopf\Lambda\Seq_{>1}^c$.
We also assume for simplicity (as usual) that $\COp$ is equipped with an augmentation $\eta_*: \COp\rightarrow\ComOp^c$ as an object
of the category of coaugmented $\Lambda$-cooperads $\ComOp^c/\dg^*\Lambda\Op^c$.
We still set $\IOp\COp = \ker(\eta_*: \overline{\COp}\rightarrow\overline{\ComOp}{}^c)$
so that we have the splitting formula $\overline{\COp} = \overline{\ComOp}{}^c\oplus\IOp\COp$
in the category of $\Lambda$-collections.
We have a morphism of Hopf $\Lambda$-cooperads induced by $\eta_*: \COp\rightarrow\ComOp^c$
on the relative symmetric algebra $\ROp = \ComOp^c/\Sym(\COp)$,
and this morphism defines the augmentation of our Hopf $\Lambda$-cooperad $\eta_*: \ROp\rightarrow\ComOp^c$
in~\S\ref{DeformationComplexes:Biderivations:Biderivations}.
We have the following proposition:

\begin{prop}\label{DeformationComplexes:Biderivations:ModuleReduction}
If we have $\ROp = \ComOp^c/\Sym(\COp)$ in the definitions of~\S\ref{DeformationComplexes:Biderivations:Biderivations},
where $\COp\in\ComOp^c/\dg^*\Lambda\Op^c$ is a coaugmented $\Lambda$-cooperad
equipped with an augmentation over the commutative cooperad $\eta_*: \COp\rightarrow\ComOp^c$,
and if we have $\QOp = \FreeOp^c(\NOp)$ for some Hopf $\Lambda$-collection $\NOp\in\dg^*\Hopf\Lambda\Seq_{>1}^c$,
then we have a commutative square of isomorphisms:
\begin{equation*}
\xymatrix{ \BiDer_{\dg^*\Hopf\Lambda\Op^c}(\ROp,\QOp)\ar[r]^-{\simeq}\ar[d]_{\simeq} &
\Der_{\dg\Hopf\Lambda\Seq^c}(\overline{\ROp},\NOp)\ar[d]_-{\simeq} \\
\CoDer_{\ComOp^c/\dg^*\Lambda\Op^c}(\IOp\COp,\QOp)\ar[r]^-{\simeq}\ar[r]^-{\simeq} &
\Hom_{\dg\Lambda\Seq^c}(\IOp\COp,\NOp) }.
\end{equation*}
%with a dg-module of homomorphisms of $\Lambda$-collections as abutment.
\end{prop}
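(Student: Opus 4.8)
The plan is to obtain the four corner identifications from Proposition~\ref{DeformationComplexes:Biderivations:CofreeCoderivations} and Proposition~\ref{DeformationComplexes:Biderivations:SymmetricAlgebraDerivations}, and then to check that the square of natural maps commutes and invoke the observation that three isomorphisms in a commutative square force the fourth. First I would record the explicit form of the objects. Since $\ROp = \ComOp^c/\Sym(\COp)$ carries the augmentation induced by $\eta_*\colon\COp\to\ComOp^c$, the splitting $\overline{\COp} = \overline{\ComOp}^c\oplus\IOp\COp$ gives $\overline{\ROp} = \overline{\ComOp}^c/\Sym(\overline{\COp})$ with $\overline{\ROp}(r) = \Sym(\IOp\COp(r))$ arity-wise, so that $\IOp\ROp = \ker(\eta_*\colon\overline{\ROp}\to\overline{\ComOp}^c)$ is the reduced symmetric algebra $\bigoplus_{m\geq 1}\Sym^m(\IOp\COp)$ and $\IOp\COp\subset\IOp\ROp$ is its weight-one component. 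The cofree projection $\pi\colon\overline{\FreeOp^c(\NOp)}\to\NOp$, being the counit of the adjunction between Hopf $\Lambda$-collections and Hopf $\Lambda$-cooperads, is a morphism of Hopf $\Lambda$-collections, hence an arity-wise algebra map; composing with $\phi$ makes $\NOp$ a module over $\overline{\ROp}$ (the structure used at the top-right corner), while restricting the bicomodule structure of $\IOp\COp$ over $\COp$ through $\COp\to\ROp\xrightarrow{\phi}\QOp$ makes $\IOp\COp$ a bicomodule over $\QOp$ (the structure used at the bottom-left corner).

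With these identifications, the bottom horizontal map is exactly the isomorphism of Proposition~\ref{DeformationComplexes:Biderivations:CofreeCoderivations} for the bicomodule $\IOp\COp$ over $\QOp = \FreeOp^c(\NOp)$, namely $\psi\mapsto\pi\psi$, and the right vertical map is the isomorphism of Proposition~\ref{DeformationComplexes:Biderivations:SymmetricAlgebraDerivations} for $\overline{\ROp} = \overline{\ComOp}^c/\Sym(\overline{\COp})$ and the module $\NOp$, namely restriction of a derivation to the generating collection $\IOp\COp$. The left vertical map sends a biderivation $\theta\colon\IOp\ROp\to\overline{\QOp}$ to its restriction $\theta|_{\IOp\COp}\colon\IOp\COp\to\QOp$; it lands in $\CoDer$ because $\IOp\COp$ is a sub-bicomodule of $\IOp\ROp$ over $\QOp$ (multiplication in the reduced symmetric algebra raises weight, so the infinitesimal coproduct carries the weight-one part into itself after projection onto the bicomodule summand, which is the dual of the corresponding assertion for operadic infinitesimal bimodules in~\cite[\S III.2.1]{FresseBook}). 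The top horizontal map sends $\theta$ to $\pi\theta\colon\IOp\ROp\to\NOp$, viewed as a derivation $\overline{\ROp}\to\NOp$ by extension by zero on $\overline{\ComOp}^c$. Since all four arrows are composites of ``restrict to $\IOp\COp$'' and ``postcompose with $\pi$,'' the square commutes by inspection.

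It then suffices to prove that the left vertical arrow is an isomorphism, the remaining assertions being formal consequences. Injectivity is immediate: a biderivation is in particular a derivation out of the reduced symmetric algebra $\IOp\ROp$, hence determined by its restriction to the algebra generators $\IOp\COp$. For surjectivity I would start from a coderivation $\psi\colon\IOp\COp\to\QOp$, extend it to a homomorphism $\theta\colon\IOp\ROp\to\overline{\QOp}$ by the Leibniz formula $\theta(\xi_1\cdots\xi_m) = \sum_i\pm\,\xi_1\cdots\theta(\xi_i)\cdots\xi_m$ as in Proposition~\ref{DeformationComplexes:Biderivations:SymmetricAlgebraDerivations} (so $\theta$ is automatically a derivation), and then check that this $\theta$ is again a coderivation. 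I expect this last verification to be the main obstacle: it requires that the infinitesimal composition coproduct of $\QOp$, evaluated on a monomial $\xi_1\cdots\xi_m$, obeys a Leibniz-type compatibility with the coproduct, so that the coderivation identity on the monomial reduces to the coderivation identity on each generator $\xi_i$; this ultimately rests on the Hopf axioms that the composition coproducts $\circ_i^*$ of $\ROp$ and of $\QOp$ are morphisms of commutative algebras and that $\phi$ is an algebra morphism, and I would carry it out by the same bookkeeping as in the proof of~\cite[Theorem~III.2.1.7]{FresseBook} (and its dual used in Proposition~\ref{DeformationComplexes:Biderivations:CofreeCoderivations}). Once surjectivity is established, commutativity of the square together with the isomorphisms at the bottom and right corners forces the top horizontal arrow to be an isomorphism as well, which completes the proof.
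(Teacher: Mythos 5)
Your proposal is correct and follows essentially the same route as the paper, whose proof merely records that the bijections of Proposition~\ref{DeformationComplexes:Biderivations:CofreeCoderivations} and Proposition~\ref{DeformationComplexes:Biderivations:SymmetricAlgebraDerivations} restrict to the stated correspondences between biderivations, derivations, and coderivations, and calls the verification straightforward. You simply supply the details the paper omits (and organize them by proving the left vertical arrow in full and deducing the top one from commutativity of the square), correctly isolating the one nontrivial point, namely that the Leibniz extension of a coderivation remains a coderivation via the Hopf compatibility between products and composition coproducts.
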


\begin{proof}
To get this result, we basically check that the bijections of Proposition~\ref{DeformationComplexes:Biderivations:CofreeCoderivations}
and Proposition~\ref{DeformationComplexes:Biderivations:SymmetricAlgebraDerivations}
induce a one-to-one correspondence between biderivations and coderivations in the cofree cooperad case,
and a one-to-one correspondence between biderivations and coderivations
in the symmetric algebra case. This verification is straightforward.
\end{proof}

\subsection{The bicosimplicial biderivation complex}\label{DeformationComplexes:BicosimplicialComplex}
We now study the homotopy of the bicosimplicial function space
\begin{equation*}
X^{\bullet\,\bullet} = \Map_{\dg^*\Hopf\Lambda\Op^c}(\Res^{com}_{\bullet}(\PoisOp_n^c),\Res_{op}^{\bullet}(\KOp))
\end{equation*}
which we consider in our obstruction method. We first check that we have a bijection between the cohomotopy class set $\pi^0\pi_0(\Diag X)$
and the set of morphisms of Hopf $\Lambda$-cooperads in graded modules $\chi: \PoisOp_n^c\rightarrow\DGH^*(\KOp)$.
We prove afterwards that the cohomotopy class sets $\pi^s\pi_t(\Diag X)$
can be determined by using a bicosimplicial complex
of biderivations
associated to any morphism $\chi: \PoisOp_n^c\rightarrow\DGH_*(\KOp)$.

We consider any object of the category of Hopf $\Lambda$-cooperads in cochain graded modules $\PiOp\in\gr^*\Hopf\Lambda\Op^c$
equipped with an augmentation
over the commutative cooperad $\eta_*: \PiOp\rightarrow\ComOp^c$
for the moment.
We will take $\PiOp = \PoisOp_n^c$ later on, when we tackle the applications of our constructions
to our initial obstruction problem.
We just require that the augmentation ideal $\IOp\PiOp$ of our Hopf $\Lambda$-cooperad $\PiOp$
is free as a $\Lambda$-collection, in the sense that we have an identity $\IOp\PiOp = \Lambda\otimes_{\Sigma}\SOp\PiOp$,
for some generating symmetric collection $\SOp\PiOp\subset\IOp\PiOp$,
as in case of the $n$-Poisson cooperad $\PiOp = \PoisOp_n^c$ (see~\S\S\ref{Background:PoissonFreeStructure}-\ref{Background:PoissonAugmentationIdeal}).
We need such an assumption in order to guarantee the validity of our constructions.
We similarly assume that $\KOp$ is any Hopf $\Lambda$-cooperad in the category of dg-modules for the moment (regardless of our obstruction problem).
We do not need to make any extra assumption for this second Hopf $\Lambda$-cooperad $\KOp$.

Note that we have $\PiOp = \DGH^*(\PiOp)$, since we assume that $\PiOp$ is defined within the category of cochain graded modules.
In fact, we can extend the constructions of this subsection to a more general setting,
where $\PiOp$ is a Hopf $\Lambda$-cooperads in cochain graded dg-modules,
but this identity $\PiOp = \DGH^*(\PiOp)$ will simplify our layout.
We may also set $\IOp\PiOp(r) = \coker(\epsilon_*: \kk\rightarrow\PiOp(r))$, where we consider the unit morphism of the algebras~$\PiOp(r)$,
in order to give a sense to our construction without assuming the existence of an augmentation $\eta_*: \PiOp\rightarrow\ComOp^c$,
but the existence of this structure simplifies the analysis of our constructions
too (see the preliminary discussion before Proposition~\ref{DeformationComplexes:Biderivations:SymmetricAlgebraDerivations}).

We define the function spaces on the category~$\dg^*\Hopf\Lambda\Op^c$ by using general concepts of the theory of model categories.
Recall that these function spaces $\Map_{\dg^*\Hopf\Lambda\Op^c}(\ROp,\QOp)$,
where $\ROp$ is any cofibrant object in~$\dg^*\Hopf\Lambda\Op^c$
and $\QOp$ is a fibrant object,
basically depend either on the choice of a cosimplicial framing for $\ROp$,
or on the choice of a simplicial framing for $\QOp$,
and that different choices give weakly-equivalent objects
in the category of simplicial sets.
We make the definition of these function spaces explicit for the Hopf $\Lambda$-cooperads $\ROp_k = \Res^{com}_k(\PiOp)$, $k\in\NN$,
and $\QOp^l = \Res_{op}^l(\KOp)$, $l\in\NN$.
We use statements of the appendix sections, and the simplicial framing of~\S\ref{TripleCoresolution:SimplicialFraming}.
We revisit this construction first.

\begin{constr}[The explicit definition of function spaces of Hopf cooperads]\label{DeformationComplexes:BicosimplicialComplex:FunctionSpaces}
In~\S\ref{TripleCoresolution}, we explain that the Hopf $\Lambda$-cooperads $\QOp^l = \Res_{op}^l(\KOp)$, $l\in\NN$,
are, by construction, identified with cofree cooperads $\QOp^l = \FreeOp^c(\DGC_{op}^l(\KOp))$
on coaugmented Hopf $\Lambda$-collections $\DGC_{op}^l(\KOp)\in\overline{\ComOp}{}^c/\dg^*\Hopf\Lambda\Seq_{>1}^c$
such that:
\begin{equation}\label{DeformationComplexes:BicosimplicialComplex:FunctionSpaces:CofreeStructure}
\DGC_{op}^l(\KOp) = \underbrace{\overline{\FreeOp}^c\circ\dots\circ\overline{\FreeOp}^c}_{l}(\overline{\KOp}),
\end{equation}
for any $l\in\NN$, where we perform an $l$-fold composite of the coaugmentation coideal
of the cofree cooperad functor $\overline{\omega}\FreeOp^c = \overline{\FreeOp}^c$.
Then we check that the cofree cooperads
\begin{equation}\label{DeformationComplexes:BicosimplicialComplex:FunctionSpaces:SimplicialFraming}
\Res_{op}^l(\KOp)^{\Delta^{\bullet}} := \FreeOp^c(\DGC_{op}^l(\KOp)\otimes\DGOmega^*(\Delta^{\bullet})),
\end{equation}
where we take an arity-wise tensor product of this Hopf $\Lambda$-collection $\DGC_{op}^l(\KOp)\in\overline{\ComOp}{}^c/\dg^*\Hopf\Lambda\Seq_{>1}^c$
with the Sullivan dg-algebra $\DGOmega^*(\Delta^{\bullet})\in\dg^*\ComCat_+$
define a simplicial framing of each object $\QOp^l = \Res_{op}^l(\KOp)$, $l\in\NN$,
in the category $\dg^*\Hopf\Lambda\Op^c$.

In~\S\ref{TripleCoresolution}, we also observe that the operations of the cosimplicial structure on $\QOp^{\bullet} = \Res_{op}^{\bullet}(\KOp)$
extend to the cofree cooperads~(\ref{DeformationComplexes:BicosimplicialComplex:FunctionSpaces:SimplicialFraming}),
which accordingly form a simplicial framing of this object $\QOp^{\bullet} = \Res_{op}^{\bullet}(\KOp)$
in the category $\cosimp\dg^*\Hopf\Lambda\Op^c$. We will use this extra observation later on.
For the moment, simply record that we can define our function spaces
as simplicial morphism sets:
\begin{equation}\label{DeformationComplexes:BicosimplicialComplex:FunctionSpaces:Expression}
\Map_{\dg^*\Hopf\Lambda\Op^c}(\ROp_k,\QOp^l)
:= \Mor_{\dg^*\Hopf\Lambda\Op^c}\bigl(\ROp_k,\FreeOp^c(\DGC_{op}^l(\KOp)\otimes\DGOmega^*(\Delta^{\bullet}))\bigr),
\end{equation}
where we take our simplicial framing~(\ref{DeformationComplexes:BicosimplicialComplex:FunctionSpaces:SimplicialFraming})
of the objects $\QOp^l = \Res_{op}^l(\KOp)$ in the category $\dg^*\Hopf\Lambda\Op^c$,
for any $l\in\NN$.
This construction actually works for any source object, and not only for the Hopf $\Lambda$-cooperads $\ROp_k = \Res^{com}_k(\PiOp)$.
\end{constr}

\begin{constr}[The reduction to function spaces of $\Lambda$-collections]\label{DeformationComplexes:BicosimplicialComplex:Reductions}
In what follows, we apply the function space of the previous paragraph to the Hopf $\Lambda$-cooperads $\ROp_k = \Res^{com}_k(\PiOp)$, $k\in\NN$,
which form the components of the cotriple resolution $\Res^{com}_{\bullet}(\PiOp)$
of the Hopf $\Lambda$-cooperad $\PiOp$.
In~\S\ref{CotripleResolution}, we explain that these Hopf $\Lambda$-cooperads $\ROp_k = \Res_k^{com}(\PiOp)$, $k\in\NN$,
are, by construction, identified with relative symmetric algebras $\ROp_k = \ComOp^c/\Sym(\DGC^{com}_k(\PiOp))$
on coaugmented $\Lambda$-cooperads $\DGC^{com}_k(\PiOp)\in\ComOp^c/\dg^*\Lambda\Op^c$
such that:
\begin{equation}\label{DeformationComplexes:BicosimplicialComplex:Reductions:FreeStructure}
\DGC^{com}_k(\PiOp) = \underbrace{\ComOp^c/\Sym\circ\dots\circ\ComOp^c/\Sym}_{k}(\PiOp),
\end{equation}
for any $k\in\NN$, and where we take a $k$-fold composite of the functor $\ComOp^c/\Sym(-)$
on coaugmented $\Lambda$-cooperads.
Recall that we also use the notation $\overline{\DGC}^{com}_k(\PiOp)$
for the coaugmentation coideal of this coaugmented $\Lambda$-cooperad $\DGC^{com}_k(\PiOp)$.

The adjunction relations of relative symmetric algebras and of cofree cooperads
imply that the morphism sets
of~\S\ref{DeformationComplexes:BicosimplicialComplex:FunctionSpaces}(\ref{DeformationComplexes:BicosimplicialComplex:FunctionSpaces:Expression})
admit a bunch of reductions, abutting to a morphism set of coaugmented $\Lambda$-collections,
and which we deduce from the commutative square of isomorphisms
of Figure~\ref{Fig:FunctionSpaceReduction}.
\begin{figure}[t]
\begin{equation*}
\xymatrix@!C=3cm{
*+<4pt>{\Mor_{\dg^*\Hopf\Lambda\Op^c}\bigl(\ComOp^c/\Sym(\DGC^{com}_k(\PiOp)),\FreeOp^c(\DGC_{op}^l(\KOp)\otimes\DGOmega^*(\Delta^{\bullet}))\bigr)}
\ar[]!DC-<2cm,0cm>;[dd]!UC-<2cm,0cm>_{\simeq}\ar@/^1em/[]!R;[dr]!UC+<2cm,0cm>^{\simeq} & \\
& *+<4pt>{\Mor_{\ComOp^c/\dg^*\Lambda\Op^c}\bigl(\DGC^{com}_k(\PiOp),\FreeOp^c(\DGC_{op}^l(\KOp)\otimes\DGOmega^*(\Delta^{\bullet}))\bigr)}
\ar[]!DC+<2cm,0cm>;[dd]!UC+<2cm,0cm>^{\simeq} \\
*+<4pt>{\Mor_{\dg^*\Hopf\Lambda\Seq_{>1}^c}\bigl(\overline{\ComOp}{}^c/\Sym(\overline{\DGC}^{com}_k(\PiOp)),\DGC_{op}^l(\KOp)\otimes\DGOmega^*(\Delta^{\bullet})\bigr)}
\ar@/_1em/[]!DC-<2cm,0cm>;[dr]!L_{\simeq} & \\
& *+<4pt>{\Mor_{\overline{\ComOp}{}^c/\dg^*\Lambda\Seq_{>1}^c}\bigl(\overline{\DGC}^{com}_k(\PiOp),\DGC_{op}^l(\KOp)\otimes\DGOmega^*(\Delta^{\bullet})\bigr)}
\ar@{.>}[]!DC+<2cm,0cm>;[d]!UC+<2cm,0cm>^{\simeq}\\
& *+<4pt>{\Mor_{\dg^*\Lambda\Seq_{>1}^c}\bigl(\DGI\DGC^{com}_k(\PiOp),\DGC_{op}^l(\KOp)\otimes\DGOmega^*(\Delta^{\bullet})\bigr)}
}
\end{equation*}
\caption{}\label{Fig:FunctionSpaceReduction}
\end{figure}

Recall that we use the notation $\II\Sym(-)$ for the augmentation ideal of the symmetric algebra (where we drop the unit).
In~\S\ref{CotripleResolution:AugmentedCase}, we observe that the objects $\overline{\DGC}^{com}_k(\PiOp)$
are identified with direct sums $\overline{\DGC}^{com}_k(\PiOp) = \overline{\ComOp}{}^c\oplus\DGI\DGC^{com}_k(\PiOp)$,
where $\DGI\DGC^{com}_k(\PiOp)$ is the (plain) $\Lambda$-collection
such that:
\begin{equation}\label{DeformationComplexes:BicosimplicialComplex:Reductions:Splitting}
\DGI\DGC^{com}_k(\PiOp) = \underbrace{\II\Sym\circ\dots\circ\II\Sym}_{k}(\IOp\PiOp),
\end{equation}
for any $k\in\NN$. (Recall that $\PiOp$ is assumed to be equipped with an augmentation over the commutative cooperad for simplicity.)
From this relation $\overline{\DGC}^{com}_k(\PiOp) = \overline{\ComOp}{}^c\oplus\DGI\DGC^{com}_k(\PiOp)$,
we readily deduce that the morphism set on the lower right-hand side corner of the square
of Figure~\ref{Fig:FunctionSpaceReduction}
admits a further reduction,
into a morphism set of plain (un-coaugmented) $\Lambda$-collections,
which we materialize by the dotted isomorphism
of the figure.

The cosimplicial objects $\Res_{op}^l(\KOp)^{\Delta^{\bullet}} := \FreeOp^c(\DGC_{op}^l(\KOp)\otimes\DGOmega^*(\Delta^{\bullet}))$,
of which we recall the definition in~\S\ref{DeformationComplexes:BicosimplicialComplex:FunctionSpaces},
clearly define a simplicial framing of the objects~$\Res_{op}^l(\KOp)$
in the category of coaugmented $\Lambda$-cooperads too
since our forgetful functor from Hopf $\Lambda$-cooperads to coaugmented $\Lambda$-cooperads
preserves weak-equivalences and fibrations.
Let also $\DGC_{op}^l(\KOp)^{\Delta^{\bullet}} := \DGC_{op}^l(\KOp)\otimes\DGOmega^*(\Delta^{\bullet})$
be the Hopf $\Lambda$-collection that occurs in this simplicial framing construction.
We readily see that these objects $\DGC_{op}^l(\KOp)^{\Delta^{\bullet}} = \DGC_{op}^l(\KOp)\otimes\DGOmega^*(\Delta^{\bullet})$
still define a simplicial framing of the objects $\DGC_{op}^l(\KOp)$
in the category of Hopf $\Lambda$-collections and in the category of (coaugmented) $\Lambda$-collections
similarly. We then use that the weak-equivalences and fibrations of the category of Hopf $\Lambda$-collections
are created arity-wise in the category of unitary commutative cochain dg-algebras (actually in the category of cochain graded dg-modules),
and that the tensor product $-\otimes\DGOmega^*(\Delta^{\bullet})$
defines a simplicial framing functor on this base model category (see~\cite[Theorem II.7.1.5]{FresseBook}).
We use similar arguments in the context of (coaugmented) $\Lambda$-collections.

We deduce from these observations that all morphism sets in the reduction diagram of Figure~\ref{Fig:FunctionSpaceReduction}
represent function spaces of the corresponding model categories.
We get in particular that the abutment of our reduction process represents a function space
on the category of (plain) $\Lambda$-collections:
\begin{equation}\label{DeformationComplexes:BicosimplicialComplex:Reductions:FunctionModule}
\Map_{\dg^*\Lambda\Seq_{>1}^c}(\MOp,\NOp)
:= \Mor_{\dg^*\Lambda\Seq_{>1}^c}\bigl(\MOp,\NOp\otimes\DGOmega^*(\Delta^{\bullet})\bigr),
\end{equation}
which we apply to the objects $\MOp = \DGI\DGC^{com}_k(\PiOp)$, $\NOp = \DGC_{op}^l(\KOp)$.
\end{constr}

We use the reductions of the previous paragraph to establish the following preliminary statement:

\begin{prop}\label{DeformationComplexes:BicosimplicialComplex:ModuleHomotopy}
The spaces
%\begin{equation*}
$X^{k l} = \Map_{\dg^*\Hopf\Lambda\Op^c}(\Res^{op}_k(\PiOp),\Res_{op}^l(\KOp))$
%\end{equation*}
are isomorphic to simplicial modules, defined by the function spaces of $\Lambda$-collections $Y^{k l} = \Map_{\dg^*\Lambda\Seq_{>1}^c}(\DGI\DGC^{com}_k(\PiOp),\DGC_{op}^l(\KOp))$
which we obtain in the abutment of the relations of Figure~\ref{Fig:FunctionSpaceReduction}, and which inherit an obvious simplicial module structure,
for all $(k,l)\in\NN^2$.
Moreover, when we pass to homotopy, we have an isomorphism of graded modules
\begin{equation*}
\pi_*(X^{k l})\simeq\tau_*\Hom_{\gr\Lambda\Seq^c}(\DGI\DGC^{com}_k(\PiOp),\DGC_{op}^l(\DGH^*(\KOp))),
\end{equation*}
for any choice of base point in this function space, and where, on the right-hand side, we consider the truncation $\tau_*$
in lower degree $*\geq 0$ of the enriched hom-bifunctor of $\Lambda$-collections
in graded modules $\Hom_{\gr\Lambda\Seq^c}(-,-)$.
\end{prop}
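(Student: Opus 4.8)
The plan is to trace the function space $X^{k l}=\Map_{\dg^*\Hopf\Lambda\Op^c}(\Res^{com}_k(\PiOp),\Res_{op}^l(\KOp))$ through the chain of adjunction isomorphisms recorded in Construction~\ref{DeformationComplexes:BicosimplicialComplex:Reductions}, then to compute its homotopy by a Dold--Kan argument, and finally to commute cohomology past the cofree cooperad functor. First I would invoke the explicit description of the function space in Construction~\ref{DeformationComplexes:BicosimplicialComplex:FunctionSpaces}(\ref{DeformationComplexes:BicosimplicialComplex:FunctionSpaces:Expression}), the commutative square of Figure~\ref{Fig:FunctionSpaceReduction}, and the subsequent reduction to plain $\Lambda$-collections, which together identify $X^{k l}$ naturally with the function space
\begin{equation*}
\Map_{\dg^*\Lambda\Seq_{>1}^c}\bigl(\DGI\DGC^{com}_k(\PiOp),\DGC_{op}^l(\KOp)\bigr)=\Mor_{\dg^*\Lambda\Seq_{>1}^c}\bigl(\DGI\DGC^{com}_k(\PiOp),\DGC_{op}^l(\KOp)\otimes\DGOmega^*(\Delta^{\bullet})\bigr).
\end{equation*}
Since $\PiOp$ is defined in cochain graded modules, the $\Lambda$-collection $\DGI\DGC^{com}_k(\PiOp)=\II\Sym\circ\cdots\circ\II\Sym(\IOp\PiOp)$ carries the trivial differential, so in each simplicial degree the morphism set on the right is the kernel of a $\kk$-linear map inside a $\kk$-module of homomorphisms, hence a $\kk$-submodule; and the coface and codegeneracy operators, induced by the simplicial commutative cochain dg-algebra $\DGOmega^*(\Delta^{\bullet})$ through algebra morphisms, are $\kk$-linear. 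Transporting along the displayed natural isomorphism of simplicial sets then exhibits $X^{k l}$ as isomorphic to a simplicial $\kk$-module, so $\pi_*(X^{k l})$ is the homology of a normalized complex of $\kk$-modules and in particular does not depend on the base point --- which settles the first assertion and the base-point issue in the second.

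For the homotopy computation I would use that $\DGC_{op}^l(\KOp)\otimes\DGOmega^*(\Delta^{\bullet})$ is a simplicial framing of $\DGC_{op}^l(\KOp)$ in $\dg^*\Lambda\Seq_{>1}^c$ and that $\DGI\DGC^{com}_k(\PiOp)$ is cofibrant there (it is built by iterating $\II\Sym$ on the free $\Lambda$-collection $\IOp\PiOp$; see~\S\ref{CotripleResolution}), so the reduced function space models the derived mapping space. The standard identification of homotopy groups of function spaces of dg-objects (the normalized complex of $\Mor(\MOp,\NOp\otimes\DGOmega^*(\Delta^{\bullet}))$ computes the nonnegative truncation of $\Hom_{\dg\Lambda\Seq^c}(\MOp,\NOp)$ for $\MOp$ cofibrant, cf.~\cite{FresseBook}) then gives $\pi_*(X^{k l})\simeq\tau_*\DGH_*\Hom_{\dg\Lambda\Seq^c}(\DGI\DGC^{com}_k(\PiOp),\DGC_{op}^l(\KOp))$. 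Because $\DGI\DGC^{com}_k(\PiOp)$ has trivial differential, the differential on this hom-complex is induced only by that of the target, and since $\kk$ is a characteristic-zero field the arity-wise functor $\Hom_{\kk}(\DGI\DGC^{com}_k(\PiOp)(r),-)$ is exact, as is the $\Lambda$-end (an equalizer of products over $\kk$); hence homology passes inside the hom-object and $\DGH_*\Hom_{\dg\Lambda\Seq^c}(\DGI\DGC^{com}_k(\PiOp),\DGC_{op}^l(\KOp))\simeq\Hom_{\gr\Lambda\Seq^c}(\DGI\DGC^{com}_k(\PiOp),\DGH^*(\DGC_{op}^l(\KOp)))$.

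It then remains to identify $\DGH^*(\DGC_{op}^l(\KOp))$ with $\DGC_{op}^l(\DGH^*(\KOp))$. By Construction~\ref{DeformationComplexes:BicosimplicialComplex:FunctionSpaces}(\ref{DeformationComplexes:BicosimplicialComplex:FunctionSpaces:CofreeStructure}), $\DGC_{op}^l(\KOp)$ is the $l$-fold composite of $\overline{\FreeOp}^c$ applied to $\overline{\KOp}$. In each arity $r$, the coaugmentation coideal $\overline{\FreeOp}^c(\MOp)(r)$ is a finite direct sum of tensor products $\bigotimes_v\MOp(r_v)$ indexed by reduced trees with $r$ leaves and acted on by finite automorphism groups; by the Künneth theorem and exactness of finite-group (co)invariants over $\kk$, the natural map $\overline{\FreeOp}^c(\DGH^*(\MOp))\rightarrow\DGH^*(\overline{\FreeOp}^c(\MOp))$ is an isomorphism. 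Iterating this $l$ times, starting from $\DGH^*(\overline{\KOp})=\overline{\DGH^*(\KOp)}$, yields $\DGH^*(\DGC_{op}^l(\KOp))\simeq\DGC_{op}^l(\DGH^*(\KOp))$; substituting this into the previous step and retaining the truncation $\tau_*$ gives the asserted isomorphism.

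The main obstacle I expect is the bookkeeping in the first step: one must check that every isomorphism in Figure~\ref{Fig:FunctionSpaceReduction}, together with the final reduction to un-coaugmented $\Lambda$-collections, is natural in the framing simplices $\Delta^{\bullet}$, hence defines an isomorphism of simplicial sets compatible with the entire simplicial (and, in $l$, cosimplicial) structure, so that transporting the $\kk$-module structure back to $X^{k l}$ is legitimate. This amounts to unwinding, in families, the adjunctions between relative symmetric algebras, cofree cooperads and the various forgetful functors recalled in~\S\ref{Background:AlgebraicAdjunctions}; it is routine, but it is where most of the verification effort sits. The remaining ingredients --- exactness of $\Hom_{\kk}(V,-)$ and of finite-group invariants over a characteristic-zero field, the Künneth theorem, and the standard framing computation --- are all formal.
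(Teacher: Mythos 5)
Your proposal is correct and follows essentially the same route as the paper: reduce via Figure~\ref{Fig:FunctionSpaceReduction} to a function space of plain $\Lambda$-collections (giving the simplicial module structure), compute its homotopy by the standard framing/Dold--Kan identification with $\tau_*\Hom_{\dg\Lambda\Seq^c}(\DGI\DGC^{com}_k(\PiOp),\DGC_{op}^l(\KOp))$, and then pass to cohomology using the K\"unneth formula for the cofree cooperad composites. One small imprecision: commuting homology past the hom-object is not justified by the $\Lambda$-end being ``an equalizer of products'' (equalizers are not exact); the correct reason --- which the paper uses and which you already have available --- is that $\DGI\DGC^{com}_k(\PiOp)$ is freely generated as a $\Lambda$-collection, so the end reduces to one over $\Sigma$ and the K\"unneth morphism $\DGH_*\Hom_{\dg\Lambda\Seq^c}(\MOp,\NOp)\rightarrow\Hom_{\gr\Lambda\Seq^c}(\DGH_*(\MOp),\DGH_*(\NOp))$ is an isomorphism.
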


\begin{proof}
We aim to determine the homotopy of the simplicial module such that $Y^{k l} = \Map_{\dg^*\Lambda\Seq_{>1}^c}(\MOp,\NOp)$,
where we set $\MOp = \DGI\DGC^{com}_k(\PiOp)$ and $\NOp = \DGC_{op}^l(\KOp)$,
for any $(k,l)\in\NN^2$.
We use that the homotopy of a simplicial module (and, more generally, of a simplicial group)
is given by the homology of the normalized complex associated to our object.
We have the relations
\begin{align}
\label{DeformationComplexes:BicosimplicialComplex:ModuleHomotopy:Defn}
\DGN_*\Map_{\dg^*\Lambda\Seq_{>1}^c}(\MOp,\NOp)
& \xrightarrow{=}\DGN_*\Mor_{\dg^*\Lambda\Seq_{>1}^c}(\MOp,\NOp\otimes\DGOmega^*(\Delta^{\bullet}))\\
\label{DeformationComplexes:BicosimplicialComplex:ModuleHomotopy:Integration}
& \xrightarrow{\sim}\DGN_*\Mor_{\dg^*\Lambda\Seq_{>1}^c}(\MOp,\NOp\otimes\DGN^*(\Delta^{\bullet}))\\
\label{DeformationComplexes:BicosimplicialComplex:ModuleHomotopy:Adjunction}
& \xrightarrow{\simeq}\DGN_*\Mor_{\dg_*\Mod}(\DGN_*(\Delta^{\bullet}),\tau_*\Hom_{\dg\Lambda\Seq^c}(\MOp,\NOp))\\
\label{DeformationComplexes:BicosimplicialComplex:ModuleHomotopy:DoldKan}
& \xrightarrow{\simeq}\tau_*\Hom_{\dg\Lambda\Seq^c}(\MOp,\NOp),
\end{align}
where: the identity of Equation (\ref{DeformationComplexes:BicosimplicialComplex:ModuleHomotopy:Defn})
is just our definition of our function space
in~\S\ref{DeformationComplexes:BicosimplicialComplex:Reductions}(\ref{DeformationComplexes:BicosimplicialComplex:Reductions:FunctionModule});
the morphism of Equation (\ref{DeformationComplexes:BicosimplicialComplex:ModuleHomotopy:Integration})
is yielded by the integration map $\rho: \DGOmega^*(\Delta^{\bullet})\rightarrow\DGN^*(\Delta^{\bullet})$
on the dg-algebra of piecewise linear forms (see~\S\ref{TripleCoresolution:SullivanDGAlgebra});
the isomorphism of Equation (\ref{DeformationComplexes:BicosimplicialComplex:ModuleHomotopy:Adjunction})
follows from obvious duality and adjunction relations;
while the isomorphism of Equation (\ref{DeformationComplexes:BicosimplicialComplex:ModuleHomotopy:DoldKan})
follows from the Dold-Kan correspondence.
Just observe that the integration map $\rho: \DGOmega^*(\Delta^{\bullet})\rightarrow\DGN^*(\Delta^{\bullet})$
induces a weak-equivalence of simplicial framings
when we take the tensor product of this map
with our $\Lambda$-collection:
\begin{equation}\label{DeformationComplexes:BicosimplicialComplex:ModuleHomotopy:FramingMap}
\rho_*: \NOp\otimes\DGOmega^*(\Delta^{\bullet})\xrightarrow{\sim}\NOp\otimes\DGN^*(\Delta^{\bullet}),
\end{equation}
and the weak-equivalence in our relation (\ref{DeformationComplexes:BicosimplicialComplex:ModuleHomotopy:Integration})
simply follows from the assertion
that function spaces do not depend on the choice of a particular simplicial framing.

If we recap our definitions, then we easily get that our correspondence~(\ref{DeformationComplexes:BicosimplicialComplex:ModuleHomotopy:Defn}-\ref{DeformationComplexes:BicosimplicialComplex:ModuleHomotopy:DoldKan})
maps any morphism $f: \MOp\rightarrow\NOp\otimes\DGOmega^*(\Delta^m)$
in the category of $\Lambda$-collections
to the homomorphism $\rho_*(f)\in\Hom_{\dg\Lambda\Seq^c}(\MOp,\NOp)_m$
such that
\begin{equation}\label{DeformationComplexes:BicosimplicialComplex:ModuleHomotopy:IntegrationMap}
\rho_*(f)(\xi) = \int_{\Delta^m} f(\xi)
\end{equation}
for any $\xi\in\MOp(r)$, where we now consider the mapping $\int_{\Delta^m}: \NOp(r)\otimes\DGOmega^*(\Delta^m)\rightarrow\NOp(r)$
given by the integration of forms $\omega\in\DGOmega^*(\Delta^m)$
over the simplex $\Delta^m$ (we just assume that this mapping vanishes when $\deg^*(\omega)\not=m$).
The other way round, to a homomorphism $g: \MOp\rightarrow\NOp$
of (lower) degree $\deg(g) = m$,
we can associate the morphism of $\Lambda$-collections $g_{\sharp}: \MOp\rightarrow\NOp\otimes\DGOmega^*(\Delta^m)$
such that
\begin{equation}\label{DeformationComplexes:BicosimplicialComplex:ModuleHomotopy:ConverseMap}
g_{\sharp}(\xi) = g(\xi)\otimes(dx_1\cdot\ldots\cdot dx_m),
\end{equation}
for any $\xi\in\MOp(r)$. This morphism clearly represents a pre-image of the homomorphism $g\in\Hom_{\dg\Lambda\Seq^c}(\MOp,\NOp)$
in the normalized complex $\DGN_*\Map_{\dg^*\Lambda\Seq_{>1}^c}(\MOp,\NOp)$ (up to a $1/n!$ factor).

We now have a K\"unneth morphism
\begin{equation}\label{DeformationComplexes:BicosimplicialComplex:ModuleHomotopy:KuennethMorphism}
\DGH_*\Hom_{\dg\Lambda\Seq^c}(\MOp,\NOp)\xrightarrow{\simeq}\Hom_{\dg\Lambda\Seq^c}(\DGH_*(\MOp),\DGH_*(\NOp))
\end{equation}
which is an isomorphism since the $\Lambda$-collection $\MOp = \DGI\DGC^{com}_k(\PiOp)$
is freely generated by a symmetric collection (see~\S\ref{CotripleResolution:AugmentedCase}).
We moreover have the identity $\DGH_*(\MOp) = \MOp = \DGI\DGC^{com}_k(\PiOp)$
because $\PiOp$ is equipped with a trivial differential.
We also have the relation $\NOp = \DGC_{op}^l(\KOp)\Rightarrow\DGH_*(\NOp) = \DGC_{op}^l(\DGH^*(\KOp))$
by the K\"unneth formula.
Eventually, we obtain the isomorphism of the proposition
when we combine these relations
with the isomorphisms
of Figure~\ref{Fig:FunctionSpaceReduction}.
\end{proof}

In degree $*=0$ and in bicosimplicial dimension $(k,l) = (0,0)$, the relation of this proposition
implies that we have a bijection $\pi_0(X^{0 0})\simeq\Mor_{\gr\Lambda\Seq_{>1}^c}(\IOp\PiOp,\DGH^*(\overline{\KOp}))$,
which we can prolong to
\begin{equation*}
\pi_0(X^{0 0})\simeq\Mor_{\overline{\ComOp}{}^c/\gr\Lambda\Seq_{>1}^c}(\overline{\PiOp},\DGH^*(\overline{\KOp}))
\end{equation*}
by going back to the splitting formula $\overline{\PiOp} = \overline{\ComOp}{}^c\oplus\IOp\PiOp$.
We aim to determine which morphisms in this set correspond to degree $0$ cocycles in the cohomotopy of our bicosimplicial homotopy class set.
We still use that any morphism of $\Lambda$-collections $\overline{\chi}: \overline{\PiOp}\rightarrow\DGH^*(\overline{\KOp})$
has a unique extension $\chi: \PiOp\rightarrow\DGH^*(\KOp)$
which preserves the counit of the cooperads $\PiOp$
and $\DGH^*(\KOp)$ (though this map $\chi$ is not a morphism of cooperads in general).
We establish the following result:

\begin{thm}\label{DeformationComplexes:BicosimplicialComplex:DegreeZeroCocycles}
The correspondence of Proposition~\ref{DeformationComplexes:BicosimplicialComplex:ModuleHomotopy}
induces a bijection:
\begin{equation*}
\pi^0\pi_0(X)\xrightarrow{\simeq}\Mor_{\gr^*\Hopf\Lambda\Op^c}(\PiOp,\DGH^*(\KOp))
\end{equation*}
where we consider the set of cohomotopy cocycles $z\in\pi^0\pi_0(X)$
in the diagonal
of the bicosimplicial set $\pi_0(X) = \Mor_{\dg^*\Hopf\Lambda\Op^c}(\Res^{op}_{\bullet}(\PiOp),\Res_{op}^{\bullet}(\DGH^*(\KOp)))$
on the one hand, and the set of morphisms of Hopf $\Lambda$-cooperads in graded modules $\chi: \PiOp\rightarrow\DGH^*(\KOp)$
on the other hand.
\end{thm}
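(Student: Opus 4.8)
The plan is to identify $\pi^0\pi_0(X)$ with a double equalizer and then to recognize the two equalizer conditions as multiplicativity and comultiplicativity of the classifying map. Since $\pi_0(X)$ is a bicosimplicial module over the ground field, $\pi^0$ of its diagonal coincides, by the cochain Eilenberg--Zilber theorem (equivalently, by the interchange $\Tot^{\Delta}(X) = \Tot^h\Tot^v(X)$ read on the associated conormalized total complex) and the fact that the image of $d^0 - d^1$ automatically lies in the conormalized part, with
\begin{equation*}
\bigl\{\,x\in\pi_0(X^{0 0})\ :\ d^h_0(x) = d^h_1(x)\ \text{in}\ \pi_0(X^{1 0})\ \text{and}\ d^v_0(x) = d^v_1(x)\ \text{in}\ \pi_0(X^{0 1})\,\bigr\},
\end{equation*}
where $d^h_\bullet$ and $d^v_\bullet$ denote the horizontal (commutative cotriple) and vertical (cooperadic triple) coface operators. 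First I would apply Proposition~\ref{DeformationComplexes:BicosimplicialComplex:ModuleHomotopy} in bidegrees $(0,0)$, $(1,0)$, $(0,1)$, together with the naturality of its isomorphism in the (co)simplicial variables, so as to replace the three sets above by morphism sets of $\Lambda$-collections in graded modules. Using $\DGI\DGC^{com}_0(\PiOp) = \IOp\PiOp$, the level-one formula~(\ref{DeformationComplexes:BicosimplicialComplex:Reductions:Splitting}) giving $\DGI\DGC^{com}_1(\PiOp) = \II\Sym(\IOp\PiOp)$, and~(\ref{DeformationComplexes:BicosimplicialComplex:FunctionSpaces:CofreeStructure}) giving $\DGC^0_{op}(\DGH^*(\KOp)) = \overline{\DGH^*(\KOp)}$ and $\DGC^1_{op}(\DGH^*(\KOp)) = \overline{\FreeOp}^c(\overline{\DGH^*(\KOp)})$, one obtains
\begin{equation*}
\pi_0(X^{0 0})\simeq\Mor_{\gr\Lambda\Seq^c_{>1}}(\IOp\PiOp,\overline{\DGH^*(\KOp)}),\qquad
\pi_0(X^{1 0})\simeq\Mor_{\gr\Lambda\Seq^c_{>1}}(\II\Sym(\IOp\PiOp),\overline{\DGH^*(\KOp)}),
\end{equation*}
and $\pi_0(X^{0 1})\simeq\Mor_{\gr\Lambda\Seq^c_{>1}}(\IOp\PiOp,\overline{\FreeOp}^c(\overline{\DGH^*(\KOp)}))$. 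As recalled just before the theorem, the first set is the set of counit-preserving morphisms of $\Lambda$-collections $\chi\colon\PiOp\rightarrow\DGH^*(\KOp)$, via the splitting $\overline{\PiOp} = \overline{\ComOp}^c\oplus\IOp\PiOp$.

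Next I would unwind the two coface conditions. In the horizontal direction $\Res^{com}_\bullet(\PiOp)$ is the cotriple resolution attached to the adjunction $\ComOp^c/\Sym(-)\dashv\omega$; unwinding the chain of adjunction isomorphisms of Figure~\ref{Fig:FunctionSpaceReduction}, the two induced maps $\II\Sym(\IOp\PiOp)\rightrightarrows\IOp\PiOp$ are, on the one hand, the map carrying a symmetric word to its value under the commutative product of $\PiOp$ (coming from the counit $\ComOp^c/\Sym(\omega\PiOp)\rightarrow\PiOp$, i.e. the algebra structure of $\PiOp$), and on the other hand the map coming from the algebra structure of the free object $\ComOp^c/\Sym(\omega\PiOp)$. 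Equality of the two precompositions is therefore precisely the condition that the components $\chi(r)\colon\PiOp(r)\rightarrow\DGH^*(\KOp)(r)$ be morphisms of unitary commutative algebras, i.e. that $\chi$ be a morphism of Hopf $\Lambda$-collections -- this is the standard bar-resolution fact that maps equalizing the first two cofaces are exactly the algebra morphisms, transported through the reduction. In the vertical direction $\Res^\bullet_{op}(\KOp)$ is the triple coresolution attached to the adjunction $\bar{\omega}\dashv\FreeOp^c$, so $\DGC^1_{op}(\DGH^*(\KOp)) = \overline{\FreeOp}^c(\overline{\DGH^*(\KOp)})$ and the two induced maps $\overline{\DGH^*(\KOp)}\rightrightarrows\overline{\FreeOp}^c(\overline{\DGH^*(\KOp)})$ are the universal map into the cofree cooperad and the map classifying the cooperad composition coproducts $\circ_i^*$ of $\DGH^*(\KOp)$ (the coalgebra structure of $\overline{\DGH^*(\KOp)}$ over the comonad $\overline{\FreeOp}^c$). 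Equality of the two postcompositions with $\chi$ is precisely the condition that $\chi$ commute with the $\circ_i^*$, i.e. that $\chi$ be a morphism of cooperads. Combining the two, $\chi$ lies in $\pi^0\pi_0(X)$ if and only if it is simultaneously a morphism of Hopf $\Lambda$-collections and a morphism of cooperads -- equivalently a morphism of Hopf $\Lambda$-cooperads in graded modules; compatibility with the corestriction operators holds throughout since we work inside the category of $\Lambda$-collections from the start, and the (co)augmentations are automatically preserved. This yields the asserted bijection $\pi^0\pi_0(X)\xrightarrow{\simeq}\Mor_{\gr^*\Hopf\Lambda\Op^c}(\PiOp,\DGH^*(\KOp))$.

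The main obstacle I expect is the careful bookkeeping in these two unwinding steps: one must check that, under the chain of adjunction isomorphisms of Figure~\ref{Fig:FunctionSpaceReduction} and the K\"unneth identification~(\ref{DeformationComplexes:BicosimplicialComplex:ModuleHomotopy:KuennethMorphism}) of Proposition~\ref{DeformationComplexes:BicosimplicialComplex:ModuleHomotopy}, the level-one coface operators of $\Res^{com}_\bullet(\PiOp)$ and of $\Res^\bullet_{op}(\KOp)$ really go over to the two comparison maps whose coincidence encodes multiplicativity, respectively comultiplicativity, of $\chi$, and that these identifications are sufficiently natural that forming $\pi^0$ commutes with them. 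Concretely this means spelling out the bar, resp. cobar, presentations of algebra morphisms and of cooperad morphisms and tracking the explicit coface formulas (the monad counit versus the free-object multiplication on the commutative side; the cofree universal map versus the cooperad comultiplication on the cooperadic side) through the reductions. A minor point worth recording is that no higher coface maps intervene: by the cosimplicial identities it suffices to equalize $d^0$ and $d^1$ at level one in each direction, so the argument only ever invokes Proposition~\ref{DeformationComplexes:BicosimplicialComplex:ModuleHomotopy} in bidegrees with $k,l\leq 1$.
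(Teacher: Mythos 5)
Your proposal is correct and follows essentially the same route as the paper, which simply declares the identification of the cocycle condition with multiplicativity and comultiplicativity a ``straightforward inspection'' of the monadic and comonadic coface operators of the two resolutions; you carry that inspection out explicitly. The only point worth flagging is your replacement of the paper's single diagonal condition $d^0(z)=d^1(z)$ in $\pi_0(X^{1 1})$ by the two separate horizontal and vertical equalizer conditions: this is legitimate (apply $s^0_v$, respectively $s^0_h$, to the diagonal relation and use the cosimplicial identities together with the commutation of horizontal and vertical operators, so the Eilenberg--Zilber appeal is not even needed), and it is indeed the form in which the inspection is most naturally performed.
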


\begin{proof}
The set $\pi^0\pi_0(X)$ consists of the homotopy classes $z\in\pi_0(X)$
such that $d^0(z) = d^1(z)$ in $\pi_0(X^{1 1})$,
where we also consider the diagonal coface operators $d^0,d^1: \pi_0(X^{0 0})\rightarrow\pi_0(X^{1 1})$
on the bicosimplicial set $\pi_0(X^{\bullet\,\bullet})$.

The theorem follows from a straightforward inspection of the correspondence of Proposition~\ref{DeformationComplexes:BicosimplicialComplex:ModuleHomotopy}
and of the definition, in terms of monadic and comonadic adjunctions,
of the simplicial and cosimplicial structure
of our resolutions.
Note simply that the K\"unneth morphism
in the proof of Proposition~\ref{DeformationComplexes:BicosimplicialComplex:ModuleHomotopy}
reduces to the obvious functoriality mapping $\Mor_{\dg^*\Lambda\Mod}(\MOp,\NOp)\rightarrow\Mor_{\gr^*\Lambda\Mod}(\DGH^*(\MOp),\DGH^*(\NOp))$
in homotopical degree $*=0$.
\end{proof}

We assume, from now on, that we have a morphism of Hopf $\Lambda$-cooperads $\chi: \PiOp\rightarrow\DGH^*(\KOp)$.
We also set $\HOp = \DGH^*(\KOp)$ for short.

We fix a morphism $\phi: \Res^{com}_0(\PiOp)\rightarrow\Res_{op}^0(\KOp)$
whose class $[\phi]$
in the homotopy of the function space $\Map_{\dg^*\Hopf\Lambda\Op^c}(\Res^{com}_{\bullet}(\PiOp),\Res_{op}^{\bullet}(\KOp))$
represents the cohomotopy cocycle $z$
corresponding to $\chi$.
By definition of our correspondence in \S\ref{DeformationComplexes:BicosimplicialComplex:Reductions},
Proposition~\ref{DeformationComplexes:BicosimplicialComplex:ModuleHomotopy},
and Theorem~\ref{DeformationComplexes:BicosimplicialComplex:DegreeZeroCocycles},
this morphism $\phi$ is actually identified with a morphism of Hopf $\Lambda$-cooperads $\phi = \phi_f$
associated to a morphism of coaugmented $\Lambda$-collections $f: \overline{\PiOp}\rightarrow\overline{\KOp}$
that corresponds to $\chi$
in cohomology.
We form the composite morphism:
\begin{equation*}
\Res^{com}_k(\PiOp)\xrightarrow{d_1\cdots d_k}\Res^{com}_0(\PiOp)
\xrightarrow{\phi}\Res_{op}^0(\KOp)\xrightarrow{d^l\cdots d^1}\Res_{op}^l(\PiOp),
\end{equation*}
and we provide the space $X^{k l} = \Map_{\dg^*\Hopf\Lambda\Op^c}(\Res^{com}_k(\PiOp),\Res_{op}^l(\KOp))$
with this morphism $\phi^0 = (d^l\cdots d^1)\cdot\phi\cdot(d_1\cdots d_k)$
as base point, for each pair $(k,l)\in\NN^2$.
We aim to determine the cohomotopy groups $\pi^s\pi_t(X,\phi^0)$, where we consider the homotopy of the spaces $\Diag(X)^k = X^{k k}$
at our base point $\phi^0\in X^{k k}$, for all $k\in\NN$, and for any degree $t>0$.
We use that these cohomotopy groups are given by the cohomology of a conormalized complex $\DGN^*\pi_t(X,\phi^0)$
associated to the cosimplicial object $E^{\bullet} = \pi_t(\Diag(X^{\bullet\,\bullet}),\phi^0)$ (see~\cite{BousfieldKan}).

We consider modules of biderivations on the cotriple resolution $\ROp_{\bullet} = \Res^{com}_{\bullet}(\PiOp)$
of the object $\PiOp$ in the category Hopf $\Lambda$-cooperads in graded modules
and with values in the triple coresolution $\QOp^{\bullet} = \Res_{op}^{\bullet}(\HOp)$
of the cohomology cooperad $\HOp = \DGH^*(\KOp)$.
We just take the composite of our cohomology morphism $\chi$ with the augmentation of the cotriple resolution
and with the coaugmentation of the triple coresolution
\begin{equation*}
\Res^{com}_k(\PiOp)\xrightarrow{\epsilon}\PiOp\xrightarrow{\chi}\DGH^*(\KOp)\xrightarrow{\eta}\Res_{op}^l(\HOp),
\end{equation*}
in order to get a morphism of Hopf $\Lambda$-cooperad $\chi^0: \Res^{com}_k(\PiOp)\rightarrow\Res_{op}^l(\HOp)$
and to give a sense to this graded module of biderivations
\begin{equation*}
B^{k l} = B^{k l}(\PiOp,\HOp) = \BiDer_{\gr^*\Hopf\Lambda\Op^c}(\Res^{com}_k(\PiOp),\Res_{op}^l(\HOp)),
\end{equation*}
for each pair $(k,l)\in\NN^2$. These objects still form a bicosimplicial module
and we consider the conormalized complex of the associated
diagonal object.

Recall that the conormalized complex $\DGN^*(A)$ of a cosimplicial module (or group) $A = A^{\bullet}$
is generally defined by
%\begin{equation*}
$\DGN^k(A) = \bigcap_{j = 0}^{k-1}\ker(s^j: A^k\rightarrow A^{k-1})$
%\end{equation*}
in each degree $k\in\NN$,
and has the alternate sum of coface operators $\partial = \sum_{i=0}^{k} (-1)^i d^i$
as differential.

We have the following statement:

\begin{thm}\label{DeformationComplexes:BicosimplicialComplex:MainResult}
Let $\HOp = \DGH^*(\KOp)$. We have an isomorphism of cochain complexes
\begin{equation*}
\DGN^*\pi_*(X,\phi^0)\xrightarrow{\simeq}\DGN^*\BiDer_{\dg^*\Hopf\Lambda\Op^c}(\Res^{com}_{\bullet}(\PiOp),\Res_{op}^{\bullet}(\HOp))
\end{equation*}
%in any degree $t>0$,
where we consider:
\begin{itemize}
\item
on the one hand, the conormalized complex of the (diagonal complex of the) bicosimplicial complex of homotopy groups
\begin{equation*}
\qquad\qquad T^{k l} = \pi_*(X^{k l},\phi^0) = \pi_*\bigl(\Map_{\dg^*\Hopf\Lambda\Op^c}(\Res^{com}_k(\PiOp),\Res_{op}^l(\KOp)),\phi^0\bigr)
\end{equation*}
taken at our base point $\phi^0$;
\item
on the other hand, the conormalized complex of the (diagonal complex of the) bicosimplicial complex of biderivations
\begin{equation*}
B^{k l} = B^{k l}(\PiOp,\HOp) = \BiDer_{\dg^*\Hopf\Lambda\Op^c}\bigl(\Res^{com}_k(\PiOp),\Res_{op}^l(\HOp)\bigr)
\end{equation*}
taken with respect to our cohomology morphism $\chi^0 = \eta\chi\epsilon$.
%(In the expression of our isomorphism, we also use an external subscript $t\in\NN$ to refer to the components
%of internal degree $*=t$
%of the graded module of biderivations.)
\end{itemize}
\end{thm}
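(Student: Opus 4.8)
The statement is really a "homotopy groups of a mapping space = biderivations" computation, so the strategy is to combine the explicit description of the function spaces obtained in Construction~\ref{DeformationComplexes:BicosimplicialComplex:FunctionSpaces} and Construction~\ref{DeformationComplexes:BicosimplicialComplex:Reductions} with the reduction of biderivations proved in Proposition~\ref{DeformationComplexes:Biderivations:ModuleReduction}. I would first fix the bicosimplicial dimension $(k,l)$ and unwind both sides arity-wise. By Proposition~\ref{DeformationComplexes:BicosimplicialComplex:ModuleHomotopy}, the homotopy module $\pi_*(X^{kl},\phi^0)$ is identified with the truncation $\tau_*\Hom_{\gr\Lambda\Seq^c}(\DGI\DGC^{com}_k(\PiOp),\DGC_{op}^l(\HOp))$, the identification being the one explicitly recorded in the proof of that proposition (integration of forms in one direction, the $g\mapsto g_\sharp$ map in the other). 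On the other side, since $\Res^{com}_k(\PiOp) = \ComOp^c/\Sym(\DGC^{com}_k(\PiOp))$ is a relative symmetric algebra on a coaugmented $\Lambda$-cooperad and $\Res_{op}^l(\HOp) = \FreeOp^c(\DGC_{op}^l(\HOp))$ is a cofree cooperad, Proposition~\ref{DeformationComplexes:Biderivations:ModuleReduction} gives
\begin{equation*}
\BiDer_{\dg^*\Hopf\Lambda\Op^c}(\Res^{com}_k(\PiOp),\Res_{op}^l(\HOp))\simeq\Hom_{\dg\Lambda\Seq^c}(\DGI\DGC^{com}_k(\PiOp),\DGC_{op}^l(\HOp)),
\end{equation*}
using that $\IOp\DGC^{com}_k(\PiOp) = \DGI\DGC^{com}_k(\PiOp)$ by~\eqref{DeformationComplexes:BicosimplicialComplex:Reductions:Splitting}. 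Both sides of the claimed isomorphism, in bidegree $(k,l)$, are thus identified with (a truncation of) the same hom-object of $\Lambda$-collections; I would then simply take the map to be the composite of these two identifications. The truncation $\tau_*$ on the homotopy side is harmless because $\DGC_{op}^l(\HOp)$, being built from $\HOp=\DGH^*(\KOp)$ which is cochain graded, has homology concentrated in non-positive degrees, so the hom-object is already concentrated in the range $*\geq 0$ where the truncation acts as the identity.

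**Carrying out the bicosimplicial compatibility.** The content beyond the arity-wise identification is that this bijection is a map of cosimplicial complexes, i.e. that it commutes with the diagonal coface and codegeneracy operators of the bicosimplicial structures on $T^{\bullet\bullet}$ and $B^{\bullet\bullet}$. Here I would argue structurally rather than by formulas: both the simplicial structure on $\Res^{com}_\bullet(\PiOp)$ and the cosimplicial structure on $\Res^\bullet_{op}(\HOp)$ come from the comonad $\ComOp^c/\Sym(-)\circ\omega$ and the monad $\FreeOp^c\circ\bar\omega$ respectively (see~\S\ref{Background:CotripleResolution}, \S\ref{Background:TripleCoresolution}), and all the adjunction isomorphisms used in Proposition~\ref{DeformationComplexes:Biderivations:ModuleReduction} and in Figure~\ref{Fig:FunctionSpaceReduction} are the unit/counit of precisely these adjunctions. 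Consequently the induced operators on $\Hom_{\dg\Lambda\Seq^c}(\DGI\DGC^{com}_k(\PiOp),\DGC_{op}^l(\KOp))$ transported from $X^{\bullet\bullet}$ agree, by naturality of the adjunctions, with those transported from $B^{\bullet\bullet}$. The base point $\phi^0 = (d^l\cdots d^1)\phi(d_1\cdots d_k)$ and the biderivation base point $\chi^0 = \eta\chi\epsilon$ are matched by construction since both are the images of the same morphism $\chi:\PiOp\to\HOp$ under the same (co)augmentation maps, by Theorem~\ref{DeformationComplexes:BicosimplicialComplex:DegreeZeroCocycles}. Passing to the diagonal and then to conormalized complexes is then functorial, which yields the stated isomorphism of cochain complexes $\DGN^*\pi_*(X,\phi^0)\xrightarrow{\simeq}\DGN^*\BiDer_{\dg^*\Hopf\Lambda\Op^c}(\Res^{com}_\bullet(\PiOp),\Res_{op}^\bullet(\HOp))$.

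**Main obstacle.** The routine-looking but genuinely delicate point is matching the two base-point structures and checking that linearity is preserved throughout: $\pi_*(X^{kl},\phi^0)$ is a priori only the homotopy of a pointed space, and it is the specific form of $\phi=\phi_f$ coming from a morphism of $\Lambda$-collections $f:\overline{\PiOp}\to\overline{\KOp}$ that makes the function space split off an affine copy of the linear mapping space of $\Lambda$-collections with $\phi$ at the origin; this is where the hypothesis that $\IOp\PiOp$ is free as a $\Lambda$-collection (invoked in Proposition~\ref{DeformationComplexes:BicosimplicialComplex:ModuleHomotopy} via the Künneth isomorphism~\eqref{DeformationComplexes:BicosimplicialComplex:ModuleHomotopy:KuennethMorphism}) is used. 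I expect the bulk of the work to be a careful bookkeeping check that, under the chain of reductions in Figure~\ref{Fig:FunctionSpaceReduction}, the biderivation condition — a coderivation relative to the bicomodule structure on $\IOp\ROp$ and simultaneously a derivation relative to the module structure on $\overline{\QOp}$, as in Definition~\ref{DeformationComplexes:Biderivations:Biderivations} — corresponds exactly to the tangent space condition picking out $\pi_*$ at the base point $\phi^0$, rather than to some larger or smaller space of homomorphisms of $\Lambda$-collections; Proposition~\ref{DeformationComplexes:Biderivations:ModuleReduction} does the algebra, but one must verify it is the \emph{same} identification as the one extracted geometrically from the function space. Once that is in place, everything else is formal naturality of adjoints and of the conormalization functor.
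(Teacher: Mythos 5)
Your architecture is the same as the paper's: identify $T^{kl}$ and $B^{kl}$ with the common hom-object $\Hom_{\gr\Lambda\Seq^c}(\DGI\DGC^{com}_k(\PiOp),\DGC_{op}^l(\HOp))$ via Proposition~\ref{DeformationComplexes:BicosimplicialComplex:ModuleHomotopy} and Proposition~\ref{DeformationComplexes:Biderivations:ModuleReduction}, then check compatibility with the (bi)cosimplicial operators. But your argument for that compatibility has a genuine gap exactly where the real content sits. Appealing to ``naturality of the adjunctions'' disposes of the codegeneracies $s^j$ and the cofaces $d^i$ with $i>0$, because those operators are induced by morphisms of the generating objects $\DGI\DGC^{com}_{\bullet}(\PiOp)$ and $\DGC_{op}^{\bullet}(\HOp)$; the paper says as much and calls that case immediate. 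The $0$-faces are precisely the operators that do \emph{not} descend to the generators (see~\S\ref{CotripleResolution:Construction} and~\S\ref{TripleCoresolution:Construction}), so there is no natural transformation at the level of generating collections for naturality to act on, and your structural argument does not apply to them.

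What is actually needed for $d_0$ (and dually $d^0$) is the computation the paper performs: represent a class in $\pi_t(X^{kl},\phi^0)$ by the translated morphism $\phi_{f+g}$ with $g(-)=h(-)\otimes(dx_1\cdots dx_t)$, compose with $d_0$, and expand. The terms involving two or more factors of $g$ vanish because $g$ lands in the top-degree form component of $\DGOmega^*(\Delta^t)$ and $(dx_1\cdots dx_t)^2=0$; the surviving linear term is exactly the derivation $\theta_g$ twisted by the base morphism, which is how the biderivation differential (with respect to $\chi^0=\eta\chi\epsilon$) arises on the $T$-side. You correctly flag in your last paragraph that matching the ``tangent space at $\phi^0$'' with the biderivation condition is the crux, but you leave it as expected bookkeeping rather than supplying the mechanism; without the vanishing of the quadratic terms in $g$ there is no reason the transported $0$-coface should be linear in $g$ at all, let alone equal to the biderivation twisting. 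As a minor side point, your claim that the truncation $\tau_*$ is harmless because the hom-object is concentrated in degrees $*\geq 0$ does not follow from $\PiOp$ and $\HOp$ being cochain graded (a homomorphism between non-positively graded objects can have either sign of degree); the comparison should simply be read in the degrees $*\geq 0$ where the homotopy groups live.
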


\begin{proof}
Proposition~\ref{DeformationComplexes:Biderivations:ModuleReduction}
and Proposition~\ref{DeformationComplexes:BicosimplicialComplex:ModuleHomotopy}
give one-to-one correspondences:
\begin{equation*}
T^{k l}\simeq\Hom_{\gr\Lambda\Seq^c}\bigl(\DGI\DGC^{com}_k(\PiOp),\DGC_{op}^l(\DGH^*(\KOp))\bigr)\simeq B^{k l},
\end{equation*}
and our main purpose is to check that these bijections carry the degeneracy and coface operators of our bicosimplicial structure
on the complex of homotopy groups $T = T^{\bullet\,\bullet}$
to the degeneracy and coface operators of our bicosimplicial structure
on our complex of biderivations $B = B^{\bullet\,\bullet}$.
This is done by going back to the explicit definition of our bijections.
We can actually check this correspondence in the horizontal and vertical directions separately, by using one half of the reduction paths
of Figure~\ref{Fig:FunctionSpaceReduction} in each case. The case of the codegeneracies $s^j$ and of the coface operators $d^i$
such that $i>0$ is immediate because these operations are induced by morphisms
of the generating objects $\DGI\DGC^{com}_{\bullet}(\PiOp)$
and $\DGC_{op}^{\bullet}(\KOp)$.
We therefore focus on the case of the $0$-coface operators.

We also check that the equality between these coface operators
holds in the dg-module $\Hom_{\gr\Lambda\Seq^c}\bigl(\DGI\DGC^{com}_k(\PiOp),\DGC_{op}^l(\KOp)\bigr)$
before we perform the K\"unneth isomorphism.
We then consider a morphism of $\Lambda$-collections $g: \DGI\DGC^{com}_k(\PiOp)\rightarrow\DGC_{op}^l(\KOp)\otimes\DGOmega^*(\Delta^t)$
satisfying $g(-) = h(-)\otimes(dx_1\cdots dx_t)$
and which corresponds to an element of this dg-hom $h\in\Hom_{\gr\Lambda\Seq^c}\bigl(\DGI\DGC^{com}_k(\PiOp),\DGC_{op}^l(\KOp)\bigr)$
(as in the proof of Proposition~\ref{DeformationComplexes:BicosimplicialComplex:ModuleHomotopy}).

We first note that, when we base our homotopy groups at the morphism $\phi^0 = \phi_f^0$,
we have to carry such a morphism of $\Lambda$-collections $g: \DGI\DGC^{com}_k(\PiOp)\rightarrow\DGC_{op}^l(\KOp)\otimes\DGOmega^*(\Delta^t)$,
which satisfies the face relations $d_i(g) = 0$ for all $i\geq 0$,
to the morphism of Hopf $\Lambda$-cooperads $\phi_{f+g}: \Res^{com}_k(\PiOp)\rightarrow\Res_{op}^l(\KOp)^{\Delta^t}$
associated to the translated morphism $f+g: \DGI\DGC^{com}_k(\PiOp)\rightarrow\DGC_{op}^l(\KOp)\otimes\DGOmega^*(\Delta^{\bullet})$
in order to get a simplicial cycle that satisfies the face relations $d_i(\phi_{f+g}) = \phi_f^0$
for all $i\geq 0$
in the function space $\Map_{\dg^*\Hopf\Lambda\Op^c}(\Res^{com}_k(\PiOp),\Res_{op}^l(\KOp))$.
We have a similar observation when we carry out only one half of the correspondences
of Figure~\ref{Fig:FunctionSpaceReduction},
and we only deal with a morphism $\phi_{f+g}: \overline{\ComOp}{}^c/\Sym(\overline{\DGC}^{com}_k(\PiOp))\rightarrow\DGC_{op}^l(\KOp)\otimes\DGOmega^*(\Delta^{\bullet})$
in the category of Hopf $\Lambda$-collections (respectively, with a morphism $\phi_{f+g}: \DGC^{com}_k(\PiOp)\rightarrow\FreeOp^c(\DGC_{op}^l(\KOp)\otimes\DGOmega^*(\Delta^{\bullet}))$
in the category of coaugmented $\Lambda$-cooperads).

We easily retrieve the expression of the derivation
$\theta_g: \overline{\ComOp}{}^c/\Sym(\overline{\DGC}^{com}_k(\PiOp))\rightarrow\DGC_{op}^l(\KOp)\otimes\DGOmega^*(\Delta^{\bullet})$
associated to $g$ in the expansion of the morphism
\begin{equation*}
\DGI\DGC^{com}_{k+1}(\PiOp)\subset\overline{\ComOp}{}^c/\Sym(\overline{\DGC}^{com}_{k+1}(\PiOp))
\xrightarrow{d_0}\overline{\ComOp}{}^c/\Sym(\overline{\DGC}^{com}_k(\PiOp))
\xrightarrow{\phi_{f+g}}\DGC_{op}^l(\KOp)\otimes\DGOmega^*(\Delta^{\bullet})
\end{equation*}
after observing that the terms involving more than one $g$ factors vanish in the outcome of this process. This verification gives
the correspondence of our coface operators in the algebraic direction,
and we address the case of the operadic direction
similarly.
\end{proof}

\subsection{The deformation bicomplex of Hopf cooperads}\label{DeformationComplexes:DGComplex}
The purpose of this subsection is to prove that the conormalized complex of the bicosimplicial module of biderivations of the previous subsection
\begin{equation*}
B^{\bullet\,\bullet} = B^{\bullet\,\bullet}(\PiOp,\HOp) = \BiDer_{\dg^*\Hopf\Lambda\Op^c}(\Res^{com}_{\bullet}(\PiOp),\Res_{op}^{\bullet}(\HOp)),
\end{equation*}
where we set $\HOp = \DGH^*(\KOp)$, is weakly-equivalent to a deformation bicomplex of Hopf $\Lambda$-cooperads:
\begin{equation*}
D^{* *} = D^{* *}(\PiOp,\HOp) = \BiDef^{* *}_{\dg^*\Hopf\Lambda\Op^c}(\PiOp,\HOp).
\end{equation*}
This deformation bicomplex of Hopf $\Lambda$-cooperads is an operadic counterpart
of the classical Gerstenhaber-Schack complex
of bialgebras~\cite{GerstenhaberSchack} (see also~\cite{FoxMarkl}).
In short, we combine a cooperadic cobar construction, which governs deformations of cooperad morphisms,
with (an extension to Hopf $\Lambda$-cooperads of) the Harrison complex
of commutative algebras, which governs deformations
of commutative algebra morphism.

In the previous subsection, we assumed for simplicity that $\PiOp$ is a Hopf $\Lambda$-cooperad in the category of graded modules,
or equivalently, that $\PiOp$ is a Hopf $\Lambda$-cooperad in dg-modules equipped with a trivial differential.
This simplifying assumption is not necessary in this subsection, and our subsequent definitions make sense without change when the object $\PiOp\in\dg^*\Hopf\Lambda\Op^c$
is equipped with a non-trivial differential.
But, on the other hand, we still prefer to assume that $\PiOp$ is equipped with an augmentation over the commutative cooperad $\ComOp^c$ (in order to simplify our constructions),
and we keep the notation $\IOp\PiOp$ for the collection of augmentation ideals of the algebras $\PiOp(r)$, $r>1$.
Then we require that this object $\IOp\PiOp$ is free as a $\Lambda$-collection $\IOp\PiOp = \Lambda\otimes_{\Sigma}\SOp\PiOp$ (as in the previous subsection).
We use this assumption in our proof that our deformation bicomplex is weakly-equivalent to the conormalized complex of the bicosimplicial module
of biderivations of the previous subsection.

We also assume that $\HOp$ is a general Hopf $\Lambda$-cooperad in the category of dg-modules (regardless of our initial obstruction problem).
We just take $\HOp = \DGH^*(\KOp)$ when we tackle the applications of our constructions
to our initial obstruction problem.

In a first step, we revisit the definition of the cobar complex of cooperads and the definition of the Harrison complex.
In these constructions, we consider a suspension functor $\DGSigma: \dg\Mod\rightarrow\dg\Mod$
defined, on any dg-module $C\in\dg\Mod$, by the tensor product $\DGSigma C = \kk\ecell_1\otimes C$,
where $\ecell_1$ represents a homogeneous element of (lower) degree $\deg(\ecell_1) = 1$
equipped with a trivial differential $\delta(\ecell_1) = 0$.
We also deal with the inverse desuspension operation $\DGSigma^{-1}: \dg\Mod\rightarrow\dg\Mod$,
which is defined by a similar tensor product $\DGSigma^{-1} C = \kk\ecell^1\otimes C$,
but where we now assume $\deg^*(\ecell^1) = 1\Leftrightarrow\deg(\ecell^1) = -1$.

\begin{rem}[The cobar construction of cooperads]\label{DeformationComplexes:DGComplex:CobarConstruction}
Briefly recall that the operadic cobar construction $\BB_{op}^c(\COp)$
of a cooperad $\COp$
is defined by the expression (see~\cite{GetzlerJones}):
\begin{equation}\label{DeformationComplexes:DGComplex:CobarConstruction:QuasiFreeDefinition}
\BB_{op}^c(\COp) = (\FreeOp(\DGSigma^{-1}\overline{\COp}),\partial'),
\end{equation}
where we consider the free operad $\FreeOp(-)$ on the arity-wise desuspension $\DGSigma^{-1}$
of the coaugmentation coideal of our cooperad $\overline{\COp}$
together with a twisting differential $\partial': \FreeOp(\DGSigma^{-1}\overline{\COp})\rightarrow\FreeOp(\DGSigma^{-1}\overline{\COp})$
which we determine by the composition coproduct of $\COp$ (we also refer to~\cite{FressePartition}
for a detailed survey of this construction).

In what follows, we consider a cochain complex such that:
\begin{equation}\label{DeformationComplexes:DGComplex:CobarConstruction:Complex}
\DGB_{op}^*(\COp) = \DGSigma\overline{\BB}{}_{op}^c(\COp),
\end{equation}
where $\overline{\BB}{}_{op}^c(\COp)$ denotes the augmentation ideal of the cobar operad $\BB_{op}^c(\COp)$,
and $\DGSigma$ is our suspension functor on dg-modules, which we apply arity-wise
to this symmetric collection $\overline{\BB}{}_{op}^c(\COp)$.
This object $\DGB_{op}^*(\COp)$ naturally forms a cochain complex in the category of symmetric collections in dg-modules, with a cochain grading,
referred to by the superscript $*$ in our notation $\DGB_{op}^*(\COp)$,
which is determined by the decomposition of the (augmentation ideal of the) free operad $\overline{\FreeOp}(-) = \bigoplus_{s\geq 1}\FreeOp_s(-)$
into components of homogeneous weight $\FreeOp_s(-)\subset\FreeOp(-)$, $s\geq 1$.
We explicitly set $\DGB_{op}^l(\COp) = \DGSigma\FreeOp_{l+1}(\DGSigma^{-1}\overline{\COp})$ for each $l\geq 0$,
and each of these homogeneous components of our complex naturally forms an object
of the category of symmetric collections in dg-modules by construction,
while the twisting differential $\partial'$
is equivalent to a homomorphism $\partial': \DGB_{op}^*(\COp)\rightarrow\DGB_{op}^{*+1}(\COp)$
that raises the cochain grading of our object by one (and decreases the internal lower grading by one).
If necessary, then we specify the natural (internal) grading of the objects $\DGB_{op}^l(\COp)$
by an external subscript in our notation $\DGB_{op}^l(\COp) = \DGB_{op}^l(\COp)_*$.

We now assume that $\COp$ is a coaugmented $\Lambda$-cooperad. The object $\DGB_{op}^*(\COp)$ inherits natural corestriction operators
in this situation, and we also have a coaugmentation $\epsilon_*: \overline{\ComOp}{}^c\rightarrow\DGB_{op}^*(\COp)$
so that $\DGB_{op}^*(\COp)$ forms a coaugmented $\Lambda$-collection. We refer to~\cite[Proposition C.2.18]{FresseBook}
for an explicit definition of this extra structure (in the dual context of the bar construction
of an augmented $\Lambda$-operad).
\end{rem}

\begin{constr}[The deformation complex of cooperads with coefficients in a bicomodule]\label{DeformationComplexes:DGComplex:CooperadBicomodule}
We define the deformation complex $\CoDef_{\dg^*\Lambda\Op^c}^*(\MOp,\COp)$ of the cooperad $\COp$
with coefficients in a bicomodule $\MOp$
as the dg-module of homomorphisms:
\begin{equation}\label{DeformationComplexes:DGComplex:CooperadBicomodule:Construction}
\CoDef_{\dg^*\Lambda\Op^c}^*(\MOp,\COp) = (\Hom_{\dg\Lambda\Seq^c}(\MOp,\DGB_{op}^*(\COp)),\partial'')
\end{equation}
where we take the $\Lambda$-collection underlying $\MOp$ as source object, the reduced cobar complex $\DGB_{op}^*(\COp)$
as target object and an extra twisting differential $\partial''$,
which we determine by the coaction of $\COp$ on $\MOp$.

We proceed as follows.
We use that the coaction of the cooperad $\COp$
on the bicomodule $\MOp$
can be represented by tree-wise coproducts $\rho: \MOp(r)\rightarrow\FreeOp_{\gammatree}(\MOp,\COp)$,
where $\FreeOp_{\gammatree}(\MOp,\COp)$ denotes a tensor product
of the objects $\COp$ and $\MOp$
over a tree $\gammatree$ with two vertices $x = u,v$, and $r$ ingoing edges indexed by $i = 1,\dots,r$.
The structure of such a tree can be determined by giving a partition $\{1,\dots,r\} = \{i_1,\dots,\widehat{i_e},\dots,i_k\}\amalg\{j_1,\dots,j_l\}$,
where $\{i_1,\dots,i_e,\dots,i_k\}$ serves to index the ingoing edges of the lower vertex of the tree $u$,
while $\{j_1,\dots,j_l\}$ serves to index the ingoing edges of the upper vertex $v$. The index $i_e$ is a dummy variable
which we associate to the inner edge of the tree $v\rightarrow u$
between the vertices $u,v$. We then set $\FreeOp_{\gammatree}(\MOp,\COp) = \MOp(k)\otimes\COp(l)\oplus\COp(k)\otimes\MOp(l)$
for any such tree $\gammatree$, and we define the tree-wise coproduct $\rho: \MOp(r)\rightarrow\FreeOp_{\gammatree}(\MOp,\COp)$
by the two-sided coproduct operation of~\S\ref{DeformationComplexes:Biderivations:Bicomodules}.
We also consider the sum $\FreeOp_2(\MOp,\COp)' = \bigoplus_{\gammatree}\FreeOp_{\gammatree}(\MOp,\COp)$
running over isomorphism classes of trees with two vertices $\gammatree$
and the operation $\rho: \MOp(r)\rightarrow\FreeOp_2(\MOp,\COp)$
that collects these two-fold tree-wise coproducts. Recall that the homogeneous component of weight two of the free operad $\FreeOp_2(-)$
is defined by the same sum of two-fold tree-wise tensor products $\FreeOp_2(-) = \bigoplus_{\gammatree}\FreeOp_{\gammatree}(-)$.

We now define the twisting differential $\partial''(h)$ of a homomorphism $h: \MOp\rightarrow\DGB_{op}^*(\COp)$
by the composite:
\begin{equation}\label{DeformationComplexes:DGComplex:CooperadBicomodule:TwistingDifferential}
\MOp\xrightarrow{\rho}\FreeOp_2(\MOp,\COp)'\xrightarrow{\partial_{\iota}(h)}\DGSigma\FreeOp_2(\BB^c_{op}(\COp))
\xrightarrow{\lambda}\DGSigma\overline{\BB}^c_{op}(\COp) = \DGB_{op}^*(\COp),
\end{equation}
where:
\begin{itemize}
\item
we perform our coproduct operation $\rho: \MOp\rightarrow\FreeOp_2(\MOp,\COp)$ first;
\item
we consider the map $\partial_{\iota}(h): \FreeOp_2(\MOp,\COp)'\rightarrow\DGSigma\FreeOp_2(\BB^c_{op}(\COp))$
given by the application
of our homomorphism
$\MOp\xrightarrow{h}\DGB_{op}^*(\COp) = \DGSigma\BB^c_{op}(\COp)\xrightarrow{\simeq}\BB^c_{op}(\COp)$
on the $\MOp$ factors of our tree-wise coproducts,
together with the universal morphism of the free operad
$\overline{\COp}\xrightarrow{\ecell^1\otimes-}\DGSigma^{-1}\overline{\COp}\xrightarrow{\iota}\FreeOp(\DGSigma^{-1}\overline{\COp}) = \BB^c_{op}(\COp)$
on the other factor;
\item
and we take the universal morphism $\FreeOp_2(\BB^c_{op}(\COp))\subset\FreeOp(\BB^c_{op}(\COp))\xrightarrow{\lambda}\BB^c_{op}(\COp)$
determined by the composition structure of the operad $\BB^c_{op}(\COp)$
afterwards.
\end{itemize}
We easily check that this mapping $\partial'': h\mapsto\partial''(h)$
fulfills the equation of a twisting differential
in the dg-module $\Hom_{\dg\Lambda\Seq^c}(\MOp,\DGB_{op}^*(\COp))$.

The object $\CoDef_{\dg^*\Lambda\Op^c}^*(\MOp,\COp)$ actually forms a cochain complex of dg-mo\-du\-les,
with an internal grading given by the natural grading of our dg-module
of homomorphisms,
a cochain grading such that
\begin{equation}\label{DeformationComplexes:DGComplex:CooperadBicomodule:Components}
\CoDef_{\dg^*\Lambda\Op^c}^l(\MOp,\COp) = \Hom_{\dg\Lambda\Seq^c}(\MOp,\DGB_{op}^l(\COp))
\end{equation}
for any $l\in\NN$,
and a total twisting differential $\partial = \partial'+\partial''$
that raises this cochain grading by one.
In what follows, we also use an extra subscript $v$ in order to distinguish this differential $\partial_v = \partial'_v+\partial''_v$
from an algebraic twisting differential which occurs in our deformation bicomplex of Hopf cooperads.

Let us mention that the deformation complex defined in this paragraph represents a generalization (in the context of cooperads)
of the operadic deformation complexes studied by the second author in~\cite{WillwacherGraphs}.
To be specific, in comparison with this reference, we consider general bicomodules (and not only cooperads)
as coefficient objects, and we take care of the extra corestriction operators
associated to our objects.
\end{constr}

We have the following structure statement:

\begin{prop}\label{DeformationComplexes:DGComplex:CobarModule}
In the case of a Hopf $\Lambda$-cooperad $\COp = \HOp$, the cooperadic cobar complex $\DGB_{op}^*(\HOp) = \DGSigma\overline{\BB}^c_{op}(\HOp)$
is naturally a complex of modules over the Hopf $\Lambda$-collection $\overline{\HOp}$.
\end{prop}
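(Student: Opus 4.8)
The plan is to build the module structure \emph{tree-wise}, using the iterated composition coproduct of the cooperad $\HOp$ to transport the commutative algebra structure of $\HOp(r)$ onto the decorations sitting at the vertices of the trees that index the underlying free operad $\FreeOp(\DGSigma^{-1}\overline{\HOp})$.

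First I would recall the tree-wise description: for $r>1$ the component $\DGB_{op}^*(\HOp)(r) = \DGSigma\,\overline{\FreeOp}(\DGSigma^{-1}\overline{\HOp})(r)$ decomposes as a sum, over isomorphism classes of reduced trees $\ttree$ with $r$ ingoing edges (all vertices $x$ of arity $|x|>1$), of tree-wise tensor products $\bigotimes_{x\in\ttree}\DGSigma^{-1}\overline{\HOp}(|x|)$. To any such tree $\ttree$ the cooperad structure of $\HOp$ associates an iterated composition coproduct $\Delta_{\ttree}\colon\HOp(r)\to\bigotimes_{x\in\ttree}\HOp(|x|)$, obtained by composing the operations $\circ_i^*$ along the inner edges of $\ttree$; this is well defined by the coassociativity and equivariance relations of cooperads, exactly as in the dual construction of tree-wise composites in an operad. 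Since $\HOp$ is a Hopf $\Lambda$-cooperad, each $\circ_i^*$ is a morphism of unitary commutative cochain dg-algebras, hence so is $\Delta_{\ttree}$.

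Then I would define the action $\HOp(r)\otimes\DGB_{op}^*(\HOp)(r)\to\DGB_{op}^*(\HOp)(r)$ on the $\ttree$-summand as the composite that applies $\Delta_{\ttree}$ to the $\HOp(r)$-factor and then lets each resulting factor $\HOp(|x|)$ act on the decoration $\DGSigma^{-1}\overline{\HOp}(|x|) = \DGSigma^{-1}\HOp(|x|)$ through the regular self-action of the algebra $\HOp(|x|)$ (and through the identity on $\kk\ecell^1$), with the Koszul sign rule governing the reshuffle of tensor factors; the outer suspension $\DGSigma$ is inert. I would check that this does not depend on the choice of representative tree $\ttree$ — using naturality and equivariance of the cooperad coproducts and of the algebra structures — and that it makes each $\DGB_{op}^l(\HOp)(r)$ a unital associative module over the commutative algebra $\HOp(r)$: unitality because $\Delta_{\ttree}(1)=\bigotimes_{x}1$ and $1$ acts as the identity on each factor, associativity because $\Delta_{\ttree}$ is an algebra morphism and the self-actions are associative. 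The $\Sigma_r$-equivariance, and the compatibility with the corestriction operators of the $\Lambda$-structure on $\DGB_{op}^*(\HOp)$ of \cite[Proposition C.2.18]{FresseBook}, likewise follow by transporting, through this tree-wise formula, the corresponding compatibilities built into the definition of a Hopf $\Lambda$-cooperad.

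The remaining, and main, point is compatibility with the differentials, which is where the Hopf hypothesis is genuinely used. The internal differential of $\HOp$ commutes with the action by naturality, so it suffices to show that the cobar twisting differential $\partial'$ is a morphism of $\overline{\HOp}$-modules. Now $\partial'$ is a sum of terms, each of which passes from a tree $\ttree$ to a tree $\ttree'$ obtained by splitting a single vertex $x$ of arity $m$ into two vertices of arities $m',m''$ with $m'+m''-1=m$, and applies the corresponding coproduct $\circ_j^*\colon\HOp(m)\to\HOp(m')\otimes\HOp(m'')$ to the decoration at $x$. Refining $\ttree$ to $\ttree'$ refines the iterated coproduct, $\Delta_{\ttree'}=(\id\otimes\cdots\otimes\circ_j^*\otimes\cdots\otimes\id)\circ\Delta_{\ttree}$, by coassociativity; hence the required equality $\partial'(a\cdot\xi)=\pm\,a\cdot\partial'(\xi)$ reduces, term by term, to the identity $\circ_j^*(a\cdot c)=\circ_j^*(a)\cdot\circ_j^*(c)$ for $a\in\HOp(m)$ and $c\in\DGSigma^{-1}\HOp(m)$, which is exactly the statement that $\circ_j^*$ is an algebra morphism. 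A routine check of Koszul signs then completes the verification and shows that $\DGB_{op}^*(\HOp)$, together with its cochain differential $\partial'$, is a complex in the category of modules over the Hopf $\Lambda$-collection $\overline{\HOp}$. The bookkeeping of signs, and of the $\Lambda$-equivariance, is the only technical part; the conceptual content is entirely carried by the fact that the composition coproducts of a Hopf cooperad are multiplicative.
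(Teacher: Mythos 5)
Your proposal is correct and follows essentially the same route as the paper's proof: a tree-wise decomposition of the free operad underlying the cobar construction, the iterated (tree-wise) composition coproduct $\HOp(r)\rightarrow\FreeOp_{\ttree}(\HOp)$ applied to the acting factor, vertex-wise multiplication on the decorations, and the Hopf distribution relation between products and composition coproducts to check compatibility with the twisting differential. Your vertex-splitting explanation of why $\partial'$ is a module map merely spells out the step the paper compresses into one sentence.
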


\begin{proof}
We use the expansion of the free operad $\FreeOp(-) = \bigoplus_{\ttree}\FreeOp_{\ttree}(-)$,
where the sum runs over (isomorphism classes of) trees $\ttree$,
and the summands $\FreeOp_{\ttree}(-)$
are tree-wise tensor products
associated to each object of the category of trees $\ttree$.
We define our left product operations $\lambda: \HOp(r)\otimes\DGB_{op}^*(\HOp)(r)\rightarrow\DGB_{op}^*(\HOp)(r)$
on each term $\DGSigma\FreeOp_{\ttree}(\DGSigma^{-1}\overline{\HOp})$
of this expansion in the complex $\DGB_{op}^*(\HOp) = \DGSigma\FreeOp(\DGSigma^{-1}\overline{\HOp})$.

We proceed as follows.
We perform a tree-wise coproduct operation $\rho_{\ttree}: \HOp(r)\rightarrow\FreeOp_{\ttree}(\HOp)$,
determined by the composition structure of our cooperad $\HOp$,
and we consider the morphism
$\mu_*: \FreeOp_{\ttree}(\HOp)\otimes\FreeOp_{\ttree}(\DGSigma^{-1}\overline{\HOp})\rightarrow\FreeOp_{\ttree}(\DGSigma^{-1}\overline{\HOp})$
given by the multiplication operations
$\HOp(r_v)\otimes\DGSigma^{-1}\HOp(r_v)\simeq\DGSigma^{-1}\HOp(r_v)\otimes\HOp(r_v)\rightarrow\DGSigma^{-1}\HOp(r_v)$
on the factors of this tree-wise tensor product, where $v$ runs over the vertices of the tree $\ttree$.
We can use the same definition to determine right product operations $\rho: \DGB_{op}^*(\HOp)(r)\otimes\HOp(r)\rightarrow\DGB_{op}^*(\HOp)(r)$ (which obviously agree
with the left product operations when we apply a symmetry isomorphism on the source).
The distribution relation between the commutative algebra product and the composition coproducts
in a Hopf cooperad implies that these product operations on $\DGB_{op}^*(\HOp)$
intertwine the twisting differential of the cobar construction.

We easily check that our product operations preserve the corestriction operators too, as well as the coaugmentation morphisms,
and the proof of our statement is therefore complete.
\end{proof}

\begin{rem}[Harrison complexes]\label{DeformationComplexes:DGComplex:HarrisonConstruction}
The classical Harrison chain complex with trivial coefficients $\BB_{com}(A)$
of an augmented unitary commutative algebra $A$
can be defined by the expression:
\begin{equation}\label{DeformationComplexes:DGComplex:HarrisonConstruction:QuasiCofreeDefinition}
\BB_{com}(A) = (\LLie^c(\DGSigma IA),\partial'),
\end{equation}
where we consider the cofree Lie coalgebra $\LLie^c(-)$ on the suspension $\DGSigma$
of the augmentation ideal of our algebra $IA$
and a twisting differential $\partial': \LLie^c(\DGSigma IA)\rightarrow\LLie^c(\DGSigma IA)$
which we determine by the product of $A$. This interpretation of the Harrison complex arises
from the application of the Koszul duality of operads
to commutative algebras~\cite{GinzburgKapranov}.

In what follows, we consider a chain complex:
\begin{equation}\label{DeformationComplexes:DGComplex:HarrisonConstruction:ChainComplex}
\DGB^{com}_*(A) = \DGSigma^{-1}\BB_{com}(A)
\end{equation}
which we form by taking the desuspension $\DGSigma^{-1}$ of this quasi-cofree Lie coalgebra $\BB_{com}(A)$.
In what follows, we use the expression of the Harrison chain complex
to refer to this desuspended dg-module
rather than to our initial quasi-cofree Lie coalgebra~(\ref{DeformationComplexes:DGComplex:HarrisonConstruction:QuasiCofreeDefinition}).
This object $\DGB^{com}_*(A)$ naturally forms a chain complex of dg-modules, with a chain grading,
referred to by the subscript $*$ in our notation $\DGB^{com}_*(A)$,
which is determined by the decomposition of the cofree Lie coalgebra $\LLie^c(-) = \bigoplus_s\LLie^c_s(-)$
into components of homogeneous weight $\LLie^c_s(-)$, $s\geq 1$.
We explicitly set $\DGB^{com}_k(A) = \DGSigma^{-1}\LLie^c_{k+1}(\DGSigma IA)$ for each $k\geq 0$, and each of these homogeneous components
of our complex naturally forms an object of the category of dg-modules,
while the twisting differential $\partial'$
is equivalent to a homomorphism $\partial': \DGB^{com}_*(A)\rightarrow\DGB^{com}_{*-1}(A)$
that decreases (both) the chain grading (and the natural grading)
of our object by one.
If necessary, then we specify the internal (natural) grading of the objects $\DGB^{com}_k(\COp)$
by an external subscript in our notation $\DGB^{com}_k(\COp) = \DGB^{com}_k(\COp)_*$.

The Harrison cochain complex $\Def_{\dg^*\ComCat_+}^*(A,N)$ of a (plain) augmented unitary commutative algebra $A$
with coefficients in a module $N$
is defined by the dg-module of homomorphisms:
\begin{equation}\label{DeformationComplexes:DGComplex:HarrisonConstruction:CochainComplex}
\Def_{\dg^*\ComCat_+}^*(A,N) = (\Hom_{\dg\Mod}(\DGB^{com}_*(A),N),\partial'')
\end{equation}
where we consider the chain complex $\DGB^{com}_*(A)\in\dg\Mod$ as source object, the module $N$ as target object
and an extra twisting differential $\partial''$,
which we determine by the action of $A$ on $N$ (we refer to~\cite{Balavoine} for the explicit definition
of this twisting differential in our operadic approach
of the Harrison cohomology).
This object $\Def_{\dg^*\ComCat_+}^*(A,N)$ actually forms a cochain complex of dg-mo\-du\-les,
with an internal grading given by the natural grading of our dg-module
of homomorphisms,
a cochain grading such that
\begin{equation}\label{DeformationComplexes:DGComplex:HarrisonConstruction:CochainComplexComponents}
\Def_{\dg^*\ComCat_+}^k(A,N) = \Hom_{\dg\Mod}(\DGB^{com}_k(A),N),
\end{equation}
for any $k\in\NN$, and a total twisting differential $\partial = \partial'+\partial''$
that raises this cochain grading by one.
\end{rem}

\begin{constr}[The Harrison complex and the deformation complex of Hopf collections]\label{DeformationComplexes:DGComplex:CommutativeAlgebraModule}
We extend the construction of the previous section to Hopf $\Lambda$-collections $\AOp\in\dg^*\Hopf\Lambda\Seq_{>1}^c$
equipped with an augmentation $\eta_*: \AOp\rightarrow\overline{\ComOp}{}^c$
over the Hopf collection $\overline{\ComOp}{}^c(r) = \kk$, $r>1$,
that underlies the commutative cooperad $\ComOp^c$.
We then form the dg-modules:
\begin{equation}\label{DeformationComplexes:DGComplex:CommutativeAlgebraModule:HarrisonConstruction}
\DGB^{com}_*(\AOp)(r) = \DGB^{com}_*(\AOp(r)),
\end{equation}
for $r>1$, where we consider the Harrison chain complex of the collection of augmented unitary commutative algebras $\AOp(r)$
underlying $\AOp$.
These dg-modules clearly form a $\Lambda$-collection $\DGB^{com}_*(\AOp)\in\dg\Lambda\Seq_{>1}^c$ (by functoriality of the Harrison complex construction).
In the case where we have a module $\NOp$ over $\AOp$ (in the sense of the definition of~\S\ref{DeformationComplexes:Biderivations:Modules})
we also form the end
\begin{equation}\label{DeformationComplexes:DGComplex:CommutativeAlgebraModule:EndConstruction}
\Def_{\dg\Hopf\Lambda\Seq^c}^*(\AOp,\NOp) = \int_{\rset\in\Lambda}(\Hom_{\dg\Mod}(\DGB^{com}_*(\AOp(r)),\NOp(r)),\partial'')
\end{equation}
to get a Harrison cochain complex $\Def_{\dg\Hopf\Lambda\Seq^c}^*(\AOp,\NOp)\in\dg\Mod$
with coefficients in $\NOp$.
Thus, an element of this cochain complex $h\in\Def_{\dg\Hopf\Lambda\Seq^c}^*(\AOp,\NOp)$ consists of a collection
of homomorphisms $h: \DGB^{com}_*(\AOp(r))\rightarrow\NOp(r)$
that intertwine the action of the corestriction operators on our objects and define Harrison cochains in the classical sense for the commutative algebras $\AOp(r)$
and the modules of coefficients $\NOp(r)$.
We equivalently have:
\begin{equation}\label{DeformationComplexes:DGComplex:CommutativeAlgebraModule:HomConstruction}
\Def_{\dg\Hopf\Lambda\Seq^c}^*(\AOp,\NOp) = (\Hom_{\dg\Lambda\Seq^c}(\DGB^{com}_*(\AOp),\NOp),\partial''),
\end{equation}
where we consider the dg-module of $\Lambda$-collection homomorphisms $h: \DGB^{com}_*(\AOp)\rightarrow\NOp$.

The differential of a homomorphism $h$ in $\Def_{\dg\Hopf\Lambda\Seq^c}^*(\AOp,\NOp)$
is defined by an arity-wise application
of the Harrison differential.
In what follows, we also use an extra $h$ subscript in order to distinguish this differential $\partial_h = \partial'_h+\partial''_h$
from the operadic differential of our deformation bicomplex.
\end{constr}

We have the following structure statement:

\begin{prop}\label{DeformationComplexes:DGComplex:HarrisonComodule}
In the case of the coaugmentation coideal $\AOp = \overline{\PiOp}$
of an augmented Hopf $\Lambda$-cooperad $\PiOp$,
the Harrison complex with trivial coefficients $\DGB^{com}_*(\overline{\PiOp}) = \DGSigma(\LLie^c(\DGSigma\IOp\PiOp),\partial'_v)$
naturally forms a complex of bicomodules over the coaugmented $\Lambda$-cooperad
underlying~$\PiOp$.
\end{prop}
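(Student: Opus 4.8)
The plan is to dualize the proof of Proposition~\ref{DeformationComplexes:DGComplex:CobarModule}. Recall from Remark~\ref{DeformationComplexes:DGComplex:HarrisonConstruction} that $\DGB^{com}_*(\overline{\PiOp})$ is built arity-wise from the cofree Lie coalgebras $\LLie^c(\DGSigma\IOp\PiOp(r))$, with the weight decomposition $\LLie^c(-) = \bigoplus_{s\geq 1}\LLie^c_s(-)$, where each $\LLie^c_s(V)$ is a functorial subquotient of $V^{\otimes s}$, and $\DGB^{com}_k(\overline{\PiOp}) = \DGSigma^{-1}\LLie^c_{k+1}(\DGSigma\IOp\PiOp)$. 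I would equip this complex with left and right coaction operations $\circ_i^*: \DGB^{com}_*(\overline{\PiOp})(k+l-1)\rightarrow\DGB^{com}_*(\overline{\PiOp})(k)\otimes\PiOp(l)$ and $\circ_i^*: \DGB^{com}_*(\overline{\PiOp})(k+l-1)\rightarrow\PiOp(k)\otimes\DGB^{com}_*(\overline{\PiOp})(l)$, in the sense of Definition~\ref{DeformationComplexes:Biderivations:Bicomodules}, defined on each weight component $\DGB^{com}_k(\overline{\PiOp})$, just as the module structure of Proposition~\ref{DeformationComplexes:DGComplex:CobarModule} is defined on the tree-wise summands of the cobar construction.

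The construction of the left coaction (the right one being symmetric) proceeds in two steps mirroring those of Proposition~\ref{DeformationComplexes:DGComplex:CobarModule}. By Definition~\ref{DeformationComplexes:Biderivations:Biderivations}, the augmentation ideal $\IOp\PiOp$ carries the structure of a bicomodule over the cooperad underlying $\PiOp$, its left coaction $\IOp\PiOp(k+l-1)\rightarrow\IOp\PiOp(k)\otimes\PiOp(l)$ being obtained from the Hopf cooperad coproduct $\circ_i^*: \PiOp(k+l-1)\rightarrow\PiOp(k)\otimes\PiOp(l)$ by projecting the left tensor factor onto $\IOp\PiOp(k)$ along the augmentation splitting $\PiOp(k) = \ComOp^c(k)\oplus\IOp\PiOp(k)$. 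Applying this coaction to each of the $s$ tensor slots of an element of $\LLie^c_s(\DGSigma\IOp\PiOp(k+l-1))$ — which is legitimate by functoriality of $\LLie^c_s(-)$ — lands it in $\LLie^c_s(\DGSigma\IOp\PiOp(k)\otimes\PiOp(l))$; one then pushes forward along the $s$-fold multiplication $\PiOp(l)^{\otimes s}\rightarrow\PiOp(l)$ performed on the $\PiOp(l)$-slots. This last map is $\Sigma_s$-equivariant (the symmetric group acting by permutation with Koszul signs on the source and trivially on the target) because $\PiOp(l)$ is a commutative algebra, so it descends to a well-defined morphism $\LLie^c_s(\DGSigma\IOp\PiOp(k)\otimes\PiOp(l))\rightarrow\LLie^c_s(\DGSigma\IOp\PiOp(k))\otimes\PiOp(l)$, and the composite of the two steps is the desired left coaction. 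This is exactly the mirror of the recipe of Proposition~\ref{DeformationComplexes:DGComplex:CobarModule}, where one instead coproducts a single $\HOp(r)$-element over the vertices of a tree and multiplies the resulting pieces into the cobar factors.

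It then remains to verify that these operations make $\DGB^{com}_*(\overline{\PiOp})$ a complex of bicomodules over the coaugmented $\Lambda$-cooperad underlying $\PiOp$: namely the equivariance, counit and coassociativity relations of a bicomodule, the compatibility with corestriction operators and with the (null) coaugmentation of bicomodules in the $\Lambda$-setting, and the fact that the Harrison twisting differential $\partial'_v$ is a morphism of bicomodules (so that the coaction preserves the weight grading and intertwines $\partial'_v$ on both sides). The structural relations and the $\Lambda$-compatibility follow arity-wise and slot-wise from the corresponding properties of the bicomodule $\IOp\PiOp$ (Definition~\ref{DeformationComplexes:Biderivations:Biderivations}) together with the functoriality of $\LLie^c_s(-)$ and the equivariance of the multiplication of $\PiOp(l)$. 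The compatibility of $\partial'_v$ with the coaction is the dual of the argument in the proof of Proposition~\ref{DeformationComplexes:DGComplex:CobarModule}: $\partial'_v$ is built from the commutative products of the algebras $\PiOp(r)$, the coaction is built from the composition coproducts $\circ_i^*$, and the distribution relation between the product and the coproducts in a Hopf cooperad forces the two to commute. The main obstacle — and the only delicate point — is precisely this interplay carried out on the cofree Lie coalgebra: one must check that ``multiply the cooperad-side slots together'' genuinely descends to $\LLie^c_s(-)$ and intertwines $\partial'_v$ with the correct Koszul signs, which, once the functorial description of $\LLie^c_s(-)$ is unwound, reduces slot-by-slot to the Hopf distribution relation. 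As in the cobar case this verification is straightforward but notationally heavy.
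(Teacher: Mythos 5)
Your proposal is correct and follows essentially the same route as the paper's proof: expand the cofree Lie coalgebra into its weight components $\LLie^c_s(-)$ as (sub)quotients of tensor powers, apply the composition coproduct of $\PiOp$ slot-wise, gather and multiply the cooperad-side factors using the commutativity of the algebras $\PiOp(r)$ (which is what makes the map descend to $\LLie^c_s$), and invoke the Hopf distribution relation between products and composition coproducts to see that the Harrison twisting differential is intertwined. The only cosmetic difference is that you write out the coaction keeping $\IOp\PiOp$ on the $k$-side while the paper displays the symmetric one keeping it on the $l$-side, with the other obtained "similarly" in both cases.
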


\begin{proof}
We use the Lie cooperad $\LieOp^c$ and the operadic expansion of the cofree Lie coalgebra
%\begin{equation*}
$\LLie^c(X) = \bigoplus_r(\LieOp^c(r)\otimes X^{\otimes r})_{\Sigma_r}$
%\end{equation*}
in order to check this statement.
We explicitly define the left coproducts of our bicomodule structure
as the composites:
\begin{equation*}
\LLie^c_s(\DGSigma\IOp\PiOp(k+l-1))
\xrightarrow{\circ_i^*}\LLie^c_s(\PiOp(k)\otimes\DGSigma\IOp\PiOp(l))
\xrightarrow{\mu}\PiOp(k)\otimes\LLie^c_s(\DGSigma\IOp\PiOp(l)),
\end{equation*}
where we perform the coproduct $\circ_i^*: \IOp\PiOp(k+l-1)\rightarrow\PiOp(k)\otimes\IOp\PiOp(l)$, inherited from the cooperad $\PiOp$,
on each tensor factor of the cofree Lie coalgebra first,
and we gather and multiply the factors $\PiOp(k)$ together afterwards.
We proceed similarly with the right coproducts.
The distribution relation between the commutative algebra product and the composition coproducts
in a Hopf cooperad implies, again, that these coproduct operations on $\DGB^{com}_*(\overline{\PiOp})$
intertwine the twisting differential of the Harrison construction.
\end{proof}

\begin{constr}[The deformation complex of Hopf cooperads]\label{DeformationComplexes:DGComplex:HopfCooperad}
We now assume that $\PiOp$ is an augmented Hopf $\Lambda$-cooperad (as stated in the introduction
of this subsection), that $\HOp$ is another Hopf $\Lambda$-cooperad,
and that we have a morphism between these operads $\chi: \PiOp\rightarrow\HOp$.
The object $\DGB^{com}_*(\PiOp)$ in Proposition~\ref{DeformationComplexes:DGComplex:HarrisonComodule}
forms a complex of bicomodules over $\HOp$ by corestriction of structure through $\chi$,
while the object $\DGB_{op}^*(\HOp)$ inherits the structure of a complex of modules
over the Hopf $\Lambda$-collection $\overline{\PiOp}$.

We then set
\begin{equation}
D^{k l} = D^{k l}(\PiOp,\HOp) = \Hom_{\dg\Lambda\Seq^c}(\DGB^{com}_k(\PiOp),\DGB_{op}^l(\HOp)),
\end{equation}
for each bidegree $(k,l)\in\NN^2$.
This double sequence of dg-modules inherits a horizontal twisting differential $\partial_h: D^{k l}\rightarrow D^{k+1 l}$,
which we deduce from the identity of our object with the Harrison cochain complex $D^{* l} = \Def_{\dg^*\ComCat_+}(\PiOp,\NOp)$
for $\NOp = \DGB_{op}^l(\HOp)$,
and a vertical twisting differential $\partial_v: D^{k l}\rightarrow D^{k l+1}$
which we deduce from the identity of our object with the operadic deformation complex $D^{k *} = \CoDef_{\dg^*\ComCat_+}(\MOp,\HOp)$
for $\MOp = \DGB^{com}_k(\PiOp)$.
We readily check that these twisting differentials (anti)commute to each other (by using the distribution relation
between the commutative algebra product and the composition coproducts
in a Hopf cooperad again).
We then define the deformation bicomplex $D^{* *} = D^{* *}(\PiOp,\HOp)$
of the Hopf $\Lambda$-cooperads $(\PiOp,\HOp)$
as the bicomplex of dg-modules
\begin{equation}\label{DeformationComplexes:DGComplex:HopfCooperad:Expression}
D^{* *} = \BiDef^{* *}_{\dg^*\Hopf\Lambda\Op^c}(\PiOp,\HOp) = (\Hom_{\dg\Lambda\Seq^c}(\DGB^{com}_*(\PiOp),\DGB_{op}^*(\HOp)),\partial''_h+\partial''_v),
\end{equation}
which we form from this bigraded cochain complex of dg-modules, and where we take the sum of the differentials of the Harrison
and operadic deformation complexes as twisting differential.
\end{constr}

We now consider the bicosimplicial complex of biderivations
\begin{equation*}
B^{\bullet\,\bullet} = B^{\bullet\,\bullet}(\PiOp,\HOp) = \BiDer_{\dg^*\Hopf\Lambda\Op^c}(\Res^{com}_{\bullet}(\PiOp),\Res_{op}^{\bullet}(\HOp))
\end{equation*}
which we associate to the Hopf $\Lambda$-cooperads $(\PiOp,\HOp)$ in~\S\ref{DeformationComplexes:BicosimplicialComplex},
and where, to define our biderivation relations, we consider the morphisms $\chi^0: \Res^{com}_k(\PiOp)\rightarrow\Res_{op}^l(\HOp)$
formed by composing $\chi: \PiOp\rightarrow\HOp$
with the augmentation of the cotriple resolution $\epsilon: \Res^{com}_k(\PiOp)\rightarrow\PiOp$
and with the coaugmentation of the triple coresolution $\eta: \HOp\rightarrow\Res_{op}^l(\HOp)$.
In our obstruction problem, we also consider the conormalized complex of the diagonal cosimplicial complex
of this bicomplex of biderivations.
We can actually form a conormalized complex in each direction to get a cochain bicomplex whose total complex is,
according to the (cosimplicial version of the) Eilenberg-Zilber theorem,
weakly equivalent to this diagonal conormalized complex. We have the following result
which connects this bicomplex of biderivations to the deformation bicomplex
of the previous paragraph:

\begin{thm}\label{DeformationComplexes:DGComplex:MainResult}
We consider a morphism of Hopf $\Lambda$-cooperads $\chi: \PiOp\rightarrow\HOp$.

We still assume that $\PiOp$ is equipped with an augmentation over the commutative cooperad $\ComOp^c$,
and that the augmentation ideals $\IOp\PiOp(r)$ of the algebras $\PiOp(r)$, $r>1$,
form a free $\Lambda$-collection $\IOp\PiOp = \Lambda\otimes_{\Sigma}\SOp\PiOp$,
for some symmetric collection such that $\SOp\PiOp\subset\IOp\PiOp$ (as we explain in the introduction of this subsection).
We then have a weak-equivalence of bicomplexes of dg-modules
\begin{equation*}
\DGN^{* *}\BiDer_{\dg^*\Hopf\Lambda\Op^c}(\Res^{com}_{\bullet}(\PiOp),\Res_{op}^{\bullet}(\HOp))
\xrightarrow{\sim}\BiDef^{* *}_{\dg^*\Hopf\Lambda\Op^c}(\PiOp,\HOp),
\end{equation*}
between the conormalized bicomplex of the bicosimplicial complex
of biderivations $B^{\bullet\,\bullet} = \BiDer_{\dg^*\Hopf\Lambda\Op^c}(\Res^{com}_{\bullet}(\PiOp),\Res_{op}^{\bullet}(\HOp))$
and the deformation bicomplex of~\S\ref{DeformationComplexes:DGComplex:HopfCooperad}:
\begin{equation*}
D^{* *} = \BiDef^{* *}_{\dg^*\Hopf\Lambda\Op^c}(\PiOp,\HOp).
\end{equation*}
\end{thm}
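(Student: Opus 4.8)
The plan is to route the comparison through the explicit free/cofree descriptions of the two resolutions and to reduce the statement, one direction at a time, to a classical Koszul-duality computation. First I would combine Theorem~\ref{DeformationComplexes:BicosimplicialComplex:MainResult} with Proposition~\ref{DeformationComplexes:Biderivations:ModuleReduction} and the relations $\Res^{com}_k(\PiOp) = \ComOp^c/\Sym(\DGC^{com}_k(\PiOp))$, $\Res_{op}^l(\HOp) = \FreeOp^c(\DGC_{op}^l(\HOp))$ — together with the splitting $\overline{\DGC}^{com}_k(\PiOp) = \overline{\ComOp}^c\oplus\DGI\DGC^{com}_k(\PiOp)$, the identity $\DGI\DGC^{com}_k(\PiOp) = (\II\Sym)^k(\IOp\PiOp)$, and $\DGC_{op}^l(\HOp) = (\overline{\FreeOp}^c)^l(\overline{\HOp})$ — to identify the bicosimplicial biderivation module with $B^{k l}\simeq\Hom_{\dg\Lambda\Seq^c}(\DGI\DGC^{com}_k(\PiOp),\DGC_{op}^l(\HOp))$. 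Since $\Hom_{\dg\Lambda\Seq^c}(-,\NOp)$ is additive, it carries normalized chains of a simplicial $\Lambda$-collection to conormalized cochains, so after conormalizing in the algebraic direction the left-hand side of the theorem becomes, still cosimplicial in the operadic variable, $\Hom_{\dg\Lambda\Seq^c}\bigl(\DGN_*\DGI\DGC^{com}_\bullet(\PiOp),\DGC_{op}^\bullet(\HOp)\bigr)$.

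The algebraic direction is then handled by recognizing $\DGI\DGC^{com}_\bullet(\PiOp)$, arity by arity, as the reduced monadic cotriple resolution of the augmented commutative algebra $\PiOp(r)$ for the free commutative algebra monad $\Sym$. The classical description of the homology of this resolution — equivalently, the Koszul duality between $\ComOp$ and $\LieOp$, i.e.\ the identification of Harrison homology with the Lie-cogenerated summand of the bar construction — provides a natural quasi-isomorphism $\DGB^{com}_*(\PiOp(r)) = \DGSigma^{-1}\LLie^c(\DGSigma\IOp\PiOp(r))\xrightarrow{\sim}\DGN_*\DGI\DGC^{com}_\bullet(\PiOp)(r)$, split by the projection onto Lie cogenerators. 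Checking compatibility with the corestriction operators upgrades this to a natural quasi-isomorphism of $\Lambda$-collections $\DGB^{com}_*(\PiOp)\xrightarrow{\sim}\DGN_*\DGI\DGC^{com}_\bullet(\PiOp)$, and applying $\Hom_{\dg\Lambda\Seq^c}(-,\DGC_{op}^\bullet(\HOp))$ turns the previous display into $\Hom_{\dg\Lambda\Seq^c}\bigl(\DGB^{com}_*(\PiOp),\DGC_{op}^\bullet(\HOp)\bigr)$, that is, by Construction~\ref{DeformationComplexes:DGComplex:CommutativeAlgebraModule}, the Harrison cochain complex $\Def_{\dg\Hopf\Lambda\Seq^c}^*(\PiOp,\DGC_{op}^\bullet(\HOp))$.

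The operadic direction is symmetric: $\DGC_{op}^\bullet(\HOp) = (\overline{\FreeOp}^c)^\bullet(\overline{\HOp})$ is the cotriple coresolution of the cooperad $\HOp$ for the free-cooperad monad $\FreeOp^c\bar{\omega}$, and the operadic bar--cobar formalism supplies a natural comparison morphism from its conormalization to the cobar complex $\DGB_{op}^*(\HOp)$ (whose degree-$l$ part is $\DGSigma\FreeOp_{l+1}(\DGSigma^{-1}\overline{\HOp})$), exhibiting the cobar complex as the small Koszul model of this coresolution; one again checks this is compatible with the $\Lambda$-structure. Feeding it into $\Hom_{\dg\Lambda\Seq^c}(\DGB^{com}_*(\PiOp),-)$ and conormalizing in the operadic variable produces the map
\[
\DGN^{* *}\BiDer_{\dg^*\Hopf\Lambda\Op^c}(\Res^{com}_\bullet(\PiOp),\Res_{op}^\bullet(\HOp))\longrightarrow\Hom_{\dg\Lambda\Seq^c}(\DGB^{com}_*(\PiOp),\DGB_{op}^*(\HOp)) = D^{* *}.
\]
Carrying the comparison out one direction at a time is legitimate because each step is a quasi-isomorphism in every fixed complementary degree — hence a quasi-isomorphism of bicomplexes, whose total complexes are then quasi-isomorphic by the bicomplex spectral sequence, consistently with the Eilenberg--Zilber reduction already invoked. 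Its compatibility with the two twisting differentials $\partial_h$ and $\partial_v$, and the naturality of all the maps in the coefficient objects, both come from the distribution relation between the commutative product and the composition coproducts in a Hopf cooperad — the same relation used in Propositions~\ref{DeformationComplexes:DGComplex:CobarModule} and~\ref{DeformationComplexes:DGComplex:HarrisonComodule}.

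The hard part, and the only place the hypothesis $\IOp\PiOp = \Lambda\otimes_\Sigma\SOp\PiOp$ is really used, is guaranteeing that the functors $\Hom_{\dg\Lambda\Seq^c}(-,\NOp)$ and $\Hom_{\dg\Lambda\Seq^c}(\DGB^{com}_*(\PiOp),-)$ preserve the above quasi-isomorphisms: $\Hom_{\dg\Lambda\Seq^c}$ is an end over $\Lambda$, hence only left exact in general. Freeness over $\Lambda$ is exactly what reduces such a hom to a hom of $\Sigma$-collections — i.e.\ to symmetric-group invariants, which are exact over a characteristic-zero field — and propagating this control through the iterates $(\II\Sym)^k(\IOp\PiOp)$ and through the Harrison complex (whose underlying $\Lambda$-collection is not itself free) requires a filtration of the objects by weight and an induction. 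Carrying the classical Harrison and operadic cobar comparisons through the $\Lambda$-equivariant setting, and checking that the resulting comparison is a genuine morphism of bicomplexes rather than merely a degreewise quasi-isomorphism, is the main technical burden of the proof.
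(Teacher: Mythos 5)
Your proposal follows essentially the same route as the paper's proof: reduce the biderivation modules to homs of the generating objects $\DGI\DGC^{com}_k(\PiOp)$ and $\DGC_{op}^l(\HOp)$, compare these to the Harrison and cobar complexes via the classical Koszul/bar--cobar quasi-isomorphisms, use the freeness of $\IOp\PiOp$ over $\Lambda$ (propagated through tensors by Proposition~\ref{CotripleResolution:FreeCollectionTensors}) to make the hom-functors homotopy invariant, and conclude by a spectral sequence argument. The only small discrepancy is your remark that the Harrison complex is not free as a $\Lambda$-collection: the paper shows it \emph{is} degree-wise free, via the operadic expansion of the cofree Lie coalgebra and the same tensor-product proposition, so no separate induction is needed there.
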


\begin{proof}
Proposition~\ref{DeformationComplexes:Biderivations:ModuleReduction}
implies that the modules of biderivations
\begin{gather}
B^{k l} = \BiDer_{\dg^*\Hopf\Lambda\Op^c}(\Res^{com}_k(\PiOp),\Res_{op}^l(\HOp))
\intertext{are isomorphic to dg-modules of homomorphisms}
C^{k l} = \Hom_{\dg\Lambda\Seq^c}(\DGI\DGC^{com}_k(\PiOp),\DGC_{op}^l(\HOp)).
\end{gather}
Recall that the object $\DGI\DGC^{com}_{\bullet}(\PiOp)$ is preserved by the degeneracy operators of the cotriple resolution of our Hopf cooperad
and that $\DGC_{op}^{\bullet}(\HOp)$ is similarly preserved by the codegeneracy operators
of the triple coresolution.
We accordingly have a component-wise isomorphism when we pass to the conormalized bicomplex:
\begin{equation}\label{DeformationComplexes:DGComplex:MainResult:BiderivationComplexComponents}
\DGN^{k l}(B^{\bullet\,\bullet})\simeq\Hom_{\dg\Lambda\Seq^c}(\DGN_k\DGI\DGC^{com}_{\bullet}(\PiOp),\DGN^l\DGC_{op}^{\bullet}(\HOp)),
\end{equation}
where we use these internal degeneracy and codegeneracy operators to define the normalized complex
of the object $\DGI\DGC^{com}_{\bullet}(\PiOp)$
and the conormalized complex of the object $\DGC_{op}^{\bullet}(\HOp)$.
We eventually get that the conormalized bicomplex $\DGN^{* *}(B^{\bullet\,\bullet})$
is isomorphic to a bicomplex formed by the dg-modules~(\ref{DeformationComplexes:DGComplex:MainResult:BiderivationComplexComponents})
together with twisting differentials $\partial_h$ and $\partial_v$
which we transport from $\DGN^{* *}(B^{\bullet\,\bullet})$.
We can still decompose the horizontal twisting differential $\partial_h$ into a sum $\partial_h = \partial'_h+\partial''_h$,
where $\partial'_h$ is yielded by the alternate sum of the face operators of the cotriple resolution $d_i$ such that $i>0$,
whereas $\partial''_h$ is yielded by the $0$-face $d_0$.
We can actually identify $\partial'_h$ with a differential of the complex $\DGN_*\DGI\DGC^{com}_{\bullet}(\PiOp)$
since these face operators $d_i$ such that $i>0$
preserve $\DGI\DGC^{com}_{\bullet}(\PiOp)$
inside the cotriple resolution. We have a similar observation for the vertical twisting differential $\partial_v$
which we can decompose into $\partial_v = \partial'_v+\partial''_v$,
where $\partial'_v$ is identified with a differential of the complex $\DGN^*\DGC_{op}^{\bullet}(\HOp)$
which is determined by the action of the coface operators $d^i$ such that $i>0$
on the object $\DGC_{op}^{\bullet}(\HOp)$.
Hence, we eventually get:
\begin{equation}\label{DeformationComplexes:DGComplex:MainResult:BiderivationComplexReduction}
\DGN^{* *}(B^{\bullet\,\bullet})\simeq(\Hom_{\dg\Lambda\Seq^c}(\DGN_*\DGI\DGC^{com}_{\bullet}(\PiOp),\DGN^*\DGC_{op}^{\bullet}(\HOp)),\partial''_h+\partial''_v),
\end{equation}

We have an arity-wise identity:
\begin{equation}\label{DeformationComplexes:DGComplex:MainResult:CotripleComplexComponents}
\DGN_*\DGI\DGC^{com}_{\bullet}(\PiOp(r)) = \DGN_*(\underbrace{\II\Sym\circ\cdots\circ\II\Sym}_{\bullet}(\IOp\PiOp(r))),
\end{equation}
and we have, according to~\cite{FressePartition}, a chain of weak-equivalences that connects the Harrison complex with trivial coefficients
to this normalized chain complex, where we retain the component $\partial'_h$
of our twisting differential:
\begin{equation}\label{DeformationComplexes:DGComplex:MainResult:AritywiseCotripleComplexReduction}
\DGSigma(\LLie^c(\DGSigma\IOp\PiOp(r)),\partial')\xrightarrow{\sim}\cdot
\xrightarrow{\sim}\DGN_*(\underbrace{\II\Sym\circ\cdots\circ\II\Sym}_{\bullet}(\IOp\PiOp(r))).
\end{equation}
We refer to \emph{loc. cit.} for the explicit definition of this mapping. We easily check from this construction
that our map preserves corestriction operators as well as the coaction of the Hopf cooperad $\PiOp$
on our objects and hence, defines a weak-equivalence of chain complexes of bicomodules
over the $\Lambda$-cooperad $\PiOp$:
\begin{equation}\label{DeformationComplexes:DGComplex:MainResult:CotripleComplexReduction}
\DGB^{com}_*(\PiOp)\xrightarrow{\sim}\DGN_*\DGI\DGC^{com}_{\bullet}(\PiOp).
\end{equation}

We have an analogous weak-equivalence that connects the cooperadic cobar complex to the conormalized cochain complex
of the object $\DGC_{op}^{\bullet}(\HOp)$:
\begin{equation}\label{DeformationComplexes:DGComplex:MainResult:TripleComplexReduction}
\DGN^*\DGC_{op}^{\bullet}(\HOp)\xrightarrow{\sim}\DGB_{op}^*(\HOp).
\end{equation}
We refer to~\cite[Proposition C.2.16]{FresseBook} and to~\cite{Livernet} for (a dual version, in the context of operads, of) this construction.
We easily check from the explicit definition of these references that this map preserves the action
of the Hopf collection $\overline{\HOp}$
on our objects, and hence, defines a weak-equivalence of chain complexes
of modules over~$\overline{\HOp}$.

We plug these maps (\ref{DeformationComplexes:DGComplex:MainResult:CotripleComplexReduction}-\ref{DeformationComplexes:DGComplex:MainResult:TripleComplexReduction})
in our dg-modules of homomorphisms~(\ref{DeformationComplexes:DGComplex:MainResult:BiderivationComplexReduction})
to get a comparison map:
\begin{multline}\label{DeformationComplexes:DGComplex:MainResult:ComparisonMap}
(\Hom_{\dg\Lambda\Seq^c}(\DGN_*\DGI\DGC^{com}_{\bullet}(\PiOp),\DGN^*\DGC_{op}^{\bullet}(\HOp)),\partial''_h+\partial''_v)
\\
\rightarrow(\Hom_{\dg\Lambda\Seq^c}(\DGB^{com}_*(\PiOp),\DGB_{op}^*(\HOp)),\partial''_h+\partial''_v).
\end{multline}
We easily check that this map preserves the extra twisting differentials $\partial''_h$ and $\partial''_v$
of our object.

We observe in~\S\ref{CotripleResolution} that $\DGI\DGC^{com}_{\bullet}(\PiOp)$ is dimension-wise free as $\Lambda$-collection
when $\IOp\PiOp$ satisfies the assumption of the theorem.
We have a similar result for the Harrison complex $\DGB^{com}_*(\PiOp)$ (we then use the operadic expansion of the cofree Lie algebra
underlying $\DGB^{com}_*(\PiOp)$ and the result of Proposition~\ref{CotripleResolution:FreeCollectionTensors}).
These structure results imply that both $\DGN_*\DGI\DGC^{com}_{\bullet}(\PiOp)$ and $\DGB^{com}_*(\PiOp)$
form cofibrant objects of the category of $\Lambda$-collections
in dg-modules (with respect to the projective model structure),
and as a by-product, we get that our comparison maps (\ref{DeformationComplexes:DGComplex:MainResult:CotripleComplexReduction}-\ref{DeformationComplexes:DGComplex:MainResult:TripleComplexReduction})
induce a weak-equivalence on the hom-objects
of~(\ref{DeformationComplexes:DGComplex:MainResult:ComparisonMap}).
We can then use an obvious spectral sequence argument to conclude that our map in~(\ref{DeformationComplexes:DGComplex:MainResult:ComparisonMap})
induces a weak-equivalence on total complexes,
and hence,
to complete the proof of the theorem (compare with~\cite[Theorem III.3.1.4]{FresseBook}).
\end{proof}

\subsection{The application of the Koszul duality of operads}\label{DeformationComplexes:KoszulDuality}
We now examine the case where $\HOp$ is (isomorphic to) the $m$-Poisson cooperad $\HOp = \PoisOp_m^c$
in the deformation bicomplex $\BiDef^{* *}_{\dg^*\Hopf\Lambda\Op^c}(\PiOp,\HOp)$
of the previous section.
We have in this case a reduction of the operadic cobar construction $\BB_{op}^c(\PoisOp_m^c)$ given by the observation
that the $m$-Poisson operad (and the $m$-Poisson cooperad dually) is Koszul.
We more precisely have a weak-equivalence of operads
\begin{equation*}
\kappa: \BB_{op}^c(\PoisOp_m^c)\xrightarrow{\sim}\SuspOp^m\PoisOp_m,
\end{equation*}
where $\SuspOp^m$ refers to an $m$-fold suspension operation. We refer to~\cite{GetzlerJones} for the proof of this result,
to~\cite{GinzburgKapranov} for the general definition of the notion of a Koszul operad,
and to~\cite{LodayVallette} for a general reference on this subject.
Recall simply that the definition of the above weak-equivalence follows from the observation
that the cobar construction $\BB_{op}^c(\PoisOp_m^c)$
vanishes when the weight grading of the free operad
in the expression
of the cobar construction $\BB_{op}^c(\PoisOp_m^c) = (\FreeOp(\DGSigma\PoisOp_m^c),\partial)$
exceeds the arity. Then we just use that the top cobar differentials $\partial: \FreeOp_{r-2}(\DGSigma\PoisOp_m^c)(r)\rightarrow\FreeOp_{r-1}(\DGSigma\PoisOp_m^c)(r)$
determine, up to suspension, a presentation of the $m$-Poisson operad
by generators and relations. We equivalently get that the components of the operad $\SuspOp^m\PoisOp_m$
represent the top cohomology of the operad $\BB_{op}^c(\PoisOp_m^c)$
in each arity $r>0$. The claim is that $\BB_{op}^c(\PoisOp_m^c)$ has no cohomology outside these top components.

We refer to this object $\KK_{op}^c(\PoisOp_m^c) := \SuspOp^m\PoisOp_m$ as the Koszul dual operad
of the $m$-Poisson cooperad $\PoisOp_m^c$.
We now set $\DGK_{op}^*(\PoisOp_m^c) := \DGSigma\overline{\SuspOp}^m\overline{\PoisOp}_m$
to get a complex whose components represent the top cohomology of the complex $\DGB_{op}^*(\PoisOp_m^c)$
of~\S\ref{DeformationComplexes:DGComplex}.
We have the following observation:

\begin{prop}\label{DeformationComplexes:KoszulDuality:Module}
The object $\DGK_{op}^*(\PoisOp_m^c)$ inherits the structure of a module
over the Hopf collection $\overline{\HOp} = \overline{\PoisOp}{}_m^c$
so that the morphism $\DGB_{op}^*(\PoisOp_m^c)\xrightarrow{\sim}\DGK_{op}^*(\PoisOp_m^c)$
defines a morphism of modules over $\overline{\PoisOp}{}_m^c$.
\end{prop}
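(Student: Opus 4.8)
The plan is to obtain the module structure on $\DGK_{op}^*(\PoisOp_{m-1}^c)$ by transporting, along the surjection $\DGB_{op}^*(\PoisOp_{m-1}^c)\rightarrow\DGK_{op}^*(\PoisOp_{m-1}^c)$ of the statement, the module structure of the cobar complex $\DGB_{op}^*(\PoisOp_{m-1}^c)$ over the Hopf $\Lambda$-collection $\overline{\PoisOp}_{m-1}^c$ that we have at our disposal by Proposition~\ref{DeformationComplexes:DGComplex:CobarModule}. Recall that this surjection is, by construction, induced by the Koszul weak-equivalence $\kappa$, and that it is given, in each arity $r$, by the projection of $\DGB_{op}^*(\PoisOp_{m-1}^c)(r)$ onto its top-weight cohomology component, i.e.\ onto the cokernel of the top cobar differential $\partial'\colon\DGB_{op}^{r-3}(\PoisOp_{m-1}^c)(r)\rightarrow\DGB_{op}^{r-2}(\PoisOp_{m-1}^c)(r)$ (here we use that $\DGB_{op}^l(\PoisOp_{m-1}^c)(r)$ vanishes for $l\geq r-1$), and by zero on the terms $\DGB_{op}^l(\PoisOp_{m-1}^c)(r)$ with $l<r-2$. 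It therefore suffices to check, arity by arity, that the kernel of this projection is a sub-$\overline{\PoisOp}_{m-1}^c(r)$-module of $\DGB_{op}^*(\PoisOp_{m-1}^c)(r)$: the push-forward of the module structure to the quotient, and the fact that the projection then becomes a morphism of modules, are formal consequences.

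The key property to establish is that the product operations of Proposition~\ref{DeformationComplexes:DGComplex:CobarModule} preserve the weight grading of the cobar complex, that is, restrict to each homogeneous component $\DGB_{op}^l(\PoisOp_{m-1}^c)$. This is read off from the explicit construction of these operations: on a tree-wise tensor summand $\DGSigma\FreeOp_{\ttree}(\DGSigma^{-1}\overline{\PoisOp}_{m-1}^c)$ of $\DGB_{op}^*(\PoisOp_{m-1}^c)$, the action of an element $a\in\overline{\PoisOp}_{m-1}^c(r)$ is defined by applying the tree-wise coproduct $\rho_{\ttree}$ of $a$ over the \emph{same} tree $\ttree$ and multiplying the resulting vertex factors into the vertex factors of the given element; since the underlying tree is left unchanged, the outcome stays in $\DGSigma\FreeOp_{\ttree}(\DGSigma^{-1}\overline{\PoisOp}_{m-1}^c)$, hence in the same weight. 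As a consequence, in each arity $r$, any truncation $\bigoplus_{l\leq l_0}\DGB_{op}^l(\PoisOp_{m-1}^c)(r)$ is a sub-$\overline{\PoisOp}_{m-1}^c(r)$-module; in particular the part $\bigoplus_{l<r-2}\DGB_{op}^l(\PoisOp_{m-1}^c)(r)$ of the kernel of our projection is a submodule.

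It remains to see that the image of the top cobar differential $\partial'$ inside $\DGB_{op}^{r-2}(\PoisOp_{m-1}^c)(r)$ is stable under the action. Since $\overline{\PoisOp}_{m-1}^c$ carries the trivial differential, the assertion of Proposition~\ref{DeformationComplexes:DGComplex:CobarModule} that $\DGB_{op}^*(\PoisOp_{m-1}^c)$ is a \emph{complex} of $\overline{\PoisOp}_{m-1}^c$-modules --- i.e.\ that the action intertwines $\partial'$ --- yields the identity $a\cdot\partial'(x)=\pm\,\partial'(a\cdot x)$ for every $a\in\overline{\PoisOp}_{m-1}^c(r)$ and every $x\in\DGB_{op}^{r-3}(\PoisOp_{m-1}^c)(r)$; and by the weight preservation established above, $a\cdot x$ again lies in $\DGB_{op}^{r-3}(\PoisOp_{m-1}^c)(r)$, whence $a\cdot\partial'(x)$ again lies in the image of $\partial'$. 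Therefore the kernel of our projection is a sub-$\overline{\PoisOp}_{m-1}^c$-module, the action descends to $\DGK_{op}^*(\PoisOp_{m-1}^c)$, and the projection is a morphism of modules; the compatibility of the descended action with the corestriction operators and with the coaugmentation is inherited from the corresponding compatibilities on $\DGB_{op}^*(\PoisOp_{m-1}^c)$, which the projection manifestly respects. The only point requiring care is the weight-preservation claim of the second paragraph: one must unwind the definition of the cobar module structure of Proposition~\ref{DeformationComplexes:DGComplex:CobarModule} far enough to see that it acts vertex-wise without altering the underlying tree --- after which everything is formal.
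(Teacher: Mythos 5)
Your proof is correct and follows essentially the same route as the paper's: the paper's (one-sentence) argument likewise rests on the definition of $\DGK_{op}^*(\PoisOp_{m-1}^c)$ as the top cohomology (hence a quotient) of the cobar complex together with the observation that the action of $\overline{\PoisOp}_{m-1}^c$ intertwines the cobar differential, so that the module structure descends. You merely make explicit the weight-preservation of the vertex-wise action and the resulting stability of the kernel of the projection, which the paper leaves implicit.
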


\begin{proof}
This proposition is an immediate consequence of the definition of the object $\DGK_{op}^*(\PoisOp_m^c)$ and of the observation
that the cobar differential $\partial: \FreeOp(\DGSigma\PoisOp_m^c)\rightarrow\FreeOp(\DGSigma\PoisOp_m^c)$
is preserved by the action of the commutative algebra $\PoisOp_m^c(r)$
on the module $\DGB_{op}^*(\PoisOp_m^c)(r) = \FreeOp(\DGSigma\PoisOp_m^c)(r)$
in each arity $r>1$.
\end{proof}

\begin{constr}[The application of the Koszul reduction to the deformation complex]\label{DeformationComplexes:KoszulDuality:KoszulReduction}
We plug the Koszul duality weak-equivalence $\kappa: \DGB_{op}^*(\PoisOp_m^c)\xrightarrow{\sim}\DGK_{op}^*(\PoisOp_m^c)$
in the deformation bicomplex of~\S\ref{DeformationComplexes:DGComplex:HopfCooperad}.
We accordingly set:
\begin{equation}
K^{k l} = K^{k l}(\PiOp,\PoisOp_m^c) = \Hom_{\dg\Lambda\Seq^c}(\DGB^{com}_k(\PiOp),\DGK_{op}^l(\PoisOp_m^c)),
\end{equation}
for each bidegree $(k,l)\in\NN^2$, where we consider the same dg-modules of homomorphisms as in the definition
of the deformation bicomplex~\S\ref{DeformationComplexes:DGComplex:HopfCooperad}(\ref{DeformationComplexes:DGComplex:HopfCooperad:Expression}),
but we now substitute the object $\DGK_{op}^l(\PoisOp_m^c)$
to the cobar complex $\DGB_{op}(\HOp) = \DGB_{op}^l(\PoisOp_m^c)$.

We can still provide this double sequence of dg-modules $K^{* *}$
with a horizontal twisting differential $\partial_h: K^{k l}\rightarrow K^{k+1 l}$
by identity of our object with the Harrison cochain complex $K^{* l} = \Def_{\dg^*\ComCat_+}(\PiOp,\NOp)$,
where we consider the object $\NOp = \DGK_{op}^l(\PoisOp_m^c)$
together with the module structure of Proposition~\ref{DeformationComplexes:KoszulDuality:Module}.
We also have a vertical twisting differential $\partial_v = \partial''_v: K^{k l}\rightarrow K^{k l+1}$,
defined by replacing the cobar operad $\BB^c_{op}(\HOp) = \BB^c_{op}(\PoisOp_m^c)$
in the construction of~\S\ref{DeformationComplexes:DGComplex:CooperadBicomodule}(\ref{DeformationComplexes:DGComplex:CooperadBicomodule:TwistingDifferential})
by the Koszul dual operad $\KK^c_{op}(\HOp) = \SuspOp^m\PoisOp_m$,
by using the prolongment of the morphism $\iota: \HOp\rightarrow\BB^c_{op}(\HOp)$
to this object $\KK^c_{op}(\HOp) = \SuspOp^m\PoisOp_m$
through the Koszul duality weak-equivalence $\BB^c_{op}(\PoisOp_m^c)\xrightarrow{\sim}\SuspOp^m\PoisOp_m$,
and by using the universal morphism $\FreeOp_2(\KK^c_{op}(\HOp))\subset\FreeOp(\KK^c_{op}(\HOp))\xrightarrow{\lambda}\KK^c_{op}(\HOp)$
attached to this operad $\KK^c_{op}(\HOp) = \SuspOp^m\PoisOp_m$.
We note that in this case the vertical twisting differential of our complex reduces to this term $\partial_v = \partial''_v$,
which we determine from the construction
of~\S\ref{DeformationComplexes:DGComplex:CooperadBicomodule}(\ref{DeformationComplexes:DGComplex:CooperadBicomodule:TwistingDifferential}),
because the object $\DGK_{op}^*(\PoisOp_m^c) = \DGSigma\overline{\SuspOp}^m\overline{\PoisOp}_m$
has no internal twisting differential (and actually no differential at all).

We again readily check that these twisting differentials (anti)commute to each other, and hence
our construction returns a bicomplex of dg-modules:
\begin{equation}\label{DeformationComplexes:KoszulDuality:KoszulReduction:Complex}
K^{* *} = K^{* *}(\PiOp,\PoisOp_m^c) = (\Hom_{\dg\Lambda\Seq^c}(\DGB^{com}_*(\PiOp),\DGK_{op}^*(\PoisOp_m^c)),\partial''_h+\partial''_v)
\end{equation}
associated to this Koszul construction $\DGK_{op}^*(\PoisOp_m^c) = \DGSigma\overline{\SuspOp}^m\overline{\PoisOp}{}_m$.
This complex $K^{* *} = K^{* *}(\PiOp,\PoisOp_m^c)$ is also an analogue (for Hopf cooperads)
of the deformation complex, denoted by $\Def(\hoe_n\rightarrow\POp)$,
which is studied by the second author in~\cite[\S 4]{WillwacherGraphs}.
\end{constr}

We now have the following statement:

\begin{thm}\label{DeformationComplexes:KoszulDuality:MainResult}
In the case $\HOp\simeq\PoisOp_m^c$, the deformation bicomplex of Theorem~\ref{DeformationComplexes:DGComplex:MainResult}
\begin{gather*}
D^{* *} = D^{* *}(\PiOp,\PoisOp_m^c) = \BiDef^{* *}_{\dg^*\Hopf\Lambda\Op^c}(\PiOp,\PoisOp_m^c)
\intertext{admits a further reduction}
D^{* *}(\PiOp,\PoisOp_m^c)\xrightarrow{\sim} K^{* *}(\PiOp,\PoisOp_m^c),
\intertext{where we consider the bicomplex of~\S\ref{DeformationComplexes:KoszulDuality:KoszulReduction}:}
K^{* *} = K^{* *}(\PiOp,\PoisOp_m^c) = (\Hom_{\dg\Lambda\Seq^c}(\DGB^{com}_*(\PiOp),\DGK_{op}^*(\PoisOp_m^c)),\partial''_h+\partial''_v).
\end{gather*}
\end{thm}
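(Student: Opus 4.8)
The plan is to produce the reduction by post-composing homomorphisms with the Koszul duality weak-equivalence. Since, by the very definitions of~\S\ref{DeformationComplexes:DGComplex:HopfCooperad} and~\S\ref{DeformationComplexes:KoszulDuality:KoszulReduction}, the bicomplexes $D^{* *}$ and $K^{* *}$ are both of the form $\Hom_{\dg\Lambda\Seq^c}(\DGB^{com}_*(\PiOp),-)$, applied respectively to the cobar coefficient $\DGB_{op}^*(\PoisOp_{m-1}^c)$ and to its Koszul reduction $\DGK_{op}^*(\PoisOp_{m-1}^c)=\DGSigma\overline{\SuspOp}^m\overline{\PoisOp}_{m-1}$, the natural candidate is the map $\kappa_*=\Hom_{\dg\Lambda\Seq^c}(\DGB^{com}_*(\PiOp),\kappa)$ induced by $\kappa\colon\DGB_{op}^*(\PoisOp_{m-1}^c)\xrightarrow{\sim}\DGK_{op}^*(\PoisOp_{m-1}^c)$. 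The first step is to check that $\kappa_*$ commutes with the two twisting differentials. For the Harrison differential $\partial''_h$ this follows at once from Proposition~\ref{DeformationComplexes:KoszulDuality:Module}: the map $\kappa$ is a morphism of modules over the Hopf collection $\overline{\PoisOp}_{m-1}^c$, hence over $\overline{\PiOp}$ through $\chi$, so post-composition with $\kappa$ respects the action of the algebras $\PiOp(r)$ entering the definition of $\partial''_h$. For the operadic differential $\partial''_v$ one unwinds the defining composite~(\ref{DeformationComplexes:DGComplex:CooperadBicomodule:TwistingDifferential}) of Construction~\ref{DeformationComplexes:DGComplex:CooperadBicomodule}: it is built from the bicomodule coproduct $\rho$ on $\DGB^{com}_*(\PiOp)$, the application of a homomorphism on one tree factor and of the generating inclusion into the cobar operad on the other, and finally the composition morphism $\lambda$ of $\BB^c_{op}(\PoisOp_{m-1}^c)$; the version on $K^{* *}$ uses the same recipe with $\BB^c_{op}(\PoisOp_{m-1}^c)$ replaced by $\KK^c_{op}(\PoisOp_{m-1}^c)=\SuspOp^m\PoisOp_{m-1}$, with the inclusion prolonged through $\kappa$, and with the composition morphism of $\SuspOp^m\PoisOp_{m-1}$. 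Because $\rho$ is unchanged and $\kappa$ is a \emph{morphism of operads} from $\BB^c_{op}(\PoisOp_{m-1}^c)$ to $\SuspOp^m\PoisOp_{m-1}$ (this is part of the Koszulness recalled in the excerpt, see~\cite{GetzlerJones}), the two composites agree after post-composition with $\kappa$, which gives the claimed compatibility.

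The second step is to prove that $\kappa_*$ is a weak-equivalence. The essential input, already established within the proof of Theorem~\ref{DeformationComplexes:DGComplex:MainResult}, is that $\DGB^{com}_*(\PiOp)$ is cofibrant as a $\Lambda$-collection in dg-modules --- here one uses the hypothesis $\IOp\PiOp=\Lambda\otimes_{\Sigma}\SOp\PiOp$ together with Proposition~\ref{CotripleResolution:FreeCollectionTensors} applied to the operadic expansion of the cofree Lie coalgebra underlying $\DGB^{com}_*(\PiOp)$. Hence $\Hom_{\dg\Lambda\Seq^c}(\DGB^{com}_*(\PiOp),-)$ carries the weak-equivalence of $\Lambda$-collections $\kappa$ to a weak-equivalence, and it then remains to transport this through the extra twisting differentials $\partial''_h+\partial''_v$, which I would do by a spectral sequence argument modelled on the end of the proof of Theorem~\ref{DeformationComplexes:DGComplex:MainResult} (compare~\cite[Theorem III.3.1.4]{FresseBook}). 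One filters by the arity of the generators of $\DGB^{com}_*(\PiOp)$: the operadic twisting differential $\partial''_v$ strictly raises this arity --- the bicomodule coproducts only involve components of strictly lower arity --- whereas all the other differentials preserve it, so on the associated graded $\partial''_v$ disappears; within each fixed arity one filters further by the Harrison chain degree, so that on the resulting associated graded only the internal cobar differential survives. At that point the Koszulness of $\PoisOp_{m-1}^c$ identifies, arity-wise, the cohomology of the cobar coefficients with $\DGK_{op}^*(\PoisOp_{m-1}^c)$, and, since $\kappa$ is then an arity-wise quasi-isomorphism and every dg-module over our characteristic zero ground field is cofibrant, post-composition with $\kappa$ is a quasi-isomorphism on this associated graded; thus $\kappa_*$ induces an isomorphism on the first pages of the successive spectral sequences, and hence a weak-equivalence of total complexes.

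The step I expect to be the main obstacle is this last one, and more precisely the convergence bookkeeping it requires. The Harrison direction of $D^{* *}$ is unbounded above, whereas the cobar direction is only bounded arity-wise (indeed $\DGB_{op}^l(\PoisOp_{m-1}^c)(r)=0$ for $l>r-2$ and $\DGK_{op}^l(\PoisOp_{m-1}^c)(r)=0$ for $l\ne r-2$), so one cannot simply invoke the comparison theorem for first-quadrant spectral sequences. The way around this is to realize $D^{* *}$ and $K^{* *}$ as the inverse limits of their finite-arity truncations: the transition maps of these towers are surjective, hence the $\lim^1$-terms vanish, and on each truncation the remaining filtration by Harrison degree is degreewise bounded, so the comparison of spectral sequences applies there; passing to the limit then yields the weak-equivalence for the full bicomplexes. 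Once this is arranged, all the other ingredients are formal: they are exactly Proposition~\ref{DeformationComplexes:KoszulDuality:Module}, the construction of $K^{* *}$ in~\S\ref{DeformationComplexes:KoszulDuality:KoszulReduction}, and the operadic Koszulness of $\PoisOp_{m-1}^c$ from~\cite{GetzlerJones}; no new graph-complex or configuration-space computation enters at this stage.
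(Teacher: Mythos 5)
Your proposal follows essentially the same route as the paper's own proof: post-compose with the Koszul duality weak-equivalence $\kappa$, check that the induced map carries the twisting differentials $\partial''_h$ and $\partial''_v$ of $D^{**}$ to those of $K^{**}$ (via Proposition~\ref{DeformationComplexes:KoszulDuality:Module} and the operadic nature of $\kappa$), use the cofibrancy of $\DGB^{com}_*(\PiOp)$ as a $\Lambda$-collection to get a weak-equivalence on the underlying hom-objects, and conclude by a spectral sequence comparison. The only difference is that you spell out the filtration and convergence bookkeeping that the paper dismisses as "an obvious spectral sequence argument"; your extra care there is sound and does not change the argument.
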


\begin{proof}
We immediately get that the Koszul duality weak-equivalence
\begin{equation}\label{DeformationComplexes:KoszulDuality:MainResult:KoszulDualityEquivalence}
\kappa: \DGB_{op}^*(\PoisOp_m^c)\xrightarrow{\sim}\DGK_{op}^*(\PoisOp_m^c)
\end{equation}
induces a comparison map
\begin{multline}\label{DeformationComplexes:KoszulDuality:MainResult:ComparisonMap}
\underbrace{(\Hom_{\dg\Lambda\Seq^c}(\DGB^{com}_*(\PiOp),\DGB_{op}^*(\PoisOp_m^c)),\partial''_h+\partial''_v)}_{= D^{* *}}
\\
\rightarrow\underbrace{(\Hom_{\dg\Lambda\Seq^c}(\DGB^{com}_*(\PiOp),\DGK_{op}^*(\PoisOp_m^c)),\partial''_h+\partial''_v)}_{= K^{* *}}
\end{multline}
which carries the twisting differentials of the deformation complex $D^{* *}$ in~\S\ref{DeformationComplexes:DGComplex:HopfCooperad}
to the twisting differentials of the deformation complex $K^{* *}$
of~\S\ref{DeformationComplexes:KoszulDuality:KoszulReduction}
because this map~(\ref{DeformationComplexes:KoszulDuality:MainResult:KoszulDualityEquivalence})
preserves the structures involved in the definition
of our twisting differentials.

We still use that the Harrison complex $\DGB^{com}_*(\PiOp)$ forms a cofibrant object of the category of $\Lambda$-collections
in dg-modules (see the proof of Theorem~\ref{DeformationComplexes:DGComplex:MainResult})
to check that our map (\ref{DeformationComplexes:KoszulDuality:MainResult:KoszulDualityEquivalence})
induce a weak-equivalence on the hom-objects
of~(\ref{DeformationComplexes:KoszulDuality:MainResult:ComparisonMap}).
We again use an obvious spectral sequence argument to conclude that the comparison map
in (\ref{DeformationComplexes:KoszulDuality:MainResult:ComparisonMap})
defines a weak-equivalence on total complexes,
and the result of the theorem follows.
\end{proof}

\section{The reduction to graph homology}\label{GraphHomology}
The goal of this section is to compute the homology of the Koszul deformation complex $K^{* *} = K^{* *}(\PiOp,\PoisOp_m^c)$
of Theorem~\ref{DeformationComplexes:KoszulDuality:MainResult}
in the case where $\PiOp$ is a Poisson cooperad~$\PiOp = \PoisOp_n^c$ (with possibly $m\not=n$).
We aim to prove the vanishing of this homology in order to apply the obstruction method of~\S\ref{Background:ObstructionProblem}.
We will more precisely show that the dg-modules $K^{* *}$ are weakly-equivalent to (variants of) the graph complexes
defined by Maxim Kontsevich (see~\cite{KontsevichSymplectic,KontsevichFormalityConj,KontsevichMotives}),
and we use simple degree counting to deduce our vanishing statement
from this relationship.

Recall that the Koszul deformation complex $K^{* *} = K^{* *}(\PiOp,\PoisOp_m^c)$
depends on the choice of a morphism of Hopf $\Lambda$-cooperads $\chi: \PiOp\rightarrow\PoisOp_m^c$
(like all deformation complexes which we defined in the previous section).
%If necessary, then we mark this parameter as an external subscript in the notation
%of the deformation complex.
We examine the Koszul deformation complexes associated to particular morphisms $\chi: \PiOp\rightarrow\PoisOp_m^c$,
where we take a Poisson cooperad $\PiOp = \PoisOp_n^c$
as source object each time.

First, for $m=n$, we consider the case where $\chi = \id$ is the identity morphism
of the $n$-Poisson cooperad $\PiOp = \PoisOp_n^c$.
We adopt the short notation:
\begin{equation*}
L^{* *}_n = K^{* *}(\PoisOp_n^c,\PoisOp_n^c)
\end{equation*}
for the Koszul deformation complex associated to this morphism $\chi = \id: \PoisOp_n^c\rightarrow\PoisOp_n^c$,
for any $n\geq 2$.
%This complex is actually defined for all $n\geq 2$. (though $\PoisOp_n$ has a topological interpretation,
%as the homology of the little $n$-discs operad, in the case $n\geq 2$ only).
We compute the homology of this deformation complex in order to prove
our intrinsic formality statement
for the little $n$-discs operad (Theorem~\ref{Result:HopfDGOperadIntrinsicFormality}).
We then assume $n\geq 3$.

We also consider the morphism of Hopf $\Lambda$-cooperads $\iota^*: \PoisOp_n^c\rightarrow\PoisOp_m^c$
whose dual $\iota_*: \PoisOp_m\rightarrow\PoisOp_n$
carries the commutative product operation of the $m$-Poisson operad $\mu\in\PoisOp_m(2)$
to the commutative product operation of the $n$-Poisson operad $\mu\in\PoisOp_n(2)$
and sends the Lie bracket operation $\lambda\in\PoisOp_m(2)$
to zero.
In the case $m<n$, we can identify this morphism $\chi = \iota^*$ with the morphism induced by the embedding
of little discs operads $\iota: \DOp_m\rightarrow\DOp_n$
in cohomology. But our definition makes also sense when $m=n$
and returns a morphism $\iota^*: \PoisOp_n^c\rightarrow\PoisOp_n^c$
which still differs from the identity morphism
in this case.
We adopt the notation
\begin{equation*}
K^{* *}_{m n} = K^{* *}(\PoisOp_n^c,\PoisOp_m^c)
\end{equation*}
for the Koszul deformation complex which we associate to the morphism $\chi = \iota^*: \PoisOp_n^c\rightarrow\PoisOp_m^c$.
This complex is defined for all pairs $m,n\geq 2$.
We actually use the complex $K^{* *}_{n n}$, with $m=n$, as an auxiliary device when we compute the homology of the Koszul deformation complex
of the identity morphism $L^{* *}_n$,
while we focus on the case $m<n$ (and, actually, on the case $n-m\geq 2$) when we prove our formality statement
for the morphisms $\iota: \DOp_m\rightarrow\DOp_n$
that link the little discs operads together (Theorem~\ref{Result:RelativeChainOperadFormality}).

We use an operad of graphs $\GraphOp_n$, weakly-equivalent to the $n$-Poisson operad~$\PoisOp_n$,
in order to obtain our graph complex models of the Koszul deformation complexes $L^{* *}_n$
and $K^{* *}_{m n}$.
We need auxiliary categories of operads and cooperads in order to carry out our constructions.
First of all, the operad of graphs $\GraphOp_n$ is dual to a Hopf $\Lambda$-cooperad $\GraphOp_n^c$
which does not fulfill the connectedness conditions $\KOp(1) = \kk$
of the previous sections.
We therefore introduce an extension of our category of Hopf $\Lambda$-cooperads
in which we can define this object $\GraphOp_n^c$.
We will also see that the operad of graphs $\GraphOp_n$ is defined within a base category of complete dg-modules
and we have to work with a completed tensor product in order to provide this object $\GraphOp_n$
with a Hopf structure.

We explain this background in the first subsection of this section. Then we revisit the definitions of the previous sections
in order to give a dual expression, in terms of operads in complete dg-modules,
of the Koszul deformation complexes
of~\S\ref{DeformationComplexes:KoszulDuality:KoszulReduction}.
We use this construction when we relate our Koszul deformation complex to the graph complex.
We briefly recall the definition of the operads of graphs and we review basic results regarding graph complexes
in the second subsection.
We tackle the applications of graph complexes to the Koszul deformation complex of the cooperads $\PoisOp_n^c$ afterwards.

\subsection{The definition of Koszul deformation complexes revisited}\label{GraphHomology:TwistedEndComplexes}
We use a general notion of Hopf $\Lambda$-operad which is dual (in the categorical sense) to the notion
of Hopf $\Lambda$-cooperad that we consider in the previous sections.
We just drop the connectedness condition of cooperads $\COp(1) = \kk$
when we deal with operads.
We only require that our objects $\POp$ vanish in arity zero $\POp(0) = 0$.
We refer to~\cite[\S I.2]{FresseBook} for a detailed definition of the structure of an augmented $\Lambda$-operad
in the general context of symmetric monoidal categories.
Recall simply that the underlying diagram structure of an augmented $\Lambda$-operad $\POp$
is defined by restriction operators $u^*: \POp(l)\rightarrow\POp(k)$,
which we associate to the injective maps $u: \{1<\dots<k\}\rightarrow\{1<\dots<l\}$,
and which are duals to the corestriction operators
of~\S\ref{Background:LambdaCooperads}.

In what follows, we also consider (contravariant) $\Lambda$-collections, underlying our $\Lambda$-operads,
which are dual to the (covariant) $\Lambda$-collections
consider in the previous sections.
We just use the name `$\Lambda$-collection' to refer to these objects.
We drop the adjective `contravariant' in general since the notion that we consider is usually clearly specified by the context.
Recall simply that, in the book~\cite{FresseBook}, the contravariant $\Lambda$-collections are just called `$\Lambda$-sequences,
while the covariant $\Lambda$-collections are called `covariant $\Lambda$-sequences' (see~\S\ref{Background:Collections}).
We also drop the connectedness condition of~\S\ref{Background:Collections} when we deal with (contravariant) $\Lambda$-collections underlying $\Lambda$-operads.
We just assume that our objects vanish in arity zero in general.

The operad of graphs, to which we apply our constructions, is naturally defined in a base category of complete dg-modules
and, in fact, we have to work in this category in order to provide
this operad with a Hopf structure.
We review the definition of this base category of complete dg-modules in the next paragraph.
We examine the definition of Hopf $\Lambda$-operads in complete dg-modules afterwards.
We then explain the definition of the (previously alluded to) generalization of the category cooperads,
where we have a possibly non-trivial term in arity one,
and which we get when we take the dual objects of these complete Hopf $\Lambda$-operads
in the category of dg-modules. We eventually tackle the applications of complete Hopf $\Lambda$-operads
to the Koszul deformation complexes.

\begin{rem}[The symmetric monoidal category of complete dg-modules]\label{GraphHomology:TwistedEndComplexes:CompleteModules}
The category of complete dg-modules, denoted by $\hat{\f}\dg\Mod$, explicitly consists of dg-modules $K\in\dg\Mod$
equipped with a filtration
%\begin{equation*}
$K = \DGF_0 K\supset\cdots\supset\DGF_s K\supset\cdots$
%\end{equation*}
by dg-submodules $\DGF_s K\subset K$ such that $K = \lim_s K/\DGF_s K$.
We take the filtration preserving morphisms of dg-modules as morphisms in $\hat{\f}\dg\Mod$.
We explicitly assume that our morphisms $\phi: K\rightarrow L$
satisfy the relation $\phi(\DGF_s K)\subset\DGF_s L$
for every $s\geq 0$.
We refer to~\cite[\S II.13.0]{FresseBook} for a thorough study of this category of complete dg-modules.
We only briefly recall
the definition of a symmetric monoidal structure
on $\hat{\f}\dg\Mod$.

We explicitly equip $\hat{\f}\dg\Mod$ with the completed tensor product $\hat{\otimes}$,
defined by
%\begin{equation*}
$K\hat{\otimes} L = \lim_s K\otimes L/\DGF_s(K\otimes L)$,
%\end{equation*}
for any $K,L\in\hat{\f}\dg\Mod$, and where we set $\DGF_s(K\otimes L) = \sum_{p+q=s}\DGF_p(K)\otimes\DGF_q(L)\subset K\otimes L$,
for $s\geq 0$.
The ground field $\kk$, which we identify with a complete dg-module such that $\DGF_1\kk = 0$,
forms a unit for the completed tensor product.
We also have associativity and symmetry isomorphisms for $\hat{\otimes}$
which are inherited from the base category of dg-modules (see again~\cite[\S II.13.0]{FresseBook}).
We moreover have a symmetric monoidal transformation $\eta: K\otimes L\rightarrow K\hat{\otimes} L$,
where we use the obvious forgetful functor $\omega: \hat{\f}\dg\Mod\rightarrow\dg\Mod$
to compare the plain tensor product of the objects $K,L\in\hat{\f}\dg\Mod$
in the category of dg-modules
with the completed tensor product.
\end{rem}

\begin{defn}[Complete Hopf $\Lambda$-operads]\label{GraphHomology:TwistedEndComplexes:CompleteHopfOperads}
We now define a complete Hopf $\Lambda$-operad $\POp$ as an augmented $\Lambda$-operad
in the category of counitary cocommutative coalgebras
in $\hat{\f}\dg\Mod$.

To be explicit, when we use this definition, we first assume that each term of our operad $\POp(r)$, $r>0$,
is a counitary cocommutative coalgebra
in the complete sense,
with a coproduct $\Delta: \POp(r)\rightarrow\POp(r)\hat{\otimes}\POp(r)$
that lands in the two-fold completed tensor product $\hat{\otimes}$
of the object $\POp(r)\in\hat{\f}\dg\Mod$.
Then we define the structure morphisms of our operad within the category of complete counitary cocommutative coalgebras.
The composition operations of our operad are therefore given by morphisms $\circ_i: \POp(k)\hat{\otimes}\POp(l)\rightarrow\POp(k+l-1)$
which we define on the completed tensor products of the objects $\POp(k),\POp(l)\in\hat{\f}\dg\Mod$,
for any $k,l>0$ and $i = 1,\dots,k$.
We may restrict these composition operations to the plain tensor product $\otimes$
to get the composition operations of a plain operad
in dg-modules $\circ_i: \POp(k)\otimes\POp(l)\rightarrow\POp(k+l-1)$
on the operad $\POp$. The coproduct of our coalgebra structure,
on the other hand, does not restrict
to the plain tensor product
in general.

In our subsequent constructions, we will assume for simplicity that our complete Hopf $\Lambda$-operads $\POp$
are equipped with a coaugmentation $\eta: \ComOp\rightarrow\POp$,
where we regard the commutative operad $\ComOp$
as a complete Hopf $\Lambda$-operad such that $\DGF_0\ComOp(r) = \ComOp(r) = \kk$ and $\DGF_1\ComOp(r) = 0$
for each arity $r>0$.  We then say that $\POp$ forms a coaugmented complete Hopf $\Lambda$-operad.
The existence of this coaugmentation implies that we have a splitting $\POp(r) = \kk\oplus\IOp\POp(r)$,
where $\IOp\POp(r)$ is the kernel of the counit
of the coalgebra $\POp(r)$.
In this situation, we can make the extra assumption that we have the relation $\IOp\POp(r) = \DGF_1\POp(r)$,
for each $r>0$. We use this identity to subsequently simplify the expression
of our deformation complexes.
We therefore adopt the convention that this connectedness requirement is fulfilled
when we deal with a coaugmented complete Hopf $\Lambda$-operad
in what follows.

Now we can naturally identify the $n$-Poisson operad $\PoisOp_n$ with the complete Hopf $\Lambda$-operad equipped with the filtration
such that $\DGF_0\PoisOp_n(r) = \PoisOp_n(r)$, $\DGF_1\PoisOp_n(r) = \IOp\PoisOp_n(r)$
and $\DGF_s\PoisOp_n(r) = 0$ for $s\geq 2$, and for any arity $r>0$.
In this definition, we just take the simplest filtration which fits our requirements
and which makes $\PoisOp_n$
a coaugmented complete Hopf $\Lambda$-operad
in our sense.
\end{defn}

\begin{defn}[Filtered Hopf $\Lambda$-cooperads]\label{GraphHomology:TwistedEndComplexes:FilteredHopfCooperads}
The generalized Hopf $\Lambda$-cooperads which we consider in this section are collections $\PiOp(r) = \{\PiOp(r),r>0\}$
equipped with the same algebraic structure as the Hopf $\Lambda$-cooperads
of~\S\ref{Background:HopfLambdaCooperads},
but where we allow an arbitrary term in arity one.

We assume, besides, that the cochain dg-algebras $\PiOp(r)\in\dg^*\ComCat_+$ which form the components of these cooperads
are endowed with an increasing filtration $0 = \DGF^{-1}\PiOp(r)\subset\cdots\subset\DGF^s\PiOp(r)\subset\cdots\subset\colim_s\DGF^s\PiOp(r) = \PiOp(r)$
such that $\DGF^0\PiOp(r)$ contains the algebra unit $1\in\PiOp(r)$
and we have $\DGF^p\PiOp(r)\cdot\DGF^q\PiOp(r)\subset\DGF^{p+q}\PiOp(r)$, for any $p,q\geq 0$.
We then require that the corestriction operators $u^*: \PiOp(l)\rightarrow\PiOp(k)$ which define the $\Lambda$-diagram structure of our cooperad
preserve this filtration. We explicitly assume that we have the relation $u^*(\DGF^s\PiOp(l))\subset\DGF^s\PiOp(k)$ for every $s\geq 0$.
We similarly assume that the composition coproducts of our cooperad $\circ_i^*: \PiOp(k+l-1)\rightarrow\PiOp(k)\otimes\PiOp(l)$
satisfy $\circ_i^*(\DGF^s\PiOp(k+l-1))\subset\sum_{p+q=s}\DGF^p\PiOp(k)\otimes\DGF^q\PiOp(l)$
for all $s\geq 0$.
We adopt the notation $\dg^*\Hopf\Lambda\Op_f^c$ for the category formed by these objects
together with the filtration preserving morphism
of Hopf $\Lambda$-cooperads
as morphisms.
We also say that $\dg^*\Hopf\Lambda\Op_f^c$ is the category of filtered Hopf $\Lambda$-cooperads.

In our subsequent constructions, we will also assume that our filtered Hopf $\Lambda$-cooperads $\PiOp$
are endowed with an augmentation $\eta_*: \PiOp\rightarrow\ComOp^c$,
where we regard the commutative cooperad $\ComOp^c$ as an object of the category of filtered Hopf $\Lambda$-cooperads
equipped with a trivial filtration (for which we have $\DGF^0\ComOp^c(r) = \kk$
for all $r>0$).
We then say that $\PiOp$ forms an augmented filtered Hopf $\Lambda$-cooperads.
We may also make the assumption that this augmentation restricts to an isomorphism on the zeroth layer
of our filtration $\eta_*: \DGF^0\PiOp(r)\xrightarrow{\simeq}\kk$.
We equivalently require that $\DGF^0\PiOp(r)$ is identified with the module spanned by the unit of the commutative algebra~$\PiOp(r)$.
We adopt the convention that this connectedness requirement is fulfilled
when we deal with an augmented filtered Hopf $\Lambda$-operad
in what follows.
\end{defn}

We have the following statement:

\begin{prop}\label{GraphHomology:TwistedEndComplexes:DualHopfOperad}
The collection $\PiOp^{\vee} = \{\PiOp(r)^{\vee},r>0\}$ formed by the dual dg-modules $\PiOp^{\vee}(r) = \PiOp(r)^{\vee}$
of the components of an (augmented) filtered Hopf $\Lambda$-cooperad $\PiOp\in\dg^*\Hopf\Lambda\Op_f^c$
inherits the structure of a (coaugmented) complete Hopf $\Lambda$-operad
as soon as the subquotients $\DGE^0_s(-) = \DGF^s(-)/\DGF^{s-1}(-)$ of the filtration of the dg-modules $\PiOp(r)$
form modules of finite rank over the ground field degree-wise.
\end{prop}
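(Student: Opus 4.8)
The plan is to dualize the entire structure of the filtered Hopf $\Lambda$-cooperad $\PiOp$ arity-wise, using the finiteness hypothesis to control the interaction between linear duality and the completed tensor product of the category $\hat{\f}\dg\Mod$.

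First I would equip each dual dg-module $\PiOp^{\vee}(r) = \PiOp(r)^{\vee}$ with the decreasing filtration $\DGF_s\PiOp^{\vee}(r)$ given by the annihilator of the submodule $\DGF^{s-1}\PiOp(r)\subset\PiOp(r)$, so that $\DGF_0\PiOp^{\vee}(r) = \PiOp^{\vee}(r)$. Since the filtration $\DGF^{\bullet}\PiOp(r)$ is increasing and exhaustive, we have $\PiOp(r) = \colim_s\DGF^s\PiOp(r)$, hence $\PiOp^{\vee}(r) = \lim_s(\DGF^s\PiOp(r))^{\vee} = \lim_s\PiOp^{\vee}(r)/\DGF_{s+1}\PiOp^{\vee}(r)$, which shows that $\PiOp^{\vee}(r)$ is complete for this filtration. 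The finiteness hypothesis already enters here: it implies by induction on $s$ that each layer $\DGF^s\PiOp(r)$ is degree-wise finitely generated, so that dualizing the short exact sequences $0\to\DGF^{s-1}\PiOp(r)\to\DGF^s\PiOp(r)\to\DGE^0_s\PiOp(r)\to 0$ remains exact and compatible with the filtrations (in particular $\DGE^0_s\PiOp^{\vee}(r)\simeq(\DGE^0_s\PiOp(r))^{\vee}$ degree-wise). The commutative algebra product $\mu\colon\PiOp(r)\otimes\PiOp(r)\to\PiOp(r)$, which satisfies $\mu(\DGF^p\PiOp(r)\otimes\DGF^q\PiOp(r))\subset\DGF^{p+q}\PiOp(r)$, then dualizes to a coproduct landing in the completed tensor product $\PiOp^{\vee}(r)\hat{\otimes}\PiOp^{\vee}(r)$, so that each $\PiOp^{\vee}(r)$ becomes a counitary cocommutative algebra in $\hat{\f}\dg\Mod$.

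The technical heart of the argument, which I expect to be the main obstacle, is the verification that linear duality interchanges the plain tensor product (equipped with its product filtration) with the completed tensor product of $\hat{\f}\dg\Mod$: for filtered dg-modules $M,N$ whose subquotients $\DGE^0_s(-)$ are degree-wise finitely generated, the natural transformation induces an isomorphism of complete dg-modules
\begin{equation*}
M^{\vee}\hat{\otimes}N^{\vee}\xrightarrow{\simeq}(M\otimes N)^{\vee},
\end{equation*}
where the right-hand side carries the complete filtration dual to the product filtration $\DGF^s(M\otimes N)=\sum_{p+q=s}\DGF^p M\otimes\DGF^q N$. This is exactly the point where the finiteness hypothesis is indispensable (for infinite-dimensional modules, the dual of a tensor product is strictly larger than the tensor product of the duals), and the proof reduces to a cofinality argument identifying both sides with the limit of the degree-wise finite-dimensional quotients, combined with the usual K\"unneth isomorphism at each finite stage; one must also keep track of the chain and cochain grading conventions of~\S\ref{Background:DGModules}.

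Granting this isomorphism, the remaining steps are formal dualizations. The composition coproducts $\circ_i^*\colon\PiOp(k+l-1)\to\PiOp(k)\otimes\PiOp(l)$, together with their filtration compatibility $\circ_i^*(\DGF^s\PiOp(k+l-1))\subset\sum_{p+q=s}\DGF^p\PiOp(k)\otimes\DGF^q\PiOp(l)$, dualize through the isomorphism above to composition products $\circ_i\colon\PiOp^{\vee}(k)\hat{\otimes}\PiOp^{\vee}(l)\to\PiOp^{\vee}(k+l-1)$ of complete dg-modules, and the equivariance, unit, and associativity relations of a cooperad dualize to the defining relations of an operad. Likewise, the corestriction operations $u^*\colon\PiOp(l)\to\PiOp(k)$ dualize to filtration-preserving restriction operations $\PiOp^{\vee}(k)\to\PiOp^{\vee}(l)$, and the remaining Hopf $\Lambda$-cooperad axioms --- the distribution relations between the algebra products and the composition coproducts, and the equivariance with respect to corestrictions --- dualize to the corresponding relations of a complete Hopf $\Lambda$-operad in the sense of Definition~\ref{GraphHomology:TwistedEndComplexes:CompleteHopfOperads}. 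Finally, in the augmented case, the augmentation $\eta_*\colon\PiOp\to\ComOp^c$ dualizes to a coaugmentation $\ComOp\to\PiOp^{\vee}$; when moreover $\eta_*$ restricts to an isomorphism $\DGF^0\PiOp(r)\xrightarrow{\simeq}\kk$, the splitting $\PiOp^{\vee}(r) = \kk\oplus\IOp\PiOp^{\vee}(r)$ dual to $\PiOp(r) = \DGF^0\PiOp(r)\oplus\ker(\eta_*)$ identifies $\IOp\PiOp^{\vee}(r)$ with $\DGF_1\PiOp^{\vee}(r)$, which is precisely the connectedness convention required for a coaugmented complete Hopf $\Lambda$-operad. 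Since $\PiOp$ is concentrated in arities $r>0$, the dual collection $\PiOp^{\vee}$ likewise vanishes in arity zero, and the proof is complete.
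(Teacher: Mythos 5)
Your proposal is correct and follows essentially the same route as the paper's proof: dualize the filtration by taking annihilators of the layers $\DGF^{s-1}\PiOp(r)$, use the degree-wise finiteness of the subquotients to identify $\PiOp(k)^{\vee}\hat{\otimes}\PiOp(l)^{\vee}$ with $(\PiOp(k)\otimes\PiOp(l))^{\vee}$, and then dualize all structure maps formally, with the coaugmentation and the connectedness relation $\IOp\PiOp^{\vee}(r)=\DGF_1\PiOp^{\vee}(r)$ handled exactly as you describe. Your write-up is somewhat more detailed than the paper's (which leaves the key tensor-product/duality interchange as a one-line appeal to the finiteness assumption), but the argument is the same.
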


\begin{proof}
We explicitly equip the dg-modules $\PiOp(r)^{\vee}$, $r>0$,
with the filtration such that
%\begin{equation*}
$\DGF_s\PiOp(r)^{\vee} = \ker\bigl(\PiOp(r)^{\vee}\rightarrow\DGF^{s-1}\PiOp(r)^{\vee}\bigr)$,
%\end{equation*}
for any $s\geq 0$.
We then have the relation $\PiOp(r)^{\vee}/\DGF_s\PiOp(r)^{\vee} = (\DGF^{s-1}\PiOp(r))^{\vee}$ for each $s\geq 0$
and we readily get that $\PiOp(r)^{\vee} = \lim_s\DGF^{s-1}\PiOp(r)^{\vee}\Rightarrow\PiOp(r)^{\vee} = \lim_s\PiOp(r)^{\vee}/\DGF_s\PiOp(r)^{\vee}$.

We use the finiteness assumption of the proposition
to get an isomorphism
%\begin{equation*}
$(\PiOp(r)\otimes\PiOp(r))^{\vee}\xleftarrow{\simeq}\PiOp(r)^{\vee}\hat{\otimes}\PiOp(r)^{\vee}$
%\end{equation*}
for every $r>0$. In turn, we compose this isomorphism with the morphism induced by the product of the algebra $\PiOp(r)$
in order to provide each $\PiOp(r)^{\vee}$ with the structure of a counitary cocommutative dg-coalgebra
in complete dg-modules.
We also use the morphism $(\PiOp(k)\otimes\PiOp(l))^{\vee}\xleftarrow{\simeq}\PiOp(k)^{\vee}\hat{\otimes}\PiOp(l)^{\vee}$,
defined for every $k,l>0$, in order to provide our collection $\PiOp^{\vee}$
with the composition structure of a Hopf cooperad
in complete dg-modules,
and we use the functoriality of the duality operation to get restriction operators on $\PiOp^{\vee}$.
We therefore have a full structure of complete Hopf $\Lambda$-operad on this object~$\PiOp^{\vee}$.

The augmentation $\eta_*: \PiOp\rightarrow\ComOp^c$ which we attach to our filtered Hopf $\Lambda$-cooperad $\PiOp$
also gives a coaugmentation $\eta: \ComOp\rightarrow\PiOp^{\vee}$
by duality. We moreover have $\DGF_1\PiOp(r)^{\vee} = \IOp\PiOp(r)$, for any arity $r>0$,
as soon as the filtration of the augmented filtered Hopf $\Lambda$-cooperad $\PiOp$
fulfills our connectedness requirement $\DGF^0\PiOp(r) = \kk$ (see~\S\ref{GraphHomology:TwistedEndComplexes:FilteredHopfCooperads}).
We eventually get that $\PiOp^{\vee}$ forms a coaugmented complete Hopf $\Lambda$-operad
in our sense (see~\S\ref{GraphHomology:TwistedEndComplexes:CompleteHopfOperads}).
\end{proof}

\begin{remark}
In Proposition~\ref{GraphHomology:TwistedEndComplexes:DualHopfOperad},
we can recover the underlying filtered Hopf $\Lambda$-cooperad $\PiOp$
of the complete Hopf $\Lambda$-operad $\POp = \PiOp^{\vee}$
by taking a continuous dual of our objects.
Indeed, if our finiteness assumptions are satisfied, then we have $\DGF^{s-1}\PiOp(r) = (\POp(r)/\DGF_s\POp(r))^{\vee}$,
for each $s\geq 0$, where we consider the natural filtration of our object $\POp(r)$
and the dual of the quotient dg-modules $\POp(r)/\DGF_s\POp(r)$.
Then we can use the identity $\PiOp(r) = \colim_s\DGF^{s-1}\PiOp(r)$
to retrieve the full dg-module $\PiOp(r)$ from these dg-modules,
for each arity $r>0$.

The duality relation $\PoisOp_n(r) = \PoisOp_n^c(r)^{\vee}$, $r>0$,
for the $n$-Poisson operad $\PoisOp_n$
can be regarded as a special case of this duality relation between filtered Hopf $\Lambda$-cooperads
and complete Hopf $\Lambda$-operads. We then set $\DGF^s\PoisOp_n^c(r) = \PoisOp_n^c(r)$ for $s\geq 2$
in order to retrieve the filtration considered in~\S\ref{GraphHomology:TwistedEndComplexes:CompleteHopfOperads}
on the $n$-Poisson operad $\PoisOp_n$.
In this case, we just use that each dg-module $\PoisOp_n(r)$, $r>0$, forms a module of finite rank over the ground field degree-wise
to get our duality relation $\PoisOp_n = (\PoisOp_n^c)^{\vee}$.
We mainly use filtrations and completions in order to extend the duality between operads and cooperads
to objects which do not satisfy the local finiteness property of the $n$-Poisson operad.
\end{remark}

We now explain the definition of the dual objects of the Harrison chain complexes of~\S\ref{DeformationComplexes:DGComplex:HarrisonConstruction}.

\begin{constr}[The Harrison cochain complex of complete Hopf operads]\label{GraphHomology:TwistedEndComplexes:HarrisonConstruction}
In a first step, we consider a coaugmented counitary cocommutative coalgebra
in complete dg-modules $C$ (underlying a coaugmented complete Hopf $\Lambda$-operad).
We have $C = \kk\oplus IC$, where $IC$ denotes the coaugmentation coideal of $C$.
We still assume that we have the relation $IC = \DGF_1 C$ (as in our definition of a coaugmented complete Hopf $\Lambda$-operad).
We associate to this object $C$ a Harrison cochain complex with trivial coefficients $\hat{\DGB}{}_{com}^*(C)$
which we define by:
\begin{equation}
\hat{\DGB}{}_{com}^k(C) = \DGSigma\hat{\LLie}_{k+1}(\DGSigma^{-1} IC),
\end{equation}
for any degree $k>0$, where we use the notation $\hat{\LLie}_r(-)$, for any $r>0$, to denote a component of homogeneous weight $r>0$
of a complete version of the free Lie algebra $\hat{\LLie}(-)$.
We explicitly have $\hat{\LLie}_r(\DGSigma^{-1} IC) = (\LieOp(r)\otimes(\DGSigma^{-1} IC)^{\hat{\otimes} r})_{\Sigma_r}$,
where we still use the notation $\LieOp$
for the operad of Lie algebras.
When we form this expression, we identify the components of this operad $\LieOp$
with complete modules such that $\DGF_0\LieOp(r) = \LieOp(r)$
and $\DGF_1\LieOp(r) = 0$.
We then have $\LieOp(r)\otimes K^{\hat{\otimes} r} = \LieOp(r)\hat{\otimes} K^{\hat{\otimes} r}$,
because $\LieOp(r)$ forms a module of finite rank over the ground field.
We equip $\hat{\DGB}{}_{com}^*(C)$ with a differential $\partial': \hat{\DGB}{}_{com}^*(C)\rightarrow\hat{\DGB}{}_{com}^{*+1}(C)$,
which we determine by the homomorphism
%\begin{equation}
$\DGSigma^{-1} IC\xrightarrow{\Delta_*}\DGSigma^{-2}(IC\hat{\otimes}IC)_{\Sigma_2}
\simeq(\LieOp(2)\otimes(\DGSigma^{-1} IC)^{\hat{\otimes} 2})_{\Sigma_2}$
%\end{equation}
yielded by the coproduct of our coalgebra $\Delta: C\rightarrow C\otimes C$
on the dg-module $\DGSigma^{-1} IC\subset\hat{\LLie}(\DGSigma^{-1} IC)$.
We just assume that $\partial'$ defines a derivation with respect to Lie brackets in order to extend this map to the free complete Lie algebra
in $\hat{\DGB}{}_{com}^*(C) = \DGSigma\hat{\LLie}_{*+1}(\DGSigma^{-1} IC)$.
%and hence, to define our twisting differential on the object $\hat{\DGB}{}_{com}^*(C) = \DGSigma\hat{\LLie}_{*+1}(\DGSigma^{-1} IC)$.
%We just check that this map satisfies the homogeneity relation $\partial'(\hat{\DGB}{}_{com}^k(C))\subset\hat{\DGB}{}_{com}^{k+1}(C)$,
%for every $k\geq 0$.

We now consider the case of a coaugmented complete Hopf $\Lambda$-operad $\POp$
in the sense
of our definition of~\S\ref{GraphHomology:TwistedEndComplexes:CompleteHopfOperads}.
The components of this complete Hopf $\Lambda$-operad $\POp(r)$ form coaugmented complete counitary cocommutative coalgebras
since the morphism $\eta: \ComOp\rightarrow\POp$
defines a natural coaugmentation of the coalgebra $\POp(r)$
arity-wise.
We can therefore apply our Harrison cochain complex construction arity-wise to this complete Hopf $\Lambda$-operad $\POp$.
We then get a (contravariant) $\Lambda$-collection such that $\hat{\DGB}{}_{com}^*(\POp)(r) = \hat{\DGB}{}_{com}^*(\POp(r))$ for any $r>0$.

We can also dualize the construction of Proposition~\ref{DeformationComplexes:DGComplex:CobarModule}
to get left and right composition products
\begin{align}
\label{GraphHomology:TwistedEndComplexes:HarrisonConstruction:LeftAction}
& \circ_i: \POp(k)\hat{\otimes}\hat{\DGB}{}_{com}^*(\POp)(l)\rightarrow\hat{\DGB}{}_{com}^*(\POp)(k+l-1),\\
\label{GraphHomology:TwistedEndComplexes:HarrisonConstruction:RightAction}
& \circ_i: \hat{\DGB}{}_{com}^*(\POp)(k)\hat{\otimes}\POp(l)\rightarrow\hat{\DGB}{}_{com}^*(\POp)(k+l-1),
\end{align}
defined for all $k,l>1$, $i = 1,\dots,k$, and which provide $\hat{\DGB}{}_{com}^*(\POp)$
with the structure of a bimodule (in the complete sense) over the augmented $\Lambda$-operad $\POp$.
(We then consider the categorical dual and a complete version of the notions introduced in~\S\ref{DeformationComplexes:Biderivations:Bicomodules}).
We basically take a diagonal action of the operad $\POp$
on the tensors that span $\hat{\DGB}{}_{com}^*(\POp)$
in order to get these structure operations.
\end{constr}

We have the following duality statement, where we consider a straightforward generalization, for coaugmented filtered Hopf $\Lambda$-cooperads,
of the Harrison chain complex of~\S\ref{DeformationComplexes:DGComplex:HarrisonConstruction}:

\begin{prop}\label{GraphHomology:TwistedEndComplexes:DualHarrisonComplex}
We assume that the complete Hopf $\Lambda$-operad $\POp$ arises as the dual $\POp = \PiOp^{\vee}$
of an augmented filtered Hopf $\Lambda$-cooperad $\PiOp$
as in Proposition~\ref{GraphHomology:TwistedEndComplexes:DualHopfOperad}.
We then have the duality relation
\begin{equation*}
\hat{\DGB}{}_{com}^*(\PiOp^{\vee}) = \DGB^{com}_*(\PiOp)^{\vee}
\end{equation*}
between the Harrison cochain complex of this complete Hopf $\Lambda$-operad $\POp = \PiOp^{\vee}$
and the Harrison chain complex of the Hopf $\Lambda$-cooperad $\PiOp$.
\end{prop}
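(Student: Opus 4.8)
The plan is to unwind both sides of the claimed equality as complete/filtered dg-modules and check that the pairing between them is the tautological one coming from the finiteness hypothesis of Proposition~\ref{GraphHomology:TwistedEndComplexes:DualHopfOperad}. Recall that the right-hand side is, by Remark~\ref{DeformationComplexes:DGComplex:HarrisonConstruction} and Construction~\ref{DeformationComplexes:DGComplex:CommutativeAlgebraModule}, the $\Lambda$-collection with $\DGB^{com}_*(\PiOp)(r) = \DGSigma^{-1}\LLie^c(\DGSigma\IOp\PiOp(r))$, formed degree-wise from cofree Lie coalgebras on the suspension of the augmentation ideals $\IOp\PiOp(r)$, whereas the left-hand side, by Construction~\ref{GraphHomology:TwistedEndComplexes:HarrisonConstruction}, is the $\Lambda$-collection with $\hat{\DGB}{}_{com}^*(\PiOp^{\vee})(r) = \DGSigma\hat{\LLie}(\DGSigma^{-1}\IOp\PiOp^{\vee}(r))$, formed from complete free Lie algebras. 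So the first step is to observe that, under the hypotheses of Proposition~\ref{GraphHomology:TwistedEndComplexes:DualHopfOperad}, we have $\IOp\PiOp^{\vee}(r) = \DGF_1\PiOp(r)^{\vee} = \IOp\PiOp(r)^{\vee}$ as complete dg-modules (this is exactly the connectedness identity recorded in the proof of that proposition), and that $\DGSigma^{-1}$ applied to a dual is the dual of $\DGSigma$ up to the usual sign bookkeeping on $\ecell_1,\ecell^1$.

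Next I would reduce everything to a single weight-homogeneous arity. Fix $r>0$ and a weight $s\geq 1$. On the chain side the weight-$s$ piece is $\LLie^c_s(\DGSigma\IOp\PiOp(r)) = (\LieOp^c(s)\otimes(\DGSigma\IOp\PiOp(r))^{\otimes s})_{\Sigma_s}$, using the operadic expansion of the cofree Lie coalgebra recalled in the proof of Proposition~\ref{DeformationComplexes:DGComplex:HarrisonComodule}. On the cochain side the weight-$s$ piece is $\hat{\LLie}_s(\DGSigma^{-1}\IOp\PiOp(r)^{\vee}) = (\LieOp(s)\otimes(\DGSigma^{-1}\IOp\PiOp(r)^{\vee})^{\hat{\otimes} s})_{\Sigma_s}$. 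Now I invoke two standard facts: first, $\LieOp(s) = \LieOp^c(s)^{\vee}$ (with the $\Sigma_s$-action dualized), since $\LieOp(s)$ is finite rank over $\kk$; second, the finiteness hypothesis on the subquotients $\DGE^0_p\PiOp(r)$ guarantees the canonical map $\IOp\PiOp(r)^{\vee}\hat{\otimes}\cdots\hat{\otimes}\IOp\PiOp(r)^{\vee}\xrightarrow{\simeq}(\IOp\PiOp(r)^{\otimes s})^{\vee}$ is an isomorphism — this is precisely the point already used in the proof of Proposition~\ref{GraphHomology:TwistedEndComplexes:DualHopfOperad} to build the coalgebra structure. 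Taking coinvariants under $\Sigma_s$ then dualizes to invariants, and over a characteristic zero field invariants and coinvariants agree, so the weight-$s$, arity-$r$ piece of $\hat{\DGB}{}_{com}^*(\PiOp^{\vee})$ is canonically the dual of the weight-$s$, arity-$r$ piece of $\DGB^{com}_*(\PiOp)$, compatibly with suspensions.

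The remaining steps are to check compatibility with structure maps. I would verify that the differentials match: the Harrison chain differential $\partial'$ on $\DGB^{com}_*(\PiOp)$ is built from the comultiplication $\circ_i^*$ / product of $\PiOp$ via a coderivation relation on Lie monomials, while the cochain differential on $\hat{\DGB}{}_{com}^*(\PiOp^{\vee})$ is built from the coproduct $\Delta\colon\PiOp^{\vee}\to\PiOp^{\vee}\hat{\otimes}\PiOp^{\vee}$ via a derivation relation; but $\Delta$ is by construction the dual of the product of $\PiOp(r)$, so the two differentials are adjoint under the pairing above — this is just the statement that the Harrison differential is self-dual under linear duality, which is classical. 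Likewise the corestriction operators $u^*$ on $\PiOp$ dualize to the restriction operators on $\PiOp^{\vee}$, and the Harrison construction is functorial, so the $\Lambda$-diagram structures correspond; the bimodule structure of $\hat{\DGB}{}_{com}^*(\POp)$ over $\POp$ from Construction~\ref{GraphHomology:TwistedEndComplexes:HarrisonConstruction}, which is a diagonal action, is dual to the bicomodule structure of $\DGB^{com}_*(\PiOp)$ over $\PiOp$ from Proposition~\ref{DeformationComplexes:DGComplex:HarrisonComodule}, which is a diagonal coaction, by the same coend computation dualized. I expect the main obstacle to be purely bookkeeping rather than conceptual: getting all the suspension/desuspension signs and the $\Sigma_s$-(co)invariant identifications to line up coherently across weights, and making sure the completed tensor product on the operad side is genuinely the dual of the (non-completed, but degree-wise finite in each filtration layer) tensor product on the cooperad side — i.e.\ that no completion is lost or spuriously introduced. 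Once those identifications are in place, the proposition follows by assembling the weight-homogeneous isomorphisms into the asserted equality $\hat{\DGB}{}_{com}^*(\PiOp^{\vee}) = \DGB^{com}_*(\PiOp)^{\vee}$ of $\Lambda$-collections in dg-modules.
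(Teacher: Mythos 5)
Your proposal is correct and follows essentially the same route as the paper's own (much terser) proof: identify the weight-homogeneous components via the operadic expansions of $\LLie^c$ and $\hat{\LLie}$ together with the tensor-product duality isomorphisms already established in Proposition~\ref{GraphHomology:TwistedEndComplexes:DualHopfOperad}, then check that the two Harrison differentials are adjoint because the coproduct of $\PiOp^{\vee}$ is by construction dual to the product of $\PiOp$. The paper records exactly the identity $\LLie^c_{k+1}(\DGSigma\IOp\PiOp(r))^{\vee} = \hat{\LLie}_{k+1}(\DGSigma^{-1}\IOp\PiOp(r)^{\vee})$ and leaves the sign, $\Sigma_s$-(co)invariant, and $\Lambda$-structure bookkeeping implicit, which is precisely what you fill in.
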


%For instance, we can apply this correspondence to the $n$-Poisson operad $\POp = \PoisOp_n$.
%We then get $\hat{\DGB}{}_{com}^*(\PoisOp_n) = \DGB^{com}_*(\PoisOp_n^c)^{\vee}$.

\begin{proof}
We have an identity $\LLie^c_{k+1}(\DGSigma\IOp\PiOp(r))^{\vee} = \hat{\LLie}_{k+1}(\DGSigma^{-1}\IOp\PiOp(r)^{\vee})$
for the components of our complex (we use mostly the same tensor product isomorphisms
as in the proof of Proposition~\ref{GraphHomology:TwistedEndComplexes:DualHopfOperad})
and we readily check that our differential
in~\S\ref{GraphHomology:TwistedEndComplexes:HarrisonConstruction}
represents the dual homomorphism
of the differential of the Harrison chain complex.
\end{proof}

We aim to replace the Harrison chain complex~$\DGB^{com}_*(\PiOp)$ in the Koszul deformation complex of~\S\ref{DeformationComplexes:KoszulDuality}
by the dual construction of this subsection~$\hat{\DGB}{}_{com}^*(\POp)$.
We give an explicit definition of a complex which involves this cochain complex~$\hat{\DGB}{}_{com}^*(\POp)$
and an extended version of the Koszul construction
of~\S\ref{DeformationComplexes:KoszulDuality:KoszulReduction}
first.
We explain the correspondence between this complex and our Koszul deformation complex afterwards.

\begin{constr}[Twisted end complexes]\label{GraphHomology:TwistedEndComplexes:TwistedEndComplexConstruction}
We assume that $\POp$ is a coaugmented complete Hopf $\Lambda$-operad in the sense
of~\S\ref{GraphHomology:TwistedEndComplexes:CompleteHopfOperads}.
We also assume that $\POp$ is equipped with a coaugmentation $\gamma_*: \PoisOp_m\rightarrow\POp$
where we regard the $m$-Poisson operad $\PoisOp_m$
as a coaugmented complete Hopf $\Lambda$-operad
equipped with a trivial filtration (see~\S\ref{GraphHomology:TwistedEndComplexes:CompleteHopfOperads}).

Recall that, in the Koszul construction $\DGK_{op}^*(\PoisOp_m^c)(r) = \DGSigma\overline{\SuspOp}^m\overline{\PoisOp}_m$,
the cochain grading $l$ and the arity $r$
are linked by the relation $l = r-2$.
If our operad $\POp$ has an arbitrary term (not necessarily reduced to the unit) in arity $r=1$,
then we need to consider an extended version of this operadic Koszul construction,
where we keep the unit term $\SuspOp^m\PoisOp_m(1) = \kk 1$
of the Koszul dual operad
of the $m$-Poisson cooperad $\KK^c_{op}(\PoisOp_m^c) = \SuspOp^m\PoisOp_m$.
We explicitly set $\tilde{\DGK}{}_{op}^*(\PoisOp_m^c) = \DGSigma\SuspOp^m\PoisOp_m$,
and we keep the rule $l = r-2$ to determine the grading of this cochain complex. (We accordingly have $l=-1$
for the extra term of arity $r=1$.)
We provide this object with a trivial $\Lambda$-collection structure.

We then consider the Harrison cochain complex $\hat{\DGB}{}_{com}^*(\POp)$ associated to $\POp$
and we form the double sequence of dg-modules:
\begin{equation}\label{GraphHomology:TwistedEndComplexes:TwistedEndComplexConstruction:Components}
E^{k l} = E^{k l}(\POp,\PoisOp_m^c) = \int_{\rset\in\Lambda}\hat{\DGB}{}_{com}^k(\POp(r))\otimes\tilde{\DGK}{}_{op}^l(\PoisOp_m^c)(r),
\end{equation}
for $k\geq 0$ and $l\geq -1$.
This double sequence inherits a first horizontal twisting differential $\partial'_h: E^{k l}\rightarrow E^{k+1 l}$,
which is yielded by the internal twisting differential of the Harrison complex~$\hat{\DGB}{}_{com}^*(\POp)$.
We use the morphism $\gamma_*: \PoisOp_m\rightarrow\POp$, the Lie structure of the free complete Lie algebra~$\hat{\LLie}(-)$
in the Harrison complex~$\hat{\DGB}{}_{com}^k(\POp)$,
and the module structure of the Koszul construction $\tilde{\DGK}{}_{op}^l(\PoisOp_m^c)$
in order to define an extra horizontal twisting homomorphism $\partial''_h: E^{k l}\rightarrow E^{k+1 l}$
which we add to the twisting differential of the Harrison complex~$\partial'_h: E^{k l}\rightarrow E^{k+1 l}$.

We proceed as follows.
We consider, for any $r>0$, the tensor $\varpi(r) = \sum_{\pi}\gamma_*(\pi)\otimes\pi^{\vee}$,
where the sum runs over the Poisson monomials such that $\pi\in\IOp\PoisOp_m(r)$
and $\pi^{\vee}\in\PoisOp_m(r)^{\vee}$
denotes the dual of this basis in the augmentation ideal $\IOp\PoisOp_m(r)^{\vee}$
of the algebra $\PoisOp_m^c(r) = \PoisOp_m(r)^{\vee}$
underlying the Poisson cooperad $\PoisOp_m^c$.
We define our twisting differential term-wise on the end of our definition.
For a tensor $\alpha\otimes\xi\in\hat{\DGB}{}_{com}^k(\POp(r))\otimes\tilde{\DGK}{}_{op}^l(\PoisOp_m^c)(r)$,
with $\alpha\in\hat{\DGB}{}_{com}^k(\POp(r)) = \DGSigma\hat{\LLie}_{k+1}(\DGSigma^{-1}\IOp\POp(r))$
and $\xi\in\tilde{\DGK}{}_{op}^l(\PoisOp_m^c)(r)$,
we explicitly set:
\begin{equation}\label{GraphHomology:TwistedEndComplexes:TwistedEndComplexConstruction:HorizontalTwistingDifferential}
\partial''_h(\alpha\otimes\xi) := \sum_{\pi}\pm[\gamma_*(\pi),\alpha]\otimes(\pi^{\vee}\cdot\xi),
\end{equation}
where we form the Lie bracket $[\gamma_*(\pi),\alpha]\in\DGSigma\hat{\LLie}_{k+1}(\DGSigma^{-1}\IOp\POp(r))$,
and we use the action of the commutative algebra $\PoisOp_m(r)^{\vee}$
on the dg-module $\tilde{\DGK}{}_{op}^l(\PoisOp_m^c)(r)$.
We easily check that this construction is compatible with the action of the category $\Lambda$
on each factor of our tensor product,
and hence, does yield a homomorphism on our end.

We then use the notation $\partial_h = \partial'_h+\partial''_h$
for the total horizontal twisting differential
which we associate to our object $E^{* *}$.

We also provide our double sequence with a vertical twisting differential $\partial_v = \partial''_v: E^{k l}\rightarrow E^{k l+1}$
which we determine from the underlying operad structure
of the Koszul construction $\tilde{\DGK}{}_{op}^*(\PoisOp_m^c) = \DGSigma\SuspOp^m\PoisOp_m$
and from the bimodule structure of the Harrison cochain complex $\hat{\DGB}{}_{com}^*(\POp)$.
We then consider the arity two component of our tensor $\varpi(2) = \gamma_*(\mu)\otimes\mu^{\vee} + \gamma_*(\lambda)\otimes\lambda^{\vee}$,
with one term associated to the commutative product operation in the $m$-Poisson operad $\mu\in\PoisOp_m(2)$,
and one term associated to the Lie bracket operation $\lambda\in\PoisOp_m(2)$.
We take the image of the factors $\mu^{\vee},\lambda^{\vee}\in\PoisOp_m^{\vee}(2)$
under the Koszul duality weak-equivalence $\kappa: \BB_{op}^c(\PoisOp_m^{\vee})\xrightarrow{\sim}\SuspOp^m\PoisOp_m$.
We explicitly have $\kappa(\mu^{\vee}) = \lambda$, $\kappa(\lambda^{\vee}) = \mu$, and we now set:
\begin{multline}\label{GraphHomology:TwistedEndComplexes:TwistedEndComplexConstruction:VerticalTwistingDifferential}
\partial''_v(\alpha\otimes\xi) :=
\sum_{i=1,2}\bigl[\pm(\gamma_*(\mu)\circ_i\alpha)\otimes(\lambda\circ_i\xi)
+ \pm(\gamma_*(\lambda)\circ_i\alpha)\otimes(\mu\circ_i\xi)\bigr]
\\
+ \sum_{i=1,\dots,r}\bigl[\pm(\alpha\circ_i\gamma_*(\mu))\otimes(\xi\circ_i\lambda)
+ \pm(\alpha\circ_i\gamma_*(\lambda))\otimes(\xi\circ_i\mu)\bigr],
\end{multline}
for any tensor $\alpha\otimes\xi\in\hat{\DGB}{}_{com}^k(\POp(r))\otimes\tilde{\DGK}{}_{op}^l(\POp)(r)$
as above.
We easily check, again, that this construction is compatible with the action of the category $\Lambda$
on each factor of our tensor product, and hence, does yield a homomorphism
on our end.

We readily check that these maps fulfill the defining relations of a differential
and the commutation relation $\partial_h\partial_v + \partial_v\partial_h = 0$.
We accordingly get that our double sequence~(\ref{GraphHomology:TwistedEndComplexes:TwistedEndComplexConstruction:Components})
forms a double complex $E^{* *} = E^{* *}(\POp,\PoisOp_m^c)$
naturally associated to~$\POp$.
\end{constr}

We also consider a natural generalization of the Koszul deformation complex of~\S\ref{DeformationComplexes:KoszulDuality:KoszulReduction}
to coaugmented filtered Hopf $\Lambda$-cooperads.
We just have to take the extended Koszul construction of~\S\ref{GraphHomology:TwistedEndComplexes:TwistedEndComplexConstruction}
(instead of the truncated object $\DGK_{op}^*(\PoisOp_n)$)
when we work in this setting.
We accordingly have:
\begin{equation*}
K^{* *} = K^{* *}(\PiOp,\PoisOp_m^c) = (\Hom_{\dg\Lambda\Seq^c}(\DGB^{com}_*(\PiOp),\tilde{\DGK}{}_{op}^*(\PoisOp_m^c)),\partial''_h+\partial''_v)
\end{equation*}
for a straightforward generalization of the twisting differentials of~\S\ref{DeformationComplexes:KoszulDuality:KoszulReduction}
(check the general definition of these twisting differentials in~\S\ref{DeformationComplexes:DGComplex:CooperadBicomodule}
and~\S\ref{DeformationComplexes:DGComplex:CommutativeAlgebraModule}).
We may simply note that this complex reduces to the Koszul deformation complex of~\S\ref{DeformationComplexes:KoszulDuality:KoszulReduction}
when $\PiOp$ is a plain augmented Hopf $\Lambda$-cooperad, because we have $\IOp\PiOp(1) = 0\Rightarrow\DGB^{com}_*(\PiOp(1))=0$
in this case.

We now have the following correspondence between these (extended) Koszul deformation complexes
and the twisted end complexes of the previous paragraphs:

\begin{thm}\label{GraphHomology:TwistedEndComplexes:MainResult}
We assume that the complete Hopf $\Lambda$-operad $\POp$ arises as the dual $\POp = \PiOp^{\vee}$
of an augmented filtered Hopf $\Lambda$-cooperad $\PiOp$ (as in Proposition~\ref{GraphHomology:TwistedEndComplexes:DualHopfOperad}).
We also assume that $\POp$ is equipped with an coaugmentation over the Poisson $m$-operad $\gamma_*: \PoisOp_m\rightarrow\POp$
which comes from an augmentation $\gamma: \PiOp\rightarrow\PoisOp_m^c$
associated to this filtered Hopf $\Lambda$-cooperad $\PiOp$.

We have, in this context, an isomorphism of bicomplexes of dg-modules between the twisted end complex
of~\S\ref{GraphHomology:TwistedEndComplexes:TwistedEndComplexConstruction}
and the Koszul deformation complex of~\S\ref{DeformationComplexes:KoszulDuality:KoszulReduction}:
\begin{equation*}
E^{* *}(\PiOp^{\vee},\PoisOp_m^c)
\xrightarrow{\simeq}
K^{* *}(\PiOp,\PoisOp_m^c).
\end{equation*}
\end{thm}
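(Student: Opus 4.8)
The plan is to establish the claimed isomorphism $E^{**}(\PiOp^{\vee},\PoisOp_{m-1}^c)\xrightarrow{\simeq} K^{**}(\PiOp,\PoisOp_{m-1}^c)$ by exhibiting an explicit term-wise identification of the underlying bigraded dg-modules and then checking that all the twisting differentials match. First I would observe that, since $\PiOp$ is an augmented filtered Hopf $\Lambda$-cooperad whose augmentation ideal forms a free $\Lambda$-collection (the hypothesis we carry from the previous subsections), the Harrison chain complex $\DGB^{com}_*(\PiOp)$ is itself freely generated as a $\Lambda$-collection by the corresponding $\Sigma$-collection of cofree Lie coalgebra components $\LLie^c(\DGSigma\IOp\PiOp)$. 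Therefore, by the adjunction between $\Lambda\otimes_{\Sigma}-$ and the forgetful functor (as in Figure~\ref{Fig:AlgebraicAdjunctions}), a homomorphism of $\Lambda$-collections out of $\DGB^{com}_*(\PiOp)$ is the same as a homomorphism of $\Sigma$-collections out of its generating part, and the local finiteness assumption on the filtration subquotients $\DGE^0_s(\PiOp)$ lets us dualize: $\Hom_{\dg\Lambda\Seq^c}(\DGB^{com}_*(\PiOp),\tilde{\DGK}{}_{op}^*(\PoisOp_{m-1}^c))$ is identified, arity by arity, with an end over $\Lambda$ of tensor products $\hat{\DGB}{}_{com}^*(\PiOp^{\vee}(r))\otimes\tilde{\DGK}{}_{op}^*(\PoisOp_{m-1}^c)(r)$. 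Here I would invoke Proposition~\ref{GraphHomology:TwistedEndComplexes:DualHarrisonComplex}, which gives precisely the duality $\hat{\DGB}{}_{com}^*(\PiOp^{\vee}) = \DGB^{com}_*(\PiOp)^{\vee}$, so the components of $E^{**}$ and $K^{**}$ agree.

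Next I would match the differentials. The vertical Harrison differential $\partial'_h$ of $E^{**}$ is, by Proposition~\ref{GraphHomology:TwistedEndComplexes:DualHarrisonComplex} again, dual to the internal Harrison differential entering the definition of $\DGB^{com}_*(\PiOp)$, which in the Koszul deformation complex is already incorporated into $\DGB^{com}_*(\PiOp)$ (this is the reason that in $K^{**}$ only the extra twisting differentials $\partial''_h+\partial''_v$ are written explicitly: the Harrison internal differential is part of the source object). The extra horizontal twisting differential $\partial''_h$ of $E^{**}$, given by the formula~(\ref{GraphHomology:TwistedEndComplexes:TwistedEndComplexConstruction:HorizontalTwistingDifferential}) involving the canonical element $\varpi(r)=\sum_{\pi}\gamma_*(\pi)\otimes\pi^{\vee}$ and the Lie bracket with $\gamma_*(\pi)$, is precisely the dual of the component $\partial''_h$ of the Harrison deformation differential of $K^{**}$ associated to the $\PoisOp_{m-1}^c$-module structure on the Koszul construction (which, via the augmentation $\gamma:\PiOp\to\PoisOp_{m-1}^c$, is what gives $\tilde{\DGK}{}_{op}^*$ its $\PiOp$-module structure by corestriction). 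I would then do the same for $\partial''_v$: formula~(\ref{GraphHomology:TwistedEndComplexes:TwistedEndComplexConstruction:VerticalTwistingDifferential}), built from the two-fold composites of $\gamma_*(\mu),\gamma_*(\lambda)$ paired against $\kappa(\mu^{\vee})=\lambda$, $\kappa(\lambda^{\vee})=\mu$ in the Koszul dual operad $\SuspOp^m\PoisOp_{m-1}$, is dual to the cooperadic cobar-type twisting differential $\partial''_v$ of $K^{**}$ as defined in Construction~\ref{DeformationComplexes:KoszulDuality:KoszulReduction} through the two-sided coproduct of the bicomodule $\DGB^{com}_*(\PiOp)$ over the operad $\KK^c_{op}(\PoisOp_{m-1}^c)$. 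In each case the verification reduces to unwinding the definition of the relevant structure operation and its dual, together with the fact that the end construction over $\Lambda$ is compatible with these operations (which is already checked in Construction~\ref{GraphHomology:TwistedEndComplexes:TwistedEndComplexConstruction}).

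The main obstacle, as usual in dualizing deformation-complex constructions, will be bookkeeping: ensuring that the completed tensor products $\hat{\otimes}$ appearing on the operad side, the plain tensor products on the cooperad side, and the ends over $\Lambda$ all interact correctly, so that the isomorphism of components is genuinely natural and genuinely an isomorphism (not merely a monomorphism onto a completed subobject). This is exactly the point where the finiteness hypothesis on $\DGE^0_s(\PiOp)$ is used, via the isomorphisms $(\PiOp(r)\otimes\PiOp(r))^{\vee}\cong\PiOp(r)^{\vee}\hat{\otimes}\PiOp(r)^{\vee}$ already established in Proposition~\ref{GraphHomology:TwistedEndComplexes:DualHopfOperad}, and the analogous identification $\LLie^c_{k+1}(\DGSigma\IOp\PiOp(r))^{\vee}=\hat{\LLie}_{k+1}(\DGSigma^{-1}\IOp\PiOp(r)^{\vee})$ from the proof of Proposition~\ref{GraphHomology:TwistedEndComplexes:DualHarrisonComplex}. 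I would also need to be careful about the extra arity-one term: the extended Koszul construction $\tilde{\DGK}{}_{op}^*$ keeps the unit $\SuspOp^m\PoisOp_{m-1}(1)=\kk1$ in cochain degree $l=-1$, and I should check that this extra term on the $E^{**}$ side matches the contribution of $\DGB^{com}_*(\PiOp(1))$ in $K^{**}$, which is why $K^{**}$ was generalized to coaugmented filtered Hopf $\Lambda$-cooperads in the first place. Once these identifications are laid out, the compatibility of the differentials is a straightforward (if tedious) sign-chasing check, and the theorem follows.
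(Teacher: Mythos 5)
Your proposal is correct and follows essentially the same route as the paper: identify the components of the two bicomplexes via the duality $\hat{\DGB}{}_{com}^*(\PiOp^{\vee}) = \DGB^{com}_*(\PiOp)^{\vee}$ of Proposition~\ref{GraphHomology:TwistedEndComplexes:DualHarrisonComplex} together with a tensor--hom conversion inside the end over $\Lambda$, and then check that the twisting differentials correspond under this identification. The one imprecision is in your justification of the step $\DGB^{com}_k(\PiOp(r))^{\vee}\otimes\tilde{\DGK}{}_{op}^l(\PoisOp_{m-1}^c)(r)\simeq\Hom_{\dg\Mod}(\DGB^{com}_k(\PiOp(r)),\tilde{\DGK}{}_{op}^l(\PoisOp_{m-1}^c)(r))$: this holds because the Koszul construction $\tilde{\DGK}{}_{op}^l(\PoisOp_{m-1}^c)(r)$ is degree-wise finitely generated (which is what the paper invokes), not because of the local finiteness of the filtration subquotients of $\PiOp$ (that hypothesis only enters through the duality of Harrison complexes), and the freeness of $\DGB^{com}_*(\PiOp)$ as a $\Lambda$-collection is likewise not needed at this point, since the end formula over $\Lambda$ already computes $\Hom_{\dg\Lambda\Seq^c}$ by definition.
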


\begin{proof}
For any pair $(k,l)$, we have an isomorphism of ends:
\begin{align*}
\int_{\rset\in\Lambda}\hat{\DGB}{}_{com}^k(\PiOp(r)^{\vee})\otimes\tilde{\DGK}{}_{op}^l(\PoisOp_m^c)(r)
\xrightarrow{\simeq}\int_{\rset\in\Lambda}\DGB^{com}_k(\PiOp(r))^{\vee}\otimes\tilde{\DGK}{}_{op}^l(\PoisOp_m^c)(r)
\\
\xrightarrow{\simeq}\int_{\rset\in\Lambda}\Hom_{\dg\Mod}(\DGB^{com}_k(\PiOp(r)),\tilde{\DGK}{}_{op}^l(\PoisOp_m^c)(r)),
\end{align*}
which gives the relation $E^{k l}(\PiOp^{\vee},\PoisOp_m^c)\xrightarrow{\simeq}K^{k l}(\PiOp,\PoisOp_m^c)$,
and which we deduce from the duality relation $\hat{\DGB}{}_{com}^*(\PiOp(r)^{\vee}) = \DGB^{com}_*(\PiOp)^{\vee}$
of Proposition~\ref{GraphHomology:TwistedEndComplexes:DualHarrisonComplex}
together with the observation that the Koszul construction $\tilde{\DGK}{}_{op}^*(\PoisOp_m^c)$
consists of modules of finite rank in each arity $r>0$ (check the definition of~\S\ref{DeformationComplexes:KoszulDuality}).
We easily check that this isomorphism carries the twisting differentials of the bicomplex $E^{* *} = E^{* *}(\PiOp^{\vee},\PoisOp_m^c)$
to the twisting differentials of the Koszul deformation bicomplex $K^{* *} = K^{* *}(\PiOp,\PoisOp_m^c)$.
We therefore get the conclusion of the theorem.
\end{proof}

In~\S\S\ref{DeformationComplexes:DGComplex}-\ref{DeformationComplexes:KoszulDuality},
we use that the Harrison chain complex $\DGB^{com}_*(\PiOp)$ has a free structure as a $\Lambda$-collection
in order to prove the validity of our reduction processes. We briefly recall the definition of dual cofree structures
in the category of (contravariant) $\Lambda$-collections
in the next paragraph.

\begin{rem}[Cofree $\Lambda$-structures]\label{GraphHomology:TwistedEndComplexes:CofreeLambdaStructures}
We first consider the category formed by collections $\MOp = \{\MOp(r),r>0\}$
whose terms are dg-modules $\MOp(r)\in\dg\Mod$
equipped with an action of the symmetric groups $\Sigma_r$,
for $r>0$.
We can identify this category of symmetric collections with the category of covariant diagrams $\MOp$
over the category $\Sigma = \coprod_r\Sigma_r$
which have a trivial term $\MOp(0) = 0$
in arity $r=0$.

In parallel, we consider the category, underlying our category of augmented $\Lambda$-operads,
whose objects $\MOp$ are contravariant diagrams over the category $\Lambda$
with a trivial term $\MOp(0) = 0$
in arity $r=0$ (as in the case of symmetric collections).
We say that an object of this category $\MOp$ is cofreely cogenerated by a symmetric collection $\SOp\MOp$
when we have the end formula:
\begin{equation}\label{GraphHomology:TwistedEndComplexes:CofreeLambdaStructures:EndExpression}
\MOp(r) = \int_{\kset\in\Sigma} \SOp\MOp(k)^{\Mor_{\Lambda}(\kset,\rset)},
\end{equation}
for any arity $r>0$, where we use the notation $\SOp\MOp(k)^S$ for the product of copies of the object $\SOp\MOp(k)$
over the set $S = \Mor_{\Lambda}(\kset,\rset)$
and we assume that $\MOp$ inherits the action of the category $\Lambda$ by left translation
over these morphism sets $S = \Mor_{\Lambda}(\kset,\rset)$.
The collection $\SOp\MOp$ is identified with a quotient object of $\MOp$ (in the category of symmetric collections)
when we have such a decomposition.

This concept applies to the $n$-Poisson operad $\PoisOp_n$, for any $n\geq 2$ (and actually, for any $n\geq 1$).
To be more precise, we observed in~\S\ref{Background:PoissonAugmentationIdeal}
that the augmentation ideals $\IOp\PoisOp_n^c(r)$
of the commutative algebras $\PoisOp_n^c(r)$
underlying the $n$-Poisson cooperad $\PoisOp_n^c$
form a free $\Lambda$-collection
over a symmetric sequence such that $\SOp\PoisOp_n^c\subset\IOp\PoisOp_n^c$.
In the case of the $n$-Poisson operad, we dually get that the coaugmentation coideals $\IOp\PoisOp_n(r)$
of the cocommutative coalgebras $\PoisOp_n(r)$
form a cofree (contravariant) $\Lambda$-collection in the sense of the previous definition.
The cogenerating symmetric sequence $\SOp\PoisOp_n$ underlying $\IOp\PoisOp_n$
is just the dual of the symmetric sequence considered in~\S\ref{Background:PoissonFreeStructure}
and consists of monomials $\pi(x_1,\dots,x_r) = \pi_1(x_{1 j_1},\dots,x_{j_{1 n_1}})\cdot\ldots\cdot\pi_s(x_{s j_1},\dots,x_{s j_{n_s}})$
whose factors $\pi_i(x_{i j_1},\dots,x_{i j_{n_i}})$, $i = 1,\dots,s$,
are Lie monomials of weight $n_i>1$.
\end{rem}

\begin{remark}\label{GraphHomology:TwistedEndComplexes:DualCofreeLambdaStructures}
We may extend the duality correspondence between the cofree structure of the $\Lambda$-collection $\IOp\PoisOp_n^c$
underlying the $n$-Poisson cooperad $\PoisOp_n^c$
and the free structure of the $\Lambda$-collection $\IOp\PoisOp_n$
underlying the $n$-Poisson operad $\PoisOp_n$
in the setting of coaugmented complete Hopf $\Lambda$-operads.

We then consider a coaugmented Hopf $\Lambda$-operad $\POp$ dual to an augmented filtered Hopf $\Lambda$-cooperad $\PiOp$
in the sense of the construction of Proposition~\ref{GraphHomology:TwistedEndComplexes:DualHopfOperad}.
We also consider an obvious generalization, for general covariant $\Lambda$-collections,
of the coend construction of~\S\ref{Background:AlgebraicAdjunctions}.
We easily see that the collection of coaugmentation coideals $\IOp\POp(r) = \IOp\PiOp(r)^{\vee}$
underlying the coalgebras $\POp(r) = \PiOp(r)^{\vee}$
has the structure of a cofree $\Lambda$-collection
if and only if the collection of augmentation ideals $\IOp\PiOp(r)$ underlying the algebras $\PiOp(r)$
forms a free $\Lambda$-collection,
in the sense that we have the relation $\IOp\PiOp = \Lambda\otimes_{\Sigma}\SOp\PiOp$
for some symmetric collection such that $\SOp\PiOp\subset\IOp\PiOp$.
We explicitly have the duality relations:
\begin{equation*}
\IOp\PiOp(r) = (\Lambda\otimes_{\Sigma}\SOp\PiOp)(r) = \int^{\kset\in\Sigma}\Mor_{\Lambda}(\kset,\rset)\otimes\SOp\PiOp(k)
\Leftrightarrow\IOp\POp(r) = \int_{\kset\in\Sigma} \SOp\POp(k)^{\Mor_{\Lambda}(\kset,\rset)},
\end{equation*}
for the symmetric collection in complete dg-modules such that $\SOp\POp(r) = \SOp\PiOp(r)^{\vee}$.
\end{remark}

We use such free structures in the proof of the following homotopy invariance statement:

\begin{prop}\label{GraphHomology:TwistedEndComplexes:TwistedEndComplexHomotopyInvariance}
Let $\phi: \AOp\rightarrow\BOp$ be a morphism of augmented filtered Hopf $\Lambda$-cooperads.
We assume that $\AOp$ and $\BOp$ are both equipped with an augmentation
over the $m$-Poisson cooperad $\PoisOp_m^c$,
and that $\phi$ preserves this augmentation.

We then consider the generalized Koszul deformation complex associated to these augmented filtered Hopf $\Lambda$-cooperads.
If $\phi$ is a weak-equivalence, then the morphism induced by $\phi$ on these complexes
defines a weak-equivalence as well:
\begin{equation}
\phi^*: K^{* *}(\BOp,\PoisOp_m^c)\xrightarrow{\sim}K^{* *}(\AOp,\PoisOp_m^c)
\end{equation}
provided that the $\Lambda$-collections $\IOp\AOp$ and $\IOp\BOp$ underlying our augmented cooperads $\AOp$ and $\BOp$
admit a free structure in the sense recalled in~\S\ref{GraphHomology:TwistedEndComplexes:DualCofreeLambdaStructures}
(but we do not require that our morphism $\phi$ preserves the generating symmetric collections of our objects).
\end{prop}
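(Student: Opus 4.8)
The plan is to argue exactly as in the proofs of Theorem~\ref{DeformationComplexes:DGComplex:MainResult} and Theorem~\ref{DeformationComplexes:KoszulDuality:MainResult}: we reduce the statement to the homotopy invariance of the Harrison chain complex construction, to the cofibrancy of the Harrison complex of a free $\Lambda$-collection, and to a spectral sequence argument which takes care of the twisting differentials $\partial''_h$ and $\partial''_v$. First I would record that, since $\AOp$ and $\BOp$ are augmented over $\ComOp^c$, we have arity-wise direct sum decompositions $\AOp(r) = \kk\oplus\IOp\AOp(r)$ and $\BOp(r) = \kk\oplus\IOp\BOp(r)$, so that $\phi$ being a weak-equivalence is equivalent to the induced morphism of $\Lambda$-collections $\IOp\phi\colon\IOp\AOp\rightarrow\IOp\BOp$ being a weak-equivalence. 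Since the Harrison chain complex $\DGB^{com}_*(-)$ is, arity-wise, the desuspended bar-type construction $\DGSigma(\LLie^c(\DGSigma\IOp(-)),\partial')$ arising from the Koszul duality of the commutative operad, and since over our characteristic zero ground field every augmented commutative dg-algebra has an underlying cofibrant dg-module, the morphism $\DGB^{com}_*(\phi)\colon\DGB^{com}_*(\AOp)\rightarrow\DGB^{com}_*(\BOp)$ is a weak-equivalence of $\Lambda$-collections; one can also see this from the operadic expansion $\LLie^c(X) = \bigoplus_s(\LieOp^c(s)\otimes X^{\otimes s})_{\Sigma_s}$ together with the K\"unneth theorem and the exactness of $\Sigma_s$-coinvariants, or from the comparison with the normalized cotriple complex of~\cite{FressePartition}. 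Moreover, under the freeness assumption on $\IOp\AOp$ and $\IOp\BOp$, the same argument as in the proof of Theorem~\ref{DeformationComplexes:DGComplex:MainResult} (using the operadic expansion of the cofree Lie coalgebra and Proposition~\ref{CotripleResolution:FreeCollectionTensors}) shows that $\DGB^{com}_*(\AOp)$ and $\DGB^{com}_*(\BOp)$ form cofibrant objects of the category of $\Lambda$-collections in dg-modules.

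From these observations, the argument used for the comparison maps in the proof of Theorem~\ref{DeformationComplexes:DGComplex:MainResult} (compare with~\cite[Theorem III.3.1.4]{FresseBook}) shows that the weak-equivalence $\DGB^{com}_*(\phi)$ between cofibrant $\Lambda$-collections induces a weak-equivalence on the hom-objects $\Hom_{\dg\Lambda\Seq^c}(\DGB^{com}_*(\BOp),\tilde{\DGK}{}_{op}^*(\PoisOp_{m-1}^c))\rightarrow\Hom_{\dg\Lambda\Seq^c}(\DGB^{com}_*(\AOp),\tilde{\DGK}{}_{op}^*(\PoisOp_{m-1}^c))$, equipped with their internal Harrison and hom-complex differentials. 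Here the hypothesis that $\phi$ preserves the augmentations over $\PoisOp_{m-1}^c$ is what makes $\phi^*$ a morphism of the full deformation complexes: the module structure on $\tilde{\DGK}{}_{op}^*(\PoisOp_{m-1}^c)$ and the bicomodule structure on $\DGB^{com}_*(\AOp)$ which enter the definition of $\partial''_h$ and $\partial''_v$ are obtained by restriction along the augmentations $\gamma\colon\AOp\rightarrow\PoisOp_{m-1}^c$ and $\gamma\colon\BOp\rightarrow\PoisOp_{m-1}^c$, and the relation $\gamma\phi=\gamma$ guarantees that $\phi^*$ intertwines the twisting differentials. One then filters $K^{* *}$ by a suitable grading, for instance by the cochain degree carried by $\tilde{\DGK}{}_{op}^*(\PoisOp_{m-1}^c)$ (equivalently by the arity), so that on the associated graded only the internal differentials survive; comparison of the resulting spectral sequences, whose first pages are the homology of the hom-objects treated above, then yields that $\phi^*$ is a weak-equivalence on total complexes.

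The main obstacle I anticipate is the bookkeeping needed to make this last spectral sequence argument rigorous: one must choose the filtration so that it is exhaustive and so that, in each total degree, only finitely many filtration layers contribute, which uses that $\tilde{\DGK}{}_{op}^l(\PoisOp_{m-1}^c)$ is concentrated in arity $l+2$ and the corestriction structure of the Harrison complex. Everything else rests on the two facts already established while proving Theorem~\ref{DeformationComplexes:DGComplex:MainResult}, namely the homotopy invariance of the Harrison complex and the cofibrancy of $\DGB^{com}_*(\AOp)$ and $\DGB^{com}_*(\BOp)$ as $\Lambda$-collections when the augmentation ideals $\IOp\AOp$ and $\IOp\BOp$ are free.
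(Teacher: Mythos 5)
Your proposal is correct and follows essentially the same route as the paper's own proof: homotopy invariance of the Harrison chain complex via the cofree Lie coalgebra functor in characteristic zero, cofibrancy of $\DGB^{com}_*(\AOp)$ and $\DGB^{com}_*(\BOp)$ as $\Lambda$-collections under the freeness hypothesis, the resulting weak-equivalence on hom-objects, and a concluding spectral sequence argument. The extra remarks you add on the role of the augmentation over $\PoisOp_{m-1}^c$ and on the convergence of the filtration are consistent with, and slightly more explicit than, the paper's treatment.
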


\begin{proof}
We observed in the proof of Theorem~\ref{DeformationComplexes:DGComplex:MainResult} (see also Theorem~\ref{DeformationComplexes:KoszulDuality:MainResult})
that the components of the Harrison chain complex $\DGB^{com}_k(\PiOp)$
associated to an augmented Hopf $\Lambda$-cooperad $\PiOp$
form a free $\Lambda$-collection when the $\Lambda$-collection $\IOp\PiOp$
underlying our Hopf $\Lambda$-cooperad $\PiOp$
does so.
We have a straightforward generalization of this structure result in the setting of filtered Hopf $\Lambda$-cooperads.
We moreover get in this situation that $\DGB^{com}_k(\PiOp)$ forms a cofibrant
object in the projective model category of $\Lambda$-collections
in dg-modules.

In the context of our proposition, we first use that the cofree Lie coalgebra functor $\LLie^c(-)$
preserves weak-equiva\-len\-ces (when we work over a field of characteristic zero)
in order to check that our weak-equivalence $\phi: \AOp\xrightarrow{\sim}\BOp$
induces a weak-equivalence
on each component of the Harrison chain complex $\phi_*: \DGB^{com}_k(\AOp(r))\xrightarrow{\sim}\DGB^{com}_k(\BOp(r))$,
and for any arity $r>0$.
Then we use our structure results on this complex and standard model category arguments
in order to check that this morphism induces a weak-equivalence
on the hom-objects which define the components
of the Koszul deformation complex
%\begin{equation*}
$\phi^*: \Hom_{\dg\Lambda\Seq^c}(\DGB^{com}_k(\BOp),\tilde{\DGK}{}_{op}^l(\PoisOp_m^c))
\xrightarrow{\sim}\Hom_{\dg\Lambda\Seq^c}(\DGB^{com}_k(\AOp),\tilde{\DGK}{}_{op}^l(\PoisOp_m^c))$,
%\end{equation*}
for all $k\geq 0$ and $l\geq -1$, and we use a spectral sequence argument
to get the conclusion of our theorem.
\end{proof}

\begin{remark}\label{GraphHomology:TwistedEndComplexes:HarrisonConstructionCofreeLambdaStructure}
We use in this proof that the components of the Harrison chain complex $\DGB^{com}_k(\PiOp(r))$ of an augmented Hopf $\Lambda$-operad $\PiOp$
admit a free structure as a $\Lambda$-collection when the $\Lambda$-collection $\IOp\PiOp$ underlying $\PiOp$
does so.
We also use that this observation admits a straightforward generalization
in the setting of augmented filtered Hopf $\Lambda$-cooperads.
We can use the duality correspondence of Proposition~\ref{GraphHomology:TwistedEndComplexes:DualHarrisonComplex}
to get an equivalent structure result for the Harrison cochain complex $\hat{\DGB}{}_{com}^*(\POp)$
associated to the dual complete Hopf $\Lambda$-operad $\POp = \PiOp^{\vee}$
of our cooperad $\PiOp$.
We then have the equivalence:
\begin{multline*}
\DGB^{com}_k(\PiOp) = \int^{\rset\in\Sigma}\Mor_{\Lambda}(\rset,-)\otimes\DGS\DGB^{com}_k(\PiOp)(r)
\\
\Leftrightarrow\hat{\DGB}{}_{com}^k(\POp) = \int_{\rset\in\Sigma}\DGS\hat{\DGB}{}_{com}^k(\POp)(r)^{\Mor_{\Lambda}(\rset,-)},
\end{multline*}
for any degree $k\in\NN$, where we consider the symmetric collection $\SOp\hat{\DGB}{}_{com}^k(\POp)$
such that $\DGS\hat{\DGB}{}_{com}^k(\POp)(r) = \DGS\DGB^{com}_k(\PiOp)(r)^{\vee}$,
for any $r>0$.

We use this formula in the next subsection in order to simplify the expression
of the twisted end complexes which arise from the duality relation
of Theorem~\ref{GraphHomology:TwistedEndComplexes:MainResult}.
We mainly use that, in degree zero, we have the relation $\DGS\hat{\DGB}{}_{com}^0(\POp) = \SOp\POp$
which arises from the effective definition of the symmetric collection $\DGS\DGB^{com}_k(\PiOp)$
underlying $\DGB^{com}_k(\PiOp)$ (see the proof of Theorem~\ref{DeformationComplexes:DGComplex:MainResult}).
\end{remark}

\subsection{Graph complexes and graph operads}\label{GraphHomology:GraphComplexes}
The purpose of this subsection is to recall the definition of the graph complexes which we use to compute
the homology of the Koszul deformation complexes $K^{* *}_{m n} = K^{* *}(\PoisOp_n^c,\PoisOp_m^c)$
and $L^{* *}_n = K^{* *}(\PoisOp_n^c,\PoisOp_n^c)$.
We review the definition of a first graph complex, denoted by $\GCOp_n$, which is related to the complex $L^{* *}_n$
associated to the identity morphism
of the $n$-Poisson cooperad $\id: \PoisOp_n^c\rightarrow\PoisOp_n^c$.
We deal with a variant of this graph complex, which we call the hairy graph complexes and denote by $\HGCOp_{m n}$,
in order to compute the homology of the Koszul deformation complexes $K^{* *}_{m n}$
of the morphisms $\iota^*: \PoisOp_n^c\rightarrow\PoisOp_m^c$.
We also recall the definition of the operads of graphs $\GraphOp_n$, weakly-equivalent to the Poisson operads $\PoisOp_n$,
which we use to get these graphical reductions of our Koszul deformation complexes.

We only give a short outline of the definition of these objects in this subsection.
We refer the reader to~\cite[\S 3]{WillwacherGraphs},
besides Kontsevich's initial papers~\cite{KontsevichFormalityConj,KontsevichMotives},
for further details.
We start with the definition of a basic graph operad $\GraOp_n$
from which we derive the construction
of our other objects.
The operad of graphs $\GraphOp_n$ actually forms a complete Hopf $\Lambda$-operad in the sense
of~\S\ref{GraphHomology:TwistedEndComplexes:CompleteHopfOperads},
and is dual to a filtered Hopf $\Lambda$-cooperad
in the sense of~\S\ref{GraphHomology:TwistedEndComplexes:FilteredHopfCooperads}.
In what follows, we generally do not make explicit the filtered Hopf $\Lambda$-cooperads which we associate to our objects
(we can use the observations of~\S\ref{GraphHomology:TwistedEndComplexes:DualHopfOperad} to retrieve them).

\begin{rem}[The plain operad of graphs]\label{GraphHomology:GraphComplexes:PlainGraphOperad}
We first denote by $\gra_{r k}$ the set of directed graphs with vertex set $\rset = \{1,\dots,r\}$ and edge set $\kset = \{1,\dots,k\}$.
We use that the group $\Sigma_r\times\Sigma_k\ltimes\Sigma_2^k$ acts on this set of graphs by permuting vertices,
edge labels and changing the edge directions.
We then consider an operad $\GraOp_n$, $n\geq 2$, such that:
\begin{equation}\label{GraphHomology:GraphComplexes:PlainGraphOperad:Expansion}
\GraOp_n(r) = \prod_{k\geq 0}(\kk\langle\gra_{r k}\rangle\otimes\kk\ecell_{n-1}^{\otimes k})_{\Sigma_k\ltimes\Sigma_2^k},
\end{equation}
for any arity $r>0$, where $\kk\langle\gra_{r k}\rangle$ is the free module spanned by the set $\gra_{r k}$
while $\ecell_{n-1}$ denotes a homogeneous element of lower degree $*={n-1}$
and we consider the ($k$-fold tensor product of the) graded module
of rank one
spanned by this element.
%\footnote{This convention differs from the one of the article~\cite{WillwacherGraphs}.
%First, we use lower degrees rather than the upper grading
%considered in this reference.}.
%We equivalently set $M[l] = \DGSigma^l M$, where we identify the module $M$ with a graded module concentrated in degree $0$
%and we consider the $l$-fold suspension functor on the category of graded modules $\gr\Mod$.
The action of $\Sigma_k$ on $\kk\langle\gra_{r k}\rangle\otimes\kk\ecell_{n-1}^{\otimes k}$ is twisted by a sign when $n$ is even,
while the action of $\Sigma_2^k$ carries a sign
when $n$ is odd.
The following picture shows a typical basis element of $\GraOp_n(5)$:
\begin{equation*}
\begin{tikzpicture}{scale=.5}
\node[ext] (v) at (0,0) {1};
\node[ext] (w) at (1,1) {4};
\node[ext] (u) at (1,0) {3};
\node[ext] (x) at (2,0) {2};
\node[ext] (y) at (3,0) {5};
\draw (v) edge (w) edge (u) (u) edge (w) (x) edge (y);
\end{tikzpicture}.
\end{equation*}
The operadic composite $\alpha\circ_i\beta\in\GraOp_n(k+l-1)$ of such basis elements $\alpha\in\GraOp_n(k)$, $\beta\in\GraOp_n(l)$,
is defined, for any $i = 1,\dots,k$,
by plugging the graph $\beta\in\GraOp_n(l)$ in the $i$th vertex of $\alpha\in\GraOp_n(k)$
and reconnecting the adjacent edges of this vertex in the graph $\alpha$
to vertices of the graph $\beta$ in all possible ways (we just sum over all reconnections).
We refer to~\cite[\S 3, pp. 679-680]{WillwacherGraphs} for further details on this process.

We can also provide $\GraOp_n$ with the structure of an augmented $\Lambda$-operad.
The augmentation $\epsilon: \GraOp_n(r)\rightarrow\kk$ carries the fully disconnected graph (where we only have isolated vertices) to $1$
and is zero otherwise.
The restriction operator $u^*: \GraOp_n(l)\rightarrow\GraOp_n(k)$ associated to any injective map $u: \{1<\dots<k\}\rightarrow\{1<\dots<l\}$
is obtained by removing the vertices labeled by indices $j\not=\{u(1),\dots,u(k)\}$
in a graph, and by the obvious renumbering operation $j\mapsto u^{-1}(j)$
on the remaining vertices.
We just assume that this map $u^*: \alpha\mapsto u^*(\alpha)$ vanishes when our removal operation
involves vertices with a non-empty set of incident edges
inside $\alpha$.

The signs and degrees of the graph operad are chosen so that, for any $n\geq 2$,
we have an operad morphism $\gamma_*: \PoisOp_n\rightarrow\GraOp_n$,
which carries the product element $\mu(x_1,x_2) = x_1 x_2\in\PoisOp_n(2)$
to the discrete graph
in $\GraOp_n(2)$
and the Lie bracket element $\lambda(x_1,x_2) = [x_1,x_2]\in\PoisOp_n(2)$
to the one edge graph:
\begin{equation}\label{GraphHomology:GraphComplexes:PlainGraphOperad:Mapping}
\gamma_*(\mu) := \vcenter{\xymatrix@!0@C=1.5em@R=1em@M=0pt{ *+<6pt>[o][F]{1} & *+<6pt>[o][F]{2} }},
\qquad\gamma_*(\lambda) := \vcenter{\xymatrix@!0@C=1.5em@R=1em@M=0pt{ *+<6pt>[o][F]{1}\ar@{-}[r] & *+<6pt>[o][F]{2} }}.
\end{equation}
%\begin{align*}
%\mu & = \begin{tikzpicture}[baseline=-.65ex, scale=.75]
%\node[ext] (v) at (-.5,0) {$1$};
%\node[ext] (w) at (.5,0) {$2$};
%\end{tikzpicture}
%\intertext{in $\GraOp_n(2)$,
%and the Lie bracket element $\lambda(x_1,x_2) = [x_1,x_2]\in\PoisOp_n(2)$
%to the one edge graph}
%\lambda & = \begin{tikzpicture}[baseline=-.65ex, scale=.75]
%\node[ext] (v) at (-.5,0) {$1$};
%\node[ext] (w) at (.5,0) {$2$};
%\draw (v) edge (w);
%\end{tikzpicture}.
%\end{align*}
%This mapping actually defines a morphism of Hopf $\Lambda$-operads.
\end{rem}
%\end{constr}

\begin{rem}[Graph complexes]\label{GraphHomology:GraphComplexes:GraphComplexConstruction}
The full graph complex, denoted by $\FGCOp_n$, is a graded module defined by twisted invariants of the operad $\GraOp_n$.
We more explicitly have:
\begin{equation}\label{GraphHomology:GraphComplexes:GraphComplexConstruction:Expression}
\FGCOp_n := \prod_{l\geq 2}\DGSigma^n(\GraOp_n(l)\otimes\kk\ecell_{-n}^{\otimes l})^{\Sigma_l},
\end{equation}
where we still use the notation $\kk\ecell_{-n}$ for the graded module of rank one spanned by an element $\ecell_{-n}$ of (lower) degree $-n$,
while $\DGSigma$ denotes the suspension functor on the category of dg-modules (see the introduction of~\S\ref{DeformationComplexes:DGComplex}).
We can represent the elements of this module $\FGCOp_n$ as formal series of graphs with unidentifiable (non-numbered) vertices.
We will indicate this in drawings by filling the vertices black.

We have a natural Lie algebra structure on $\FGCOp_n$, which we define by taking the commutator of pre-Lie composition operations
in the operad of graphs (see for instance~\cite{DolgushevTwisting}).
We may check that the graph
\begin{equation}\label{GraphHomology:GraphComplexes:GraphComplexConstruction:Picture}
m=
\begin{tikzpicture}[scale=.5]
\node[int](u) at (0,0) {};
\node[int](v) at (1,0) {};
\draw (u) edge (v);
\end{tikzpicture}
\end{equation}
defines a Maurer-Cartan element in this graded Lie algebra $\FGCOp_n$. The differential of the complex $\FGCOp_n$
is given by the Lie bracket with this Maurer-Cartan element $\delta = [m,-]$.
Basically, the differential of a graph in the complex $\FGCOp_n$ can be obtained by splitting any vertex of this graph
into two vertices connected by an edge and by reconnecting the adjacent edges of this vertex
in the original graph in all possible ways. We just sum over these reconnections
and over the set of vertices
of our graph
when we perform this boundary operation (see~\cite[Remark 3.3]{WillwacherGraphs} for further details on this process).

In what follows, we mainly deal with sub-complexes of the full graph complex $\GCOp_n\subset\GCOp_n^2\subset\FGCOp_n$,
where $\GCOp_n$ consists of connected graphs whose vertices are at least trivalent,
and $\GCOp_n^2$ consists of connected graphs whose vertices
are at least bivalent.
\end{rem}

The relations between the homology of the full graph complex $\FGCOp_n$
and the homology of these sub-complexes of connected graphs
is studied in~\cite[\S 3, pp. 682-685]{WillwacherGraphs}.
The most significant outcome of this study, for our purpose, is the result of the following proposition:

\begin{prop}[{see~\cite[Proposition 3.4]{WillwacherGraphs}}]\label{GraphHomology:GraphComplexes:GraphComplexHomology}
We have
\begin{equation*}
\DGH_*(\GCOp_n^2) = \DGH_*(\GCOp_n)\oplus\bigoplus_{l\equiv 2n+1(\mymod 4)}\kk\gamma_l
\end{equation*}
where the class $\gamma_l$, of (lower) degree $n-l$, is represented by the $l$-loop graph
\begin{equation*}
\gamma_l = \begin{tikzpicture}[baseline=-.65ex]
\node[int] (v1) at (0:1) {};
\node[int] (v2) at (72:1) {};
\node[int] (v3) at (144:1) {};
\node[int] (v4) at (216:1) {};
\node (v5) at (-72:1) {$\cdots$};
\draw (v1) edge (v2) edge (v5) (v3) edge (v2) edge (v4) (v4) edge (v5);
\end{tikzpicture}
\qquad\text{($l$ vertices and $l$ edges)}.
\end{equation*}
Moreover, the module $\DGH_*(\GCOp_n)$ vanishes in degree $*<n$ when $n\geq 3$.
\end{prop}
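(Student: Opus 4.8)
This statement is \cite[Proposition 3.4]{WillwacherGraphs}, and I recall the structure of its proof. The guiding principle is that in $\GCOp_n^2$ a vertex of valence two carries no more information than a subdivision point of an edge, so bivalent vertices may be suppressed --- the only exception being the connected graphs all of whose vertices are bivalent, which are exactly the loop polygons $\gamma_l$, $l\geq 2$, and which admit no bivalent-free model. To make this precise, recall that the differential $\delta=[m,-]$ is computed term by term by splitting one vertex into two vertices joined by a new edge: a vertex of valence $d$ gives rise to two vertices of valences $d_1+1$ and $d_2+1$ with $d_1+d_2=d$ and $d_1,d_2\geq 1$. Hence splitting a bivalent vertex produces two bivalent vertices, while splitting a vertex of valence $d\geq 3$ produces at least one vertex of valence $\geq 3$. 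It follows that the span $P_n\subset\GCOp_n^2$ of loop polygons and the span $\GCOp_n^{2,c}\subset\GCOp_n^2$ of the remaining (connected, $\geq 2$-valent) graphs, i.e. those possessing a vertex of valence $\geq 3$, are each $\delta$-stable, so one gets a direct sum of complexes $\GCOp_n^2=\GCOp_n^{2,c}\oplus P_n$ and therefore $\DGH_*(\GCOp_n^2)=\DGH_*(\GCOp_n^{2,c})\oplus\DGH_*(P_n)$.

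\textbf{The polygon summand.} For fixed $l\geq 2$, the subcomplex of $P_n$ generated by the $l$-gon under iterated vertex splitting is a parity-twisted form of the simplicial chain complex of the circle refined along its subdivisions; it is acyclic apart from a single class in the bottom degree, represented by the $l$-gon itself. Since an $l$-gon has $l$ vertices, $l$ edges, and hence first Betti number one, this class sits in degree $n\cdot 1-l=n-l$, as read off from the definitions of $\GraOp_n$ and $\FGCOp_n$. Whether the class is nonzero is governed by the action of the dihedral automorphism group $\Sigma_l\supset D_l$ of the $l$-gon on the orientation line spanned by the $l$ edge-decorations $\ecell_{n-1}$, the $l$ vertex-decorations $\ecell_{-n}$ and the suspension $\DGSigma^{n}$; a short sign computation shows that the cyclic generator acts trivially precisely when $l\equiv 2n+1$ modulo $4$, which yields $\DGH_*(P_n)=\bigoplus_{l\equiv 2n+1(\mymod 4)}\kk\gamma_l$.

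\textbf{Suppressing bivalent vertices.} It remains to identify $\DGH_*(\GCOp_n^{2,c})$ with $\DGH_*(\GCOp_n)$. The plan is to filter $\GCOp_n^{2,c}$ by the number of vertices of valence $\geq 3$ (the ``core size''); the differential does not decrease this number, and the essential homological input is that a string of bivalent vertices joining two higher-valent vertices, equipped with the differential that inserts one further bivalent vertex, has the homology of the un-subdivided edge --- equivalently, the reduced simplicial chains on a subdivided interval relative to its endpoints are acyclic. Passing to the associated spectral sequence, the $E_1$-page is then supported on the bivalent-free graphs carrying the induced vertex-splitting differential of $\GCOp_n$, and a convergence argument for this complete filtration gives the claimed isomorphism. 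This step is the main obstacle: one must carry out the sign bookkeeping in the acyclicity statement and check that the resulting identification with $\GCOp_n$ respects orientations, and here I would import the detailed treatment of \cite[\S 3]{WillwacherGraphs}.

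\textbf{The degree bound.} Finally, let $\Gamma$ be a connected graph with $v$ vertices and $e$ edges, all of valence $\geq 3$. Then the valences sum to $2e$ and each is at least $3$, so $3v\leq 2e$, whence $v\leq\tfrac23 e$ and the first Betti number $g:=e-v+1$ satisfies $e\leq 3(g-1)$; in particular $g\geq 2$. The degree of $\Gamma$ in $\GCOp_n$ equals $ng-e$, so $\deg\Gamma\geq ng-3(g-1)=(n-3)g+3\geq 2(n-3)+3=2n-3\geq n$ whenever $n\geq 3$. Hence $\DGH_*(\GCOp_n)$ is concentrated in degrees $*\geq n$, which completes the proof.
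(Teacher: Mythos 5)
Your proof of the degree bound is the same valence-counting argument as the paper's (your inequality $\deg\Gamma = ng - e \geq (n-3)g+3$ is just the paper's bound $\tfrac{l}{2}(n-3)+n$ rewritten via the first Betti number), and for the decomposition $\DGH_*(\GCOp_n^2)=\DGH_*(\GCOp_n)\oplus\bigoplus_l\kk\gamma_l$ the paper simply cites \cite[Proposition 3.4]{WillwacherGraphs}, exactly as you ultimately do for the bivalent-suppression step. Your additional sketch of that cited result is broadly consistent with it, though two details are imprecise: the mod-$4$ condition comes from the reflection in the dihedral group (the cyclic generator acting trivially only forces $l$ odd), and the polygon summand is not ``acyclic apart from the bottom class'' --- rather the nonvanishing $\gamma_l$ are spaced four apart, so the splitting differential vanishes on them identically.
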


\begin{proof}
We refer to the cited reference for the first assertion of this proposition.
We only check the vanishing statement.
Let $l$ be the number of vertices and $k$ be the number of edges of a graph $\alpha$.
The trivalence assumption implies $2 k\geq 3 l$.
The degree of our graph $\alpha$ in the complex $\GCOp_n$
therefore satisfies the relation
\begin{equation*}
\deg(\alpha) = k(n-1) - (l-1)n\geq\frac{3}{2}l(n-1) - (l-1)n = \frac{l}{2}(n-3) + n\geq n
\end{equation*}
as soon as we assume $n\geq 3$. Hence, the complex $\GCOp_n$ is concentrated in degrees $*\geq n$.
The conclusion of the proposition follows.
\end{proof}

%\begin{constr}[Extended graph complexes]
%We already mentioned that the full graph complex $\FGCOp_n$ forms a Lie dg-algebra.
%We may easily check that the sub-complexes $\GCOp_n$ and $\GCOp_n^2$
%are preserved by this Lie algebra structure,
%and hence form Lie dg-algebras
%as well.

%We equip these Lie dg-algebras $\GCOp_n$ and $\GCOp_n^2$ with a grading given by the loop order (the first Betti number) of graphs.
%We then consider extended Lie algebras $\kk\ltimes\GCOp_n$ and $\kk\ltimes\GCOp_n^2$
%where the Lie bracket of the factor $\kk$ with a graph $\gamma$ of loop order $k$
%is set to be $k\gamma$.
%\end{constr}

\begin{rem}[Graphical models of the $n$-Poisson operad]\label{GraphHomology:GraphComplexes:GraphOperads}
We use an operadic twisting process (see \cite{DolgushevTwisting}) to construct operads $\GraphOp_n$, $\GraphOp_n^2$, and $\FGraphOp_n$
from the operad of graphs $\GraOp_n$.
We briefly survey the main features of these objects in this paragraph (we refer to~\cite[\S 3.2]{WillwacherGraphs}
for further details on the definition of these operads and cooperads).

The modules $\FGraphOp_n(r)$ interpolate between the components of the operad $\GraOp_n$
and the full graph complex $\FGCOp_n$ (where we also forget about the front suspension).
We formally have:
\begin{equation}\label{GraphHomology:GraphComplexes:GraphOperads:Expression}
\FGraphOp_n(r) := \prod_{l\geq 0}(\GraOp_n(r+l)\otimes\kk\ecell_{-n}^{\otimes l})^{\Sigma_l},
\end{equation}
for any arity $r>0$, where we consider the action of the group $\Sigma_l$
on the vertices $r+1,\dots,r+l$
of graphs $\alpha\in\GraOp_n(r+l)$.
We can identify the elements of this module with formal sums of graphs with $r$ numbered (``external'') vertices,
together with an arbitrary number of un-identifiable (``internal'') vertices
which we color in black as in the following picture:
\begin{equation}\label{GraphHomology:GraphComplexes:GraphOperads:Picture}
\begin{tikzpicture}[scale=.5]
\node[ext] (u) at (0,0) {1};
\node[ext] (v) at (1,0) {2};
\node[ext] (w) at (2,0) {3};
\node[ext] (x) at (3,0) {4};
\node[int] (y) at (1,1) {};
\draw (y) edge (u) edge (v) edge (w);
\end{tikzpicture}.
\end{equation}
We now define $\GraphOp_n(r)$ (respectively, $\GraphOp_n^2(r)$) as the submodule of this object $\FGraphOp_n(r)$
that consists of graphs whose connected components
contain at least one external vertex,
and where each internal vertex is at least trivalent (respectively, bivalent).
The differential of these dg-modules $\GraphOp_n(r)$, $\GraphOp_n^2(r)$ and $\FGraphOp_n(r)$
is given by a natural vertex splitting process.
We schematically have:
\begin{equation}\label{GraphHomology:GraphComplexes:GraphOperads:Differential}
\delta\;\vcenter{\xymatrix@!0@C=1em@R=1em@M=0pt{ \ar@{-}[drr] & \ar@{-}[dr] & *{\cdots} & \ar@{-}[dl] & \ar@{-}[dll] \\
&& *{\bullet} && \\
\ar@{-}[urr] & \ar@{-}[ur] & *{\cdots} & \ar@{-}[ul] & \ar@{-}[ull] }}
\;=\;\vcenter{\xymatrix@!0@C=1em@R=1em@M=0pt{ \ar@{-}[drr] & \ar@{-}[dr] & *{\cdots} & \ar@{-}[dl] & \ar@{-}[dll] \\
&& *{\bullet}\ar@{-}[d] && \\
&& *{\bullet} && \\
\ar@{-}[urr] & \ar@{-}[ur] & *{\cdots} & \ar@{-}[ul] & \ar@{-}[ull] }}
\qquad\text{and}
\qquad\delta\;\vcenter{\xymatrix@!0@C=1em@R=1.3em@M=0pt{ \ar@{-}[drr] & \ar@{-}[dr] & *{\cdots} & \ar@{-}[dl] & \ar@{-}[dll] \\
&& *+<6pt>[o][F]{i} && }}
\;=\;\vcenter{\xymatrix@!0@C=1em@R=1.3em@M=0pt{ & \ar@{-}[dr] & *{\cdots} & \ar@{-}[dl] & \\
\ar@{-}[drr] & *{\cdots} & *{\bullet}\ar@{-}[d] & *{\cdots} & \ar@{-}[dll] \\
&& *+<6pt>[o][F]{i} && }}.
\end{equation}

We provide the collections $\GraphOp_n$, $\GraphOp_n^2$, and $\FGraphOp_n$ with an obvious extension of the composition operations
of the plain operad of graphs $\GraOp_n$.
We also use an obvious extension of the restriction operators of this operad $\GraOp_n$
to provide our objects with the structure
of an augmented $\Lambda$-operad.

We equip each dg-module $\FGraphOp_n(r)$ with the (descending) filtration by the number of edges.
We readily have $\FGraphOp_n(r) = \lim_s\FGraphOp_n(r)/\DGF_s\FGraphOp_n(r)$,
so that our object forms a complete dg-module in the sense of~\S\ref{GraphHomology:TwistedEndComplexes:CompleteModules}.
We can moreover provide $\FGraphOp_n(r)$ with the structure of a coaugmented commutative dg-coalgebra
in complete dg-modules.
We accordingly get that $\FGraphOp_n$ forms a complete Hopf $\Lambda$-operad.
We have a similar result in the case of the operads $\GraphOp_n$ and $\GraphOp_n^2$.
We explicitly define the coproduct of this coalgebra $\FGraphOp_n(r)$
by splitting graphs into disjoint unions of subgraphs
of internal vertices, and keeping a copy of the external vertices
in each component of this decomposition,
as in the following example:
\begin{equation}\label{GraphHomology:GraphComplexes:GraphOperads:Coproduct}
\begin{aligned}[t] \Delta\bigl(\;\begin{tikzpicture}[scale=.5, baseline=-.65ex]
\node[ext] (u) at (0,0) {1};
\node[ext] (v) at (1.25,0) {2};
\node[ext] (w) at (2.5,0) {3};
\node[ext] (x) at (3.75,0) {4};
\node[int] (y) at (1.25,1) {};
\draw (y) edge (u) edge (v) edge (w) (w) edge (x);
\end{tikzpicture}\;\bigr)
\;=\; & \begin{tikzpicture}[scale=.5, baseline=-.65ex]
\node[ext] (u) at (0,0) {1};
\node[ext] (v) at (1.25,0) {2};
\node[ext] (w) at (2.5,0) {3};
\node[ext] (x) at (3.75,0) {4};
\node[int] (y) at (1.25,1) {};
\draw (y) edge (u) edge (v) edge (w) (w) edge (x);
\end{tikzpicture}
\;\hat{\otimes}\;\begin{tikzpicture}[scale=.5, baseline=-.65ex]
\node[ext] (u) at (0,0) {1};
\node[ext] (v) at (1.25,0) {2};
\node[ext] (w) at (2.5,0) {3};
\node[ext] (x) at (3.75,0) {4};
\end{tikzpicture}
\\
\;\pm\; & \begin{tikzpicture}[scale=.5, baseline=-.65ex]
\node[ext] (u) at (0,0) {1};
\node[ext] (v) at (1.25,0) {2};
\node[ext] (w) at (2.5,0) {3};
\node[ext] (x) at (3.75,0) {4};
\node[int] (y) at (1.25,1) {};
\draw (y) edge (u) edge (v) edge (w);
\end{tikzpicture}
\;\hat{\otimes}\;\begin{tikzpicture}[scale=.5, baseline=-.65ex]
\node[ext] (u) at (0,0) {1};
\node[ext] (v) at (1.25,0) {2};
\node[ext] (w) at (2.5,0) {3};
\node[ext] (x) at (3.75,0) {4};
\draw (w) edge (x);
\end{tikzpicture}
\\
\;\pm\; & \begin{tikzpicture}[scale=.5, baseline=-.65ex]
\node[ext] (u) at (0,0) {1};
\node[ext] (v) at (1.25,0) {2};
\node[ext] (w) at (2.5,0) {3};
\node[ext] (x) at (3.75,0) {4};
\draw (w) edge (x);
\end{tikzpicture}
\;\hat{\otimes}\;\begin{tikzpicture}[scale=.5, baseline=-.65ex]
\node[ext] (u) at (0,0) {1};
\node[ext] (v) at (1.25,0) {2};
\node[ext] (w) at (2.5,0) {3};
\node[ext] (x) at (3.75,0) {4};
\node[int] (y) at (1.25,1) {};
\draw (y) edge (u) edge (v) edge (w);
\end{tikzpicture}
\\
\;+\; & \begin{tikzpicture}[scale=.5, baseline=-.65ex]
\node[ext] (u) at (0,0) {1};
\node[ext] (v) at (1.25,0) {2};
\node[ext] (w) at (2.5,0) {3};
\node[ext] (x) at (3.75,0) {4};
\end{tikzpicture}
\;\hat{\otimes}\;\begin{tikzpicture}[scale=.5, baseline=-.65ex]
\node[ext] (u) at (0,0) {1};
\node[ext] (v) at (1.25,0) {2};
\node[ext] (w) at (2.5,0) {3};
\node[ext] (x) at (3.75,0) {4};
\node[int] (y) at (1.25,1) {};
\draw (y) edge (u) edge (v) edge (w) (w) edge (x);
\end{tikzpicture}.
\end{aligned}
\end{equation}

The dg-modules $\GraphOp_n(r)$ actually form cofree cocommutative coalgebras in complete dg-modules (when we forget about differentials).
The cogenerating module of this cofree cocommutative coalgebra is defined by the suspension of a complex of internally connected graphs $\ICGOp_n(r)$
which consists of graphs that are connected after deleting all external vertices (see~\cite[\S 3.2, p. 689]{WillwacherGraphs}).
Let us observe that the subquotients of our filtration on $\GraphOp_n(r)$
form modules of finite rank over the ground field degree-wise, and this observation
implies that we can associate $\GraphOp_n$
to a filtered Hopf $\Lambda$-cooperad  $\GraphOp_n^c$
such that $\GraphOp_n(r) = \GraphOp_n^c(r)^{\vee}$
for any $r>0$, where we use the duality functor on dg-modules (see Proposition~\ref{GraphHomology:TwistedEndComplexes:DualHopfOperad}).
Then we have an identity of graded commutative algebras $\GraphOp_n^c(r)_{\flat} = \Sym(\DGSigma^{-1}\ICGOp_n^c(r)_{\flat})$,
for each $r>0$,
where we use the subscript $\flat$ to mark the forgetting of differentials, and we again use the notation $\ICGOp_n^c(r)$
for a dg-module such that $\ICGOp_n(r) = \ICGOp_n^c(r)^{\vee}$.

We have a natural morphism of complete Hopf $\Lambda$-operads $\gamma_*: \PoisOp_n\rightarrow\GraphOp_n$, given by an obvious extension
of the mapping
of~\S\ref{GraphHomology:GraphComplexes:PlainGraphOperad}(\ref{GraphHomology:GraphComplexes:PlainGraphOperad:Mapping}).
We then regard the $n$-Poisson operad $\PoisOp_n$ as a complete Hopf $\Lambda$-operad with the filtration such that $\DGF_s\PoisOp_n(r)\subset\PoisOp_n(r)$
is the module spanned by the products of Lie monomials $\pi(x_1,\dots,x_r) = \pi_1(x_{1 j_1},\dots,x_{j_{1 n_1}})\cdot\ldots\cdot\pi_l(x_{l j_1},\dots,x_{l j_{n_l}})$
which satisfy $n_1+\dots+n_l-l\leq s$ in the representation of~\S\ref{Background:PoissonFreeStructure}.
This morphism $\gamma_*: \PoisOp_n\rightarrow\GraphOp_n$ is a weak-equivalence by a theorem of Maxim Kontsevich \cite{KontsevichMotives} (see also \cite{LambrechtsVolic})
and so is the natural inclusion $\GraphOp_n\subset\GraphOp_n^2$
(while we have to remove graphs with connected components without external vertices in order to retrieve an object weakly-equivalent
to the $n$-Poisson operad from the full operad of graphs $\FGraphOp_n$).

We can also consider a restriction of our morphism $\gamma_*: \PoisOp_n\rightarrow\GraphOp_n$
to the commutative operad $\ComOp$
which we identify with the suboperad of $\PoisOp_n$
spanned by the commutative product monomial $\mu(x_1,\dots,x_r) = x_1\cdot\ldots\cdot x_r$
in each arity $r>0$. The components of this operad morphism define coaugmentations $\eta: \kk\rightarrow\GraphOp_n(r)$
of the coalgebras $\GraphOp_n(r)$.
We moreover have:
\begin{equation}\label{GraphHomology:GraphComplexes:GraphOperads:CoaugmentationCoideal}
\GraphOp_n(r) = \ComOp(r)\oplus\IOp\GraphOp_n(r),
\end{equation}
for any $r>0$, where we identify $\ComOp(r)$ with the image of this map, and $\IOp\GraphOp_n(r)$ denotes the submodule of $\GraphOp_n(r)$
spanned by graphs with a non-empty set of edges. This module $\IOp\GraphOp_n(r)$
represents the cokernel of our coaugmentation
on the coalgebra $\GraphOp_n(r)$. We have similar observations for the operads $\GraphOp_n^2$
and $\FGraphOp_n$.
\end{rem}

We now have the following statement:

\begin{prop}\label{GraphHomology:GraphComplexes:GraphOperadCofreeStructure}
The (contravariant) $\Lambda$-collection $\IOp\GraphOp_n$ has a cofree structure:
\begin{equation}
\IOp\GraphOp_n = \int_{\kset\in\Sigma} \SOp\GraphOp_n(k)^{\Mor_{\Lambda}(\kset,-)},
\end{equation}
where $\SOp\GraphOp_n(k)$ is the summand of the module $\IOp\GraphOp_n(k)$
spanned by the graphs in which all external vertices
have at least one incident edge.
We have a similar statement for our other variants of the operad of graphs $\GraphOp_n^2$, $\FGraphOp_n$,
and for the collection of internally connected graphs $\ICGOp_n$.
\end{prop}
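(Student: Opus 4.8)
The plan is to prove the stated formula by exhibiting the canonical comparison map into the cofree $\Lambda$-collection and checking it is an isomorphism, using the explicit combinatorial description of the restriction operations $u^*$ recalled in~\S\ref{GraphHomology:GraphComplexes:PlainGraphOperad} and~\S\ref{GraphHomology:GraphComplexes:GraphOperads}. Recall from the end formula~\eqref{GraphHomology:TwistedEndComplexes:CofreeLambdaStructures:EndExpression} that to say $\IOp\GraphOp_n$ is cofreely cogenerated by the $\Sigma$-collection $\SOp\GraphOp_n$ amounts to saying that the map
\begin{equation*}
\Phi\colon\IOp\GraphOp_n(r)\longrightarrow\int_{\kset\in\Sigma}\SOp\GraphOp_n(k)^{\Mor_{\Lambda}(\kset,\rset)},
\end{equation*}
adjoint to the canonical projection $p\colon\IOp\GraphOp_n\twoheadrightarrow\SOp\GraphOp_n$ onto the direct summand spanned by the graphs all of whose external vertices carry an incident edge, and whose component indexed by an injection $u\colon\kset\to\rset$ is the composite $\IOp\GraphOp_n(r)\xrightarrow{u^*}\IOp\GraphOp_n(k)\xrightarrow{p}\SOp\GraphOp_n(k)$, is an isomorphism in every arity $r>0$. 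The $\Lambda$-equivariance of $\Phi$, that is, the identification of the restriction operators of $\IOp\GraphOp_n$ with the left-translation operators of the cofree object, follows formally from the functoriality of the augmented $\Lambda$-operad structure; the substance of the proof is the bijectivity of $\Phi$.

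For the bijectivity I would run the following combinatorial analysis. To a basis graph $\alpha\in\IOp\GraphOp_n(r)$ associate its \emph{external support} $S(\alpha)\subseteq\{1,\dots,r\}$, the set of external vertices incident to at least one edge; since the graphs spanning $\IOp\GraphOp_n(r)$ have a non-empty edge set and (for $\GraphOp_n$, $\GraphOp_n^2$) every connected component carries an external vertex, one has $S(\alpha)\neq\varnothing$. By the definition of the restriction operations, $u^*(\alpha)$ vanishes unless the deleted vertices $\rset\setminus\operatorname{im}(u)$ are all isolated in $\alpha$, i.e. unless $\operatorname{im}(u)\supseteq S(\alpha)$, and in that case $u^*(\alpha)$ is $\alpha$ with those (isolated) vertices removed and the survivors renumbered along $u^{-1}$. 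Hence $p(u^*(\alpha))\neq 0$ forces moreover that $\operatorname{im}(u)$ contains no isolated external vertex of $u^*(\alpha)$, that is $\operatorname{im}(u)\subseteq S(\alpha)$; altogether $p(u^*(\alpha))\neq 0$ only for the injections $u$ that restrict to a bijection $\kset\xrightarrow{\simeq}S(\alpha)$. This pins down both sides of $\Phi$ as a finite sum over subsets: the filtration by the number of edges splits $\IOp\GraphOp_n(r)$ into the direct sum, over non-empty $S\subseteq\{1,\dots,r\}$, of the subspace $\GraphOp_n[S]$ of graphs of external support exactly $S$, while the end condition shows that an element of $\int_{\kset}\SOp\GraphOp_n(k)^{\Mor_{\Lambda}(\kset,\rset)}$ is freely and uniquely determined by its components at the order-preserving inclusions $\iota_S\colon\underline{|S|}\xrightarrow{\simeq}S\hookrightarrow\rset$, one per subset $S$, so that the target is the sum over $S$ of copies of $\SOp\GraphOp_n(|S|)$. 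With respect to these decompositions $\Phi$ is block-diagonal, and its $S$-block $\GraphOp_n[S]\to\SOp\GraphOp_n(|S|)$ deletes the isolated external vertices outside $S$ and relabels $S$ by $\underline{|S|}$ order-preservingly; this is visibly a bijection, its inverse relabelling $\underline{|S|}$ back by $S$ and re-adjoining the $|\rset|-|S|$ isolated external vertices. Summing over $S$ yields that $\Phi$ is an isomorphism of $\Lambda$-collections, which is the assertion.

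Finally, I would observe that the proof uses only two structural features: the splitting of each component into graphs of prescribed external support, and the vertex-deletion formula for restriction together with its vanishing whenever an incident external vertex would be removed. Since the operads $\GraphOp_n^2$ and $\FGraphOp_n$, and the collection $\ICGOp_n$ of internally connected graphs, all share these features, the same argument applies to each of them with $\SOp\GraphOp_n$ replaced throughout by the corresponding summand of graphs whose external vertices are all incident to an edge. The delicate point, and the one I expect to occupy most of the write-up, is the bookkeeping of edge orientations, vertex orderings, and the attendant signs of the graph generators in the end/coend identifications, together with the verification that the finite support-decomposition is compatible with the completions in which these graph objects live; none of this is essentially hard, but it must be spelled out for the block-diagonal form of $\Phi$ to be literally correct.
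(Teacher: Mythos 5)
Your argument is correct and is exactly the content of the paper's proof, which consists of the single sentence that the proposition ``follows from an immediate visual inspection of the action of restriction operations on graphs'': your comparison map $\Phi$, the vanishing criterion $p(u^*(\alpha))\neq 0\Leftrightarrow\operatorname{im}(u)=S(\alpha)$, and the block-diagonal decomposition by external support are precisely that inspection written out. The only point worth a footnote is the $\FGraphOp_n$ case, where a graph may have edges but empty external support (a purely internal component plus isolated external vertices), so the support decomposition needs the convention handling that degenerate block; for $\GraphOp_n$, $\GraphOp_n^2$ and $\ICGOp_n$ your argument applies verbatim.
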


\begin{proof}
This proposition follows from an immediate visual inspection
of the action of restriction operators
on graphs.
\end{proof}

\begin{rem}[The hairy graph complex]\label{GraphHomology:GraphComplexes:HairyGraphComplex}
Let $\GraphOp_n'\subset\GraphOp_n$ be the symmetric collection consisting of connected graphs all of whose external vertices have valence one.
Let $m\leq n$.
The hairy graph complex $\HGCOp_{m n}$ is the dg-module of twisted invariants:
\begin{equation}\label{GraphHomology:GraphComplexes:HairyGraphComplex:Expression}
\HGCOp_{m n} = \prod_{r\geq 1} \DGSigma^m(\GraphOp_n'(r)\otimes\kk\ecell_{-m}^{\otimes r})^{\Sigma_r},
\end{equation}
together with the differential inherited from $\GraphOp_n'$.
The elements of this module can be identified with formal sums of graphs $\alpha\in\GraphOp_n'(r)$
whose external vertices have one adjacent edge each (the ``hairs'' of the graph),
and are made undistinguishable. In what follows, we just draw the hairs in the picture of such a graph,
as we can omit to represent the univalent external vertex attach to each hair.

Note that the internal vertices of the graphs $\alpha\in\GraphOp_n'(r)$
are at least trivalent by convention on our definition
of the operad $\GraphOp_n$.
In what follows, we also consider a version of the hairy graph complex $\HGCOp_{m n}^2$,
which is defined by starting with the operad $\GraphOp_n^2$ instead of $\GraphOp_n$,
and where bivalent internal vertices are allowed.
We have an embedding $\HGCOp_{m n}\subset\HGCOp_{m n}^2$, yielded by the obvious operad inclusion $\GraphOp_n\subset\GraphOp_n^2$.

We also have a Lie dg-algebra structure on $\HGCOp_{m n}$ (respectively, $\HGCOp_{m n}^2$) yielded, as in the case of the full graph complex,
by the commutator of pre-Lie composition operations in the operad $\GraphOp_n$ (respectively, $\GraphOp_n^2$).
\end{rem}

We establish the following proposition:

\begin{prop}\label{GraphHomology:GraphComplexes:HairyGraphComplexHomology}
The inclusion $\HGCOp_{m n}\rightarrow\HGCOp_{m n}^2$ is a weak-equivalence.
Furthermore, the module $\DGH_*(\HGCOp_{m n}) = \DGH_*(\HGCOp_{m n}^2)$
vanishes in degree $*<1$ when $n-m\geq 2$.
\end{prop}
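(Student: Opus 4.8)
The plan is to reduce both assertions to known facts about graph complexes, treating the passage from $\HGCOp_{mn}$ to $\HGCOp_{mn}^2$ (bivalent vertices allowed) and then the degree estimate separately, in close analogy with Proposition~\ref{GraphHomology:GraphComplexes:GraphComplexHomology}. First I would observe that $\HGCOp_{mn}^2$ differs from $\HGCOp_{mn}$ only by allowing internal vertices of valence two, and that an internal bivalent vertex is either part of a path of bivalent vertices joining two higher-valence vertices (or a hair), or sits on a loop formed entirely of bivalent vertices. A standard spectral-sequence / filtration argument on the number of bivalent vertices (as in~\cite{WillwacherGraphs}, and exactly parallel to the $\GCOp_n$ versus $\GCOp_n^2$ comparison recalled in Proposition~\ref{GraphHomology:GraphComplexes:GraphComplexHomology}) shows that the extra generators contribute an acyclic piece: contracting an edge adjacent to a bivalent vertex defines a homotopy, so chains of bivalent vertices can be pruned. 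The only potentially surviving classes would be the all-bivalent loops, but in the hairy setting every connected graph must carry at least one hair, hence at least one external univalent vertex, so a pure bivalent loop cannot occur. This gives that the inclusion $\HGCOp_{mn}\hookrightarrow\HGCOp_{mn}^2$ is a quasi-isomorphism.

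Next I would establish the vanishing range by a degree count on $\HGCOp_{mn}$, mirroring the computation in the proof of Proposition~\ref{GraphHomology:GraphComplexes:GraphComplexHomology}. Fix a connected graph $\alpha$ appearing in $\HGCOp_{mn}$ with $h$ hairs (external univalent vertices), $v$ internal vertices, and $e$ edges. From the definition~\eqref{GraphHomology:GraphComplexes:HairyGraphComplex:Expression}, the hairs each carry a desuspension contributing degree $-m$ (or, after the overall $\DGSigma^m$, one net $-m$ is absorbed), each of the $e$ edges contributes degree $n-1$, and each internal vertex contributes degree $-n$; writing this out, the total (lower) degree of $\alpha$ is
\begin{equation*}
\deg(\alpha) = e(n-1) - v\,n - h(m-1) + m .
\end{equation*}
The trivalence constraint on internal vertices together with the univalence of hairs gives $2e \geq 3v + h$, and connectedness gives $e \geq v + h - 1$. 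I would combine these two inequalities, solving for $\deg(\alpha)$ as a function of $v$ and $h$, to show that when $n - m \geq 2$ the minimum of $\deg(\alpha)$ over admissible $(v,h)$ with $h \geq 1$ is attained at the smallest graph (a single hair pair, i.e.\ the segment graph, or the tripod) and equals $1$; hence $\deg(\alpha) \geq 1$ for every generator, so $\HGCOp_{mn}$ is concentrated in degrees $* \geq 1$ and $\DGH_*(\HGCOp_{mn})$ vanishes for $* < 1$. Since the inclusion into $\HGCOp_{mn}^2$ is a quasi-isomorphism, the same holds for $\DGH_*(\HGCOp_{mn}^2)$.

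The main obstacle I anticipate is getting the sign conventions and the precise bookkeeping of suspensions/desuspensions right in the degree formula, since the hairs, edges, and internal vertices all carry separate grading shifts ($\ecell_{-m}$, $\ecell_{n-1}$, $\ecell_{-n}$) and the overall $\DGSigma^m$; a careless accounting can shift the bound by one. I would pin this down by checking the formula against the two smallest graphs explicitly (the single edge with two hairs, and the tripod with one internal trivalent vertex and three hairs) and against the known edge case, rather than trusting the general expression blindly. A secondary point requiring care is the acyclicity argument for bivalent vertices: one must verify that the filtration by number of bivalent vertices is exhaustive and complete (so the spectral sequence converges) and that the contracting homotopy is compatible with the $\Sigma_r$-equivariant structure used to form the twisted invariants in~\eqref{GraphHomology:GraphComplexes:HairyGraphComplex:Expression}; here I would simply cite the analogous treatment in~\cite{WillwacherGraphs}, since the hairy variant is handled there by the same mechanism and the only new input — the absence of all-bivalent loops — follows from the mandatory presence of a hair.
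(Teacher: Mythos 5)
Your treatment of the first assertion is fine and amounts to the same mechanism the paper invokes: the paper simply quotes the operad-level weak-equivalence $\GraphOp_n\subset\GraphOp_n^2$ (already recorded in \S\ref{GraphHomology:GraphComplexes:GraphOperads}) together with a spectral sequence argument, whereas you redo the bivalent-vertex pruning directly inside the hairy complex; your observation that a pure bivalent loop cannot occur because every connected hairy graph carries an external vertex is exactly the reason no analogue of the loop classes $\gamma_l$ survives here.

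For the vanishing statement your route is genuinely different from the paper's, and it can be made to work, but as written it has a concrete defect. The degree of a connected hairy graph with $e$ edges, $v$ internal vertices and $h$ external (univalent) vertices is, from~\eqref{GraphHomology:GraphComplexes:HairyGraphComplex:Expression} and the definitions of $\GraOp_n$ and $\FGraphOp_n$,
\begin{equation*}
\deg(\alpha) \;=\; e(n-1) - n v - m h + m ,
\end{equation*}
i.e.\ each external vertex contributes $-m$, not $-(m-1)$: your formula gives the single-edge graph degree $n-m+1$ instead of the correct $n-m-1$, which is the value the paper computes and which is exactly $1$ at the boundary case $n-m=2$. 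Your planned sanity check against the segment graph would have caught this, but the error matters because with the wrong formula you would be ``proving'' a false stronger bound ($\deg\geq 3$) and would miss that the estimate is sharp. With the corrected formula the linear-programming argument does close, but it is tight in two different regimes and genuinely requires a case split according to which of your two constraints is binding: when $2e\geq 3v+h$ dominates one gets $\deg(\alpha)\geq \tfrac{n-3}{2}v+\tfrac{n-1-2m}{2}h+m$, and when the tree bound $e\geq v+h-1$ dominates one gets $\deg(\alpha)\geq h(n-m-2)+3-(n-m)$; both are $\geq 1$ precisely because $n-m\geq 2$, and several families (hedgehogs, caterpillars) realize $\deg=1$ exactly. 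The paper avoids this minimization entirely by an induction on the number of internal vertices: it removes one external vertex together with its hair, splits the attached (at least trivalent) internal vertex into $s\geq 2$ new external vertices, and uses the resulting recursion $\deg(\alpha)=\sum_{i}\deg(\alpha'_i)+(s-k)m-1$ over the $k\leq s$ connected components, with the single edge as base case. Your approach buys a closed-form bound with no surgery, at the price of the bookkeeping and the case analysis; the paper's buys a two-line induction at the price of the graph surgery. Either is acceptable once the degree formula is corrected and the minimization is actually carried out rather than asserted.
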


\begin{proof}
The first assertion of the proposition follows from the same arguments as the verification that the morphism $\GraphOp_n\rightarrow\GraphOp_n^2$
defines a weak-equivalence of operads. In short, we consider the filtration of the complex $\HGCOp_{m n}^2$
by the number of internal vertices of valence $\geq 3$.
Then we can check that the spectral sequence determined by this filtration degenerates to the complex $\HGCOp_{m n}$
from the first page on, with all terms on the first row, and the conclusion follows.

We check that any graph $\alpha\in\HGCOp_{m n}$ satisfies $\deg(\alpha)\geq 1$
to establish our second assertion.
The claim is immediate when our graph has no internal vertex since $\alpha$ necessarily consists of two external vertices
connected by an edge in this case, and we then have $\deg(\alpha) = m - 2 m + (n-1) = n-m-1$.
In the case of a hairy graph with a non-empty set of internal vertices, we pick an external vertex $u$,
and we consider the hair $e$ that connects this vertex $u$
to the rest of our graph
through an internal vertex $v$.
We remove this external vertex $u$, the edge $e$, and we split $v$ into a bunch of external vertices $v_1,\dots,v_s$
which we attach to the other incident edges of that vertex $v$
in the graph $\alpha$ (we accordingly have $s\geq 2$ since $v$ is supposed to be at least trivalent).
The connected components of the cell complex which we obtain by this removal operation
are equivalent to graphs $\alpha'_i\in\HGCOp_{m n}$, $i=1,\dots,k$ ($k\leq s$),
with less internal vertices than the graph $\alpha$,
and such that $\deg(\alpha) = \deg(\alpha'_1)+\dots+\deg(\alpha'_k)+(s-k)m-1$.
We have either $k=s\geq 2$ or $s>k\Rightarrow (s-k)m\geq 1$, and in both cases $\deg(\alpha'_1),\dots,\deg(\alpha'_k)\geq 1\Rightarrow\deg(\alpha)\geq 1$.
We can therefore proceed by induction on the number of internal vertices
of our graphs to get our conclusion.
\end{proof}

\begin{constr}[The comparison map from the graph complex to the hairy graph complex]\label{GraphHomology:GraphComplexes:ComparisonMap}
We now consider the morphism of dg-modules $\nu_*: \DGSigma^{-1}\GCOp_n^2\rightarrow\HGCOp_{m n}^2$
which carries a graph $\alpha\in\GCOp_n$ to the hairy graph $\nu_*(\alpha)\in\HGCOp_{m n}^2$
which we obtain by adding one external vertex in all possible ways
to $\alpha$ (we sum over all choices):
\begin{equation}\label{GraphHomology:GraphComplexes:ComparisonMap:Expression}
\nu_*(\alpha) := \sum\vcenter{\xymatrix@!0@C=2em@R=2em@M=0pt{ *+<6pt>{\alpha}\ar@{-}[d] \\ *+<6pt>[o][F]{1} }}.
\end{equation}
In the case $m=n$, the graph
\begin{equation}\label{GraphHomology:GraphComplexes:ComparisonMap:TwistingElement}
\lambda = \vcenter{\xymatrix@!0@C=2em@R=1em@M=0pt{ *+<6pt>[o][F]{1}\ar@{-}[r] & *+<6pt>[o][F]{2} }}
\in\HGCOp_{n n}^2
\end{equation}
satisfies the relations $\delta(\lambda) = 0$ and $[\lambda,\lambda] = 0$
in the Lie dg-algebra $\HGCOp_{n n}^2$.
We moreover have $\deg(\lambda) = -1$.
We therefore have a well defined twisted Lie dg-algebra $(\HGCOp_{n n}^2,\partial_{\lambda})$,
which is defined by adding the twisting derivation $\partial_{\lambda} = [\lambda,-]$,
associated to this Maurer-Cartan element $\lambda$,
to the internal differential of our dg-module $\HGCOp_{n n}^2$.
We readily see that this extra twisting homomorphism vanishes on the image of our mapping,
which therefore defines a morphism of dg-modules $\nu_*: \DGSigma^{-1}\GCOp_n^2\xrightarrow{\sim}(\HGCOp_{n n}^2,\partial_{\lambda})$.
%(Note, however, that this map does not preserve Lie algebra structures in any sense.)
\end{constr}

The Lie algebra $\GCOp_n^2$ (and the Lie algebra $\GCOp_n$ similarly) has a central extension
%\begin{equation*}
$\widetilde{\GCOp}{}_n^2 = \kk\ltimes\GCOp_n^2$,
%\end{equation*}
which we define by considering the first Betti number $b_1: \alpha\mapsto b_1(\alpha)$
as a grading on the graph complex.
The natural action of the Lie algebra $\GCOp_n^2$ on $\HGCOp_{m n}^2$
also extends to $\kk\ltimes\GCOp_n^2$.
We do not explicitly use this extended Lie algebra structure in what follows.
We only use a morphism, reflecting this action, but which we can also define directly. To be more explicit, we are
going to use the following statement:

\begin{prop}\label{GraphHomology:GraphComplexes:ComparisonMapHomology}
We have a weak-equivalence of dg-modules
\begin{equation*}
\tilde{\nu}_*: \DGSigma^{-1}(\kk\ltimes\GCOp_n^2)\xrightarrow{\sim}(\HGCOp_{n n}^2,\partial_{\lambda})
\end{equation*}
which carries the factor $\kk$ of the extended Lie algebra $\kk\ltimes\GCOp_n^2$
to the module spanned by our Maurer-Cartan element $\lambda$ in $\HGCOp_{n n}$,
and which is given by the mapping of the previous paragraph~\S\ref{GraphHomology:GraphComplexes:ComparisonMap}
on the factor $\GCOp_n^2$.
\end{prop}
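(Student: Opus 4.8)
The plan is to establish the weak-equivalence by a two-step argument: first identify the domain and target of $\tilde\nu_*$ with the homotopy fiber of a natural map of deformation complexes, and then reduce to the (already known) statement that $\nu_*: \DGSigma^{-1}\GCOp_n^2 \xrightarrow{\sim} (\HGCOp_{nn}^2, \partial_\lambda)$ of Construction~\ref{GraphHomology:GraphComplexes:ComparisonMap} is a quasi-isomorphism — which itself is a theorem of~\cite{WillwacherGraphs}. The key observation is that the central extension $\kk \ltimes \GCOp_n^2$ splits off the factor $\kk$ by the first-Betti-number grading, and that on the target side the element $\lambda$ is precisely the additional generator picked up when we pass from the ``loop-order-zero'' part of the hairy complex to the full twisted complex $(\HGCOp_{nn}^2, \partial_\lambda)$. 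Concretely, I would write $\kk \ltimes \GCOp_n^2$ as a direct sum $\kk \oplus \GCOp_n^2$ of complexes (the extension is central, hence the differential does not mix the two summands), apply $\DGSigma^{-1}$, and check that $\tilde\nu_*$ sends the $\kk$-summand isomorphically onto $\kk\lambda \subset \HGCOp_{nn}^2$ (note $\deg(\DGSigma^{-1}\kk) = -1 = \deg(\lambda)$, and $\partial_\lambda(\lambda) = \tfrac12[\lambda,\lambda] = 0$, so $\kk\lambda$ is a subcomplex) while restricting to $\nu_*$ on the $\GCOp_n^2$-summand.

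The second step is to verify compatibility of $\tilde\nu_*$ with the twisting differential $\partial_\lambda$ on the whole of $\DGSigma^{-1}(\kk\ltimes\GCOp_n^2)$, not just on the $\GCOp_n^2$ part. On $\GCOp_n^2$ this was already checked in Construction~\ref{GraphHomology:GraphComplexes:ComparisonMap}: the twisting homomorphism $\partial_\lambda = [\lambda, -]$ vanishes on the image of $\nu_*$. On the $\kk$-summand there is nothing to check since $\partial_\lambda\lambda = 0$. What remains is to match the part of the differential on $\kk\ltimes\GCOp_n^2$ that records the bracket of the central $\kk$-generator with elements of $\GCOp_n^2$: by definition of the central extension via $b_1$ this bracket is zero, so the differential on $\DGSigma^{-1}(\kk\ltimes\GCOp_n^2)$ is just $\DGSigma^{-1}\delta \oplus 0$, and the identity $\tilde\nu_* \circ \delta = \partial_\lambda \circ \tilde\nu_*$ follows termwise from the corresponding identity for $\nu_*$ together with the vanishing $\partial_\lambda\lambda = 0$. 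This is a routine verification once the bookkeeping of signs from $\DGSigma^{-1}$ and from the odd/even parity of $n$ is set up.

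The third and final step is to upgrade the isomorphism-onto-$\kk\lambda$ plus quasi-isomorphism-on-the-rest into a global quasi-isomorphism. I would use the filtration of both sides by the first Betti number: $b_1 = 0$ picks out exactly the $\kk$-summand on the left (a single loop-less generator in the extension) and the span of $\lambda$ on the right, while the associated graded in each positive loop order reduces, via $\nu_*$, to the comparison of $\GCOp_n^2$ in that loop order with the corresponding piece of $(\HGCOp_{nn}^2, \partial_\lambda)$. A spectral sequence comparison argument — using that the filtration is exhaustive and complete degreewise, since these are pro-finite-dimensional complexes — then reduces the claim to: (a) $\tilde\nu_*$ is an isomorphism on the $b_1 = 0$ layer (immediate), and (b) $\nu_*: \DGSigma^{-1}\GCOp_n^2 \to (\HGCOp_{nn}^2, \partial_\lambda)$ is a quasi-isomorphism, which is the cited result of~\cite{WillwacherGraphs}.

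The main obstacle I anticipate is not conceptual but the careful handling of the twisting on the hairy side: showing that $(\HGCOp_{nn}^2, \partial_\lambda)$ genuinely decomposes compatibly with the Betti-number filtration, and that adding the $\lambda$-generator to $\GCOp_n^2$ exactly accounts for the ``loop-order-raising'' part of $\partial_\lambda$ that $\nu_*$ alone cannot see. One must be sure that no extra classes are created or killed by the interaction of $\partial_\lambda$ with the central extension; this is where the detailed results of~\cite{WillwacherGraphs} on the structure of $\HGCOp_{mn}^2$ (and, in the case $m=n$, on the relation between the hairy complex twisted by $\lambda$ and the graph complex) are essential. Given those inputs, the argument is a filtration-plus-five-lemma bootstrap and the degree/sign computations, none of which present genuine difficulty.
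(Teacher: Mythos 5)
There is a genuine gap: your argument outsources precisely the step that carries all of the mathematical content. After the (correct and easy) bookkeeping that splits off $\kk\lambda$ on the target and the central $\kk$ on the source, what remains to prove is that $\nu_*\colon \DGSigma^{-1}\GCOp_n^2\to(\HGCOp_{n n}',\partial_{\lambda})$ is a quasi-isomorphism, where $\HGCOp_{n n}'\subset\HGCOp_{n n}^2$ is the summand of graphs with at least one internal vertex --- and for this you offer nothing beyond a citation. Worse, you cite the wrong statement: $\nu_*\colon \DGSigma^{-1}\GCOp_n^2\to(\HGCOp_{n n}^2,\partial_{\lambda})$ into the \emph{full} twisted complex is not a quasi-isomorphism (it misses the class of $\lambda$, which is the whole reason the proposition passes to the central extension); if it were, $\tilde{\nu}_*$ would fail to be one. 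Your Betti-number filtration also does no work here: both the vertex-splitting differential and the hair-adding operator $\partial_{\lambda}$ preserve $b_1$ (each adds one vertex and one edge), so the ``filtration'' is a direct sum decomposition respected by $\nu_*$, and comparing the two sides loop order by loop order merely restates the problem.

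The paper supplies the missing argument directly, and you should reproduce something like it. One identifies the mapping cone of $\nu_*\colon \DGSigma^{-1}\GCOp_n^2\to(\HGCOp_{n n}',\partial_{\lambda})$ with the twisted complex $(\HGCOp_{n n}'',\partial_{\lambda})$ of graphs having at least one internal vertex but an \emph{optional} set of hairs, enlarges this to the complex $\FHGCOp_{n n}''$ in which univalent internal vertices are also allowed (a weak-equivalence), and then filters by the number of internal vertices. On the associated graded only $\partial_{\lambda}$ survives, the complex splits over isomorphism classes of ``cores'' (the graph with all hairs deleted), and each summand is the module of invariants $\bigl((\kk\oplus\kk\ecell_1)^{\otimes l}\bigr)^{G}$ with the differential sending $\kk$ isomorphically onto $\kk\ecell_1$ in each factor --- an acyclic complex, whose invariants under a finite group remain acyclic in characteristic zero. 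Without this (or an equivalent) contraction argument, your proposal reduces the proposition to itself.
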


\begin{proof}
This result is contained in some form in \cite[\S 5]{WillwacherGraphs}, but we will give a shorter and self-contained proof.
Note that the operation $\partial_{\lambda} = [\lambda,-]$
is combinatorially described by adding one hair, in all possible ways,
and $\lambda$ represents the unique non-vanishing hairy graph without internal vertices.

We moreover have $(\HGCOp_{n n}^2,\partial_{\lambda}) = \kk\lambda\oplus(\HGCOp_{n n}',\partial_{\lambda})$,
where $\HGCOp_{n n}'\subset\HGCOp_{n n}^2$ consists of graphs with at least one internal vertex,
and our morphism in~\S\ref{GraphHomology:GraphComplexes:ComparisonMap}
carries $\DGSigma^{-1}\GCOp_n^2$
into this summand $\HGCOp_{n n}'$
of the hairy graph complex $\HGCOp_{n n}^2$.
We just check that our morphism induces
a weak-equivalence
between $\DGSigma^{-1}\GCOp_n^2$
and $(\HGCOp_{n n}',\partial_{\lambda})$.
We then consider the variant $(\HGCOp_{n n}'',\partial_{\lambda})$
of the complex $(\HGCOp_{n n}^2,\partial_{\lambda})$
which we form by taking graphs with at least one internal vertex,
but possibly without hairs.
We still assume that the vertices of the graphs of the complex $\HGCOp_{n n}''$
have at least two incident edges.
The twisted complex $(\HGCOp_{n n}'',\partial_{\lambda})$
is identified with the mapping cone
of our morphism
$\nu_*: \DGSigma^{-1}\GCOp_n^2\rightarrow(\HGCOp_{n n}',\partial_{\lambda})$
and our goal is to prove that this mapping cone is acyclic.

We consider a further variant $\FHGCOp_{n n}''$
of the complex $\HGCOp_{n n}''$
by allowing graphs with univalent internal vertices.
We easily check that the twisting homomorphism $\partial_{\lambda} = [\lambda,-]$
extends to this module $\FHGCOp_{n n}''$.
We can also check (along the lines of \cite[Proof of Proposition 3.4, pp. 683-684]{WillwacherGraphs})
that the inclusion $\HGCOp_{n n}''\hookrightarrow\FHGCOp_{n n}''$
is a weak-equivalence.
We therefore aim to prove that the dg-module $(\FHGCOp_{n n}'',\partial_{\lambda})$ is acyclic.
We provide $\FHGCOp_{n n}''$ with the complete descending filtration
\begin{equation*}
\FHGCOp_{n n}'' = \DGF_1\FHGCOp_{n n}''\supset\cdots\supset\DGF_l\FHGCOp_{n n}''\supset\cdots
\end{equation*}
such that $\DGF_l\FHGCOp_{n n}''$
consists of graphs with at least $l$ internal vertices.
We check that the graded object $(\DGE^0(\FHGCOp_{n n}''),\partial_{\lambda})$ which we associate to this filtration
forms an acyclic complex in order to get our result.
We readily see that the internal differential of the object~$\FHGCOp_{n n}''$
vanishes on the modules $\DGE^0_l(\FHGCOp_{n n}'')$
whose differential therefore reduces to the twisting homomorphism $\partial_{\lambda} = [\lambda,-]$.

Let the core of a hairy graph $\alpha$ be the graph obtained by removing all hairs from $\alpha$.
The complex $(\DGE^0_l(\FHGCOp_{n n}''),\partial_{\lambda})$
splits into a direct product of subcomplexes,
one for each isomorphism class of core,
and the graded module underlying each of these sub-complexes
is identified with a graded module of coinvariants
%\begin{equation*}
$\left((\kk\oplus\kk\ecell_1)^{\otimes l})\right)_G$,
%\end{equation*}
where $l$ is the number of vertices in the core, which we number from $1$ to $l$, the letter $G$ denotes the automorphism group
of the core (which acts by permutation on the vertices),
and the $j$th factor of our tensor product $\kk\oplus\kk\ecell_1$
controls the presence or non-presence of a hair at the $j$th vertex
of the core. (We attach a hair to the $j$th vertex when this factor is $\ecell_1$.)
The differential acts on each factor $\kk\oplus\kk\ecell_1$ by sending $\kk$ isomorphically to $\kk\ecell_1$.
Hence the complex is clearly acyclic (since the coinvariants of an acyclic complex under the action of a finite group
is again an acyclic complex in characteristic zero).

To illustrate this procedure, we easily see that the subcomplex of~$(\DGE^0_l(\FHGCOp_{n n}''),\partial_{\lambda})$
associated to the core
%\begin{equation*}
$\alpha = \begin{tikzpicture}[baseline=-.65ex,scale=.5]
\node[int] (v1) at (0,-.5) {};
\node[int] (v2) at (0,.5) {};
\node[int] (v3) at (1,.5) {};
\node[int] (v4) at (1,-.5) {};
\draw (v1) edge (v2) edge (v3);
\draw (v4) edge (v2) edge (v3);
\draw (v2) edge (v3);
\end{tikzpicture}$,
%\end{equation*}
with $n$ even, has the form:
\begin{equation*}
\kk\begin{tikzpicture}[baseline=-.65ex,scale=.5]
\node[int] (v1) at (0,-.5) {};
\node[int] (v2) at (0,.5) {};
\node[int] (v3) at (1,.5) {};
\node[int] (v4) at (1,-.5) {};
\draw (v1) edge (v2) edge (v3);
\draw (v4) edge (v2) edge (v3);
\draw (v2) edge (v3);
\end{tikzpicture}\xrightarrow{\delta}\kk\begin{tikzpicture}[baseline=-.65ex,scale=.5]
\node[int] (v1) at (0,-.5) {};
\node[int] (v2) at (0,.5) {};
\node[int] (v3) at (1,.5) {};
\node[int] (v4) at (1,-.5) {};
\draw (v1) edge (v2) edge (v3);
\draw (v4) edge (v2) edge (v3);
\draw (v2) edge (v3);
\draw (v3) edge +(60:.67);
\end{tikzpicture}
\oplus\kk\begin{tikzpicture}[baseline=-.65ex,scale=.5]
\node[int] (v1) at (0,-.5) {};
\node[int] (v2) at (0,.5) {};
\node[int] (v3) at (1,.5) {};
\node[int] (v4) at (1,-.5) {};
\draw (v1) edge (v2) edge (v3);
\draw (v4) edge (v2) edge (v3);
\draw (v2) edge (v3);
\draw (v4) edge +(-60:.67);
\end{tikzpicture}
\xrightarrow{\delta}\kk\begin{tikzpicture}[baseline=-.65ex,scale=.5]
\node[int] (v1) at (0,-.5) {};
\node[int] (v2) at (0,.5) {};
\node[int] (v3) at (1,.5) {};
\node[int] (v4) at (1,-.5) {};
\draw (v1) edge (v2) edge (v3);
\draw (v4) edge (v2) edge (v3);
\draw (v2) edge (v3);
\draw (v3) edge +(60:.67);
\draw (v4) edge +(-60:.67);
\end{tikzpicture},
\end{equation*}
with a differential given by the addition of a hair. The terms of the differential associated to the blow-up of a vertex
in our graphs automatically increase the degree of our filtration (by construction),
and hence, vanish in our spectral sequence.
In this example, we assume that $n$ is even, otherwise our graph would vanish for sign and symmetry reasons.
To form our correspondence with the complex $((\kk\oplus\kk\ecell_1)^{\otimes l})_G$,
we number the vertices of our core $\alpha$ from $1$ to $l = 4$,
as in the following picture:
\begin{equation*}
\alpha' = \begin{tikzpicture}[baseline=-.65ex,scale=.5]
\node[int] (v1) at (0,-.5) {};
\node[int] (v2) at (0,.5) {};
\node[int] (v3) at (1,.5) {};
\node[int] (v4) at (1,-.5) {};
\node (x1) at (-.5,-.5) {1};
\node (x2) at (-.5,.5) {2};
\node (x3) at (1.5,.5) {3};
\node (x4) at (1.5,-.5) {4};
\draw (v1) edge (v2) edge (v3);
\draw (v4) edge (v2) edge (v3);
\draw (v2) edge (v3);
\end{tikzpicture}
\end{equation*}
The action of the automorphism group of our graph is identified with the permutation action of the group $G = \langle(1\ 4),(2\ 3)\rangle\subset\Sigma_4$ (with no sign when $n$ is even)
on the numbering of the vertices.
To the tensor $\ecell_1\otimes\ecell_1\otimes 1\otimes 1$ (for instance),
we associate the graph
\begin{equation*}
\ecell_1\otimes\ecell_1\otimes 1\otimes 1\mapsto\begin{tikzpicture}[baseline=-.65ex,scale=.5]
\node[int] (v1) at (0,-.5) {};
\node[int] (v2) at (0,.5) {};
\node[int] (v3) at (1,.5) {};
\node[int] (v4) at (1,-.5) {};
\node (x1) at (-.5,-.5) {1};
\node (x2) at (-.5,.5) {2};
\node (x3) at (1.5,.5) {3};
\node (x4) at (1.5,-.5) {4};
\draw (v1) edge (v2) edge (v3);
\draw (v4) edge (v2) edge (v3);
\draw (v2) edge (v3);
\draw (v1) edge +(-120:.67);
\draw (v2) edge +(120:.67);
\end{tikzpicture}
\end{equation*}
which corresponds to the graph with two hairs depicted of our complex. To be precise, in order to get this correspondence,
we apply an automorphism of the core that corresponds to the action of the permutation $s = (1\ 4)(2\ 3)$
on our vertex numbering.
\end{proof}

\begin{remark}\label{GraphHomology:GraphComplexes:TwistedComplexReduction}
Proposition~\ref{GraphHomology:GraphComplexes:HairyGraphComplexHomology} (together with a standard spectral sequence argument)
can be used to establish
that the morphism of twisted dg-modules
%\begin{equation*}
$(\HGCOp_{n n},\partial_{\lambda})\rightarrow(\HGCOp_{n n}^2,\partial_{\lambda})$
%\end{equation*}
is a weak-equivalence as well.
In particular, the homology class of the cycle $\tilde{\nu}_*(\gamma_{2 l+1})\in\HGCOp_{n n}^2$,
where we take the image of the loop graph $\gamma_{2 l+1}\in\GCOp_n^2$
in $\HGCOp_{n n}^2$ (see Proposition~\ref{GraphHomology:GraphComplexes:GraphComplexHomology}),
can be represented by a hairy graph with trivalent vertices in $\HGCOp_{n n}$.
To be explicit, we can easily see that we have a relation of the form
\begin{equation*}
[\tilde{\nu}_*(\gamma_{2 l+1})]\equiv\left[\begin{tikzpicture}[baseline=-.65ex, scale=.5]
\node[int] (v1) at (0:1) {};
\node[int] (v2) at (72:1) {};
\node[int] (v3) at (144:1) {};
\node[int] (v4) at (216:1) {};
\node (v5) at (-72:.75) {$\cdots$};
\draw (v1) edge (v2) edge (v5) (v3) edge (v2) edge (v4) (v4) edge (v5);
\draw (v1) edge +(0:.66);
\draw (v2) edge +(72:.66);
\draw (v3) edge +(144:.66);
\draw (v4) edge +(226:.66);
\end{tikzpicture}\right]
\end{equation*}
in $\DGH_*(\HGCOp_{n n}^2,\partial_{\lambda})$, where we consider a ``hedgehog'' graph with $l+1$-hairs, and we use the notation $[z]$
for the homology class of a cycle in the complex $(\HGCOp_{n n}^2,\partial_{\lambda})$.
\end{remark}

\begin{constr}[Involutions]\label{GraphHomology:GraphComplexes:Involutions}
Recall that the operad $\PoisOp_n$ is equipped with an involution $J_*: \PoisOp_n\rightarrow\PoisOp_n$
such that $J(\lambda) = -\lambda$
for the Lie bracket operation $\lambda\in\PoisOp_n(2)$.
This involution reflects the action of a hyperplane reflection on the operad of little $n$-discs.
We can extend this involution to the graph operad $\GraphOp_n^2$ so that our morphism $\gamma_*: \PoisOp_n\rightarrow\GraphOp_n^2$
preserves the action of involutions.
We explicitly set:
\begin{equation}\label{GraphHomology:GraphComplexes:Involutions:GraphOperadCase}
J_*(\alpha) = (-1)^{k+l}\alpha,
\end{equation}
for any graph $\alpha\in\GraphOp_n^2$ with $k$ edges and $l$ internal vertices.
We immediately see that this map does define a morphism of Hopf $\Lambda$-operads $J_*: \GraphOp_n^2\rightarrow\GraphOp_n^2$.
Furthermore, this involution $J_*: \GraphOp_n^2\rightarrow\GraphOp_n^2$
trivially admits a restriction
to the reduced ($\geq 3$-valent) graph operad $\GraphOp_n$.

%In~\S\ref{XX}, we mention that the graph complex $\GCOp_n^2$ acts on $\GraphOp_n^2$
%by derivations of Hopf $\Lambda$-operads.
%We can also define an involution $I_*: \GCOp_n^2\rightarrow\GCOp_n^2$
%such that the following diagram commutes:
%\begin{equation*}
%\xymatrix{ \GCOp_n^2\otimes\GraphOp_n^2\ar[r]^{I_*\otimes J_*}\ar[d]_{\text{action}} &
%\GCOp_n^2\otimes\GraphOp_n^2\ar[d]_{\text{action}} \\
%\GraphOp_n^2\ar[r]^{J_*} & \GraphOp_n^2 }.
%\end{equation*}
In what follows, we also deal with an involution $I_*: \GCOp_n^2\rightarrow\GCOp_n^2$ on the graph complex $\GCOp_n^2$.
We define this morphism by the explicit formula:
\begin{equation}\label{GraphHomology:GraphComplexes:GraphComplexCase}
I_*(\alpha) = (-1)^{k+l+1}\alpha,
\end{equation}
for any graph $\alpha\in\GCOp_n^2$ with $k$ edges and $l$ vertices.
We immediately see that this map preserves the differential in $\GCOp_n^2$.

We still have an involution $I_*: \HGCOp_{m n}^2\rightarrow\HGCOp_{m n}^2$
on the hairy graph complex $\HGCOp_{m n}^2$
which we define by the formula
\begin{equation}\label{GraphHomology:GraphComplexes:HairyGraphComplexCase}
I_*(\alpha) = (-1)^{k+l+r-1}\alpha,
\end{equation}
for any graph $\alpha\in\HGCOp_{m n}^2$ with $k$ edges, $l$ internal vertices, and $r$ external vertices.
We readily check that this involution preserves the differential of our complex $\HGCOp_{m n}^2$ (again)
and commutes with the extra twisting homomorphism $\partial_{\lambda} = [\lambda,-]$
in the case $m=n$.
The morphism $\nu_*: \GCOp_n^2\rightarrow\HGCOp_{m n}^2$
preserves the action of involutions.
Furthermore, we immediately see that both $I_*: \GCOp_n^2\rightarrow\GCOp_n^2$ and $I_*: \HGCOp_{m n}^2\rightarrow\HGCOp_{m n}^2$
admit a restriction to the reduced ($\geq 3$-valent) version of our graph complexes $\GCOp_n$
and $\HGCOp_{m n}$.
We will see (in the next subsection) that these involutions correspond to a conjugate action
of the involution of the $n$-Poisson cooperad
on the Koszul deformation complex
of~\S\ref{DeformationComplexes:KoszulDuality:KoszulReduction}.

Let us observe that the loop classes $\gamma_l\in\GCOp_n^2$ are odd with respect to the action of the involution $I_*(\gamma_l) = - \gamma_l$.
We use this observation in the next subsection in order to prove the vanishing of the obstructions
to the existence of a formality weak-equivalence $\phi: \KOp\xrightarrow{\sim}\PoisOp_n^c$
when $n\equiv 0(\mymod 4)$ (see~\S\ref{Background:ObstructionProblem}).
\end{constr}

\subsection{The homology of the deformation complexes}\label{GraphHomology:DeformationComplexHomology}
We go back to the study of the deformation complexes
%\begin{equation*}
$K^{* *}_{m n} = K^{* *}(\PoisOp_n^c,\PoisOp_m^c)$
%\quad\text{and}
%\quad
and
$L^{* *}_n = K^{* *}(\PoisOp_n^c,\PoisOp_n^c)$.
%\end{equation*}
We prove that the homology of these complexes reduce to the homology of graph complexes.
We then use the connectedness of the graph complexes in order to establish the vanishing statements which we need in our obstruction problem.

We address the case of the complex $K^{* *}_{m n}$ first.
We rely on the following statement:

\begin{prop}\label{GraphHomology:DeformationComplexHomology:RelativeObstructions:GraphOperadReduction}
We have a weak-equivalence
\begin{equation*}
K^{* *}_{m n}\simeq E^{* *}(\PoisOp_n,\PoisOp_m^c)
\xrightarrow{\sim} E^{* *}(\GraphOp_n,\PoisOp_m^c),
\end{equation*}
where on the right hand side we consider the twisted end complex
\begin{equation*}
E^{* *}_{m n} = E^{* *}(\GraphOp_n,\PoisOp_m^c)
%= (\int_{\rset\in\Lambda}\hat{\DGB}{}_{com}^*(\GraphOp_n(r))\otimes\tilde{\DGK}{}_{op}^*(\PoisOp_m^c),\partial''_h+\partial''_v)
\end{equation*}
associated to the graph operad $\POp = \GraphOp_n$ together with the morphism $\iota_*: \PoisOp_m\rightarrow\GraphOp_n$
such that $\iota_*(\mu) = \vcenter{\xymatrix@!0@C=1.5em@R=1em@M=0pt{ *+<6pt>[o][F]{1} & *+<6pt>[o][F]{2} }}$
and $\iota_*(\lambda) = 0$.
\end{prop}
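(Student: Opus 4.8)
The plan is to establish the two weak-equivalences in the statement separately and then compose them. The first identification $K^{**}_{mn}\simeq E^{**}(\PoisOp_{n-1},\PoisOp_{m-1}^c)$ is essentially a special case of Theorem~\ref{GraphHomology:TwistedEndComplexes:MainResult}: I would take $\PiOp=\PoisOp_{n-1}^c$ as the augmented filtered Hopf $\Lambda$-cooperad, observe that $\PoisOp_{n-1}^c$ satisfies the local finiteness hypothesis of Proposition~\ref{GraphHomology:TwistedEndComplexes:DualHopfOperad} (each $\PoisOp_{n-1}(r)$ is degree-wise finite), so that its dual complete Hopf $\Lambda$-operad is precisely $\PoisOp_{n-1}$, and note that the morphism $\iota^*\colon\PoisOp_{n-1}^c\to\PoisOp_{m-1}^c$ dualizes to the coaugmentation $\iota_*\colon\PoisOp_{m-1}\to\PoisOp_{n-1}$ needed in Construction~\ref{GraphHomology:TwistedEndComplexes:TwistedEndComplexConstruction}. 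Wait --- one must be careful about the direction here: the twisted end complex $E^{**}$ is built from a \emph{coaugmentation} $\gamma_*\colon\PoisOp_{m-1}\to\POp$, and for $\POp=\PoisOp_{n-1}$ this is the map $\iota_*\colon\PoisOp_{m-1}\to\PoisOp_{n-1}$ carrying $\mu\mapsto\mu$, $\lambda\mapsto 0$, which is exactly the map dual to $\chi=\iota^*$ used to define $K^{**}_{mn}$. So Theorem~\ref{GraphHomology:TwistedEndComplexes:MainResult} applies verbatim and gives the stated isomorphism $E^{**}(\PoisOp_{n-1}^\vee,\PoisOp_{m-1}^c)\cong K^{**}(\PoisOp_{n-1}^c,\PoisOp_{m-1}^c)$.

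The second weak-equivalence $E^{**}(\PoisOp_{n-1},\PoisOp_{m-1}^c)\xrightarrow{\sim}E^{**}(\GraphOp_n,\PoisOp_{m-1}^c)$ is to be deduced from the homotopy invariance result Proposition~\ref{GraphHomology:TwistedEndComplexes:TwistedEndComplexHomotopyInvariance}, applied to the Kontsevich weak-equivalence $\gamma_*\colon\PoisOp_{n-1}\xrightarrow{\sim}\GraphOp_n$ of Recollection~\ref{GraphHomology:GraphComplexes:GraphOperads}. To invoke that proposition I need: (i) that $\gamma_*$, at the level of the underlying filtered Hopf $\Lambda$-cooperads (obtained by dualizing per Proposition~\ref{GraphHomology:TwistedEndComplexes:DualHopfOperad}), is a weak-equivalence of \emph{augmented} filtered Hopf $\Lambda$-cooperads compatible with the augmentations over $\PoisOp_{m-1}^c$ --- this follows because $\gamma_*$ intertwines the coaugmentations from $\ComOp$ and hence the augmentations from $\ComOp^c$, and the composite $\PoisOp_{m-1}\xrightarrow{\iota_*}\PoisOp_{n-1}\xrightarrow{\gamma_*}\GraphOp_n$ is the graph-operad map $\iota_*$ of the statement (sending $\mu$ to the discrete $2$-graph, $\lambda$ to $0$); and (ii) that the collections $\IOp\PoisOp_{n-1}^c$ and $\IOp\GraphOp_n^c$ admit free $\Lambda$-collection structures. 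Point (ii) is supplied by~\S\ref{Background:PoissonAugmentationIdeal} (the relation $\IOp\PoisOp_{n-1}^c(r)=\Lambda\otimes_\Sigma\SOp\PoisOp_{n-1}^c(r)$) for the source and by Proposition~\ref{GraphHomology:GraphComplexes:GraphOperadCofreeStructure} (dualized via Remark~\ref{GraphHomology:TwistedEndComplexes:DualCofreeLambdaStructures}) for $\GraphOp_n$. One should also record that, passing through the isomorphism of Theorem~\ref{GraphHomology:TwistedEndComplexes:MainResult} in the opposite direction, $E^{**}(\GraphOp_n,\PoisOp_{m-1}^c)$ is precisely $E^{**}_{mn}$, the twisted end complex attached to $\POp=\GraphOp_n$ with the coaugmentation $\iota_*$, so the notation in the statement is consistent.

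The main obstacle, I expect, is \emph{not} any single computation but the bookkeeping needed to check that all the structure (filtrations, augmentations, $\Lambda$-diagram structure, and the Poisson-coaugmentation used to twist the end complex) is correctly matched under the chain of dualities Proposition~\ref{GraphHomology:TwistedEndComplexes:DualHopfOperad}, Theorem~\ref{GraphHomology:TwistedEndComplexes:MainResult}, and Proposition~\ref{GraphHomology:TwistedEndComplexes:TwistedEndComplexHomotopyInvariance}; in particular one must verify that $\gamma_*$ genuinely lands in the category of \emph{augmented filtered} Hopf $\Lambda$-cooperads with the correct augmentation over $\PoisOp_{m-1}^c$, which amounts to checking compatibility of $\gamma_*$ with the commutative-operad coaugmentation and with the restriction operators on graphs --- both of which are straightforward from the explicit formulas of~\S\ref{GraphHomology:GraphComplexes:GraphOperads}. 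The spectral sequence argument hidden inside Proposition~\ref{GraphHomology:TwistedEndComplexes:TwistedEndComplexHomotopyInvariance} (filtering by the operadic/Harrison degree $l$ and using that $\DGB^{com}_k(-)$ preserves arity-wise weak-equivalences together with cofibrancy of the Harrison complex as a $\Lambda$-collection) does the real work, and I would simply cite that proposition rather than re-run the argument.
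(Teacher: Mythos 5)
Your proposal is correct and follows essentially the same route as the paper: the first identification is the duality isomorphism of Theorem~\ref{GraphHomology:TwistedEndComplexes:MainResult}, the second equivalence is Proposition~\ref{GraphHomology:TwistedEndComplexes:TwistedEndComplexHomotopyInvariance} applied to Kontsevich's weak-equivalence $\gamma_*:\PoisOp_{n-1}\rightarrow\GraphOp_n$, and the required (co)freeness of the $\Lambda$-collections $\IOp\PoisOp_{n-1}$ and $\IOp\GraphOp_n$ is supplied by~\S\ref{GraphHomology:TwistedEndComplexes:CofreeLambdaStructures} and Proposition~\ref{GraphHomology:GraphComplexes:GraphOperadCofreeStructure}. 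The extra bookkeeping you carry out on directions of (co)augmentations and compatibility of $\gamma_*$ with the structure is exactly the content the paper leaves implicit.
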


\begin{proof}
This proposition follows from the result of Proposition~\ref{GraphHomology:TwistedEndComplexes:TwistedEndComplexHomotopyInvariance}
(together with the duality result of Theorem~\ref{GraphHomology:TwistedEndComplexes:MainResult})
since we observed in~\S\ref{GraphHomology:TwistedEndComplexes:CofreeLambdaStructures}
and Proposition~\ref{GraphHomology:GraphComplexes:GraphOperadCofreeStructure}
that the operads $\POp = \PoisOp_n,\GraphOp_n$ fulfill a cofreeness property equivalent to the freeness requirement
of this statement for the dual cooperads
of these operads.
%$\PiOp = \PoisOp_n^c,\GraphOp_n^c$.
\end{proof}

We examine the definition of the twisted end complex of this proposition $E^{* *}_{m n} = E^{* *}(\GraphOp_n,\PoisOp_m^c)$ in the next paragraph.
We mainly review the general definition of~\S\ref{GraphHomology:TwistedEndComplexes:TwistedEndComplexConstruction},
and we prove that this twisted end complex can be related to the hairy graph complex
of the previous subsection.

\begin{constr}[The comparison map with the hairy graph complex]\label{GraphHomology:DeformationComplexHomology:HairyGraphComplexMap}
By definition (see~\S\ref{GraphHomology:TwistedEndComplexes:TwistedEndComplexConstruction}),
the total differential of the complex $E^{* *}_{m n} = E^{* *}(\GraphOp_n,\PoisOp_m^c)$
consists of the following pieces:
\begin{itemize}
\item
a component-wise differential $\delta: E^{k l}_{m n}\rightarrow E^{k l}_{m n}$
yielded by the internal differential of the operad of graphs;
\item
a horizontal twisting differential $\partial_h = \partial'_h+\partial''_h: E^{k l}_{m n}\rightarrow E^{k+1 l}_{m n}$,
where $\partial'_h$ is yielded by the twisting differential of the Harrison complexes
with trivial coefficients $\hat{\DGB}{}_{com}^*(\GraphOp_n(r))$
whereas the map $\partial''_h$ involves the action of the algebras $\GraphOp_n(r)$
on the Koszul complexes $\tilde{\DGK}{}_{op}^l(\PoisOp_m^c)(r)$
through $\PoisOp_m^c(r)$;
\item
and a vertical twisting differential $\partial_v = \partial''_v: E^{k l}_{m n}\rightarrow E^{k l+1}_{m n}$
determined by the bimodule structure of the Harrison complex $\hat{\DGB}{}_{com}^*(\GraphOp_n)$
over the Poisson operad $\PoisOp_m^c$.
\end{itemize}
Recall that we determine our twisting homomorphisms term-wise on the end of our twisted complex
(see~\S\ref{GraphHomology:TwistedEndComplexes:TwistedEndComplexConstruction}).
Let $\alpha\otimes\xi\in\hat{\DGB}{}_{com}^k(\GraphOp_n(r))\otimes\tilde{\DGK}{}_{op}^l(\PoisOp_m^c)(r)$
be any tensor in a term of this end.
The explicit expression of the twisting homomorphism $\partial''_h$
in~\S\ref{GraphHomology:TwistedEndComplexes:TwistedEndComplexConstruction}(\ref{GraphHomology:TwistedEndComplexes:TwistedEndComplexConstruction:HorizontalTwistingDifferential})
implies that we have the relation:
\begin{align}
\label{GraphHomology:DeformationComplexHomology:HairyGraphComplexMap:HorizontalTwistingDifferential}
\partial''_h(\alpha\otimes\xi) & = 0,
\intertext{because our map $\iota_*: \PoisOp_m\rightarrow\GraphOp_n$ vanishes on the Poisson monomials
such that $\pi\in\IOp\PoisOp_m(r)$.
For the operadic twisting differential $\partial''_v$,
we get the formula:}
\label{GraphHomology:DeformationComplexHomology:HairyGraphComplexMap:VerticalTwistingDifferential}
\partial''_v(\alpha\otimes\xi)
& = \sum_{i=1,2}\pm(\vcenter{\xymatrix@!0@C=1.5em@R=1em@M=0pt{ *+<6pt>[o][F]{1} & *+<6pt>[o][F]{2} }}\circ_i\alpha)\otimes(\lambda\circ_i\xi)
+ \sum_{i=1,\dots,r}\pm(\alpha\circ_i\vcenter{\xymatrix@!0@C=1.5em@R=1em@M=0pt{ *+<6pt>[o][F]{1} & *+<6pt>[o][F]{2} }})
\otimes(\xi\circ_i\lambda).
\end{align}

We also have the degree-wise end change formula:
\begin{multline}
\label{GraphHomology:DeformationComplexHomology:HairyGraphComplexMap:Components}
E^{k l}_{m n} = \int_{\rset\in\Lambda}\hat{\DGB}{}_{com}^k(\GraphOp_n)(r)\otimes\tilde{\DGK}{}_{op}^l(\PoisOp_m^c)(r)
\\
\simeq\int_{\rset\in\Sigma}\DGS\hat{\DGB}{}_{com}^k(\GraphOp_n)(r)\otimes\tilde{\DGK}{}_{op}^l(\PoisOp_m^c)(r),
\end{multline}
where we consider the cogenerating symmetric collection $\DGS\hat{\DGB}{}_{com}^k(\GraphOp_n)$
of the $\Lambda$-collection $\hat{\DGB}{}_{com}^k(\GraphOp_n)$ (see~\S\ref{GraphHomology:TwistedEndComplexes:HarrisonConstructionCofreeLambdaStructure}).
The end over the category $\Sigma$ which we obtain in this formula is obviously equivalent
to the cartesian product of the modules
of invariants $(\DGS\hat{\DGB}{}_{com}^k(\GraphOp_n)(r)\otimes\tilde{\DGK}{}_{op}^l(\PoisOp_m^c)(r))^{\Sigma_r}$,
where we consider the diagonal action of the symmetric groups $\Sigma_r$
on our tensors.

When $k=0$, we have $\DGS\hat{\DGB}{}_{com}^0(\GraphOp_n)(r) = \SOp\GraphOp_n(r)$,
where we consider the underlying cogenerating symmetric collection $\SOp\GraphOp_n$
of the coaugmentation coideal
of the graph operad $\IOp\GraphOp_n$ (see~\S\ref{GraphHomology:TwistedEndComplexes:HarrisonConstructionCofreeLambdaStructure}).
We can now identify a hairy graph with $r$ hairs $\alpha\in\HGCOp_{m n}$ with an element of the module of invariants $\SOp\GraphOp_n(r)^{\Sigma_r}$.
We then consider the mapping $\psi: \DGSigma\HGCOp_{m n}\rightarrow E^{* *}_{m n}$
such that:
\begin{equation}
\label{GraphHomology:DeformationComplexHomology:HairyGraphComplexMap:Mapping}
\psi(\alpha) = \alpha\otimes\mu_r\in\int_{\rset\in\Sigma}\SOp\GraphOp_n(r)\otimes\tilde{\DGK}{}_{op}^{r-2}(\PoisOp_m^c)(r) = E^{0 r-2},
\end{equation}
for any such $\alpha\in\HGCOp_{m n}$,
where we use the identity
\begin{equation}
\tilde{\DGK}{}_{op}^{r-2}(\PoisOp_m^c)(r) = \DGSigma\SuspOp^m\PoisOp_m(r)
\end{equation}
and $\mu_r$ represents the $r$-fold commutative product operation
in the $m$-Poisson operad $\PoisOp_m$ (the unit element for $r=1$).

We easily check that $\psi$ satisfies the relation $\psi(\delta\alpha) = \delta\psi(\alpha)$, where we consider the internal differential
of the graph complex on the one hand,
and the component of differential of the dg-module $E^{* *}_{m n}$
yielded by the internal differential of the operad of graphs $\GraphOp_n$
on the other hand.
We moreover have $\partial'_h\psi(\alpha) = 0$ when we consider the differential of the Harrison complex $\hat{\DGB}{}_{com}^*(\GraphOp_n)$
because the connectedness assumption in the definition
of the hairy graph complex implies that $\alpha$
corresponds to an indecomposable element
of the graph operad.
We still have $\partial''_h\psi(\alpha) = 0$ since this twisting homomorphism $\partial''_h$ entirely vanishes in our bicomplex $E^{* *}_{m n}$.
We also easily check that the terms of the operadic twisting differential
in~(\ref{GraphHomology:DeformationComplexHomology:HairyGraphComplexMap:VerticalTwistingDifferential})
cancel each other for the element $\psi(\alpha) = \alpha\otimes\mu_r$ (we use that the external vertices
have valence one in the hairy graph complex to check this claim).
We conclude from these relations that our mapping~(\ref{GraphHomology:DeformationComplexHomology:HairyGraphComplexMap:Mapping})
defines a morphism of dg-modules $\psi: \DGSigma\HGCOp_{m n}\rightarrow E^{* *}_{m n}$.
\end{constr}

We now have the following theorem:

\begin{thm}\label{GraphHomology:DeformationComplexHomology:HairyGraphComplexMapHomology}
The morphism of~\S\ref{GraphHomology:DeformationComplexHomology:HairyGraphComplexMap}
defines a weak-equivalence of dg-modules
%\begin{equation*}
$\psi: \DGSigma\HGCOp_{m n}\xrightarrow{\sim} E^{* *}_{m n}$,
%\end{equation*}
for any pair $m,n\geq 2$.
\end{thm}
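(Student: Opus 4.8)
The plan is to prove that $\psi$ is a quasi-isomorphism by two successive reductions of the twisted end complex $E^{* *}_{m n}$, each implemented by a spectral sequence, after which the statement becomes a comparison at the first-page level. We already know from Construction~\ref{GraphHomology:DeformationComplexHomology:HairyGraphComplexMap} that $\psi$ is a morphism of dg-modules and that, via the end-change formula~(\ref{GraphHomology:DeformationComplexHomology:HairyGraphComplexMap:Components}), its image lies in the Harrison-weight zero part $E^{0,\bullet}_{m n} = \int_{\rset\in\Sigma}\SOp\GraphOp_n(r)\otimes\tilde{\DGK}{}_{op}^{\bullet}(\PoisOp_{m-1}^c)(r)$, where a hairy graph on $r$ hairs is read off as an element of $\SOp\GraphOp_n(r)^{\Sigma_r}$ tensored with the top commutative product $\mu_r$, and where the horizontal twisting differential $\partial''_h$ vanishes identically by~(\ref{GraphHomology:DeformationComplexHomology:HairyGraphComplexMap:HorizontalTwistingDifferential}). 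Since the graph operad lives in complete dg-modules (§\ref{GraphHomology:TwistedEndComplexes:CompleteModules}), all filtrations below are complete and exhaustive, so the associated spectral sequences converge; it therefore suffices to check that $\psi$ induces an isomorphism on first pages.

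For the first reduction I would filter $E^{* *}_{m n}$ by the number of edges of the underlying graphs. The differential not raising the edge count is $\partial'_h+\partial''_v$, whereas the internal graph differential $\delta$ strictly raises it; and within $\partial'_h+\partial''_v$ a further filtration by the arity makes $\partial'_h$ — the Harrison differential of $\hat{\DGB}{}_{com}^*(\GraphOp_n(r))$ with trivial coefficients — the leading term. Here I would invoke the structural fact recalled at the end of §\ref{GraphHomology:GraphComplexes:GraphOperads}: forgetting internal differentials, $\GraphOp_n^c(r)_{\flat}=\Sym(\DGSigma^{-1}\ICGOp_n^c(r)_{\flat})$ is a polynomial commutative algebra, so over a characteristic zero field its reduced Harrison homology with trivial coefficients is concentrated in weight zero, where it is the module of (co)indecomposables. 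This annihilates the whole Harrison direction and collapses the relevant page onto $k=0$, identifying it with a twisted complex built from the internally connected graphs $\ICGOp_n$ and the extended Koszul construction $\tilde{\DGK}{}_{op}^*(\PoisOp_{m-1}^c)$, carrying the residual differential coming from $\delta$ and $\partial''_v$; the map $\psi$ is compatible with this identification, since it lands in the Harrison-weight zero part and the cogenerating collection there is precisely $\SOp\GraphOp_n$ (cf.~§\ref{GraphHomology:TwistedEndComplexes:HarrisonConstructionCofreeLambdaStructure}).

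The second reduction, which I expect to be the main obstacle, is to identify this internally-connected twisted complex with $\DGSigma\HGCOp_{m n}$ and its plain graph differential $\delta$. This follows the template of the proof of Proposition~\ref{GraphHomology:GraphComplexes:ComparisonMapHomology}: one filters by the valences of the external vertices (equivalently by the number of internal vertices) and observes that the part of $\partial''_v$ attaching a $\lambda$-composition at an external leg, together with the $\PoisOp_{m-1}(r)$-factor of $\tilde{\DGK}{}_{op}^{r-2}(\PoisOp_{m-1}^c)(r)=\DGSigma\SuspOp^m\PoisOp_{m-1}(r)$, organises each external vertex of valence $>1$ into an acyclic ``hair present / hair absent'' complex; taking $\Sigma_r$-invariants of an acyclic complex is again acyclic in characteristic zero, so the homology is carried by the subcomplex in which every external vertex is univalent and the Poisson factor is forced down to $\mu_r$ — which is exactly $\DGSigma\HGCOp_{m n}$, with no residual $\partial_\lambda$-twist since $\gamma_*$ kills the Poisson bracket. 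A routine unwinding of the suspension conventions ($l=r-2$, the $m$-fold operadic suspension $\SuspOp^m$, and the factor $\ecell_{-m}^{\otimes r}$ in the definition~(\ref{GraphHomology:GraphComplexes:HairyGraphComplex:Expression}) of $\HGCOp_{m n}$) then shows that $\psi$ realises this last isomorphism on the nose, completing the proof. The real work is entirely in this second step: choosing the filtrations so that the three spectral sequences interleave correctly, and tracking the signs and degree shifts through them.
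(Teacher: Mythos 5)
Your overall strategy coincides with the paper's: filter by the number of edges (killing the internal vertex-splitting differential $\delta$ on the associated graded), then filter by the Koszul/arity grading to isolate the Harrison differential $\partial'_h$, and use that $\GraphOp_n^c(r)$ is a free commutative algebra on $\DGSigma^{-1}\ICGOp_n^c(r)$ so that its reduced Harrison homology is the module of indecomposables; this collapses the $E^1$-page onto the internally connected graphs $\int_{\rset\in\Sigma}\SOp\ICGOp_n(r)\otimes\tilde{\DGK}{}_{op}^*(\PoisOp_{m-1}^c)(r)$ with residual differential $d^1=\partial''_v$. Up to this point your argument is essentially the paper's (you should still say a word about why the quasi-isomorphism $\DGB^{com}_*(\GraphOp_n^c(r))\xrightarrow{\sim}\ICGOp_n^c(r)$ passes to the end over $\Lambda$; the paper uses the cofree $\Lambda$-structures to get Reedy cofibrancy).

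The gap is in your second reduction. The differential $\partial''_v$ on $\DGD^1$ is \emph{not} of the ``hair present / hair absent'' type that appears in the proof of Proposition~\ref{GraphHomology:GraphComplexes:ComparisonMapHomology}: that argument applies to the twisting $\partial_\lambda=[\lambda,-]$, which adjoins a univalent hair to a vertex and hence contributes a two-term acyclic factor $\kk\to\kk\ecell_1$ per vertex of the core. Here, by contrast, $\partial''_v(\alpha\otimes\xi)=\sum_i\pm(\alpha\circ_i\mu)\otimes(\xi\circ_i\lambda)$ \emph{splits} an external vertex of valence $v$ into two external vertices in all $2^{v-1}-1$ essentially distinct ways while inserting a Lie bracket into the Poisson factor; for a fixed internally connected core with leg set $L$ the complex is indexed by partitions of $L$ into nonempty blocks paired with $\SuspOp^m\PoisOp_{m-1}(r)$, with the differential refining partitions. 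Already for $|L|=3$ this is $\kk\to 3\,\PoisOp_{m-1}(2)\to\PoisOp_{m-1}(3)$ (suitably symmetrized), which is not a tensor product of two-term complexes. The assertion that its homology is concentrated on the all-singleton partition paired with $\mu_r$ is exactly the Koszul-duality computation of \cite[Lemma 4.4]{WillwacherGraphs} (equivalently, the acyclicity away from top degree of the Koszul complex of $\PoisOp_{m-1}$ relative to its commutative suboperad), and the paper cites that result rather than reproving it. As written, your per-vertex acyclicity argument does not establish this step; you would need either to invoke the external lemma as the paper does, or to supply the genuinely different (Koszulness-based) acyclicity argument.
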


\begin{proof}
We filter the graph operad $\GraphOp_n$ by the number of edges in graphs.
We equip the twisted end complex $E^{* *}_{m n}$
with the complete descending filtration
$E^{* *}_{m n} = \DGF_0 E^{* *}_{m n}\supset\cdots\supset\DGF_s E^{* *}_{m n}\supset\cdots$
yielded by this filtration of the graph operad $\GraphOp_n$
in our twisted end construction.
We consider a similar filtration, by the number of edges, on the hairy graph complex $\HGCOp_{m n}$.
We just check that the morphism of the theorem induces a weak-equivalence
on the graded complexes determined by this filtration
%\begin{equation*}
%\psi: \DGE^0(\HGCOp_{m n})\xrightarrow{\sim}\DGE^0(E^{* *}_{m n})
%\end{equation*}
to get our result, and we deduce this claim from the following arguments.

We first see that the differential of the hairy graph complex vanishes in the graded complex $\DGE^0(\HGCOp_{m n})$
associated to $\HGCOp_{m n}$
since this differential creates one edge
in all cases.
We similarly get that the internal differential of the graph operad vanishes in $\DGE^0(E^{* *}_{m n})$,
and only the pieces $\partial_h = \partial'_h$ and $\partial_v = \partial''_v$
of the differential remain non-trivial in $\DGE^0(E^{* *}_{m n})$.

We use a filtration by the grading of the Koszul construction $\tilde{\DGK}{}_{op}^*(\PoisOp_m^c)$
to compute the homology of the complex $(\DGE^0(E^{* *}_{m n}),\partial'_h+\partial''_v)$.
We then get a spectral sequence $\DGD^1(E^{* *}_{m n})\Rightarrow\DGE^1(E^{* *}_{m n})$
with $\DGD^1 = \DGH_*(\DGE^0(E^{* *}_{m n}),\partial'_h)$
as $E^1$-page,
and whose $d^1$ differential is yielded by the operadic twisting differential $\partial''_v$.
We now have a weak-equivalence
\begin{equation}\label{GraphHomology:DeformationComplexHomology:HairyGraphComplexMapHomology:HarrisonHomology}
\DGB^{com}_*(\GraphOp_n^c(r))\xrightarrow{\sim}\ICGOp_n^c(r)
\Leftrightarrow\ICGOp_n(r)\xrightarrow{\sim}\hat{\DGB}{}_{com}^*(\GraphOp_n(r)),
\end{equation}
for any $r>0$, where we consider the dual filtered Hopf $\Lambda$-cooperad $\GraphOp_n^c$
of the operad of graphs $\GraphOp_n$ and the dual $\ICGOp_n^c(r)$ of the dg-modules
of internally connected graphs $\ICGOp_n(r)$. We just use that $\GraphOp_n^c(r)$
forms a symmetric algebra on $\ICGOp_n^c(r)$
when we forget about the differential (see~\S\ref{GraphHomology:GraphComplexes:GraphOperads})
to get this statement.

We readily get that this weak-equivalence induces
a weak-equivalence
on our end
\begin{equation}\label{GraphHomology:DeformationComplexHomology:HairyGraphComplexMapHomology:EndMapping}
\int_{\rset\in\Lambda}\ICGOp_n(r)\otimes\tilde{\DGK}{}_{op}^*(\PoisOp_m^c)(r)
\xrightarrow{\sim}\int_{\rset\in\Lambda}\hat{\DGB}{}_{com}^*(\GraphOp_n(r))\otimes\tilde{\DGK}{}_{op}^*(\PoisOp_m^c)(r),
\end{equation}
because both $\ICGOp_n$ and $\hat{\DGB}{}_{com}^*(\GraphOp)$ admit a cofree structure
as $\Lambda$-collection, and this structure result implies
that $\ICGOp_n$ and $\hat{\DGB}{}_{com}^*(\GraphOp)$
form cofibrant $\Lambda$-collections
in dg-modules (with respect to the Reedy model structure
of~\cite[Proposition III.2.3.4]{FresseBook}).
We therefore have the relations:
\begin{multline}\label{GraphHomology:DeformationComplexHomology:HairyGraphComplexMapHomology:SpectralSequence}
\DGD^1 = \DGH_*(\DGE^0(E^{* *}_{m n}),\partial'_h)\simeq\int_{\rset\in\Lambda}\ICGOp_n(r)\otimes\tilde{\DGK}{}_{op}^*(\PoisOp_m^c)(r)
\\
\simeq\int_{\rset\in\Sigma}\SOp\ICGOp_n(r)\otimes\tilde{\DGK}{}_{op}^*(\PoisOp_m^c)(r).
\end{multline}
This graded module $\DGD^1$ together with the differential $d^1 = \partial''_v$
is identified with a summand
of the (non-Hopf) operadic deformation complex
of \cite[Lemma 4.4]{WillwacherGraphs}.
The result of this reference implies that the homology of this complex is identified with the module
that contains only terms of the form $\alpha\otimes\mu_k$
and where all external vertices of the graph $\alpha$
are univalent. But this is exactly the image of $\HGCOp_{m n}$ in $E^{* *}_{m n}$
and hence we are done.
\end{proof}

This theorem, together with the results of Proposition~\ref{GraphHomology:GraphComplexes:HairyGraphComplexHomology}
and Proposition~\ref{GraphHomology:DeformationComplexHomology:RelativeObstructions:GraphOperadReduction},
immediately implies:

\begin{prop}\label{GraphHomology:DeformationComplexHomology:RelativeObstructions}
We have the vanishing relation
\begin{equation*}
\DGH_0(K^{* *}_{m n})\simeq\DGH_0(E^{* *}_{m n})\simeq\DGH_{-1}(\HGCOp_{m n}) = 0.
\end{equation*}
as soon as $n-m\geq 2$ and $m\geq 2$.\qed
\end{prop}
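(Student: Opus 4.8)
The plan is simply to concatenate the comparison results established in this subsection. First I would invoke Proposition~\ref{GraphHomology:DeformationComplexHomology:RelativeObstructions:GraphOperadReduction}, which gives a chain of weak-equivalences $K^{* *}_{m n}\simeq E^{* *}(\PoisOp_{n-1},\PoisOp_{m-1}^c)\xrightarrow{\sim}E^{* *}_{m n}$; taking homology of the associated total complexes produces the first claimed isomorphism $\DGH_0(K^{* *}_{m n})\simeq\DGH_0(E^{* *}_{m n})$.

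Next I would apply Theorem~\ref{GraphHomology:DeformationComplexHomology:HairyGraphComplexMapHomology}, which asserts that the map $\psi\colon\DGSigma\HGCOp_{m n}\xrightarrow{\sim}E^{* *}_{m n}$ is a weak-equivalence for all $m,n\geq 2$. Passing to homology and using that the front suspension shifts the degree by one, so that $\DGH_k(\DGSigma\HGCOp_{m n}) = \DGH_{k-1}(\HGCOp_{m n})$, gives the second claimed isomorphism $\DGH_0(E^{* *}_{m n})\simeq\DGH_{-1}(\HGCOp_{m n})$. This is the only place where some care is needed: one must check that the grading conventions on the twisted end complex $E^{* *}_{m n}$ and on the suspension $\DGSigma\HGCOp_{m n}$ are matched so that the degree-zero component of the total complex on the left genuinely corresponds to the degree $-1$ part of the hairy graph complex on the right.

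Finally, under the hypothesis $n-m\geq 2$, Proposition~\ref{GraphHomology:GraphComplexes:HairyGraphComplexHomology} tells us that $\DGH_*(\HGCOp_{m n})$ vanishes in every degree $*<1$, hence in particular $\DGH_{-1}(\HGCOp_{m n}) = 0$. Combining the three isomorphisms then yields $\DGH_0(K^{* *}_{m n}) = 0$. I expect essentially no obstacle here beyond this bookkeeping: the substantive work — the reduction of the Koszul deformation complex to the twisted end complex of the graph operad, the spectral-sequence identification of that end complex with the suspended hairy graph complex, and the connectivity estimate for $\HGCOp_{m n}$ — has already been carried out in Proposition~\ref{GraphHomology:DeformationComplexHomology:RelativeObstructions:GraphOperadReduction}, Theorem~\ref{GraphHomology:DeformationComplexHomology:HairyGraphComplexMapHomology} and Proposition~\ref{GraphHomology:GraphComplexes:HairyGraphComplexHomology} respectively, so the present statement follows by assembling them.
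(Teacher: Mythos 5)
Your proposal is correct and is exactly the argument the paper intends: the proposition is stated with an immediate \qed precisely because it follows by concatenating Proposition~\ref{GraphHomology:DeformationComplexHomology:RelativeObstructions:GraphOperadReduction}, Theorem~\ref{GraphHomology:DeformationComplexHomology:HairyGraphComplexMapHomology} (with the degree shift $\DGH_0(\DGSigma\HGCOp_{m n}) = \DGH_{-1}(\HGCOp_{m n})$ you correctly flag), and the connectivity bound of Proposition~\ref{GraphHomology:GraphComplexes:HairyGraphComplexHomology}. No gaps.
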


We now address the case of the complex $L^{* *}_n$.
We then have the following statement:

\begin{prop}\label{GraphHomology:DeformationComplexHomology:IdentityObstructions:GraphOperadReduction}
We have a weak-equivalence
\begin{equation*}
L^{* *}_n\simeq E^{* *}(\PoisOp_n,\PoisOp_n^c)
\xrightarrow{\sim} E^{* *}(\GraphOp_n,\PoisOp_n^c),
\end{equation*}
where on the right-hand side we consider the twisted end complex
\begin{equation*}
F^{* *}_n = E^{* *}(\GraphOp_n,\PoisOp_n^c)
%= (\int_{\rset\in\Lambda}\hat{\DGB}{}_{com}^*(\GraphOp_n(r))\otimes\tilde{\DGK}{}_{op}^*(\PoisOp_m^c),\partial''_h+\partial''_v)
\end{equation*}
associated to the graph operad $\POp = \GraphOp_n$ together with the morphism $\gamma_*: \PoisOp_n\rightarrow\GraphOp_n$
such that $\gamma_*(\mu) = \vcenter{\xymatrix@!0@C=1.5em@R=1em@M=0pt{ *+<6pt>[o][F]{1} & *+<6pt>[o][F]{2} }}$
and $\gamma_*(\lambda) = \vcenter{\xymatrix@!0@C=1.5em@R=1em@M=0pt{ *+<6pt>[o][F]{1}\ar@{-}[r] & *+<6pt>[o][F]{2} }}$.
\end{prop}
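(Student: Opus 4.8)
The plan is to run the same argument as for Proposition~\ref{GraphHomology:DeformationComplexHomology:RelativeObstructions:GraphOperadReduction}, with the morphism $\iota^*$ replaced by the identity of $\PoisOp_{n-1}^c$. First I would apply the duality result of Theorem~\ref{GraphHomology:TwistedEndComplexes:MainResult} with $\PiOp = \PoisOp_{n-1}^c$, $m=n$, and the identity augmentation $\gamma = \id\colon\PoisOp_{n-1}^c\rightarrow\PoisOp_{n-1}^c$. Since $\PoisOp_{n-1}^c$ is degree-wise of finite rank with $\PoisOp_{n-1}^c(r)^{\vee} = \PoisOp_{n-1}(r)$ (see the discussion after Proposition~\ref{GraphHomology:TwistedEndComplexes:DualHopfOperad}), the dual complete Hopf $\Lambda$-operad is $\PiOp^{\vee} = \PoisOp_{n-1}$ and the dual coaugmentation is the identity $\PoisOp_{n-1}\rightarrow\PoisOp_{n-1}$. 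The theorem then provides an isomorphism $E^{* *}(\PoisOp_{n-1},\PoisOp_{n-1}^c)\xrightarrow{\simeq}K^{* *}(\PoisOp_{n-1}^c,\PoisOp_{n-1}^c)$, and because $\PoisOp_{n-1}^c$ is a plain augmented Hopf $\Lambda$-cooperad with $\PoisOp_{n-1}^c(1) = \kk$ (so that $\IOp\PoisOp_{n-1}^c(1) = 0$ and the Harrison complex vanishes in arity one), the right-hand side reduces to the Koszul deformation complex $L^{* *}_n$ of~\S\ref{DeformationComplexes:KoszulDuality:KoszulReduction}. This establishes the first equivalence of the statement.

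For the second map I would dualize Kontsevich's weak-equivalence of Hopf $\Lambda$-operads $\gamma_*\colon\PoisOp_{n-1}\xrightarrow{\sim}\GraphOp_n$ recalled in~\S\ref{GraphHomology:GraphComplexes:GraphOperads}. Because $\PoisOp_{n-1}$ is degree-wise finite and the subquotients of the edge-number filtration of $\GraphOp_n$ are degree-wise finite, this dualizes to a weak-equivalence of augmented filtered Hopf $\Lambda$-cooperads $\gamma_*^{\vee}\colon\GraphOp_n^c\xrightarrow{\sim}\PoisOp_{n-1}^c$. Both of these cooperads carry an augmentation over $\PoisOp_{n-1}^c$ — the target via the identity, the source via $\gamma_*^{\vee}$ itself — and these augmentations are compatible with $\gamma_*^{\vee}$. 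Moreover the underlying collections $\IOp\PoisOp_{n-1}^c$ and $\IOp\GraphOp_n^c$ admit free $\Lambda$-structures: for $\PoisOp_{n-1}^c$ this is the identity $\IOp\PoisOp_{n-1}^c = \Lambda\otimes_{\Sigma}\SOp\PoisOp_{n-1}^c$ of~\S\ref{Background:PoissonAugmentationIdeal}, and for $\GraphOp_n^c$ it follows dually from the cofree structure of $\IOp\GraphOp_n$ established in Proposition~\ref{GraphHomology:GraphComplexes:GraphOperadCofreeStructure} (see also~\S\ref{GraphHomology:TwistedEndComplexes:CofreeLambdaStructures} and the duality correspondence of Remark~\ref{GraphHomology:TwistedEndComplexes:DualCofreeLambdaStructures}). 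The homotopy invariance statement of Proposition~\ref{GraphHomology:TwistedEndComplexes:TwistedEndComplexHomotopyInvariance} then yields a weak-equivalence $(\gamma_*^{\vee})^*\colon K^{* *}(\PoisOp_{n-1}^c,\PoisOp_{n-1}^c)\xrightarrow{\sim}K^{* *}(\GraphOp_n^c,\PoisOp_{n-1}^c)$, and applying the duality isomorphism of Theorem~\ref{GraphHomology:TwistedEndComplexes:MainResult} once more identifies the target with the twisted end complex $F^{* *}_n = E^{* *}(\GraphOp_n,\PoisOp_{n-1}^c)$. Composing with the identification of the first step gives the asserted chain, and a direct inspection of the dualization shows that the resulting coaugmentation $\PoisOp_{n-1}\rightarrow\GraphOp_n$ is exactly $\gamma_*$, with $\gamma_*(\mu)$ the discrete graph and $\gamma_*(\lambda)$ the one-edge graph, as in~\S\ref{GraphHomology:GraphComplexes:PlainGraphOperad}.

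The argument requires no genuinely new homological input; the point that needs care is the bookkeeping of variances and connectedness conventions. The cooperad $\GraphOp_n^c$ does not satisfy the condition $\COp(1) = \kk$ imposed on ordinary Hopf $\Lambda$-cooperads, so the whole chain must be run inside the category of filtered Hopf $\Lambda$-cooperads of~\S\ref{GraphHomology:TwistedEndComplexes:FilteredHopfCooperads} (with the extended Koszul construction $\tilde{\DGK}{}_{op}^*$), collapsing back to $L^{* *}_n$ only at the source, where $\PoisOp_{n-1}^c(1) = \kk$. One also has to check that the duality isomorphism of Theorem~\ref{GraphHomology:TwistedEndComplexes:MainResult} is natural enough in the operad variable to intertwine the map induced by $\gamma_*\colon\PoisOp_{n-1}\rightarrow\GraphOp_n$ with the map induced by $\gamma_*^{\vee}\colon\GraphOp_n^c\rightarrow\PoisOp_{n-1}^c$; this is immediate from the explicit end formulas of~\S\ref{GraphHomology:TwistedEndComplexes:TwistedEndComplexConstruction} but should be spelled out. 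This is the main, though purely formal, obstacle.
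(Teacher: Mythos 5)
Your proposal is correct and follows essentially the same route as the paper: the paper's proof likewise combines the duality identification of Theorem~\ref{GraphHomology:TwistedEndComplexes:MainResult} with the homotopy invariance statement of Proposition~\ref{GraphHomology:TwistedEndComplexes:TwistedEndComplexHomotopyInvariance}, applied to the weak-equivalence $\gamma_*: \PoisOp_{n-1}\rightarrow\GraphOp_n$, using that both $\PoisOp_{n-1}$ and $\GraphOp_n$ satisfy the requisite cofreeness property. Your writeup simply spells out in more detail the bookkeeping (variance, filtered cooperads, the arity-one convention) that the paper leaves implicit.
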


\begin{proof}
We deduce this proposition follows from the result of Proposition~\ref{GraphHomology:TwistedEndComplexes:TwistedEndComplexHomotopyInvariance},
as in the case of Proposition~\ref{GraphHomology:DeformationComplexHomology:RelativeObstructions:GraphOperadReduction},
by using that both operads $\POp = \PoisOp_n,\GraphOp_n$
fulfill the cofreeness requirement
of our statement.
\end{proof}

We examine the definition of the twisted end complex that occurs in this proposition $F^{* *}_n = E^{* *}(\GraphOp_n,\PoisOp_n^c)$.
We aim to compare this complex with the twisted hairy graph complex of Proposition~\ref{GraphHomology:GraphComplexes:ComparisonMapHomology}.

\begin{constr}[The comparison map with the twisted hairy graph complex]\label{GraphHomology:DeformationComplexHomology:TwistedHairyGraphComplexMap}
The bicomplex $F^{* *}_n$ is defined by the same double sequence
of modules~\S\ref{GraphHomology:DeformationComplexHomology:HairyGraphComplexMap}(\ref{GraphHomology:DeformationComplexHomology:HairyGraphComplexMap:Components})
as the complex $E^{* *}_{n n}$
which we consider in~\S\ref{GraphHomology:DeformationComplexHomology:HairyGraphComplexMap}.
We just provide the double collection $F^{k l}_n = E^{k l}_{n n}$
with different horizontal and vertical differentials
which we associate to the map of Proposition~\ref{GraphHomology:DeformationComplexHomology:IdentityObstructions:GraphOperadReduction}
in order to get this new bicomplex $F^{* *}_n$.
In fact, this map $\gamma_*: \PoisOp_n\rightarrow\GraphOp_n$ is given by the addition of new terms $\gamma_*(\pi)\in\GraphOp_n(r)$,
associated to Poisson monomials $\pi\in\IOp\PoisOp_n(r)$, $r>0$,
to the map $\iota_*: \PoisOp_n\rightarrow\GraphOp_n$
which we use to determine the horizontal and vertical differentials of the bicomplex $E^{* *}_{n n}$.
We can accordingly determine the horizontal (respectively, vertical) differential of the bicomplex $F^{* *}_n$
by adding an extra twisting homomorphisms,
corresponding to the extra terms of our mapping $\gamma_*(\pi)\in\GraphOp_n(r)$,
to the horizontal (respectively, vertical) differential of our first bicomplex $E^{* *}_{n n}$.

In what follows, we keep the notation $\partial'_h: F^{k l}_n\rightarrow F^{k+1 l}_n$
for the piece of the horizontal differential
inherited from the Harrison cochain complex with trivial coefficients $\hat{\DGB}{}_{com}^*(\GraphOp_n)$
and the notation $\partial''_h: F^{k l}_n\rightarrow F^{k+1 l}_n$
for the horizontal twisting differential which the bicomplex $F^{* *}_n$
inherits from $E^{* *}_{n n}$.
We adopt the notation $\partial'''_h: F^{k l}_n\rightarrow F^{k+1 l}_n$
for the new terms of our horizontal twisting differential. We adopt similar conventions for the vertical differentials,
which we therefore decompose as $\partial_v = \partial''_v + \partial'''_v$,
where $\partial''_v: F^{k l}_n\rightarrow F^{k l+1}_n$
is the part which the bicomplex $F^{* *}_n$
inherits from $E^{* *}_{n n}$,
while $\partial'''_v: F^{k l}_n\rightarrow F^{k l+1}_n$
comes from the extra terms of our map $\gamma_*: \PoisOp_n\rightarrow\GraphOp_n$.

We determine these maps term-wise on the end~(\ref{GraphHomology:TwistedEndComplexes:TwistedEndComplexConstruction:Components})
of~\S\ref{GraphHomology:TwistedEndComplexes:TwistedEndComplexConstruction}
as in the case of the bicomplexes $E^{* *}_{m n}$
of~\S\ref{GraphHomology:DeformationComplexHomology:HairyGraphComplexMap}.
We again consider a tensor $\alpha\otimes\xi\in\hat{\DGB}{}_{com}^k(\GraphOp_n(r))\otimes\tilde{\DGK}{}_{op}^l(\PoisOp_n^c)$
which represents an element in a term of this end.
We refer to~\S\ref{GraphHomology:DeformationComplexHomology:HairyGraphComplexMap}(\ref{GraphHomology:DeformationComplexHomology:HairyGraphComplexMap:VerticalTwistingDifferential})
for the expression of the twisting differential $\partial''_v$
on this tensor. Recall that the piece $\partial''_h$ of the horizontal differential
entirely vanish.
By~\S\ref{GraphHomology:TwistedEndComplexes:TwistedEndComplexConstruction}(\ref{GraphHomology:TwistedEndComplexes:TwistedEndComplexConstruction:HorizontalTwistingDifferential}),
we have on the other hand:
\begin{align}
\label{GraphHomology:DeformationComplexHomology:TwistedHairyGraphComplexMap:HorizontalTwistingDifferential}
\partial'''_h(\alpha\otimes\xi) & = \sum_{\pi}\pm[\gamma_*(\pi),\alpha]\otimes(\pi^{\vee}\cdot\xi),
\intertext{where we now consider the non-trivial elements $\gamma_*(\pi)\in\IOp\GraphOp_n(r)$, $r>0$,
associated to the Poisson monomials such that $\pi\in\IOp\PoisOp_n(r)$.
By~\S\ref{GraphHomology:TwistedEndComplexes:TwistedEndComplexConstruction}(\ref{GraphHomology:TwistedEndComplexes:TwistedEndComplexConstruction:VerticalTwistingDifferential}),
we similarly get:}
\label{GraphHomology:DeformationComplexHomology:TwistedHairyGraphComplexMap:VerticalTwistingDifferential}
\partial'''_v(\alpha\otimes\xi)
& = \sum_{i=1,2}\pm(\vcenter{\xymatrix@!0@C=1.5em@R=1em@M=0pt{ *+<6pt>[o][F]{1}\ar@{-}[r] & *+<6pt>[o][F]{2} }}\circ_i\alpha)\otimes(\mu\circ_i\xi)
+ \sum_{i=1,\dots,r}\pm(\alpha\circ_i\vcenter{\xymatrix@!0@C=1.5em@R=1em@M=0pt{ *+<6pt>[o][F]{1}\ar@{-}[r] & *+<6pt>[o][F]{2} }})
\otimes(\xi\circ_i\mu),
\end{align}
for the extra piece of the operadic twisting differential, where we consider the operadic composites
with the extra term $\gamma_*(\lambda) = \vcenter{\xymatrix@!0@C=1.5em@R=1em@M=0pt{ *+<6pt>[o][F]{1}\ar@{-}[r] & *+<6pt>[o][F]{2} }}$
of our mapping $\gamma_*: \PoisOp_n\rightarrow\GraphOp_n$
in arity $2$.

We now consider the same mapping $\psi(\alpha) = \alpha\otimes\mu_r$ as in~\S\ref{GraphHomology:DeformationComplexHomology:HairyGraphComplexMap}
(where we now assume $m=n$)
for a hairy graph $\alpha\in\HGCOp_{n n}$
with $r$ external vertices,
and where $\mu_r$ denotes the $r$-fold product operation
in $\tilde{\DGK}{}_{op}^{r-2}(\PoisOp_n^c)(r) = \DGSigma\SuspOp^n\PoisOp_n(r)$.
We already checked that this mapping preserves the internal differential of graphs $\delta\psi(\alpha) = \psi(\delta\alpha)$
and that we have the relation $\partial'_h\psi(\alpha) = 0$
as well as $\partial''_v\psi(\alpha) = 0$
in our twisted end complex (see~\S\ref{GraphHomology:DeformationComplexHomology:HairyGraphComplexMap}).
We also have $\partial'''_h(\alpha\otimes\mu_r) = 0$, because $\pi^{\vee}\in\PoisOp_n(r)^{\vee}$
acts trivially on $\mu_r\in\tilde{\DGK}{}_{op}^{r-2}(\PoisOp_n^c)(r)$
when the monomial $\pi(x_1,\dots,x_r) = \pi_1(x_{1 j_1},\dots,x_{j_{1 n_1}})\cdot\ldots\cdot\pi_s(x_{s j_1},\dots,x_{s j_{n_s}})\in\PoisOp_n(r)$
contains a non-trivial Lie factor, and hence, belongs to $\IOp\PoisOp_n(r)$.
Indeed, the object $\tilde{\DGK}{}_{op}^*(\PoisOp_n^c)(r)$
forms, by construction, a quotient
of the operadic cobar construction $\DGB_{op}^*(\PoisOp_n^c)(r)$
as a module over the commutative algebra $\PoisOp_n^c(r)$.
The element $\mu_r\in\tilde{\DGK}{}_{op}^{r-2}(\PoisOp_n^c)(r)$
is represented by a tree-wise tensor of dual Lie bracket operations $\lambda^{\vee}\in\PoisOp_n^c(2)$
in $\DGB_{op}^*(\PoisOp_n^c)(r)$. We have $\lambda^{\vee}\cdot\lambda^{\vee} = 0$
in the commutative algebra $\PoisOp_n^c(2)$,
and we deduce from this relation that $\pi^{\vee}\in\PoisOp_n(r)^{\vee}$
operates trivially on $\mu_r\in\tilde{\DGK}{}_{op}^{r-2}(\PoisOp_n^c)(r)$
as soon as any coproduct of the element $\pi^{\vee}$
over a binary tree
contains a factor $\lambda^{\vee}$.

We readily see, on the other hand, that the extra pieces of our operadic twisting differential $\partial'''_v\psi(\alpha) = \partial'''_v(\alpha\otimes\mu_r)$
correspond to the twisting operation
$\partial_{\lambda}(\alpha) = [\vcenter{\xymatrix@!0@C=1.5em@R=1em@M=0pt{ *+<6pt>[o][F]{1}\ar@{-}[r] & *+<6pt>[o][F]{2} }},\alpha]$
in the hairy graph complex $\HGCOp_{n n}$,
where we consider the internal Lie bracket
of the hairy graph complex (see~\S\ref{GraphHomology:GraphComplexes:HairyGraphComplex}).
We therefore conclude that our map $\psi(\alpha) = \alpha\otimes\mu_r$
defines a morphism
of dg-modules $\psi: \DGSigma(\HGCOp_{n n},\partial_{\lambda})\rightarrow F^{* *}_n$
when we add this extra twisting homomorphism
$\partial_{\lambda} = [\vcenter{\xymatrix@!0@C=1.5em@R=1em@M=0pt{ *+<6pt>[o][F]{1}\ar@{-}[r] & *+<6pt>[o][F]{2} }},-]$
to the internal differential of the hairy graph complex $\HGCOp_{n n}$.
\end{constr}

We then have the following main theorem:

\begin{thm}\label{GraphHomology:DeformationComplexHomology:TwistedHairyGraphComplexMapHomology}
The morphism of~\S\ref{GraphHomology:DeformationComplexHomology:TwistedHairyGraphComplexMap} defines a weak-equivalence
of dg-modules
%\begin{equation*}
$\psi: \DGSigma(\HGCOp_{n n},\partial_{\lambda})\xrightarrow{\sim} F^{* *}_n$,
%\end{equation*}
for any $n\geq 2$.
\end{thm}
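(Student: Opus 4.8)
The plan is to prove this theorem by the same filtration and spectral sequence argument used for Theorem~\ref{GraphHomology:DeformationComplexHomology:HairyGraphComplexMapHomology}, but now keeping careful track of the extra twisting homomorphisms $\partial'''_h$ and $\partial'''_v$ that come from the nontrivial values $\gamma_*(\pi)\in\IOp\GraphOp_n(r)$ of our morphism $\gamma_*\colon\PoisOp_{n-1}\to\GraphOp_n$. First I would filter the graph operad $\GraphOp_n$ by the number of edges and equip both $F^{* *}_n = E^{* *}(\GraphOp_n,\PoisOp_{n-1}^c)$ and $\DGSigma(\HGCOp_{n n},\partial_\lambda)$ with the associated complete descending filtrations, so that $\psi$ becomes a filtration-preserving map. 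By a standard comparison-of-spectral-sequences argument, it suffices to prove that $\psi$ induces a weak-equivalence on the $\DGE^0$-pages. As in the proof of Theorem~\ref{GraphHomology:DeformationComplexHomology:HairyGraphComplexMapHomology}, on $\DGE^0$ the internal differential of the operad of graphs vanishes, and the new twisting pieces $\partial'''_h$ and $\partial'''_v$ raise the edge-number filtration (they insert or bracket with an edge graph $\gamma_*(\lambda)$ or with connected graphs $\gamma_*(\pi)$), so they drop off in $\DGE^0$ as well; only $\partial'_h$ (the Harrison differential with trivial coefficients) and $\partial''_v$ (the operadic twisting differential of $E^{* *}_{n n}$) survive. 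Hence $(\DGE^0(F^{* *}_n),\partial'_h+\partial''_v) = (\DGE^0(E^{* *}_{n n}),\partial'_h+\partial''_v)$ and $\DGE^0(\DGSigma(\HGCOp_{n n},\partial_\lambda)) = \DGE^0(\DGSigma\HGCOp_{n n})$, so the computation of the $\DGE^0$-homology is \emph{identical} to that carried out in the proof of Theorem~\ref{GraphHomology:DeformationComplexHomology:HairyGraphComplexMapHomology} for the case $m=n$; that is, the Harrison reduction~(\ref{GraphHomology:DeformationComplexHomology:HairyGraphComplexMapHomology:HarrisonHomology}) replaces $\hat{\DGB}{}_{com}^*(\GraphOp_n(r))$ by the internally connected graphs $\ICGOp_n(r)$, and the subsequent spectral sequence with $d^1 = \partial''_v$ identifies the homology with the image of $\HGCOp_{n n}$ inside $E^{* *}_{n n}$ via $\cite[Lemma~4.4]{WillwacherGraphs}$. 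This already shows $\psi$ is a weak-equivalence on associated graded objects, hence a weak-equivalence after completion.

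The point that needs genuine care, and which I expect to be the main obstacle, is verifying that $\psi\colon\DGSigma(\HGCOp_{n n},\partial_\lambda)\to F^{* *}_n$ really is a chain map for the \emph{full} differentials (not just on $\DGE^0$): one must check $\partial'''_h\psi(\alpha) = 0$ and that $\partial'''_v\psi(\alpha)$ reproduces exactly the twist $\partial_\lambda(\alpha) = [\vcenter{\xymatrix@!0@C=1.5em@R=1em@M=0pt{ *+<6pt>[o][F]{1}\ar@{-}[r] & *+<6pt>[o][F]{2} }},\alpha]$. The vanishing $\partial'''_h\psi(\alpha)=0$ follows from the computation already displayed in~\S\ref{GraphHomology:DeformationComplexHomology:TwistedHairyGraphComplexMap}: the element $\mu_r\in\tilde{\DGK}{}_{op}^{r-2}(\PoisOp_{n-1}^c)(r)$ is represented by a tree-wise tensor of dual brackets $\lambda^\vee$ and, since $\lambda^\vee\cdot\lambda^\vee=0$ in $\PoisOp_{n-1}^c(2)$, every $\pi^\vee$ with $\pi\in\IOp\PoisOp_{n-1}(r)$ acts by zero on $\mu_r$. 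The identification of $\partial'''_v\psi(\alpha)$ with $\psi(\partial_\lambda\alpha)$ uses that the only surviving operadic composites with $\gamma_*(\lambda) = \vcenter{\xymatrix@!0@C=1.5em@R=1em@M=0pt{ *+<6pt>[o][F]{1}\ar@{-}[r] & *+<6pt>[o][F]{2} }}$ are precisely the ones adding one hair in all possible ways (the univalence of the external vertices in $\HGCOp_{n n}$ makes the $\circ_i$ with the first input contribute the hair-insertion and kills the remaining terms up to the cancellations already noted for $\partial''_v$), and that this is the combinatorial description of $[\lambda,-]$ in the hairy graph complex recorded in~\S\ref{GraphHomology:GraphComplexes:HairyGraphComplex} and~\S\ref{GraphHomology:GraphComplexes:ComparisonMap}. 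Once these sign and combinatorial bookkeeping checks are in place, $\psi$ is a morphism of filtered complexes inducing an isomorphism on the homology of the associated graded, and an eventual convergence argument for the complete filtrations (the filtrations being exhaustive, Hausdorff and complete, with the spectral sequences converging by boundedness of the graded pieces in each fixed bidegree and loop order) yields the claimed weak-equivalence.
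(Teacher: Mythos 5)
Your proposal follows essentially the same route as the paper: filter by the number of edges, observe that the extra twisting pieces $\partial'''_h$, $\partial'''_v$ and $\partial_{\lambda}$ all create an edge and hence vanish on the associated graded, identify the resulting $E^1$-pages with those of Theorem~\ref{GraphHomology:DeformationComplexHomology:HairyGraphComplexMapHomology}, and conclude by the comparison of spectral sequences. The chain-map verifications you flag as the delicate point are exactly the ones the paper carries out in the construction of~\S\ref{GraphHomology:DeformationComplexHomology:TwistedHairyGraphComplexMap} preceding the theorem, so your argument is complete and matches the paper's.
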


\begin{proof}
We equip the dg-module $F^{* *}_n$ with the same filtration as the bicomplex $E^{* *}_n$
in the proof of Theorem~\ref{GraphHomology:DeformationComplexHomology:HairyGraphComplexMapHomology}.
We immediately get that the extra terms of the differentials of the bicomplex $F^{* *}_n$
vanish when we pass to the graded object associated to this filtration,
because these extra terms involve
the creation of edges
in graphs.
We accordingly have the relation $\DGE^1(F^{* *}_n) = \DGE^1(E^{* *}_{n n})$ on the $E^1$-page of the spectral
sequence associated to our filtration.

We also equip the dg-module $(\HGCOp_{n n},\partial_{\lambda})$
with the filtration by the number of edges
in graphs.
We get that the twisting homomorphism $\partial_{\lambda}$ of this dg-module vanishes
when we pass to the graded object associated to this filtration
(as in the case of our twisted end complex $F^{* *}_n$),
because this twisting homomorphism produces
a new edge in graphs
again.
We therefore have the relation $\DGE^1(\HGCOp_{n n},\partial_{\lambda}) = \DGE^1(\HGCOp_{n n})$,
where we drop the extra twisting homomorphism $\partial_{\lambda}$
from the complex $\HGCOp_{n n}$
on the right-hand side.
We then retrieve the spectral sequence of the proof of Theorem~\ref{GraphHomology:DeformationComplexHomology:HairyGraphComplexMapHomology}.
We can therefore rely on the arguments
of this statement
in order to establish that our morphism $\psi: (\HGCOp_{n n},\partial_{\lambda})\rightarrow F^{* *}_n$
induces an isomorphism
when we pass to the $E^1$-page of the spectral sequence
associated to our filtration. The conclusion follows.
\end{proof}

We can now use a reduction to the graph complex $\GCOp_n^2$ in order to determine the homology of the deformation complex $L^{* *}_n$.
Recall simply that we need to consider the version of the hairy graph complex where vertices of valence two are allowed $\HGCOp_{n n}^2$
rather than the reduced complex $\HGCOp_{n n}$
in order to get a correspondence with the graph complex $\GCOp_n^2$ (see Proposition~\ref{GraphHomology:GraphComplexes:ComparisonMapHomology}).
Thus, we need to replace $(\HGCOp_{n n},\partial_{\lambda})$ by the complex $(\HGCOp_{n n}^2,\partial_{\lambda})$
in the result of Theorem~\ref{GraphHomology:DeformationComplexHomology:TwistedHairyGraphComplexMapHomology}
in order to use the correspondence with the graph complex $\GCOp_n^2$.
This is not a problem since we observed that the twisted complexes $(\HGCOp_{n n},\partial_{\lambda})$ and $(\HGCOp_{n n}^2,\partial_{\lambda})$
are weakly-equivalent (see Remark~\ref{GraphHomology:GraphComplexes:TwistedComplexReduction}).

We may equivalently see that our previous constructions remain entirely valid when we use the graph operad with bivalent vertices allowed $\GraphOp_n^2$
and the corresponding hairy graph complex $\HGCOp_{m n}^2$ instead of the reduced version of the graph operad $\GraphOp_n$
and of the hairy graph complex $\HGCOp_{m n}$.
Then we abut to the same conclusion as previously.
Namely, we can take the twisted complex $(\HGCOp_{n n}^2,\partial_{\lambda})$
instead of $(\HGCOp_{n n},\partial_{\lambda})$
in the claim of Theorem~\ref{GraphHomology:DeformationComplexHomology:TwistedHairyGraphComplexMapHomology}.
Eventually, we use the result of Proposition~\ref{GraphHomology:GraphComplexes:ComparisonMapHomology}, as we just explained,
and the observations of Proposition~\ref{GraphHomology:GraphComplexes:GraphComplexHomology},
to obtain the following statement:

\begin{prop}\label{GraphHomology:DeformationComplexHomology:IdentityObstructions}
We have the formulas:
\begin{gather*}
\DGH_0(L^{* *}_n)\simeq\DGH_0(F^{* *}_n)\simeq\kk\oplus\begin{cases}\kk\gamma_n, & \text{if $n\equiv 3(\mymod 4)$}, \\
0, & \text{otherwise}, \end{cases}
\\
\text{and}\quad\DGH_{-1}(L^{* *}_n)\simeq\DGH_{-1}(F^{* *}_n)\simeq\begin{cases}\kk\gamma_{n+1}, & \text{if $n\equiv 0(\mymod 4)$}, \\
0, & \text{otherwise}, \end{cases}
\end{gather*}
as soon as $n\geq 3$.\qed
\end{prop}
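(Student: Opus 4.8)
The plan is to deduce Proposition~\ref{GraphHomology:DeformationComplexHomology:IdentityObstructions} by combining the chain of weak-equivalences from Theorem~\ref{GraphHomology:DeformationComplexHomology:TwistedHairyGraphComplexMapHomology} with Proposition~\ref{GraphHomology:DeformationComplexHomology:IdentityObstructions:GraphOperadReduction} and then computing the low-degree homology of the twisted hairy graph complex $(\HGCOp_{nn},\partial_\lambda)$ via the graph-complex comparison results. First I would chain together the identifications
\begin{equation*}
L^{* *}_n\xrightarrow{\sim} F^{* *}_n = E^{* *}(\GraphOp_n,\PoisOp_{n-1}^c)\xleftarrow{\sim}\DGSigma(\HGCOp_{nn},\partial_{\lambda}),
\end{equation*}
the first weak-equivalence coming from Proposition~\ref{GraphHomology:DeformationComplexHomology:IdentityObstructions:GraphOperadReduction} (which in turn rests on Proposition~\ref{GraphHomology:TwistedEndComplexes:TwistedEndComplexHomotopyInvariance} and the duality of Theorem~\ref{GraphHomology:TwistedEndComplexes:MainResult}), and the second from Theorem~\ref{GraphHomology:DeformationComplexHomology:TwistedHairyGraphComplexMapHomology}. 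This gives $\DGH_d(L^{* *}_n)\simeq\DGH_{d-1}(\HGCOp_{nn},\partial_{\lambda})$, so it suffices to compute $\DGH_1$ and $\DGH_0$ of $(\HGCOp_{nn},\partial_{\lambda})$, remembering the degree shift induced by $\DGSigma$.

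Next I would invoke the remark following Proposition~\ref{GraphHomology:GraphComplexes:ComparisonMapHomology}, namely that $(\HGCOp_{nn},\partial_{\lambda})\rightarrow(\HGCOp_{nn}^2,\partial_{\lambda})$ is a weak-equivalence, together with Proposition~\ref{GraphHomology:GraphComplexes:ComparisonMapHomology} itself, which supplies a weak-equivalence $\tilde\nu_*\colon\DGSigma^{-1}(\kk\ltimes\GCOp_n^2)\xrightarrow{\sim}(\HGCOp_{nn}^2,\partial_{\lambda})$. Hence $\DGH_*(\HGCOp_{nn},\partial_{\lambda})\simeq\DGH_{*+1}(\kk\ltimes\GCOp_n^2) = \DGH_{*+1}(\GCOp_n^2)$ in positive degrees, with the extra central factor $\kk$ (placed in degree $0$ of $\kk\ltimes\GCOp_n^2$, hence degree $-1$ after the desuspension, hence degree $0$ of $L^{* *}_n$ after accounting for the front $\DGSigma$) contributing the universal summand $\kk$ that appears in both formulas of the proposition. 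Then Proposition~\ref{GraphHomology:GraphComplexes:GraphComplexHomology} computes $\DGH_*(\GCOp_n^2) = \DGH_*(\GCOp_n)\oplus\bigoplus_{l\equiv 2n+1\,(4)}\kk\gamma_l$ with $\deg(\gamma_l) = n-l$, and its second assertion gives $\DGH_*(\GCOp_n) = 0$ for $*<n$ when $n\geq 3$. Since $n\geq 3$ forces $n-l<n$ for the relevant range, the connected-graph part $\DGH_*(\GCOp_n)$ contributes nothing in the degrees we care about, and only the loop classes $\gamma_l$ survive.

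Finally I would sort out which loop classes $\gamma_l$ land in the precise degrees needed. After the translations above, a class $\gamma_l$ with $\deg(\gamma_l) = n-l$ in $\GCOp_n^2$ lands in $\DGH_0(L^{* *}_n)$ exactly when $n-l$ matches the degree forced by the two suspensions and the desuspension, i.e. when $l=n$, and in $\DGH_{-1}(L^{* *}_n)$ exactly when $l=n+1$; in each case the congruence constraint $l\equiv 2n+1\equiv 1\,(\mathrm{mod}\ 4)$ translates into $n\equiv 3\,(\mathrm{mod}\ 4)$ for the case $l=n$ and $n\equiv 0\,(\mathrm{mod}\ 4)$ for the case $l=n+1$, which are exactly the two congruences stated. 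The bookkeeping of the three degree shifts (the $\DGSigma$ in front of $\HGCOp_{nn}$, the $\DGSigma^{-1}$ in $\tilde\nu_*$, and the internal grading conventions) together with tracking which term produces the ubiquitous $\kk$ summand is the main obstacle: it is purely a matter of normalizing conventions, but one must be careful that the central extension factor $\kk$ contributes to $\DGH_0$ and not to $\DGH_{-1}$, and that no connected-graph classes sneak in because the bound $*<n$ from Proposition~\ref{GraphHomology:GraphComplexes:GraphComplexHomology} is strict. Once the arithmetic is lined up, the two displayed formulas follow immediately, and the proof is complete by the qed already present in the statement.
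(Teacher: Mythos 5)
Your proposal follows essentially the same route as the paper, which states this proposition with a \qed precisely because it is the immediate concatenation of Proposition~\ref{GraphHomology:DeformationComplexHomology:IdentityObstructions:GraphOperadReduction}, Theorem~\ref{GraphHomology:DeformationComplexHomology:TwistedHairyGraphComplexMapHomology}, the remark after Proposition~\ref{GraphHomology:GraphComplexes:ComparisonMapHomology}, Proposition~\ref{GraphHomology:GraphComplexes:ComparisonMapHomology} itself, and the degree count of Proposition~\ref{GraphHomology:GraphComplexes:GraphComplexHomology}. Two small bookkeeping slips: the relation $\DGH_d(L^{**}_n)\simeq\DGH_{d-1}(\HGCOp_{nn},\partial_{\lambda})$ means you need $\DGH_{-1}$ and $\DGH_{-2}$ of the twisted hairy graph complex (not $\DGH_1$ and $\DGH_0$), and the congruence is $l\equiv 2n+1\ (\mymod 4)$ rather than $\equiv 1$; both are harmless since your final identification of $l=n$ with $\DGH_0(L^{**}_n)$ and $l=n+1$ with $\DGH_{-1}(L^{**}_n)$, and the resulting congruences $n\equiv 3$ and $n\equiv 0\ (\mymod 4)$, are correct.
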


The result of this proposition is not enough for our purpose. We need a full vanishing of the homology in degree $-1$
in order to apply our obstruction method.
We therefore consider the action of the involutions.
We easily check that the involution $J_*$ acting on the $n$-Poisson cooperad $\PoisOp_n^c$
and on the associated cotriple resolution $\Res_{op}^{\bullet}(\PoisOp_n^c)$
can be transported to the operadic cobar construction $\DGB_{op}^c(\PoisOp_n^c)$ (by functoriality)
and descends to the Koszul construction $\tilde{\DGK}{}_{op}^c(\PoisOp_n^c)$
as well.

In this dg-module~$\tilde{\DGK}{}_{op}^*(\PoisOp_n^c)$, we have the relation $J_*(\mu_r) = (-1)^{r-1}\mu_r$,
because the element $\mu_r\in\tilde{\DGK}{}_{op}^{r-1}(\PoisOp_n^c)(r)$,
which we consider in the definition of our mapping $\psi(\alpha) = \alpha\otimes\mu_r$
(see~\S\ref{GraphHomology:DeformationComplexHomology:HairyGraphComplexMap} and~\S\ref{GraphHomology:DeformationComplexHomology:TwistedHairyGraphComplexMap})
represents the image of an $r-1$-fold tree-wise composite of the dual of the Lie bracket operations $\lambda^{\vee}\in\PoisOp_n^c$
in the cobar construction, and we have the formula $J_*(\lambda^{\vee}) = -\lambda^{\vee}$
in the $n$-Poisson cooperad~$\PoisOp_n^c$.

By functoriality, we can also transport the action of the involution on the Poisson cooperad $\PoisOp_n^c$
to the Harrison complex $\DGB^{com}_*(\PoisOp_n^c)$,
and to the dual Harrison cochain complex $\hat{\DGB}{}_{com}^*(\PoisOp_n)$.
We can moreover extend this involution to the Harrison complex of the graph operad $\DGB^{com}_*(\GraphOp_n)$
since we observed in~\S\ref{GraphHomology:GraphComplexes:Involutions}
that the involution of the $n$-Poisson operad $\PoisOp_n$
extends to this operad $\GraphOp_n$.
We then provide the twisted end complex $F^{* *}_n$ with the conjugate action of these involutions
on the Harrison complex of the graph operad $\DGB^{com}_*(\GraphOp_n)$
and on the operadic Koszul construction~$\tilde{\DGK}{}_{op}^*(\PoisOp_n^c)$.
We readily check that the involution of the hairy graph complex,
such as defined in~\S\ref{GraphHomology:GraphComplexes:Involutions},
reflects this involution on the twisted end complex $F^{* *}_n$
through the mapping of~\S\ref{GraphHomology:DeformationComplexHomology:TwistedHairyGraphComplexMap}.
From the observations of~\S\ref{GraphHomology:GraphComplexes:Involutions},
we therefore deduce the following result:

\begin{prop}\label{GraphHomology:DeformationComplexHomology:EquivariantObstructions}
%We have:
%\begin{equation*}
%\DGH_0(L^{* *}_n)^{J_*}\simeq\kk\quad\text{and}\quad\DGH_{-1}(L^{* *}_n)^{J_*}\simeq 0,
%\end{equation*}
We have $\DGH_0(L^{* *}_n)^{J_*}\simeq\kk$ and $\DGH_{-1}(L^{* *}_n)^{J_*}\simeq 0$,
for all $n\geq 3$.\qed
\end{prop}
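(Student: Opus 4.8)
The plan is to deduce Proposition~\ref{GraphHomology:DeformationComplexHomology:EquivariantObstructions} directly from the chain of identifications assembled in this subsection, combined with the explicit behaviour of the involution $I_*$ on the loop classes. By Proposition~\ref{GraphHomology:DeformationComplexHomology:IdentityObstructions:GraphOperadReduction} and Theorem~\ref{GraphHomology:DeformationComplexHomology:TwistedHairyGraphComplexMapHomology}, we have weak-equivalences $L^{* *}_n\simeq F^{* *}_n$ and $\DGSigma(\HGCOp_{n n},\partial_\lambda)\xrightarrow{\sim}F^{* *}_n$, all compatible with the conjugate action of the involution $J_*$ (on the left) and the involution $I_*$ of~\S\ref{GraphHomology:GraphComplexes:Involutions} (on the hairy graph complex side), as recorded in the discussion preceding the proposition. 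Since we work over a characteristic zero field, taking $J_*$-invariants is exact, so these weak-equivalences restrict to weak-equivalences on fixed points, giving
\begin{equation*}
\DGH_*(L^{* *}_n)^{J_*}\simeq\DGH_*(F^{* *}_n)^{J_*}\simeq\DGH_{*-1}\bigl((\HGCOp_{n n},\partial_\lambda)^{I_*}\bigr).
\end{equation*}

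Next I would feed in Proposition~\ref{GraphHomology:GraphComplexes:ComparisonMapHomology}, which identifies $(\HGCOp_{n n}^2,\partial_\lambda)$ with $\DGSigma^{-1}(\kk\ltimes\GCOp_n^2)$, together with the remark following that proposition, which promotes this to a weak-equivalence $(\HGCOp_{n n},\partial_\lambda)\xrightarrow{\sim}(\HGCOp_{n n}^2,\partial_\lambda)$. All of these maps are again $I_*$-equivariant (the $I_*$ actions on $\GCOp_n^2$ and on $\HGCOp_{m n}^2$ are compatible with $\nu_*$ by~\S\ref{GraphHomology:GraphComplexes:Involutions}, and the factor $\kk\lambda$ is $I_*$-fixed in degree $-1$, where $r=2$, $k=1$, $l=0$ so the sign $(-1)^{k+l+r-1}=(-1)^2=1$). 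Taking $I_*$-invariants and using exactness once more yields
\begin{equation*}
\DGH_*(L^{* *}_n)^{J_*}\simeq\DGH_{*-2}\bigl(\kk\ltimes\GCOp_n^2\bigr)^{I_*}
= \DGH_{*-2}(\kk)^{I_*}\oplus\DGH_{*-2}(\GCOp_n^2)^{I_*},
\end{equation*}
where the $\kk$ summand sits in degree $0$ and is $I_*$-fixed (it comes from the same $\lambda$-line). For $*=0$ this already gives the $\kk$ coming from the loop-order generator, and we must check that $\DGH_{-2}(\GCOp_n^2)^{I_*}=0$; for $*=-1$ we need $\DGH_{-3}(\GCOp_n^2)^{I_*}=0$.

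For the vanishing of these negative-degree invariants I would invoke Proposition~\ref{GraphHomology:GraphComplexes:GraphComplexHomology}: $\DGH_*(\GCOp_n^2)=\DGH_*(\GCOp_n)\oplus\bigoplus_{l\equiv 2n+1\,(4)}\kk\gamma_l$, and $\DGH_*(\GCOp_n)$ vanishes in degrees $*<n$, hence in particular in all negative degrees since $n\geq 3$. The only possibly-surviving classes in negative degrees are therefore the loop classes $\gamma_l$, of degree $n-l$, which land in negative degree exactly when $l>n$. But $I_*(\gamma_l)=-\gamma_l$, as observed at the end of~\S\ref{GraphHomology:GraphComplexes:Involutions}, so every $\gamma_l$ is killed by the $I_*$-invariants functor. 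Consequently $\DGH_*(\GCOp_n^2)^{I_*}=0$ in all negative degrees, which together with the displayed computation gives $\DGH_0(L^{* *}_n)^{J_*}\simeq\kk$ and $\DGH_{-1}(L^{* *}_n)^{J_*}\simeq 0$. The main point requiring care — though it is mostly bookkeeping already set up in the text — is to verify that the involution $I_*$ on the hairy graph complex and $\GCOp_n^2$ really is the one induced by conjugating the $(n-1)$-Poisson cooperad involution $J_*$ through all the comparison maps (Proposition~\ref{GraphHomology:DeformationComplexHomology:IdentityObstructions:GraphOperadReduction}, Theorem~\ref{GraphHomology:DeformationComplexHomology:TwistedHairyGraphComplexMapHomology}, Proposition~\ref{GraphHomology:GraphComplexes:ComparisonMapHomology}); the sign conventions~(\ref{GraphHomology:GraphComplexes:Involutions:GraphOperadCase}), (\ref{GraphHomology:GraphComplexes:GraphComplexCase}) and~(\ref{GraphHomology:GraphComplexes:HairyGraphComplexCase}) were precisely chosen to make this match, and the formula $J_*(\mu_r)=(-1)^{r-1}\mu_r$ in $\tilde{\DGK}{}_{op}^*(\PoisOp_{n-1}^c)$ recorded just before the proposition is exactly what is needed to trace the sign through $\psi(\alpha)=\alpha\otimes\mu_r$.
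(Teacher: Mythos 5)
Your argument follows essentially the same route as the paper: the proposition is deduced from the chain of involution-equivariant identifications $L^{* *}_n\simeq F^{* *}_n\simeq\DGSigma(\HGCOp_{n n},\partial_{\lambda})$ and $\DGSigma^{-1}(\kk\ltimes\GCOp_n^2)\xrightarrow{\sim}(\HGCOp_{n n}^2,\partial_{\lambda})$, combined with the facts that the loop classes $\gamma_l$ are $I_*$-odd while the class $\lambda$ is $I_*$-even and that $\DGH_*(\GCOp_n)$ vanishes in degree $*<n$; this is exactly the content of the discussion preceding the proposition together with Proposition~\ref{GraphHomology:DeformationComplexHomology:IdentityObstructions}.

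One bookkeeping slip: $\DGSigma^{-1}$ is a \emph{de}suspension, so $\DGH_{*-1}\bigl((\HGCOp_{n n},\partial_{\lambda})\bigr)\simeq\DGH_{*-1}\bigl(\DGSigma^{-1}(\kk\ltimes\GCOp_n^2)\bigr)=\DGH_{*}(\kk\ltimes\GCOp_n^2)$ — the two shifts cancel and the identification is degree-preserving, not a shift by $2$. This is what makes your computation consistent with Proposition~\ref{GraphHomology:DeformationComplexHomology:IdentityObstructions}, where $\gamma_n$, of degree $n-n=0$, contributes to $\DGH_0(L^{* *}_n)$, and it resolves the internal tension in your writeup (the $\kk$ summand is said to sit in degree $0$ yet is claimed to contribute at $*=0$ under a $*-2$ indexing, which would instead place it in $\DGH_2$). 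The conclusion is unaffected: you need $\DGH_0(\GCOp_n^2)^{I_*}$ and $\DGH_{-1}(\GCOp_n^2)^{I_*}$ to reduce to loop classes, rather than $\DGH_{-2}$ and $\DGH_{-3}$, and the vanishing of $\DGH_*(\GCOp_n)$ in all degrees $*<n$ covers both ranges when $n\geq 3$.
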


\section{Recap and proofs of the main theorems}\label{MainResultProofs}

We use the vanishing statements of the previous section, namely Proposition~\ref{GraphHomology:DeformationComplexHomology:RelativeObstructions},
Proposition~\ref{GraphHomology:DeformationComplexHomology:IdentityObstructions}
and Proposition~\ref{GraphHomology:DeformationComplexHomology:EquivariantObstructions},
to establish the main theorems
of the introduction.
We start with the algebraic forms of our statements (Theorem~\ref{Result:HopfDGOperadIntrinsicFormality}-\ref{Result:RelativeChainOperadFormality})
since we deduce our topological statements (Theorem~\ref{Result:SimplicialSetIntrinsicFormality}-\ref{Result:RelativeTopologicalFormality})
from these results and the rational homotopy theory of operads~\cite[\S\S II.8-12]{FresseBook}.

\begin{proof}[Proof of Theorem~\ref{Result:HopfDGOperadIntrinsicFormality}]
We apply the Bousfield obstruction theory to construct a morphism of Hopf $\Lambda$-cooperads
from the resolution $\ROp = |\Res^{com}_{\bullet}(\PoisOp_n^c)|$
of the $n$-Poisson cooperad $\PoisOp_n^c$
towards the coresolution $\QOp = \Tot\Res_{op}^{\bullet}(\KOp))$
of a given Hopf cooperad $\KOp$,
as we explain in~\S\ref{Background:BousfieldObstructionTheory}
and in~\S\S\ref{Background:ObstructionProblem}.

By Theorem~\ref{DeformationComplexes:BicosimplicialComplex:MainResult},
Theorem~\ref{DeformationComplexes:DGComplex:MainResult},
Theorem~\ref{DeformationComplexes:KoszulDuality:MainResult},
Theorem~\ref{GraphHomology:TwistedEndComplexes:MainResult},
and Proposition~\ref{GraphHomology:DeformationComplexHomology:IdentityObstructions:GraphOperadReduction},
%and Theorem~\ref{GraphHomology:DeformationComplexHomology:TwistedHairyGraphComplexMapHomology},
the obstruction to the existence of such a morphism lies
in the component of degree $-1$
of the isomorphic homology modules:
\begin{multline*}
\DGH_*(B^{\bullet\bullet}(\PoisOp_n^c,\PoisOp_n^c))
\simeq\DGH_*(D^{* *}(\PoisOp_n^c,\PoisOp_n^c))
\simeq\DGH_*(K^{* *}(\PoisOp_n^c,\PoisOp_n^c))
\\
\simeq\DGH_*(E^{* *}(\PoisOp_n,\PoisOp_n^c))
\simeq\DGH_*(E^{* *}(\GraphOp_n,\PoisOp_n^c))
= \DGH_*(F^{* *}_n)
\end{multline*}
(with the notation adopted in these statements).
%where we set $\BiDer(-,-) = \BiDer_{\dg^*\Hopf\Lambda\Op^c}(-,-)$ for short and we use the notation
%adopted in the cited statements otherwise.
By Proposition~\ref{GraphHomology:DeformationComplexHomology:IdentityObstructions},
this homology vanishes
when $n\not\equiv 0(\mymod 4)$.
We can therefore conclude that our morphism exists
in this case.

If the Hopf $\Lambda$-cooperad $\KOp$ is equipped with an involution $J: \KOp\xrightarrow{\simeq}\KOp$
reflecting the involution of the $n$-Poisson operad
in homology, then we can consider the $J_*$-invariant
submodule of our obstruction
complex, as we explain in~\S\ref{Background:BousfieldObstructionTheory},
to check the existence of a $J$-equivariant
morphism by our obstruction method.
We see that the involution action which we consider in Proposition~\ref{GraphHomology:DeformationComplexHomology:EquivariantObstructions}
corresponds to this $J_*$-equivariant
structure
on the obstruction complex.
We therefore conclude, from the vanishing result of Proposition~\ref{GraphHomology:DeformationComplexHomology:EquivariantObstructions},
that our $J$-equivariant morphism exists for all $n\geq 3$.

We use a similar analysis to establish the homotopy uniqueness of our formality weak-equivalence.
The Bousfield obstruction theory and the vanishing result
of Proposition~\ref{GraphHomology:DeformationComplexHomology:IdentityObstructions}
basically imply that $[\ROp,\QOp] = \pi_0\Map_{\dg^*\Hopf\Lambda\Op^c}(\ROp,\QOp)$
is identified with a subquotient of the (underlying set of the) module $\DGH_0(L_n^{**})^{J_*} = \kk$
in the case $n\not\equiv 3(\mymod 4)$.
But the composition with the morphism of the $n$-Poisson operad
which we determine by re-scaling the dual of the Lie bracket
operation $\lambda^{\vee}$
exhausts all these factors in our obstruction spectral sequence,
and this re-scaling operation can be detected
in cohomology. We therefore have a unique representative of our map (up to homotopy)
when we fix the isomorphism $\chi: \DGH^*(\KOp)\xrightarrow{\simeq}\PoisOp_n^c$
induced by our map in cohomology. We argue similarly in the $J$-equivariant
setting. We then use the vanishing result of Proposition~\ref{GraphHomology:DeformationComplexHomology:EquivariantObstructions}
to get our conclusion.
\end{proof}

\begin{proof}[Proof of Theorem~\ref{Result:ChainOperadFormality}]
We merely apply Theorem~\ref{Result:HopfDGOperadIntrinsicFormality} in the special case $\KOp = \DGOmega^*_{\sharp}(\EOp_n)$,
where $\EOp_n$ is a (cofibrant) $E_n$-operad in simplicial sets
such that $\EOp_n(1) = \pt$
and which is equipped with an involution (mimicking the action of a hyperplane reflection on the little discs operad).
We can take for instance (a functorial cofibrant resolution of) the singular complex of the Fulton-MacPherson
operad for $\EOp_n$.
We then dualize and use that $\DGOmega^*_{\sharp}(\EOp_n)^{\vee}$
is weakly equivalent to $\DGC_*(\EOp_n,\QQ)$
as an operad in dg-modules
to get our result.
\end{proof}

\begin{proof}[Proof of Theorem~\ref{Result:RelativeChainOperadFormality}]
The proof proceeds along the same lines as that of Theorem~\ref{Result:HopfDGOperadIntrinsicFormality}
and Theorem~\ref{Result:ChainOperadFormality},
except that we use the relative version
of the Bousfield obstruction theory outlined in~\S\ref{Background:RelativeObstructionProblem},
and the vanishing result of Proposition~\ref{GraphHomology:DeformationComplexHomology:RelativeObstructions}.
\end{proof}

\begin{proof}[Proof of Theorem~\ref{Result:SimplicialSetIntrinsicFormality}]
We derive this result from Theorem~\ref{Result:HopfDGOperadIntrinsicFormality}.
We assume that $\POp$ is an operad in simplicial sets satisfying $\DGH_*(\POp)\simeq\PoisOp_n$
as stated in our theorem.
We pick a cofibrant replacement $\ROp$ of this operad (by using the model structure
of $\Lambda$-operads in simplicial sets~\cite[\S II.8.4]{FresseBook})
and we apply Theorem~\ref{Result:HopfDGOperadIntrinsicFormality}
to (the operadic enhancement of) the Sullivan model
of this operad $\KOp = \DGOmega^*_{\sharp}(\ROp)$.
We get a chain of weak equivalences
%\begin{equation*}
$\DGOmega^*_{\sharp}(\ROp)\xrightarrow{\sim}\cdot\xleftarrow{\sim}\PoisOp_n^c$,
%\end{equation*}
to which we apply the derived Sullivan realization functor $\DGL\DGG_{\bullet}$.
We then obtain a chain of weak equivalences of $\Lambda$-operads in simplicial sets:
\begin{equation*}
\DGL\DGG_{\bullet}(\DGOmega^*_{\sharp}(\ROp))\xrightarrow{\sim}\cdot\xleftarrow{\sim}\DGL\DGG_{\bullet}(\PoisOp_n^c).
\end{equation*}
We just use that $\ROp\sphat := \DGL\DGG_{\bullet}(\DGOmega^*_{\sharp}(\ROp))$ represents a rationalization of the operad $\ROp$
when each space $\ROp(r)$ is $\QQ$-good in the sense of Bousfield-Kan (see~\cite[\S II.12.2.3]{FresseBook}).
\end{proof}

\begin{proof}[Proof of Theorem~\ref{Result:TopologicalFormality}]
We merely apply Theorem~\ref{Result:SimplicialSetIntrinsicFormality} to the case of a model of $E_n$-operads $\POp = \ROp_n$
in the category of $\Lambda$-operads in simplicial sets.
\end{proof}

\begin{proof}[Proof of Theorem~\ref{Result:RelativeTopologicalFormality}]
We again use the relative version of the Bousfield obstruction theory to establish the formality of the morphisms
$\iota^*: \DGOmega^*_{\sharp}(\EOp_n)\rightarrow\DGOmega^*_{\sharp}(\EOp_n)$
that model the embeddings of little discs operads
$\iota: \DOp_m\hookrightarrow\DOp_n$,
and we apply the functor $\DGL\DGG_{\bullet}$
to get our result.
\end{proof}

\begin{proof}[Proof of Theorem~\ref{Result:InitialRelativeTopologicalFormality}-\ref{Result:InitialRelativeChainOperadFormality}]
We can readily adapt our constructions and the proof of Theorem~\ref{Result:RelativeTopologicalFormality}-\ref{Result:RelativeChainOperadFormality}
in order to establish the results of Theorem~\ref{Result:InitialRelativeTopologicalFormality}
and Theorem~\ref{Result:InitialRelativeChainOperadFormality},
where we deal the particular case of the operad of little intervals $\DOp_1$
as source of our operad morphisms $\iota: \DOp_1\hookrightarrow\DOp_n$.
We then have to replace the $m$-Poisson operad $\PoisOp_m$
by the associative operad $\AsOp$
in all our constructions
since we have $\DGH_*(\DOp_1) = \AsOp$
in this case.
We also consider the dual cooperad of this operad $\AsOp^c$.
We essentially use that the associative cooperad $\AsOp^c$ is Koszul (with the operadic suspension
of the associative operad $\KK^c_{op}(\AsOp^c) = \SuspOp\AsOp$
as Koszul dual operad) to perform the Koszul reduction step
of~\S\ref{DeformationComplexes:KoszulDuality}.
We readily check that the rest of our arguments work same when we replace the Koszul construction
of the $m$-Poisson cooperad $\tilde{\DGK}{}_{op}^*(\PoisOp_m^c)$
by the Koszul construction $\tilde{\DGK}{}_{op}^*(\AsOp^c) = \DGSigma\overline{\SuspOp}\overline{\AsOp}$
associated to this cooperad $\AsOp^c$.
\end{proof}

\begin{appendix}

\renewcommand{\thesubsubsection}{\thesection.\arabic{subsubsection}}
\numberwithin{subsubsection}{section}

\section{The algebraic cotriple resolution}\label{CotripleResolution}
In this appendix, we explain the definition of the cotriple resolution for Hopf $\Lambda$-cooperads with full details,
and we check that the application of the geometric realization functor to these simplicial resolutions
returns cofibrant resolutions
in the category of Hopf $\Lambda$-cooperads.

\begin{constr}[The cotriple resolution of Hopf $\Lambda$-cooperads]\label{CotripleResolution:Construction}
In~\S\ref{Background:CotripleResolution}, we briefly explain that we use the adjunction
\begin{equation}\label{CotripleResolution:Construction:Adjunction}
\ComOp^c/\Sym(-): \ComOp^c/\dg^*\Lambda\Op^c\rightleftarrows\dg^*\Hopf\Lambda\Op^c :\omega
\end{equation}
between the category of coaugmented $\Lambda$-cooperads in cochain graded dg-modules $\ComOp^c/\dg^*\Lambda\Op^c$
and the category of Hopf $\Lambda$-cooperads $\dg^*\Hopf\Lambda\Op^c$
in order to define the cotriple resolution $\Res^{com}_{\bullet}(\AOp)\in\simp\dg^*\Hopf\Lambda\Op^c$
of any object $\AOp$
in the category of Hopf $\Lambda$-cooperads $\dg^*\Hopf\Lambda\Op^c$.

This simplicial object $\Res^{com}_{\bullet}(\AOp)$ is explicitly defined by the expression:
\begin{equation}\label{CotripleResolution:Construction:Expression}
\Res^{com}_n(\AOp) = \underbrace{(\ComOp^c/\Sym)\circ\cdots\circ(\ComOp^c/\Sym)}_{n+1}(\AOp),
\end{equation}
for each dimension $n\in\NN$, where we consider an $n+1$-fold composite of the functors of our adjunction relation.
We just omit to mark forgetful functors in order to simplify our formula.
We number the factors of this composite by $0,\dots,n$, from left to right.
We provide $\Res^{com}_{\bullet}(\AOp)$ with the face morphisms
%\begin{equation}
$d_i: \Res^{com}_n(\AOp)\rightarrow\Res^{com}_{n-1}(\AOp)$
%\end{equation}
given, for $i = 0,\dots,n$,
by the application of the augmentation morphism of our adjunction $\epsilon: (\ComOp^c/\Sym) = (\ComOp^c/\Sym)\circ\omega\rightarrow\Id$
to the $i$th factor of our composite functor~(\ref{CotripleResolution:Construction:Expression}),
while the degeneracy morphisms
%\begin{equation}
$s_j: \Res^{com}_n(\AOp)\rightarrow\Res^{com}_{n+1}(\AOp)$,
%\end{equation}
are given, for $j = 0,\dots,n$, by the insertion of the adjunction unit $\iota: \Id\rightarrow\omega\circ(\ComOp^c/\Sym)$
between the $j$ and $j+1$st factors of this composite.

The object~(\ref{CotripleResolution:Construction:Expression}) is a (relative) symmetric algebra
by construction $\Res^{com}_n(\AOp) = \ComOp^c/\Sym(\DGC^{com}_n(\AOp))$,
for a generating cooperad such that:
\begin{equation}\label{CotripleResolution:Construction:GeneratingCollection}
\DGC^{com}_n(\AOp) = \underbrace{(\ComOp^c/\Sym)\circ\cdots\circ(\ComOp^c/\Sym)}_n(\AOp),
\end{equation}
for any $n\in\NN$. In~\S\ref{DeformationComplexes:BicosimplicialComplex}, we also use the notation $\overline{\DGC}^{com}_n(\AOp)$
for the coaugmentation coideal of these cooperads $\DGC^{com}_n(\AOp)$,
where we drop the term of arity one $\DGC^{com}_n(\AOp)(1) = \kk$.
We immediately see that the face operators $d_i$ such that $i>0$
are identified with morphisms of symmetric algebras $d_i: \ComOp^c/\Sym(\DGC^{com}_n(\AOp))\rightarrow\ComOp^c/\Sym(\DGC^{com}_{n-1}(\AOp))$
which we associate to face morphisms of these generating cooperads $d_i: \DGC^{com}_n(\AOp)\rightarrow\DGC^{com}_{n-1}(\AOp)$,
and we have a similar observation for the degeneracy operators $s_j: \Res^{com}_n(\AOp)\rightarrow\Res^{com}_{n+1}(\AOp)$, for all $j$.
But the $0$-face $d_0: \Res^{com}_n(\AOp)\rightarrow\Res^{com}_{n-1}(\AOp)$, on the other hand,
is yielded by a morphism $d_0: \ComOp^c/\Sym(\DGC^{com}_n(\AOp))\rightarrow\ComOp^c/\Sym(\DGC^{com}_{n-1}(\AOp))$
which does not preserve our generating objects.
%We can still set $d_0 = 0$ (in arity $r>1$) to provide these cooperads~(\ref{CotripleResolution:Construction:GeneratingCollection})
%with a full simplicial structure, but this $0$-face does not correspond to the $0$-face
%of the cotriple resolution.

The simplicial object $\Res^{com}_{\bullet}(\AOp)$ is also equipped with an augmentation
\begin{equation}\label{CotripleResolution:Construction:Augmentation}
\Res^{com}_0(\AOp) = (\ComOp^c/\Sym)(\AOp)\xrightarrow{\epsilon}\AOp
\end{equation}
which we define by the augmentation morphism of our adjunction relation~(\ref{CotripleResolution:Construction:Adjunction}).
\end{constr}

We need more insights into the internal structure of the cotriple resolution of Hopf $\Lambda$-cooperads.
We are mainly going to observe that the components of this Hopf $\Lambda$-cooperad
are identified with the cotriple resolution of plain unitary commutative algebras
in cochain graded dg-modules.
We notably use this relationship when we define the equivalence between the bicosimplicial deformation complex of a Hopf $\Lambda$-cooperad
and our differential graded deformation complex in~\S\ref{DeformationComplexes:DGComplex}.

\begin{constr}[The components of the cotriple resolution]\label{CotripleResolution:ComponentwiseConstruction}
Recall that we use the notation $\dg^*\ComCat_+$ for the category of unitary commutative algebras in $\dg^*\Mod$
(the category of unitary commutative cochain dg-algebras).
We consider the category of cochain graded dg-modules equipped with a coaugmentation over the ground field $\kk/\dg^*\Mod$
and the relative symmetric algebra functor $\kk/\Sym(-): \kk/\dg^*\Mod\rightarrow\dg^*\ComCat_+$
such that $\kk/\Sym(M) = \kk\otimes_{\Sym(\kk)}\Sym(M)$
for any object $M\in\kk/\dg^*\Mod$.

We now have an adjunction $\kk/\Sym(-): \kk/\dg^*\Mod\rightleftarrows\dg^*\ComCat_+ :\omega$
between the category of coaugmented cochain graded dg-modules $\kk/\dg^*\Mod$
and the category of unitary commutative cochain dg-algebras $\dg^*\ComCat_+$
which we use to define the cotriple resolution $\Res^{com}_{\bullet}(A)$
of any plain unitary commutative cochain dg-algebra $A\in\dg^*\ComCat_+$.
We explicitly set:
\begin{equation}\label{CotripleResolution:ComponentwiseConstruction:Expression}
\Res^{com}_n(A) = \underbrace{(\kk/\Sym)\circ\cdots\circ(\kk/\Sym)}_{n+1}(A),
\end{equation}
for each dimension $n\in\NN$, where we consider the $n+1$-fold composite of our functor $\kk/\Sym(-)$ on $\dg^*\ComCat_+$.
We define the face and degeneracy operators of this simplicial object by the same operations
as in~\S\ref{CotripleResolution:Construction}.
We still have an identity $\Res^{com}_n(A) = \kk/\Sym(\DGC^{com}_n(A))$
by construction,
for a generating dg-module such that:
\begin{equation}\label{CotripleResolution:ComponentwiseConstruction:GeneratingComplex}
\DGC^{com}_n(A) = \underbrace{(\kk/\Sym)\circ\cdots\circ(\kk/\Sym)}_n(A),
\end{equation}
for any $n\in\NN$.
We readily see, again, that the face operators $d_i$ such that $i>0$ preserve these generating objects,
as well as the degeneracy operators $s_j$ for all $j$ (but not the $0$-face $d_0$).
We also have an augmentation $\Res^{com}_0(A) = \kk/\Sym(A)\xrightarrow{\epsilon} A$
yielded by the augmentation morphism of the adjunction $\kk/\Sym(-): \kk/\dg^*\Mod\rightleftarrows\dg^*\ComCat_+ :\omega$.

We already explained, in~\S\ref{Background:AlgebraicAdjunctions}, that we have an obvious identity
%\begin{equation}
$\ComOp^c/\Sym(\COp)(r) = \kk/\Sym(\COp(r))$,
%\end{equation}
for any coaugmented $\Lambda$-cooperad $\COp\in\ComOp^c/\dg^*\Lambda\Op^c$, where we consider the relative symmetric algebra
associated to the coaugmented object $\COp(r)\in\kk/\dg^*\Mod$
on the right-hand side.
We obtain, as a consequence, that we have the identity:
\begin{equation}
\Res^{com}_{\bullet}(\AOp)(r) = \Res^{com}_{\bullet}(\AOp(r)),
\end{equation}
for any Hopf $\Lambda$-cooperad $\AOp\in\dg^*\Hopf\Lambda\Op^c$, where, on the right-hand side, we consider the cotriple resolution
of the object $\AOp(r)\in\dg^*\ComCat_+$
underlying $\AOp$, for each $r>0$.
\end{constr}

\begin{constr}[The case of augmented Hopf $\Lambda$-cooperads]\label{CotripleResolution:AugmentedCase}
We can simplify the expression of the cotriple resolution in the case of Hopf cooperads $\AOp$
which are endowed with an augmentation
over the commutative cooperad $\eta_*: \AOp\rightarrow\ComOp^c$
(like the Poisson cooperad $\AOp = \PoisOp_n^c$).
We assume, in our setting, that this augmentation is a morphism of Hopf $\Lambda$-cooperads.
We then get that the augmentation ideals of the algebras $\AOp(r)$, which we define by $\IOp\AOp(r) = \ker(\eta_*: \AOp(r)\rightarrow\ComOp^c(r))$,
define an object of the category of $\Lambda$-collections.
We moreover have the relation $\overline{\AOp} = \overline{\ComOp}{}^c\oplus\IOp\AOp$
in this category. (We equivalently have the identity $\AOp(r) = \ComOp^c(r)\oplus\IOp\AOp(r)$, for $r>1$.)
We immediately see that, in this situation, the simplicial Hopf $\Lambda$-cooperad $\Res^{com}_{\bullet}(\AOp)$
inherits an augmentation over the commutative cooperad $\eta_*: \Res^{com}_{\bullet}(\AOp)\rightarrow\ComOp^c$
which is defined, in dimension $0$,
by the composite of the augmentation morphism of~\S\ref{CotripleResolution:Construction}(\ref{CotripleResolution:Construction:Augmentation})
with the augmentation morphism of our object $\eta_*: \AOp\rightarrow\ComOp^c$.
We can still determine the morphisms of Hopf $\Lambda$-cooperads $\eta_*: \Res^{com}_n(\AOp)\rightarrow\ComOp^c$
by morphisms of coaugmented $\Lambda$-cooperads $\eta_*: \DGC^{com}_n(\AOp)\rightarrow\ComOp^c$
which we define on the generating objects of~\S\ref{CotripleResolution:Construction}(\ref{CotripleResolution:Construction:GeneratingCollection}).
We adopt the notation $\DGI\DGC^{com}_n(\AOp)$ for the kernel of this augmentation morphism
so that we have the splitting formula $\overline{\DGC}^{com}_n(\AOp) = \overline{\ComOp}{}^c\oplus\DGI\DGC^{com}_n(\AOp)$
in any dimension $n\in\NN$.

Let now $\II\Sym(-)$ denote the natural augmentation ideal of the symmetric algebra. We consider the obvious (arity-wise) extension
of this functor $\II\Sym(-)$ to the category of $\Lambda$-collections.
We actually have the identity:
\begin{equation}\label{CotripleResolution:AugmentedCase:Expression}
\DGI\DGC^{com}_n(\AOp) = \underbrace{\II\Sym\circ\cdots\circ\II\Sym}_n(\IOp\AOp),
\end{equation}
for any $n\in\NN$, and we get $\overline{\Res}{}^{com}_{\bullet}(\AOp) = \Sym(\DGI\DGC^{com}_n(\AOp))$,
for the coaugmentation coideal  $\overline{\Res}{}^{com}_{\bullet}(\AOp)$
of the cotriple resolution $\Res^{com}_{\bullet}(\AOp)$.

We immediately see that the objects $\DGI\DGC^{com}_n(\AOp)$, $n\in\NN$, are, like the cooperads $\DGC^{com}_n(\AOp)$, $n\in\NN$,
preserved by the face operators $d_i$ such that $i>0$
and by the degeneracies $s_j$ as well, for all $j$ (but not by the $0$-face as usual).
In the context of augmented objects, we can nonetheless set $d_0 = 0$ to provide the collections $\DGI\DGC^{com}_n(\AOp)$, $n\in\NN$,
with a full simplicial structure though this trivial face operator $d_0 = 0$
does not correspond to the $0$-face of the cotriple resolution.
\end{constr}

In~\S\ref{DeformationComplexes:BicosimplicialComplex}, we also use that if the object $\IOp\AOp$
has a free structure $\IOp\AOp = \Lambda\otimes_{\Sigma}\SOp\AOp$,
for some symmetric collection $\SOp\AOp\subset\IOp\AOp$,
then we still have a dimension-wise identity $\DGI\DGC^{com}_n(\AOp) = \Lambda\otimes_{\Sigma}\DGS\DGC^{com}_n(\AOp)$, for any $n\in\NN$,
for some sub-object $\DGS\DGC^{com}_n(\AOp)$ of $\DGI\DGC^{com}_n(\AOp)$
in the category of symmetric collections.
This observation follows from the expression of $\DGI\DGC^{com}_{\bullet}(\AOp)$
in terms of (iterated symmetric) tensors on $\IOp\AOp$,
and from the following general proposition:

\begin{prop}\label{CotripleResolution:FreeCollectionTensors}
Let $\MOp_i$, $i = 1,\dots,n$, be an $n$-tuple of $\Lambda$-collections in the category of (graded) modules.
If each $\MOp_i$, $i = 1,\dots,n$, has a free structure $\MOp_i = \Lambda\otimes_{\Sigma}\SOp\MOp_i$
for some symmetric collection $\SOp\MOp_i\subset\MOp_i$,
then so does the arity-wise tensor product
of these $\Lambda$-collections:
%\begin{equation*}
$(\MOp_1\otimes\dots\otimes\MOp_n)(r) = \MOp_1(r)\otimes\dots\otimes\MOp_n(r)$.
%\end{equation*}
This result also holds for the $\Lambda$-collection
%\begin{equation*}
$\Sym_n(E,\MOp)(r) = (E\otimes\MOp(r)^{\otimes n})_{\Sigma_n}$
%\end{equation*}
which we obtain by applying a functor of symmetric tensors $\Sym_n(E,X) = (E(n)\otimes X^{\otimes n})_{\Sigma_n}$
to a single $\Lambda$-collection $\MOp = \MOp_1 = \dots = \MOp_n$ arity-wise,
for any (graded) module of coefficients $E = E(n)$
endowed with an action of the symmetric group $\Sigma_n$.
\end{prop}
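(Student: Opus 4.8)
The plan is to prove the statement by reducing the general case to the combinatorial core of the free $\Lambda$-structure, namely the computation of $\Mor_{\Lambda}(\kset,\rset)$ as a $\Sigma_k\times\Sigma_r$-set. Recall that a free $\Lambda$-collection $\MOp = \Lambda\otimes_{\Sigma}\SOp\MOp$ can be described arity-wise as $\MOp(r) = \bigoplus_{k}(\kk\langle\Mor_{\Lambda}(\kset,\rset)\rangle\otimes_{\Sigma_k}\SOp\MOp(k))$, where we sum over representatives of the $\Sigma_k$-orbits of injections, or equivalently over the subsets of $\rset$ of size $k$. Concretely, $\MOp(r) = \bigoplus_{S\subseteq\rset}\SOp\MOp(|S|)$, with the $\Sigma_r$-action permuting the summands according to its action on subsets and acting on each $\SOp\MOp(|S|)$ through the induced isomorphism $\kset[|S|]\simeq S$. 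The corestriction $u^*$ (dually, the $\Lambda$-structure map) associated to an injection $u\colon\kset\to\rset$ is the projection onto the summands indexed by subsets contained in the image of $u$, followed by the evident identifications.

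First I would treat the tensor product case. Given $\MOp_i(r) = \bigoplus_{S_i\subseteq\rset}\SOp\MOp_i(|S_i|)$ for $i=1,\dots,n$, the arity-wise tensor product is
\begin{equation*}
(\MOp_1\otimes\dots\otimes\MOp_n)(r) = \bigoplus_{(S_1,\dots,S_n)}\SOp\MOp_1(|S_1|)\otimes\dots\otimes\SOp\MOp_n(|S_n|),
\end{equation*}
where the sum runs over all $n$-tuples of subsets of $\rset$. The key observation is that the $\Lambda$-structure map of the tensor product, being the tensor product of the individual $\Lambda$-structure maps, sends the summand indexed by $(S_1,\dots,S_n)$ for an injection $u\colon\kset\to\rset$ to the analogous summand for $\lset$ precisely when $S_1\cup\dots\cup S_n$ is contained in the image of $u$, and then acts by the evident identification. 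Thus I would set $\SOp(\MOp_1\otimes\dots\otimes\MOp_n)(k)$ to be the submodule of $(\MOp_1\otimes\dots\otimes\MOp_n)(k)$ spanned by the summands indexed by tuples $(S_1,\dots,S_n)$ with $S_1\cup\dots\cup S_n = \kset$, i.e.\ the tuples that "jointly use all $k$ inputs." A direct check, using the disjoint-union decomposition $\kk\langle\Mor_{\Lambda}(\kset,\rset)\rangle$ indexed by the size-$k$ subsets of $\rset$ together with the observation that a tuple $(S_1,\dots,S_n)$ with union $T\subseteq\rset$ of size $k$ is canonically a tuple with union $\kset$ after identifying $T$ with $\kset$, shows that the end formula~(\ref{GraphHomology:TwistedEndComplexes:CofreeLambdaStructures:EndExpression}), or rather its covariant coend analogue $\Lambda\otimes_{\Sigma}\SOp(-)$, is satisfied. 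One must verify equivariance of this identification with respect to $\Sigma_r$, which is immediate since $\Sigma_r$ permutes tuples of subsets compatibly with taking unions.

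Next I would handle the symmetric tensors $\Sym_n(E,\MOp)$. Since $\Sym_n(E,\MOp)(r) = (E\otimes\MOp(r)^{\otimes n})_{\Sigma_n}$ is a quotient (equivalently, in characteristic zero, a direct summand) of $E\otimes\MOp(r)^{\otimes n} = E\otimes(\MOp\otimes\dots\otimes\MOp)(r)$, the freeness of the latter as a $\Lambda$-collection — which we have just established, tensoring additionally with the fixed module $E$ carrying only a $\Sigma_n$-action and trivial $\Lambda$-action — passes to the $\Sigma_n$-coinvariants provided that the $\Sigma_n$-action commutes with the $\Lambda$-structure and permutes the generating $\Sigma$-subcollection. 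This holds: $\Sigma_n$ permutes the tensor factors $\MOp$, hence permutes the indexing tuples $(S_1,\dots,S_n)$ by permuting the entries, which preserves the condition $S_1\cup\dots\cup S_n = \kset$ that defines $\SOp(\MOp^{\otimes n})$, and commutes with the $\Lambda$-structure map (both being "diagonal" constructions). Therefore $\SOp\Sym_n(E,\MOp)(k) := (E\otimes\SOp(\MOp^{\otimes n})(k))_{\Sigma_n}$ does the job, using that in characteristic zero the coend (a colimit built from direct sums) commutes with the exact functor of taking $\Sigma_n$-coinvariants. The main obstacle I anticipate is purely bookkeeping: keeping track of the signs and of the precise $\Sigma_k$- versus $\Sigma_r$-equivariance of the canonical isomorphism $\kk\langle\Mor_{\Lambda}(\kset,\rset)\rangle\simeq\bigoplus_{S}\kk[\Sigma_{|S|}\backslash\{\text{bijections }\kset\to S\}]$, so that the freeness decomposition of the tensor product is genuinely natural in $\rset\in\Lambda$ and not merely an arity-wise isomorphism of $\Sigma$-collections. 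Once that naturality is pinned down, the proof is a short verification.
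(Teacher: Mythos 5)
Your proposal is correct and follows essentially the same route as the paper's proof: indexing the free decomposition by subsets of $\rset$ is just the subset/order-preserving-injection ($\Lambda^+$) bookkeeping the paper uses, the generating $\Sigma$-collection of the tensor product is identified in both cases as the span of the summands whose index tuples jointly cover the target, and the symmetric-tensor case is handled in both by observing that $\Sigma_n$ permutes the index tuples compatibly and interchanging the coinvariants with the coend. No gap.
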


\begin{proof}
We consider the category $\Lambda^+\subset\Lambda$ with the same objects as the category $\Lambda$,
and the subset of order preserving injective maps
as morphisms (see~\cite[\S I.2.2.2]{FresseBook}).
The coend relation $\MOp = \Lambda\otimes_{\Sigma}\SOp\MOp$ for a $\Lambda$-collection equipped with a free structure $\MOp$
is then equivalent to:
\begin{equation}\label{CotripleResolution:FreeCollectionTensors:Decomposition}
\MOp(r) = \bigoplus_{\substack{u\in\Mor_{\Lambda^+}(\kset,\rset)\\
\kset\in\Lambda}}\SOp\MOp(k)_u,
\end{equation}
where $\SOp\MOp(\kset)_u\subset\MOp(r)$ denotes a formal copy of the object $\SOp\MOp(\kset)$
associated to any morphism $u\in\Mor_{\Lambda^+}(\kset,\rset)$
of our subcategory $\Lambda^+$ (see also~\cite[\S II.11.2.3]{FresseBook}).
In fact, the summands $\SOp\MOp(\kset)_u$ of this decomposition~(\ref{CotripleResolution:FreeCollectionTensors:Decomposition})
represent the image of the object $\SOp\MOp(k)\subset\MOp(k)$
under the corestriction operators $u_*: \MOp(k)\rightarrow\MOp(r)$
in the module $\MOp(r)$.
For a tensor product $\MOp = \MOp_1\otimes\dots\otimes\MOp_n$,
we have the distribution formula:
\begin{equation}\label{CotripleResolution:FreeCollectionTensors:DistributionFormula}
(\MOp_1\otimes\dots\otimes\MOp_n)(r) = \bigoplus_{\substack{u_i\in\Mor_{\Lambda^+}(\kset_i,\rset),\kset_i\in\Lambda\\
i = 1,\dots,n}}\SOp\MOp(k_1)_{u_1}\otimes\dots\otimes\SOp\MOp(k_n)_{u_n},
\end{equation}
and we consider the symmetric collection~$\SOp(\MOp_1\otimes\dots\otimes\MOp_n)$
such that
\begin{equation}\label{CotripleResolution:FreeCollectionTensors:GeneratingCollection}
\SOp(\MOp_1\otimes\dots\otimes\MOp_n)(r) = \bigoplus_{\substack{v_i\in\Mor_{\Lambda^+}(\kset_i,\rset)\\
v_1(\kset_1)\cup\dots\cup v_n(\kset_n) = \rset}}\SOp\MOp(k_1)_{v_1}\otimes\dots\otimes\SOp\MOp(k_n)_{v_n},
\end{equation}
for any $r\in\NN$, where the sum runs over the $n$-tuples of maps $v_i\in\Mor_{\Lambda^+}(\kset_i,\rset)$, $\kset_i\in\Lambda$,
whose images cover $\rset = \{1<\dots<r\}$.

For any $s\in\Sigma_r$, the composites $u_i = s v_i$ admit a decomposition $u_i = s v_i = w_i t_i$, where $w_i\in\Mor_{\Lambda^+}(\kset_i,\rset)$
and $t_i\in\Sigma_{k_i}$. This observation implies that our object~(\ref{CotripleResolution:FreeCollectionTensors:GeneratingCollection})
is preserved by the action of the symmetric group $\Sigma_r$
on $(\MOp_1\otimes\dots\otimes\MOp_n)(r) = \MOp_1(r)\otimes\dots\otimes\MOp_n(r)$,
for any $r\in\NN$.
Furthermore, for any $n$-tuple of maps $u_i\in\Mor_{\Lambda^+}(\kset_i,\rset)$, $i = \dots,n$, we have a unique map $u\in\Mor_{\Lambda^+}(\lset,\rset)$
satisfying $u_i = u v_i$, for $i = 1,\dots,n$, and so that the images of the maps $v_i\in\Mor_{\Lambda^+}(\kset_i,\lset)$, $i = \dots,n$,
cover the set~$\lset$.
We accordingly have:
\begin{equation}\label{CotripleResolution:FreeCollectionTensors:TensorFreeStructure}
(\MOp_1\otimes\dots\otimes\MOp_n)(r) = \bigoplus_{\substack{u\in\Mor_{\Lambda^+}(\lset,\rset)\\
\lset\in\Lambda}} u_*\SOp(\MOp_1\otimes\dots\otimes\MOp_n)(l),
\end{equation}
for any $r\in\NN$, and the first assertion of the proposition follows.

This collection in~(\ref{CotripleResolution:FreeCollectionTensors:GeneratingCollection}) is obviously preserved by tensor permutations,
and when we assume $\MOp = \MOp_1 = \dots = \MOp_n$,
we immediately get:
\begin{multline}\label{CotripleResolution:FreeCollectionTensors:SymmetricTensorFreeStructure}
\MOp(r)^{\otimes n} = \int^{\kset\in\Sigma}\Mor_{\Lambda}(\kset,\rset)\otimes\SOp(\MOp^{\otimes n})(k)
\\
\Rightarrow(E(n)\otimes\MOp(r)^{\otimes n})_{\Sigma_n}
= \int^{\kset\in\Sigma}\Mor_{\Lambda}(\kset,\rset)\otimes(E(n)\otimes\SOp(\MOp^{\otimes n})(k))_{\Sigma_n}
\end{multline}
by interchange of colimits, for any functor of symmetric tensors $\Sym_n(E,X) = (E(n)\otimes X^{\otimes n})_{\Sigma_n}$.
Hence, we are also done with the second assertion of the proposition.
\end{proof}

We have the following statement, where we use the model structures of~\S\ref{Background:CooperadModelCategories}:

\begin{prop}\label{CotripleResolution:ReedyCofibrant}
If $\AOp\in\dg^*\Hopf\Lambda\Op^c$ is cofibrant as an object of the category of coaugmented $\Lambda$-cooperads $\ComOp^c/\dg^*\Lambda\Op^c$,
then $\Res^{com}_{\bullet}(\AOp)$ forms a Reedy cofibrant simplicial object
in the category of Hopf $\Lambda$-cooperads $\dg^*\Hopf\Lambda\Op^c$.
\end{prop}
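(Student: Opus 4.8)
The plan is to show that the cotriple resolution is Reedy cofibrant by analyzing its latching morphisms degree by degree. Recall from Construction~\ref{CotripleResolution:Construction} that $\Res^{com}_n(\AOp) = \ComOp^c/\Sym(\DGC^{com}_n(\AOp))$, and from \S\ref{Background:CooperadModelCategories} that the coaugmentation coideal functor creates cofibrations in the category of Hopf $\Lambda$-cooperads and, more relevantly, that the relative symmetric algebra functor $\ComOp^c/\Sym(-)$ is the left adjoint in the Quillen adjunction between coaugmented $\Lambda$-cooperads (resp.\ coaugmented $\Lambda$-collections) and Hopf $\Lambda$-cooperads (resp.\ Hopf $\Lambda$-collections). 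So the strategy is: first reduce the Reedy cofibrancy of the simplicial Hopf $\Lambda$-cooperad $\Res^{com}_{\bullet}(\AOp)$ to a statement about the simplicial coaugmented $\Lambda$-collection of generators $\overline{\DGC}^{com}_{\bullet}(\AOp)$, using that $\ComOp^c/\Sym(-)$ is a left Quillen functor (hence preserves cofibrations and, applied levelwise, preserves Reedy cofibrant objects provided it commutes with the relevant latching colimits — which it does, being a left adjoint).

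The key step is then to identify the latching object $L_n\DGC^{com}_{\bullet}(\AOp)$ and to check that the canonical morphism $L_n\DGC^{com}_{\bullet}(\AOp) \to \DGC^{com}_n(\AOp)$ is a cofibration of coaugmented $\Lambda$-cooperads. This is where the standard ``split'' structure of a cotriple resolution enters: the simplicial object built from the comonad $\ComOp^c/\Sym\circ\omega$ is, by the classical argument (see e.g.\ the proof that bar resolutions are Reedy cofibrant), such that the degeneracies exhibit $\DGC^{com}_n(\AOp)$ as a coproduct of $L_n\DGC^{com}_{\bullet}(\AOp)$ with a ``new'' cofree summand $(\ComOp^c/\Sym)^{\circ n}$ applied in the last slot. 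More precisely, I would exploit that the only non-degenerate face $d_0$ does not touch the outermost copy of $\ComOp^c/\Sym$, so the inclusion of the latching object is the coaugmentation-coideal inclusion of a free object on the collection $\DGI\DGC^{com}_{n-1}(\AOp)$ (when $\AOp$ is augmented, via the formula $\DGI\DGC^{com}_n(\AOp) = \overbrace{\II\Sym\circ\cdots\circ\II\Sym}^n(\IOp\AOp)$ of \S\ref{CotripleResolution:AugmentedCase}), which is a cofibration by definition of the transferred model structure. Here one uses that $\IOp\AOp$ is cofibrant as a $\Lambda$-collection, which follows from the hypothesis that $\AOp$ is cofibrant as a coaugmented $\Lambda$-cooperad together with the fact (\S\ref{Background:CooperadModelCategories}) that the coaugmentation coideal functor creates cofibrations; and one uses Proposition~\ref{CotripleResolution:FreeCollectionTensors} to see that iterated symmetric tensors of a cofibrant (in particular, of a free, in the relevant cases) $\Lambda$-collection remain cofibrant, so that the free objects appearing as latching inclusions have cofibrant domains and the inclusions are genuine cofibrations.

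The main obstacle I expect is the careful bookkeeping of the latching object: one must verify that the matching of degeneracies in $\DGC^{com}_{\bullet}(\AOp)$ (equivalently, in each arity $r$, in the cotriple resolution $\Res^{com}_{\bullet}(\AOp(r))$ of the commutative algebra $\AOp(r)$, by \S\ref{CotripleResolution:ComponentwiseConstruction}) really does split off a cofree/free summand, and that this splitting is compatible with the $\Lambda$-structure and with passage to coaugmentation coideals. This is essentially the classical fact that a simplicial object arising from a comonad is ``split'' hence Reedy cofibrant whenever the comonad is built from a left Quillen functor sending cofibrations to cofibrations; the work is in confirming that $\ComOp^c/\Sym(-)$ and the coaugmentation coideal functor interact well enough with latching colimits for this to go through in the cochain graded setting, and that the characteristic-zero hypothesis (needed so that $\Sym$ and $\II\Sym$ are homotopically well-behaved, and so that the model structures of \S\ref{Background:CooperadModelCategories} exist) is invoked where required. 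Once the latching inclusion is seen to be a free morphism of coaugmented $\Lambda$-collections with cofibrant domain, applying $\ComOp^c/\Sym(-)$ and invoking its left-Quillen property completes the proof, essentially along the lines of \cite[Proposition II.8.5.11]{FresseBook} or the analogous statements in that reference.
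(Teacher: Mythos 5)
Your proposal follows essentially the same route as the paper's own (sketched) proof: reduce via the left Quillen functor $\ComOp^c/\Sym(-)$ to the latching maps of the generating cooperads $\DGC^{com}_{\bullet}(\AOp)$, then use the degeneracy-induced splitting of the cotriple resolution together with Proposition~\ref{CotripleResolution:FreeCollectionTensors} (freeness of tensor products of free $\Lambda$-collections) to see that the latching inclusion is a cofibration of coaugmented $\Lambda$-cooperads --- this is exactly the ``twisted direct sum'' argument the paper invokes from \cite[\S 11.3.5--11.3.6]{FresseBook}, with the general cofibrant case handled the same way via the fact that the coaugmentation coideal functor creates cofibrations. The only cosmetic slip is attributing the splitting to the face $d_0$ rather than to the degeneracies (which are what govern the latching object), but this does not affect the argument.
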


\begin{proof}[Proof (sketch)]
We have $\DGL_n\Res^{com}_{\bullet}(\AOp) = \ComOp^c/\Sym(\DGL_n\DGC^{com}_{\bullet}(\AOp))$,
where $\DGL_n$ refers to the $n$th latching functor, and the verification
of the proposition reduces to proving that the latching map
induces a cofibration of coaugmented $\Lambda$-cooperads
at the level of our generating object.

In this paper, we only really use the case where $\AOp$ is equipped with an augmentation over the commutative cooperad $\ComOp^c$
such that $\IOp\AOp$
is free as a $\Lambda$-collection, in the sense that we have an identity $\IOp\AOp = \Lambda\otimes_{\Sigma}\SOp\AOp$
for some symmetric collection $\SOp\AOp\subset\IOp\AOp$ (since we only consider the case $\AOp = \PoisOp_n^c$
in our applications of this proposition).
We then use that $\DGC^{com}_{\bullet}(\AOp)$ consists of tensor products of the $\Lambda$-collection $\IOp\AOp$
and that a tensor product of free $\Lambda$-collections is still free
as a $\Lambda$-collection (by the previous proposition).
In this case, we easily check that the latching map $\lambda: \DGL_n\DGC^{com}_{\bullet}(\AOp)\rightarrow\DGC^{com}_n(\AOp)$
is given by a twisted direct sum in the sense of~\cite[\S 11.3.6]{FresseBook}.
By Proposition 11.3.7 of \emph{loc. cit.}
this is enough to ensure that this map defines a cofibration
in the category of coaugmented $\Lambda$-cooperads.
Thus we easily get the claim of this proposition.

In the case where $\AOp$ is a general cofibrant object of the category of coaugmented $\Lambda$-cooperads,
we may use that the forgetful functor from coaugmented $\Lambda$-cooperads
to coaugmented $\Lambda$-collections create cofibrations
to extend the above argument line
and to conclude that $\Res^{com}_{\bullet}(\AOp)$ forms a Reedy cofibrant simplicial object
again.
\end{proof}

In~\S\ref{Background:CotripleResolution}, we take the geometric realization of the simplicial object $\Res^{com}_{\bullet}(\AOp)$
in the category $\dg^*\Hopf\Lambda\Op^c$ in order to get our cofibrant resolutions of Hopf $\Lambda$-cooperads.
We review the general definition of these geometric realization functors before tackling the applications to cotriple resolutions.

%We go back to the geometric realization construction in~\S\ref{CommutativeAlgebras}.
%We will see that the components of the geometric realization
%of the simplicial Hopf $\Lambda$-cooperad $\Res^{com}_{\bullet}(\AOp)\in\simp\dg^*\Hopf\Lambda\Op^c$
%are identified with the geometric realization of the objects $\Res^{com}_{\bullet}(\AOp(r))\in\simp\dg^*\ComCat_+$
%in the category of unitary commutative cochain dg-algebras $\dg^*\ComCat_+$.
%We use this relationship when we determine the homotopy of the object $|\Res^{com}_{\bullet}(\AOp)|\in\dg^*\Hopf\Lambda\Op^c$,
%for any $\AOp\in\dg^*\Hopf\Lambda\Op^c$.
%We prefer to focus on the application of the general definition to Hopf $\Lambda$-cooperads for the moment,
%to state our main results in this context, and to postpone our arguments
%when we have to rely on the study of the geometric realization
%of unitary commutative cochain dg-algebras.

\begin{constr}[The geometric realization]\label{CotripleResolution:GeometricRealization}
We first assume that $\ROp_{\bullet}$ is a simplicial object in the category of Hopf $\Lambda$-cooperads.
To perform the geometric realization construction,
we first have to pick a cosimplicial framing
of this object
\begin{equation}\label{CotripleResolution:GeometricRealization:CosimplicialFraming}
\ROp_{\bullet}\otimes\Delta^{\bullet}\in\cosimp\simp\dg^*\Hopf\Lambda\Op^c
\end{equation}
in the Reedy model category of simplicial objects in $\dg^*\Hopf\Lambda\Op^c$.
Then we formally set:
\begin{equation}\label{CotripleResolution:GeometricRealization:CoendFormula}
|\ROp_{\bullet}| = \int^{\underline{n}\in\Delta}\ROp_n\otimes\Delta^n,
\end{equation}
where we form our coend in the category $\dg^*\Hopf\Lambda\Op^c$.

Recall simply that cosimplicial framings exist by general model category arguments (see for instance~\cite[\S 16.6]{Hirschhorn},
\cite[\S 5.2]{Hovey}, or~\cite[\S II.3.3.1]{FresseBook}),
and that the geometric realization of a Reedy cofibrant simplicial object
does not depend on the choice of a particular cosimplicial framing (see for instance~\cite[Theorem II.3.3.6]{FresseBook}).
The general theory of model categories moreover implies that $|\ROp_{\bullet}|$
forms a cofibrant object in $\dg^*\Hopf\Lambda\Op^c$
as soon as $\ROp_{\bullet}$
is Reedy cofibrant.
\end{constr}

The geometric realization of the cotriple resolution of a Hopf $\Lambda$-cooperad $\ROp_{\bullet} = \Res^{com}_{\bullet}(\AOp)$
is equipped with an augmentation $\epsilon: |\ROp_{\bullet}|\rightarrow\AOp$.
This morphism is defined, on our coend~\S\ref{CotripleResolution:GeometricRealization}(\ref{CotripleResolution:GeometricRealization:CoendFormula}),
by the augmentation morphism
of the cotriple resolution $\epsilon: \ROp_0\rightarrow\AOp$.
(Recall that, by definition of a cosimplicial framing, we have an identity $\ROp_0 = \ROp_0\otimes\Delta^0$ in dimension $n=0$.)
We have the following statement:

\begin{lemm}\label{CotripleResolution:ResolutionClaim}
If $\AOp\in\dg^*\Hopf\Lambda\Op^c$ is cofibrant as an object of the category of coaugmented $\Lambda$-cooperads $\ComOp^c/\dg^*\Lambda\Op^c$,
then the augmentation morphism of the cotriple resolution $\epsilon: \Res^{com}_0(\AOp)\rightarrow\AOp$
induces a weak-equivalence
in the category of Hopf $\Lambda$-cooperads $\dg^*\Hopf\Lambda\Op^c$
when we pass to geometric realizations:
\begin{equation*}
\epsilon: |\Res^{com}_{\bullet}(\AOp)|\xrightarrow{\sim}\AOp.
\end{equation*}
\end{lemm}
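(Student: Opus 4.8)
The plan is to reduce the statement to a known fact about cotriple (bar) resolutions in the underlying base categories by exploiting the arity-wise description of the construction established in Construction~\ref{CotripleResolution:ComponentwiseConstruction}. First I would recall that weak-equivalences in $\dg^*\Hopf\Lambda\Op^c$ are created arity-wise in the category of cochain graded dg-modules (by the definition of the model structure in~\S\ref{Background:CooperadModelCategories}), and that, similarly, the geometric realization functor of~\S\ref{CotripleResolution:GeometricRealization} commutes with the forgetful functor to arity-wise diagrams, since a cosimplicial framing can be chosen arity-wise and the coend~\S\ref{CotripleResolution:GeometricRealization}(\ref{CotripleResolution:GeometricRealization:CoendFormula}) is computed in the category of Hopf $\Lambda$-cooperads but its underlying object in each arity is the corresponding coend of cochain graded dg-modules. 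So it suffices to check that, for each $r>0$, the augmentation $\epsilon\colon|\Res^{com}_{\bullet}(\AOp(r))|\to\AOp(r)$ is a weak-equivalence of cochain graded dg-modules, where we now use the identity $\Res^{com}_{\bullet}(\AOp)(r)=\Res^{com}_{\bullet}(\AOp(r))$ from Construction~\ref{CotripleResolution:ComponentwiseConstruction} and the relative symmetric algebra cotriple resolution of the unitary commutative cochain dg-algebra $\AOp(r)$.

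The next step is to identify $|\Res^{com}_{\bullet}(\AOp(r))|$ with a simplicial object whose homotopy type is computed by the associated normalized chain complex. The cotriple resolution $\Res^{com}_{\bullet}(\AOp(r))$ is the standard simplicial resolution arising from the free–forgetful adjunction $\kk/\Sym(-)\colon\kk/\dg^*\Mod\rightleftarrows\dg^*\ComCat_+$, hence it always comes equipped with an extra degeneracy after augmentation (the comonad comultiplication/counit yoga), so the augmented simplicial object $\Res^{com}_{\bullet}(\AOp(r))\to\AOp(r)$ is an \emph{absolute} (split) simplicial homotopy equivalence when regarded in the underlying category $\kk/\dg^*\Mod$ of cochain graded dg-modules. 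Forgetting algebra structure, the realization (which in each simplicial degree is computed by the coend against $\Delta^\bullet$, i.e.\ essentially the chain complex associated to the simplicial dg-module by the Dold–Kan / Eilenberg–Zilber machinery) therefore becomes contractible over $\AOp(r)$: an extra degeneracy on a simplicial object in dg-modules yields a chain contraction of the normalized complex, hence an acyclic augmentation. This is exactly the kind of argument used to show that bar resolutions are resolutions; I would cite the general statements in~\cite[\S II.3.3]{FresseBook} (or the classical references on simplicial objects with extra degeneracies) rather than reproving it.

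Finally, I would assemble the two reductions: the arity-wise weak-equivalences of cochain graded dg-modules obtained in the previous step assemble, by the ``created arity-wise'' characterisation, into a weak-equivalence $\epsilon\colon|\Res^{com}_{\bullet}(\AOp)|\xrightarrow{\sim}\AOp$ in $\dg^*\Hopf\Lambda\Op^c$. For this last assembly one needs that the comparison really is a morphism of Hopf $\Lambda$-cooperads (not just arity-wise), which is automatic since $\epsilon$ is defined cooperadically as the augmentation of the cotriple resolution, and one needs Proposition~\ref{CotripleResolution:ReedyCofibrant} (under the stated hypothesis that $\AOp$ is cofibrant as a coaugmented $\Lambda$-cooperad) only insofar as it guarantees that the geometric realization is a genuine homotopy-meaningful construction and that $|\Res^{com}_{\bullet}(\AOp)|$ is cofibrant; the weak-equivalence claim itself rests purely on the underlying dg-module computation. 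The main obstacle I expect is bookkeeping around the geometric realization functor: one must be careful that the cosimplicial framing used to define $|{-}|$ interacts correctly with the forgetful functor to arity-wise dg-modules, so that ``realize, then forget'' agrees (up to weak-equivalence) with ``forget, then realize''; this is where the Reedy cofibrancy of $\Res^{com}_{\bullet}(\AOp)$ from Proposition~\ref{CotripleResolution:ReedyCofibrant} and the framing-independence of realizations of Reedy cofibrant objects are genuinely needed, and the cleanest route is to invoke the relevant statements of~\cite[\S II.3.3]{FresseBook} rather than to manipulate coends by hand.
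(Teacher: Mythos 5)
Your first reduction is exactly the one the paper makes: by Construction~\ref{CotripleResolution:ComponentwiseConstruction} we have $|\Res^{com}_{\bullet}(\AOp)|(r) = |\Res^{com}_{\bullet}(\AOp(r))|$, so the lemma follows once one knows that the realization of the cotriple resolution of a plain unitary commutative cochain dg-algebra is weakly equivalent to that algebra. The paper stops there and cites~\cite{CotripleResolutions} for this arity-wise statement; you instead try to prove it, and that is where your argument has a genuine gap. Note first a small imprecision: the underlying object of $|\Res^{com}_{\bullet}(\AOp)|$ in each arity is the realization coend computed in $\dg^*\ComCat_+$, not in $\dg^*\Mod$ --- the forgetful functor from Hopf $\Lambda$-cooperads lands arity-wise in commutative algebras, and the coend of~\S\ref{CotripleResolution:GeometricRealization} involves colimits of commutative algebras, whose coproducts are tensor products rather than direct sums.

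The real problem is the step ``forgetting algebra structure, the realization therefore becomes contractible over $\AOp(r)$.'' The extra degeneracy of the cotriple resolution is induced by the adjunction unit $M\rightarrow\kk/\Sym(M)$, which is only a morphism of coaugmented dg-modules, not of algebras; so the split contraction you obtain lives in $\kk/\dg^*\Mod$ and contracts the realization of the \emph{underlying simplicial dg-module}. To conclude anything about $|\Res^{com}_{\bullet}(\AOp(r))|$ as defined in~\S\ref{CotripleResolution:GeometricRealization} you must compare the realization taken in $\dg^*\ComCat_+$ with the realization taken in $\dg^*\Mod$, and the forgetful functor between these categories is a right adjoint: it has no reason to commute with the defining coend, and Reedy cofibrancy together with framing-independence does not supply this --- framing-independence only compares different framings \emph{within} a fixed model category, whereas here you need compatibility of realizations \emph{across} a non-left-Quillen functor. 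That compatibility does hold for commutative cochain dg-algebras in characteristic zero, but it is a theorem (essentially an Eilenberg--Zilber-type statement), and it is precisely the content that the paper outsources to~\cite{CotripleResolutions}. So either invoke that reference directly, as the paper does, or supply the comparison of realizations explicitly; as written, your extra-degeneracy argument proves a statement about the wrong realization.
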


\begin{proof}
We already briefly mentioned that the components of the Hopf $\Lambda$-cooperad $|\Res^{com}_{\bullet}(\AOp)|$
are identified with the geometric realization of the objects $\Res^{com}_{\bullet}(\AOp(r))$
in the category of unitary commutative cochain dg-algebras.
We therefore have a weak-equivalence
%\begin{equation*}
$|\Res^{com}_{\bullet}(\AOp)|(r) = |\Res^{com}_{\bullet}(\AOp(r))|\xrightarrow{\sim}\AOp(r)$,
%\end{equation*}
in each arity $r>0$, because this is so for the cotriple resolution of any object
in the category of plain unitary commutative cochain dg-algebras (see~\cite{CotripleResolutions}).
The conclusion of our lemma follows.
\end{proof}

In~\S\ref{CotripleResolution:GeometricRealization}, we mentioned that the geometric realization of a Reedy cofibrant simplicial object
defines a cofibrant object in the ambient category. Therefore, from Proposition~\ref{CotripleResolution:ReedyCofibrant}
and Lemma~\ref{CotripleResolution:ResolutionClaim}
together, we get the following statement:

\begin{thm}
Let $\AOp\in\dg^*\Hopf\Lambda\Op^c$.
If $\AOp$ is cofibrant as an object of the category of coaugmented $\Lambda$-cooperads $\ComOp^c/\dg^*\Lambda\Op^c$,
then the geometric realization of the cotriple resolution $\ROp_{\bullet} = \Res^{com}_{\bullet}(\AOp)\in\simp\dg^*\Hopf\Lambda\Op^c$
defines a cofibrant resolution $\ROp = |\Res^{com}_{\bullet}(\AOp)|$
of the object $\AOp$
in the category of Hopf $\Lambda$-cooperads in cochain graded dg-modules $\dg^*\Hopf\Lambda\Op^c$.
\qed
\end{thm}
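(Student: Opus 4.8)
The statement to prove combines two ingredients established in the appendix: Proposition~\ref{CotripleResolution:ReedyCofibrant}, which tells us that $\Res^{com}_{\bullet}(\AOp)$ is Reedy cofibrant as a simplicial object in $\dg^*\Hopf\Lambda\Op^c$ when $\AOp$ is cofibrant as a coaugmented $\Lambda$-cooperad, and Lemma~\ref{CotripleResolution:ResolutionClaim}, which identifies the augmentation $\epsilon\colon|\Res^{com}_{\bullet}(\AOp)|\xrightarrow{\sim}\AOp$ as a weak-equivalence. The plan is simply to splice these together using the general behavior of the geometric realization functor on Reedy cofibrant simplicial objects, which was recalled in~\S\ref{CotripleResolution:GeometricRealization}.

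First I would invoke Proposition~\ref{CotripleResolution:ReedyCofibrant} to conclude that, under our hypothesis on $\AOp$, the cotriple resolution $\ROp_{\bullet}=\Res^{com}_{\bullet}(\AOp)$ is a Reedy cofibrant object of the Reedy model category $\simp\dg^*\Hopf\Lambda\Op^c$. Then I would recall from~\S\ref{CotripleResolution:GeometricRealization} the general fact that the geometric realization (coend over $\Delta$ with a cosimplicial framing, formula~(\ref{CotripleResolution:GeometricRealization:CoendFormula})) of a Reedy cofibrant simplicial object in a model category is a cofibrant object of that category, and is independent of the chosen cosimplicial framing. Applying this to $\ROp_{\bullet}$ gives that $\ROp=|\Res^{com}_{\bullet}(\AOp)|$ is cofibrant in $\dg^*\Hopf\Lambda\Op^c$. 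Finally, Lemma~\ref{CotripleResolution:ResolutionClaim} supplies the weak-equivalence $\epsilon\colon\ROp\xrightarrow{\sim}\AOp$, so $\ROp$ is a cofibrant object weakly-equivalent to $\AOp$, i.e.\ a cofibrant resolution, which is exactly the assertion of the theorem.

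Since the two substantive inputs are already available, there is no real obstacle here; the only thing to be careful about is that the cosimplicial framing used in the definition of $|\ROp_{\bullet}|$ is taken in the Reedy model structure on $\simp\dg^*\Hopf\Lambda\Op^c$ and that the cofibrancy-of-realization statement genuinely applies to this model category. This is a completely standard consequence of the theory of Reedy model categories (as in the references cited in~\S\ref{CotripleResolution:GeometricRealization}), so the proof is just the assembly step. Accordingly the write-up will be a one-paragraph argument citing Proposition~\ref{CotripleResolution:ReedyCofibrant}, the general realization lemma, and Lemma~\ref{CotripleResolution:ResolutionClaim} in turn.

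\begin{proof}
By Proposition~\ref{CotripleResolution:ReedyCofibrant}, the simplicial object $\ROp_{\bullet} = \Res^{com}_{\bullet}(\AOp)$
is Reedy cofibrant in the category $\simp\dg^*\Hopf\Lambda\Op^c$
under our assumption that $\AOp$ is cofibrant as a coaugmented $\Lambda$-cooperad.
As we recalled in~\S\ref{CotripleResolution:GeometricRealization}, the geometric realization of a Reedy cofibrant simplicial object
is independent of the choice of a cosimplicial framing and defines a cofibrant object in the ambient category;
hence $\ROp = |\Res^{com}_{\bullet}(\AOp)|$ is cofibrant in $\dg^*\Hopf\Lambda\Op^c$.
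Lemma~\ref{CotripleResolution:ResolutionClaim} moreover shows that the augmentation of the cotriple resolution induces
a weak-equivalence $\epsilon\colon|\Res^{com}_{\bullet}(\AOp)|\xrightarrow{\sim}\AOp$
in $\dg^*\Hopf\Lambda\Op^c$.
Thus $\ROp = |\Res^{com}_{\bullet}(\AOp)|$ is a cofibrant object of $\dg^*\Hopf\Lambda\Op^c$
equipped with a weak-equivalence onto $\AOp$, and therefore forms a cofibrant resolution of $\AOp$
in the category of Hopf $\Lambda$-cooperads in cochain graded dg-modules.
\end{proof}
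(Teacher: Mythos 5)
Your proposal is correct and follows exactly the paper's own argument: the theorem is stated there as an immediate consequence of Proposition~\ref{CotripleResolution:ReedyCofibrant}, the general cofibrancy of realizations of Reedy cofibrant simplicial objects recalled in~\S\ref{CotripleResolution:GeometricRealization}, and Lemma~\ref{CotripleResolution:ResolutionClaim}. Nothing is missing.
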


The weak-equivalence attached to the resolution of this theorem $\ROp = |\Res^{com}_{\bullet}(\AOp)|$
is precisely defined as the morphism of Hopf $\Lambda$-cooperads
%\begin{equation*}
$\epsilon: |\Res^{com}_{\bullet}(\AOp)|\xrightarrow{\sim}\AOp$
%\end{equation*}
which we deduce from the augmentation $\epsilon: \Res^{com}_0(\AOp)\rightarrow\AOp$
of the simplicial object $\Res^{com}_{\bullet}(\AOp)\in\simp\dg^*\Hopf\Lambda\Op^c$
(as in Lemma~\ref{CotripleResolution:ResolutionClaim}).

\section{The cooperadic triple coresolution}\label{TripleCoresolution}
In this appendix, we review the definition of the triple coresolution for Hopf $\Lambda$-cooperads,
and we check that the totalization of these cosimplicial coresolutions returns fibrant resolutions
in the category of Hopf $\Lambda$-cooperads.

\begin{constr}[The triple coresolution of Hopf $\Lambda$-cooperads]\label{TripleCoresolution:Construction}
In~\S\ref{Background:TripleCoresolution}, we briefly explain that we apply the standard triple coresolution construction
to the adjunction
\begin{equation}\label{TripleCoresolution:Construction:Adjunction}
\overline{\omega}: \dg^*\Hopf\Lambda\Op^c\rightleftarrows\dg^*\Hopf\Lambda\Seq_{>1}^c :\FreeOp^c
\end{equation}
in order to get a cosimplicial coresolution $\Res_{op}^{\bullet}(\KOp)$ of any object $\KOp$
in the category $\dg^*\Hopf\Lambda\Op^c$.

Let $\overline{\FreeOp}^c = \overline{\omega}\FreeOp^c$ denote the composite of the cofree cooperad functor $\FreeOp^c$
with the coaugmentation coideal functor on Hopf $\Lambda$-cooperads $\overline{\omega}$.
The structure morphisms of our adjunction~(\ref{TripleCoresolution:Construction:Adjunction})
determine a comonadic product $\nu: \overline{\FreeOp}^c\rightarrow\overline{\FreeOp}^c\circ\overline{\FreeOp}^c$
together with a comonadic counit $\epsilon: \overline{\FreeOp}^c\rightarrow\Id$
on this functor $\overline{\FreeOp}^c = \overline{\omega}\FreeOp^c$.
We moreover have a morphism $\nu: \FreeOp^c\rightarrow\FreeOp^c\circ\overline{\FreeOp}^c$
that gives a right coaction of this comonad $\overline{\FreeOp}^c$
on $\FreeOp^c$, and a morphism $\rho: \overline{\KOp}\rightarrow\overline{\FreeOp}^c(\overline{\KOp})$
that provides the object $\overline{\KOp}$
with the structure of a coalgebra over the comonad $\overline{\FreeOp}^c$.

We explicitly define the cosimplicial object $\Res_{op}^{\bullet}(\KOp)$
by the composite functor construction:
\begin{equation}\label{TripleCoresolution:Construction:Expression}
\Res_{op}^n(\KOp) = \FreeOp^c\circ\underbrace{\overline{\FreeOp}^c\circ\cdots\circ\overline{\FreeOp}^c}_n(\overline{\KOp}),
\end{equation}
for each dimension $n\in\NN$.

We number the factors of this composite by $0,\dots,n$ from left to right.
We provide $\Res_{op}^{\bullet}(\KOp)$
with the coface morphisms
$d^i: \Res_{op}^{n-1}(\KOp)\rightarrow\Res_{op}^{n}(\KOp)$
given by the structure morphism $\nu: \FreeOp^c\rightarrow\FreeOp^c\circ\overline{\FreeOp}^c$
of the coaction of the comonad $\overline{\FreeOp}^c$ on the functor $\FreeOp^c$
in the case $i = 0$,
by the application of the comonadic coproduct $\nu: \overline{\FreeOp}^c\rightarrow\overline{\FreeOp}^c\circ\overline{\FreeOp}^c$
to the $i$th factors of our composite
in the case $i = 1,\dots,n-1$,
and by the coalgebra structure morphism $\rho: \overline{\KOp}\rightarrow\overline{\FreeOp}^c(\overline{\KOp})$
in the case $i = n$.
We provide $\Res_{op}^{\bullet}(\KOp)$
with the codegeneracy morphisms $s^j: \Res_{op}^{n+1}(\KOp)\rightarrow\Res_{op}^{n}(\KOp)$
given by the application of the comonadic counit $\epsilon: \overline{\FreeOp}^c\rightarrow\Id$
to the $j+1$st factor of our composite
in all cases $j = 0,\dots,n$.

The object~(\ref{TripleCoresolution:Construction:Expression})
is a cofree cooperad by construction $\Res_{op}^n(\KOp) = \FreeOp^c(\DGC_{op}^n(\KOp))$,
for a cogenerating $\Lambda$-collection
such that:
\begin{equation}\label{TripleCoresolution:Construction:GeneratingCollection}
\DGC_{op}^n(\KOp) = \underbrace{\overline{\FreeOp}^c\circ\dots\circ\overline{\FreeOp}^c}_n(\overline{\KOp}),
\end{equation}
for any $n\in\NN$. We immediately see that the coface operators $d^i: \Res_{op}^{n-1}(\KOp)\rightarrow\Res_{op}^n(\KOp)$ such that $i>0$
are identified with morphisms of cofree cooperads $d^i: \FreeOp^c(\DGC_{op}^{n-1}(\KOp))\rightarrow\FreeOp^c(\DGC_{op}^{n}(\KOp))$
which we associate to coface morphisms of these cogenerating $\Lambda$-collections $d^i: \DGC_{op}^{n-1}(\KOp)\rightarrow\DGC_{op}^{n}(\KOp)$,
and we have a similar observation for the codegeneracy operators $s^j: \Res_{op}^{n+1}(\KOp)\rightarrow\Res_{op}^{n}(\KOp)$, for all $j$.
But the $0$-coface $d^0: \Res_{op}^{n-1}(\KOp)\rightarrow\Res_{op}^{n}(\KOp)$, on the other hand,
is yielded by a morphism $d^0: \FreeOp^c(\DGC_{op}^{n-1}(\KOp))\rightarrow\FreeOp^c(\DGC_{op}^{n}(\KOp))$
which does not preserve our cogenerating objects.
We can still set $d^0 = 0$ to provide these objects~(\ref{TripleCoresolution:Construction:GeneratingCollection})
with a full cosimplicial structure, but this $0$-coface does not correspond to the $0$-coface
of the triple coresolution.

The cosimplicial object $\Res_{op}^{\bullet}(\KOp)$ is also equipped with a canonical
coaugmentation
\begin{equation}\label{TripleCoresolution:Construction:Coaugmentation}
\KOp\xrightarrow{\eta}\FreeOp^c(\overline{\KOp}) = \Res_{op}^0(\KOp)
\end{equation}
which is yielded by the unit morphism
of our adjunction relation.
\end{constr}

\begin{constr}[The reduction of the triple coresolution to cooperads]\label{TripleCoresolution:CooperadReduction}
Recall that our cofree object functor on the category of Hopf $\Lambda$-cooperads
is defined
by lifting the plain cofree cooperad functor $\FreeOp^c: \dg^*\Sigma\Seq_{>1}^c\rightarrow\dg^*\Op^c$
to the category of Hopf $\Lambda$-cooperads. We can basically observe that the mapping $\FreeOp^c: \MOp\rightarrow\FreeOp^c(\MOp)$
carries coaugmented $\Lambda$-collections to coaugmented $\Lambda$-cooperads,
Hopf symmetric collections to Hopf cooperads,
and we put both observations together to get that $\FreeOp^c: \MOp\rightarrow\FreeOp^c(\MOp)$
carries Hopf $\Lambda$-collections to Hopf $\Lambda$-cooperads.

We are also going to consider objects, in these intermediate categories of cooperads,
which we obtain after forgetting some part of the structure
of our Hopf $\Lambda$-cooperads in~\S\ref{TripleCoresolution:Construction}.
We basically have a cofree object adjunction for plain cooperads
\begin{equation}\label{TripleCoresolution:CooperadReduction:Adjunction}
\overline{\omega}: \dg^*\Op^c\rightleftarrows\dg^*\Sigma\Seq_{>1}^c :\FreeOp^c
\end{equation}
which we can use to form a triple coresolution functor $\Res_{op}^{\bullet}: \COp\mapsto\Res_{op}^{\bullet}(\COp)$
on the category $\dg^*\Op^c$. (We use the same construction as in~\S\ref{TripleCoresolution:Construction}.)
We then readily check that this plain triple coresolution functor preserves the structure that define our extensions
of the category of cooperads. We accordingly have a square of coresolution functors,
depicted in Figure~\ref{TripleCoresolution:CooperadReduction:Diagram},
which extend our triple coresolution functor on plain cooperads.
\begin{figure}[t]
\begin{equation}
\xymatrix@R=4em@C=6em@!0{ & \cosimp\dg^*\Hopf\Lambda\Op^c\ar[rr]\ar'[d][dd] && \cosimp\dg^*\Hopf\Op^c\ar[dd] \\
\dg^*\Hopf\Lambda\Op^c\ar@<-1pt>[rr]\ar@{.>}[ur]^(0.4){\Res_{op}^{\bullet}}\ar[dd] && \dg^*\Hopf\Op^c\ar[dd]\ar@{.>}[ur] & \\
& \ComOp^c/\cosimp\dg^*\Lambda\Op^c\ar'[r][rr] && \cosimp\dg^*\Op^c \\
\ComOp^c/\dg^*\Lambda\Op^c\ar[rr]\ar@{.>}[ur] && \dg^*\Op^c\ar[ur]_-{\Res_{op}^{\bullet}} & }.
\end{equation}
\caption{}\label{TripleCoresolution:CooperadReduction:Diagram}
\end{figure}
The arrows on the forefront and background squares of this diagram are the obvious forgetful functors between our categories.
The coresolution functor of~\S\ref{TripleCoresolution:Construction}
just occurs on the top (left hand corner) of this chain.
\end{constr}

We have the following statement, where we use the model structures of~\S\ref{Background:CooperadModelCategories}:

\begin{prop}\label{TripleCoresolution:ReedyFibrant}
The triple coresolution $\Res_{op}^{\bullet}(\KOp)$ forms a Reedy fibrant cosimplicial object in the category of cooperads $\dg^*\Op^c$,
for any $\KOp\in\dg^*\Op^c$ (and without any further assumption on $\KOp$).

If we assume $\KOp\in\dg^*\Hopf\Op^c$ (respectively, $\KOp\in\ComOp^c/\dg^*\Lambda\Op^c$, $\KOp\in\dg^*\Hopf\Lambda\Op^c$),
then we get that $\Res_{op}^{\bullet}(\KOp)$ defines a Reedy fibrant cosimplicial object
in the category of Hopf cooperads (respectively, in the category of coaugmented $\Lambda$-cooperads, in the category of Hopf $\Lambda$-cooperads).
\end{prop}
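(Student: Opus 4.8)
The plan is to verify the Reedy fibrancy of the cosimplicial object $\Res_{op}^{\bullet}(\KOp)$ by reducing, through the standard characterization of Reedy fibrant cosimplicial objects, to a lifting property for the matching maps $\Res_{op}^n(\KOp)\rightarrow\DGM^n\Res_{op}^{\bullet}(\KOp)$, where $\DGM^n$ denotes the $n$th matching functor. First I would recall that a cosimplicial object $\QOp^{\bullet}$ in a model category is Reedy fibrant exactly when each matching map $\QOp^n\rightarrow\DGM^n\QOp^{\bullet}$ is a fibration, and that in our setting (by the construction of the model structure in~\S\ref{Background:CooperadModelCategories}, where fibrations are created by the forgetful functors down to the category of collections and, ultimately, arity-wise in cochain graded dg-modules) it suffices to check this property after passing to the underlying collections. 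Since the cofree cooperad functor $\FreeOp^c$ is right adjoint to the coaugmentation coideal functor $\bar{\omega}$, the triple coresolution is precisely the cosimplicial object associated to a comonad; I would invoke the dual of the classical fact that the bar resolution associated to a monad is Reedy cofibrant — here, the cobar (triple) coresolution associated to a comonad is automatically Reedy fibrant — provided one exhibits the extra codegeneracy that splits the matching maps.

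The key step is to produce this splitting. Because $\Res_{op}^n(\KOp) = \FreeOp^c(\DGC_{op}^n(\KOp))$ with $\DGC_{op}^n(\KOp) = \overline{\FreeOp}^c{}^{\circ n}(\overline{\KOp})$, and because the coface operators $d^i$ with $i>0$ and all codegeneracies $s^j$ restrict to the cogenerating collections $\DGC_{op}^{\bullet}(\KOp)$ (as observed in~\S\ref{TripleCoresolution:Construction}), the matching object $\DGM^n\Res_{op}^{\bullet}(\KOp)$ is the cofree cooperad on the matching object of the cosimplicial collection $\DGC_{op}^{\bullet}(\KOp)$ (augmented by $d^0=0$), and the matching map of cooperads is the image under $\FreeOp^c$ of the matching map of collections. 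Thus it is enough to show that $\DGC_{op}^n(\KOp)\rightarrow\DGM^n\DGC_{op}^{\bullet}(\KOp)$ is a fibration of collections, i.e. arity-wise and degree-wise surjective in positive cochain degree. For this I would use the extra codegeneracy $s^{-1}$ on the augmented cosimplicial collection $\DGC_{op}^{\bullet}(\KOp)$ coming from the comonad counit applied to the outermost $\overline{\FreeOp}^c$ factor: the existence of such an extra codegeneracy forces the matching maps to be (split) epimorphisms, hence fibrations, in each of our categories of collections, since the splitting is visibly a morphism of the relevant structured collections.

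Once the plain-cooperad case is settled, the structured cases follow formally. For $\KOp\in\dg^*\Hopf\Op^c$, $\ComOp^c/\dg^*\Lambda\Op^c$, or $\dg^*\Hopf\Lambda\Op^c$, I would invoke the compatibility square of coresolution functors of Figure~\ref{TripleCoresolution:CooperadReduction:Diagram}: the forgetful functors to plain cooperads commute with the triple coresolution, create fibrations and weak-equivalences (hence Reedy fibrations), and the extra codegeneracy $s^{-1}$ constructed above automatically preserves the Hopf algebra structure, the $\Lambda$-diagram structure, and the coaugmentation, since it is built only from the comonad counit $\epsilon\colon\overline{\FreeOp}^c\rightarrow\Id$, which is a natural transformation of functors on the corresponding categories. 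Therefore the matching maps remain split epimorphisms, hence fibrations, in the structured categories, and Reedy fibrancy is inherited.

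The main obstacle I anticipate is the bookkeeping around the $0$-coface $d^0$ and the role of the coaugmentation coideal functor $\bar{\omega}$: one must be careful that the matching object is computed only from the coface maps $d^0,\dots,d^{n-1}$ (with the convention $d^0=0$ on cogenerators), so that the matching map of cooperads is genuinely the cofree functor applied to a matching map of collections rather than something more complicated involving the non-multiplicative $0$-coface on $\Res_{op}^{\bullet}$ itself. Verifying that $\FreeOp^c$ intertwines these matching objects — equivalently, that $\FreeOp^c$ preserves the relevant limits defining $\DGM^n$ — is the delicate point; it holds because $\FreeOp^c$ is a right adjoint and the matching object is a finite limit, but it requires checking that the cosimplicial identities restrict correctly to the cogenerating collections, which is exactly the content recalled in~\S\ref{TripleCoresolution:Construction}. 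After that, the argument is routine and dual to the cofibrancy statement for the cotriple resolution in Proposition~\ref{CotripleResolution:ReedyCofibrant}.
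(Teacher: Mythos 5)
Your overall strategy---identify the matching map of $\Res_{op}^{\bullet}(\KOp)$ with the image under $\FreeOp^c$ of the matching map of the cogenerating collections $\DGC_{op}^{\bullet}(\KOp)$, check that the latter is a fibration of collections, and then transfer the structured cases through the forgetful functors---is the same as the paper's (which defers the plain-cooperad case to a dualization of the corresponding Reedy-cofibrancy argument for cotriple resolutions of operads). The identification of the matching objects is sound: matching objects only involve the codegeneracies, all of which restrict to the cogenerating collections, and $\FreeOp^c$ preserves the relevant limits as a right adjoint; your worry about $d^0$ is moot for this reason.

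There is, however, a genuine gap in your justification of the final inference. You assert that fibrations of cooperads are ``created by the forgetful functors down to the category of collections and, ultimately, arity-wise in cochain graded dg-modules,'' so that a (split) epimorphism of underlying collections is automatically a fibration of cooperads. This is false for the model structure of \S\ref{Background:CooperadModelCategories}: in $\dg^*\Op^c$ the cofibrations and weak-equivalences are defined arity-wise, but the fibrations are defined by the right lifting property against acyclic cofibrations, and an arity-wise surjection of cooperads need not be a fibration. Consequently the step ``matching map of cogenerators is a split epi of collections, hence the matching map of cooperads is a fibration'' does not follow from your stated premises. The repair is available in the paper: the adjunction $\bar{\omega}\dashv\FreeOp^c$ is a Quillen adjunction (see \S\ref{Background:CooperadModelCategories}), so $\FreeOp^c$ carries fibrations of collections to fibrations of cooperads; since your matching map is precisely $\FreeOp^c$ applied to the matching map $\DGC_{op}^n(\KOp)\rightarrow\DGM^n\DGC_{op}^{\bullet}(\KOp)$, and the latter is surjective in positive degrees (in an additive category of collections every cosimplicial object has split matching maps---no extra codegeneracy is actually needed, and note that an extra codegeneracy by itself gives contractibility rather than splitness of matching maps), the conclusion holds. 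With that substitution, and the observation that the forgetful functors to plain cooperads both create fibrations and preserve matching objects (being limit-preserving right adjoints), your argument for the structured cases goes through as in the paper.
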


\begin{proof}
The first assertion of this proposition implies the others because our forgetful functors preserve fibrations,
and one just has to dualize the arguments
of~\cite[Proposition II.8.5.5 and Proposition B.1.10]{FresseBook}
in order to check our claim in this case.
\end{proof}

In~\S\ref{Background:CotripleResolution}, we take the geometric realization of the simplicial object $\Res^{com}_{\bullet}(\AOp)$
in the category $\dg^*\Hopf\Lambda\Op^c$ in order to get our cofibrant resolutions
in the category of Hopf $\Lambda$-cooperads
in cochain graded dg-modules.

In~\S\ref{Background:TripleCoresolution}, we take the totalization of the cosimplicial object $\Res_{com}^{\bullet}(\KOp)$
in the category $\dg^*\Hopf\Lambda\Op^c$ in order to get our fibrant coresolutions
of Hopf $\Lambda$-cooperads.
We review the general definition of this totalization functor before tackling the applications to the triple coresolution.

\begin{constr}[The totalization]\label{TripleCoresolution:Totalization}
We assume that $\QOp^{\bullet}$ is any Reedy fibrant cosimplicial object of the category of Hopf $\Lambda$-cooperads for the moment.
To perform the totalization, we first have to pick a simplicial framing
of this object
\begin{equation}\label{TripleCoresolution:Totalization:Framing}
(\QOp^{\bullet})^{\Delta^{\bullet}}\in\simp\cosimp\dg^*\Hopf\Lambda\Op^c
\end{equation}
in the Reedy model category of cosimplicial objects in $\dg^*\Hopf\Lambda\Op^c$.
Then we formally set:
\begin{equation}\label{TripleCoresolution:Totalization:EndFormula}
\Tot(\QOp^{\bullet}) = \int_{\underline{n}\in\Delta}(\QOp^n)^{\Delta^n},
\end{equation}
where we form our end in the category $\dg^*\Hopf\Lambda\Op^c$.

Recall simply that simplicial framings exist by general model category arguments (see for instance~\cite[\S 16.6]{Hirschhorn},
\cite[\S 5.2]{Hovey}, or~\cite[\S II.3.3.3]{FresseBook}),
and that the totalization of a Reedy fibrant cosimplicial object
does not depend on the choice of a particular simplicial framing (see for instance~\cite[Theorem II.3.3.14]{FresseBook}).
The general theory of model categories also implies that $\Tot(\QOp^{\bullet})$
forms a fibrant object in $\dg^*\Hopf\Lambda\Op^c$
as soon as $\QOp^{\bullet}$
is Reedy fibrant.

We can forget about some part of the structure and consider the parallel totalization constructions
in the category of cooperads, in the category of Hopf cooperads,
and in the category of coaugmented $\Lambda$-cooperads.
Let us observe that the forgetful functors which connect all these cooperad categories preserve simplicial framings
since we checked in~\S\ref{Background:CooperadModelCategories}
that these forgetful functors preserve fibrations
and weak-equivalences. Our forgetful functors also preserve limits
since they fit in adjunction relations (see~\S\ref{Background:AlgebraicAdjunctions}).
These observations imply that the forgetful functors to cooperads, to Hopf cooperads,
and to coaugmented $\Lambda$-cooperads
preserve totalizations (up to the indeterminacy of the construction).
\end{constr}

We aim to apply the totalization construction to the triple coresolution $\QOp^{\bullet} = \Res_{op}^{\bullet}(\KOp)$
of our Hopf $\Lambda$-cooperad $\KOp\in\dg^*\Hopf\Lambda\Op^c$.
We need to specify a simplicial framing of this object $\QOp^{\bullet}$.
We devote the following paragraphs to this question.

\begin{rem}[The Sullivan dg-algebra]\label{TripleCoresolution:SullivanDGAlgebra}
To perform our constructions, we use the Sullivan dg-algebras of piecewise linear forms,
which we define by:
\begin{equation}\label{TripleCoresolution:SullivanDGAlgebra:Expression}
\DGOmega^*(\Delta^m) = \kk[x_1,\dots,x_m,dx_1,\dots,dx_m],
\end{equation}
for any $m\in\NN$, where $x_1,\dots,x_m$ are variables of degree $0$, while $dx_1,\dots,dx_m$ denote variables of (upper) degree $1$
which represent the differential of the elements $x_1,\dots,x_m$ inside $\DGOmega^*(\Delta^m)$.

These dg-algebras $\DGOmega^*(\Delta^m)$, $m\in\NN$, form a simplicial object in $\dg^*\ComCat_+$ (we refer to~\cite[\S II.7.1.1]{FresseBook}
for the explicit definition of the simplicial operators in our description
of these dg-algebras). Recall simply that $\DGOmega^*(\Delta^{\bullet})$
defines a simplicial framing of the object $A = \kk$
in the category $\dg^*\ComCat_+$ (see~\cite[Theorem II.7.1.5]{FresseBook}).

Let $\DGN_*(\Delta^m)$ denote the normalized complex of the simplicial set $\Delta^m$, for any $m\in\NN$.
Let $\DGN^*(\Delta^m)$ denote the dual complex of this dg-module $\DGN_*(\Delta^m)$.
In our constructions, we also consider the morphism of dg-modules $\rho: \DGOmega^*(\Delta^m)\rightarrow\DGN^*(\Delta^m)$,
determined by the pairing $\langle-,-\rangle: \DGOmega^*(\Delta^m)\otimes\DGN_*(\Delta^m)\rightarrow\kk$
such that:
\begin{equation}
\langle\omega,[\sigma]\rangle = \int_{\Delta^n}\sigma^*(\omega).
\end{equation}
for the class $[\sigma]$ of any $n$-simplex $\sigma\in(\Delta^m)_n$ in the normalized complex $\DGN_*(\Delta^m)$,
any form $\omega$ of degree $\deg^*(\omega) = n$ in the dg-algebra $\DGOmega^*(\Delta^m)$,
and where we consider the integral over the geometrical simplex $\Delta^m = \{0\leq x_1\leq\dots\leq x_m\leq 1\}$
of the pullback of this form $\omega\in\DGOmega^*(\Delta^m)$
through the morphism $\sigma_*: \Delta^n\rightarrow\Delta^m$
determined by our simplex $\sigma\in(\Delta^m)_n$.
\end{rem}

\begin{constr}[The simplicial framing of the triple coresolution]\label{TripleCoresolution:SimplicialFramingConstruction}
To the triple coresolution $\QOp^{\bullet} = \Res^{\bullet}(\KOp)$ of a Hopf $\Lambda$-cooperad $\KOp\in\dg^*\Hopf\Lambda\Op^c$,
we now associate the cofree cooperads:
\begin{equation}\label{TripleCoresolution:SimplicialFramingConstruction:Expression}
\Res_{op}^n(\KOp)^{\Delta^m}
= \FreeOp^c(\DGC_{op}^n(\KOp)\otimes\DGOmega^*(\Delta^m)),
\end{equation}
where, as cogenerating object, we take the arity-wise tensor products of the cogenerating $\Lambda$-collection
of the triple coresolution $\DGC_{op}^n(\KOp)$
with the cochain dg-algebra $\DGOmega^*(\Delta^m)\in\dg^*\ComCat_+$.
This object $\DGC_{op}^n(\KOp)\otimes\DGOmega^*(\Delta^m) = \{\DGC_{op}^n(\KOp)(r)\otimes\DGOmega^*(\Delta^m),r>1\}$
trivially inherits the structure of a Hopf $\Lambda$-collection when we assume that $\KOp$ is a Hopf $\Lambda$-cooperad,
and we accordingly get a Hopf $\Lambda$-cooperad when we perform our cofree object construction~(\ref{TripleCoresolution:SimplicialFramingConstruction:Expression}).

We have an identity $\Res^n(\KOp)^{\Delta^0} = \Res^n(\KOp)$, for any $n\in\NN$,
and these Hopf $\Lambda$-cooperads $\Res^n(\KOp)^{\Delta^m}$, $m\in\NN$,
clearly form a simplicial object in $\dg^*\Hopf\Lambda\Op^c$,
for any fixed $n\in\NN$.

Recall that the cofaces $d^i$ of the triple coresolution are induced by morphisms
on the cogenerating $\Lambda$-collections of our object $\DGC_{op}^n(\KOp)$
in the case $i>0$. We can just take the morphisms of cofree cooperads induced by the tensor product
of these morphisms $d^i: \DGC_{op}^{n-1}(\KOp)\rightarrow\DGC_{op}^n(\KOp)$
with the identity of the cochain dg-algebra $\DGOmega^*(\Delta^m)$
to provide our objects (\ref{TripleCoresolution:SimplicialFramingConstruction:Expression})
with coface morphisms $d^i: \Res_{op}^{n-1}(\KOp)^{\Delta^m}\rightarrow\Res_{op}^n(\KOp)^{\Delta^m}$
extending the coface morphisms of the triple coresolution
for $i>0$. We proceed similarly to define codegeneracy morphisms $s^j: \Res_{op}^{n+1}(\KOp)^{\Delta^m}\rightarrow\Res_{op}^n(\KOp)^{\Delta^m}$
on our object,
but we need another construction to get a $0$-coface $d^0: \Res_{op}^{n-1}(\KOp)^{\Delta^m}\rightarrow\Res_{op}^n(\KOp)^{\Delta^m}$
and to provide the objects $\Res_{op}^n(\KOp)^{\Delta^m}$
with a full cosimplicial structure.
To begin with, we can observe that we have a natural morphism of Hopf $\Lambda$-collections:
\begin{equation}\label{TripleCoresolution:SimplicialFramingConstruction:FormFactorization}
\overline{\FreeOp}^c(\DGC_{op}^{n-1}(\KOp)\otimes\DGOmega^*(\Delta^m))
\rightarrow\overline{\FreeOp}^c(\DGC_{op}^{n-1}(\KOp))\otimes\DGOmega^*(\Delta^m)
= \DGC_{op}^{n}(\KOp)\otimes\DGOmega^*(\Delta^m),
\end{equation}
for any $n\geq 1$. For this purpose, we use that the cofree cooperad $\overline{\FreeOp}^c(\DGC^{n-1}(\KOp)\otimes\DGOmega^*(\Delta^m))$
basically consists of tree-wise tensors which we form by arranging elements of our cogenerating $\Lambda$-collection
on the vertices of a tree (see for instance~\cite[\S C.1]{FresseBook}).
To form our map (\ref{TripleCoresolution:SimplicialFramingConstruction:FormFactorization}),
we basically gather and multiply the factors $\DGOmega^*(\Delta^m)$
occurring in these tree-wise tensors.
Then we just take the composite of the morphism of cofree cooperads induced by this map~(\ref{TripleCoresolution:SimplicialFramingConstruction:FormFactorization})
with the comonadic coproduct $\nu: \FreeOp^c\rightarrow\FreeOp^c\circ\overline{\FreeOp}^c$
to get our $0$-coface morphism $d^0: \Res_{op}^{n-1}(\KOp)^{\Delta^m}\rightarrow\Res_{op}^n(\KOp)^{\Delta^m}$.

We immediately check that these coface and codegeneracy morphisms fulfill the defining of relations of a cosimplicial object
and commute with the face and degeneracy morphisms of the simplicial structure
on $\DGOmega^*(\Delta^m)$. We accordingly get that the Hopf $\Lambda$-cooperads~(\ref{TripleCoresolution:SimplicialFramingConstruction:Expression})
form a cosimplicial-simplicial object in $\dg^*\Hopf\Lambda\Op^c$.
The identity $\Res^{\bullet}(\KOp)^{\Delta^0} = \Res^{\bullet}(\KOp)$
also holds in the category of cosimplicial objects in $\dg^*\Hopf\Lambda\Op^c$.
We can obviously apply the construction of this paragraph
when we just have a cooperad structure (respectively, a Hopf cooperad structure, a coaugmented $\Lambda$-cooperad structure) on $\KOp$.
In this context, our construction just returns a cosimplicial-simplicial object $\Res_{op}^{\bullet}(\KOp)^{\Delta^{\bullet}}$
of the category of cooperads (respectively, Hopf cooperads, respectively coaugmented $\Lambda$-cooperads)
in $\dg^*\Mod$.
\end{constr}

We now check that:

\begin{prop}\label{TripleCoresolution:SimplicialFraming}
Let $\KOp\in\dg^*\Op^c$. The objects $(\QOp^{\bullet})^{\Delta^m} = \Res_{op}^{\bullet}(\KOp)^{\Delta^m}$, $m\in\NN$,
defined in~\S\ref{TripleCoresolution:SimplicialFramingConstruction}
form a simplicial framing of the triple coresolution $\QOp^{\bullet} = \Res_{op}^{\bullet}(\KOp)$
in the Reedy model category of cosimplicial objects in $\dg^*\Op^c$.

If we assume $\KOp\in\dg^*\Hopf\Op^c$ (respectively, $\KOp\in\ComOp^c/\dg^*\Lambda\Op^c$, $\KOp\in\dg^*\Hopf\Lambda\Op^c$),
then we get that these objects $(\QOp^{\bullet})^{\Delta^m} = \Res_{op}^{\bullet}(\KOp)^{\Delta^m}$, $m\in\NN$,
define a simplicial framing of the triple coresolution $\QOp^{\bullet} = \Res_{op}^{\bullet}(\KOp)$
in the category of Hopf cooperads (respectively, in the category of coaugmented $\Lambda$-cooperads,
in the category of Hopf $\Lambda$-cooperads).
\end{prop}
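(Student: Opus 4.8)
The plan is to prove the statement about simplicial framings by reducing it, via a standard criterion from the theory of model categories, to two verifications: first, that the objects $\Res_{op}^{\bullet}(\KOp)^{\Delta^m}$ assembled over $m\in\NN$ form a Reedy cofibrant cosimplicial-simplicial object (more precisely, that for each fixed cosimplicial degree we have a simplicial frame in the ambient model category), and second, that the augmentation $\Res_{op}^{\bullet}(\KOp)^{\Delta^{\bullet}}\to\Res_{op}^{\bullet}(\KOp)$ coming from $\DGOmega^*(\Delta^0)=\kk$ is a weak-equivalence at each simplicial level. Recall from~\S\ref{TripleCoresolution:SimplicialFramingConstruction} that, by construction, $\Res_{op}^n(\KOp)^{\Delta^m}=\FreeOp^c(\DGC_{op}^n(\KOp)\otimes\DGOmega^*(\Delta^m))$ is a cofree cooperad on the arity-wise tensor product of the cogenerating collection with the Sullivan dg-algebra. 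Since the cofree cooperad functor $\FreeOp^c$ is right adjoint to the coaugmentation coideal functor $\bar\omega$ (see~\S\ref{Background:Collections}), and the latter creates (in fact detects) cofibrations by~\S\ref{Background:CooperadModelCategories}, the whole problem transports to the level of $\Lambda$-collections, where the relevant framing is just the arity-wise tensor product $-\otimes\DGOmega^*(\Delta^{\bullet})$.

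First I would recall the key input, which is~\cite[Theorem II.7.3.5]{FresseBook}: the simplicial cochain dg-algebra $\DGOmega^*(\Delta^{\bullet})$ is a simplicial framing of the unit object $\kk$ in $\dg^*\ComCat_+$, and more generally the functor $A\mapsto A\otimes\DGOmega^*(\Delta^{\bullet})$ defines a simplicial framing functor on the base model category. From this, tensoring arity-wise with the fixed collection $\DGC_{op}^n(\KOp)$, one gets that $\DGC_{op}^n(\KOp)\otimes\DGOmega^*(\Delta^{\bullet})$ is a simplicial framing of $\DGC_{op}^n(\KOp)$ in the category of (Hopf) $\Lambda$-collections, by exactly the same arity-wise argument used already in Construction~\ref{DeformationComplexes:BicosimplicialComplex:Reductions} (where we noted precisely this point). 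Applying the right adjoint $\FreeOp^c$ then yields a simplicial framing of each $\Res_{op}^n(\KOp)=\FreeOp^c(\DGC_{op}^n(\KOp))$ in $\dg^*\Op^c$; one checks that $\FreeOp^c$ preserves the required lifting/weak-equivalence conditions because it is a right Quillen functor with respect to the relevant model structures (its left adjoint $\bar\omega$ creates cofibrations, and $\FreeOp^c$ preserves fibrations and weak-equivalences by the dual of the arguments in~\cite[\S C]{FresseBook}). The Reedy cofibrancy of $\Res_{op}^{\bullet}(\KOp)$ as a cosimplicial object was already established in Proposition~\ref{TripleCoresolution:ReedyFibrant}, so the hypotheses of the framing criterion are met.

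The second clause of the proposition — the statements for Hopf cooperads, coaugmented $\Lambda$-cooperads, and Hopf $\Lambda$-cooperads — follows from the plain-cooperad case by the same reasoning already used in the proof of Proposition~\ref{TripleCoresolution:ReedyFibrant}: the forgetful functors connecting all these variants of the category of cooperads preserve fibrations, weak-equivalences, and limits (they fit in adjunctions, see~\S\ref{Background:AlgebraicAdjunctions}), hence they preserve and detect simplicial framings up to the indeterminacy of the construction, as noted in~\S\ref{TripleCoresolution:Totalization}. One also uses that the tensor product $\DGC_{op}^n(\KOp)\otimes\DGOmega^*(\Delta^m)$ does inherit the appropriate structure ($\Lambda$-diagram structure from $\DGC_{op}^n(\KOp)$, Hopf structure since $\DGOmega^*(\Delta^m)$ is a commutative dg-algebra and we can form the diagonal/tensor coproduct), which is exactly what Construction~\ref{TripleCoresolution:SimplicialFramingConstruction} provides. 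The main obstacle I expect is the careful verification that the non-obvious $0$-coface operator $d^0$ defined via the multiplication-of-forms morphism~(\ref{TripleCoresolution:SimplicialFramingConstruction:FormFactorization}) together with the comonadic coproduct $\nu$ is genuinely compatible with the simplicial operators of $\DGOmega^*(\Delta^{\bullet})$ and with the other cofaces/codegeneracies; this is the one place where the framing does not just split off as a formal tensor with a fixed collection, and it must be checked that the resulting cosimplicial-simplicial object is honest (so that $\int_{\underline n}(\Res_{op}^n(\KOp))^{\Delta^n}$ in~\S\ref{TripleCoresolution:Totalization} makes sense) and that taking $\Delta^0$ recovers $\Res_{op}^{\bullet}(\KOp)$ on the nose, including its $0$-coface. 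Once this compatibility is in hand, the weak-equivalence $\Res_{op}^n(\KOp)^{\Delta^{\bullet}}\xrightarrow{\sim}\Res_{op}^n(\KOp)$ at each fixed $n$ follows from the arity-wise statement for $\DGOmega^*(\Delta^{\bullet})$ and the fact that $\FreeOp^c$ preserves weak-equivalences, completing the proof.
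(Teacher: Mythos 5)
Your overall strategy matches the paper's in its key inputs (the fact that $-\otimes\DGOmega^*(\Delta^{\bullet})$ is a simplicial framing functor on the base category, from~\cite[Theorem II.7.3.5]{FresseBook}, and the reduction of the Hopf/$\Lambda$ cases to the plain cooperad case via the forgetful functors), but the central step has a genuine gap. You reduce the claim to: (a) each $\Res_{op}^n(\KOp)^{\Delta^{\bullet}}$ is a simplicial frame of $\Res_{op}^n(\KOp)$ in $\dg^*\Op^c$, plus (b) Reedy fibrancy of the base object $\Res_{op}^{\bullet}(\KOp)$ (which you misstate as cofibrancy --- note also your earlier ``Reedy cofibrant cosimplicial-simplicial object''; simplicial framings require Reedy \emph{fibrancy} in the simplicial direction). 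This is not a valid criterion. A simplicial framing of $\QOp^{\bullet}$ in the Reedy model category $\cosimp\dg^*\Op^c$ requires the bi-object to be Reedy fibrant in the iterated sense, i.e.\ the maps from $\Res_{op}^n(\KOp)^{\Delta^m}$ to the \emph{combined} matching object (mixing the cosimplicial matching in $n$ with the simplicial matching in $m$) must be fibrations of cooperads; this is strictly stronger than the conjunction of level-wise framings and Reedy fibrancy of the base, and it is exactly what the paper addresses by redoing the decomposition analysis of the proof of Proposition~\ref{TripleCoresolution:ReedyFibrant} (adapting~\cite[Proposition 8.5.5 and Proposition B.1.10]{FresseBook}) with the $\DGOmega^*(\Delta^m)$ factors inserted.

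Relatedly, your proposed transport of the whole problem ``to the level of $\Lambda$-collections'' through the adjunction $\bar\omega\dashv\FreeOp^c$ cannot be applied wholesale, for the very reason you flag yourself at the end: the $0$-coface of $\Res_{op}^{\bullet}(\KOp)^{\Delta^m}$ is built from the form-factorization map~(\ref{TripleCoresolution:SimplicialFramingConstruction:FormFactorization}) and is \emph{not} induced by a morphism of cogenerating collections, so the cosimplicial-simplicial object is not $\FreeOp^c$ applied to a bi-object of collections, and the matching maps in the cosimplicial direction must be analyzed directly on the cooperad side. (A minor further point: $\FreeOp^c$ preserves the required weak-equivalences not ``because it is right Quillen'' --- right Quillen functors only preserve weak-equivalences between fibrant objects --- but by Ken Brown's lemma together with the fact that every collection is fibrant, or more concretely because the tree-wise tensor expansion of $\FreeOp^c$ preserves arity-wise quasi-isomorphisms over a field.) To complete the argument you would need to carry out the combined matching-map verification, which is the actual content of the paper's proof.
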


\begin{proof}
The first assertion of this proposition implies the others since our forgetful functors
preserve fibrations
and weak-equivalences. The proof of this proposition parallels the proof of Proposition~\ref{TripleCoresolution:ReedyFibrant}
and follows
from the same analysis (adapt and dualize the decompositions of~\cite[Proposition II.8.5.5 and Proposition B.1.10]{FresseBook})
after observing that the tensor product with $\DGOmega^*(\Delta^{\bullet})$
gives a simplicial framing functor on cochain graded dg-modules (see~\cite[Theorem II.7.3.5]{FresseBook}).
\end{proof}

The totalization of the triple coresolution of a Hopf $\Lambda$-cooperad $\QOp^{\bullet} = \Res_{com}^{\bullet}(\KOp)$
is equipped with a coaugmentation $\eta: \KOp\rightarrow\Tot(\QOp^{\bullet})$
given on our end~\S\ref{TripleCoresolution:Totalization}(\ref{TripleCoresolution:Totalization:EndFormula})
by the coaugmentation morphism
of the triple coresolution $\epsilon: \KOp\rightarrow\QOp^0$.
(Recall that we have an identity $\QOp^0 = (\QOp^0)^{\Delta^0}$ by definition of a simplicial framing.)
We have a similar observation
when we just have a cooperad structure (respectively, a Hopf cooperad structure, respectively a coaugmented $\Lambda$-cooperad structure)
on $\KOp$.
We have the following statement:

\begin{lemm}\label{TripleCoresolution:CoresolutionClaim}
The coaugmentation of the totalization of the triple coresolution is a weak-equivalence $\eta: \KOp\xrightarrow{\sim}\Tot\Res_{op}^{\bullet}(\KOp)$
for any object of the category of cooperads $\KOp\in\dg^*\Op^c$ (and hence, for any object
of the categories of Hopf cooperads, coaugmented $\Lambda$-cooperads,
and Hopf $\Lambda$-cooperads).
\end{lemm}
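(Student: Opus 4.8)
The plan is to prove the weak-equivalence $\eta: \KOp\xrightarrow{\sim}\Tot\Res_{op}^{\bullet}(\KOp)$ by reducing, as in the companion Lemma~\ref{CotripleResolution:ResolutionClaim}, to a statement about the underlying cooperad structure, and then to an arity-wise statement that is classical. First I would invoke Proposition~\ref{TripleCoresolution:ReedyFibrant}, which ensures that $\Res_{op}^{\bullet}(\KOp)$ is Reedy fibrant in each of the relevant categories of cooperads, together with Proposition~\ref{TripleCoresolution:SimplicialFraming}, which supplies the explicit simplicial framing $\Res_{op}^{\bullet}(\KOp)^{\Delta^{\bullet}}$ needed to form the totalization $\Tot\Res_{op}^{\bullet}(\KOp)$. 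Since the forgetful functors connecting Hopf $\Lambda$-cooperads, coaugmented $\Lambda$-cooperads, Hopf cooperads, and plain cooperads all preserve fibrations, weak-equivalences, limits, and simplicial framings (as recorded in~\S\ref{TripleCoresolution:Totalization} and~\S\ref{TripleCoresolution:SimplicialFramingConstruction}), they commute with the totalization construction up to the indeterminacy inherent to framings. Hence it suffices to prove the lemma for $\KOp\in\dg^*\Op^c$, a plain cooperad, and the statement for the other categories will follow formally.

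Next I would pass to arities. The cofree cooperad functor $\FreeOp^c$ is defined by a tree-wise expansion $\FreeOp^c(\MOp) = \bigoplus_{\ttree}\FreeOp^c_{\ttree}(\MOp)$, and the coface/codegeneracy operators of the triple coresolution are built entirely from the comonadic structure of $\overline{\FreeOp}^c = \bar{\omega}\FreeOp^c$ together with the coalgebra structure morphism $\rho: \overline{\KOp}\rightarrow\overline{\FreeOp}^c(\overline{\KOp})$. For a fixed arity $r$, the contribution of each tree $\ttree$ with $r$ leaves to $\Res_{op}^{\bullet}(\KOp)(r)$ is, after applying the conormalization and framing, a tensor product over the vertices of $\ttree$ of cobar-type cosimplicial dg-modules attached to the values $\overline{\KOp}(r_v)$ at smaller arities. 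The key point is the standard fact that, for the comonad $\overline{\FreeOp}^c$ restricted to a single arity (which amounts to a cofree conilpotent coalgebra comonad on the category of cochain graded dg-modules), the associated cosimplicial coresolution of a coalgebra is a coresolution in the homotopical sense: its totalization recovers the original object. I would cite the dual of~\cite[\S C.2.16]{FresseBook} and~\cite{Livernet}, which was already used in the proof of Theorem~\ref{DeformationComplexes:DGComplex:MainResult} to identify the conormalized complex of $\DGC_{op}^{\bullet}(\HOp)$ with the cooperadic cobar complex $\DGB_{op}^*(\HOp)$ in a way compatible with the module structures; the same comparison, applied here without passing to homology, shows that $\eta$ induces a weak-equivalence on the conormalized cochain complex in each arity. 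Since $\Res_{op}^{\bullet}(\KOp)$ is Reedy fibrant and the framing tensors with the Sullivan algebra $\DGOmega^*(\Delta^{\bullet})$, which realizes the normalized cochain complex up to the integration quasi-isomorphism $\rho: \DGOmega^*(\Delta^{\bullet})\xrightarrow{\sim}\DGN^*(\Delta^{\bullet})$ (see~\S\ref{TripleCoresolution:SullivanDGAlgebra}), the totalization $\Tot\Res_{op}^{\bullet}(\KOp)(r)$ is computed arity-wise by this conormalized complex, and the coaugmentation is an arity-wise weak-equivalence.

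The proof then concludes: a morphism of cooperads in cochain graded dg-modules is a weak-equivalence precisely when it is one arity-wise (by definition of our model structure, \S\ref{Background:CooperadModelCategories}), so $\eta: \KOp\xrightarrow{\sim}\Tot\Res_{op}^{\bullet}(\KOp)$ is a weak-equivalence of plain cooperads, hence of Hopf cooperads, coaugmented $\Lambda$-cooperads, and Hopf $\Lambda$-cooperads by the preservation properties above. I expect the main obstacle to be the bookkeeping needed to make the arity-wise reduction rigorous: one must check that the tree-wise decomposition of $\FreeOp^c$ interacts correctly with both the cosimplicial structure of the triple coresolution and the simplicial framing by $\DGOmega^*(\Delta^{\bullet})$, so that the totalization genuinely distributes over the tree summands and over the vertices of each tree. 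This is where one needs the Eilenberg-Zilber type argument and the finiteness/conilpotency hypotheses hidden in the cofree cooperad construction; once it is in place, the remaining input is the classical acyclicity of cobar coresolutions of (co)augmented coalgebras, which is exactly the dual of the cited statements of~\cite{FresseBook} and~\cite{Livernet}.
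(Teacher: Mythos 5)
Your setup (Reedy fibrancy via Proposition~\ref{TripleCoresolution:ReedyFibrant}, the explicit framing of Proposition~\ref{TripleCoresolution:SimplicialFraming}, and the reduction to plain cooperads through the forgetful functors) agrees with the paper. But the core of your argument has a genuine gap: you assert that $\Tot\Res_{op}^{\bullet}(\KOp)(r)$ is computed arity-wise by the conormalized complex of the cosimplicial dg-modules $\Res_{op}^{\bullet}(\KOp)(r)$, and everything else hangs on that. This is exactly the point that cannot be taken for granted. The totalization is an end formed \emph{in the category of cooperads}, and the forgetful functor $\bar{\omega}$ to collections is a \emph{left} adjoint (to $\FreeOp^c$), so it preserves colimits but not limits; in particular it does not commute with the end $\int_{\underline{n}\in\Delta}(\QOp^n)^{\Delta^n}$. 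An arity-wise extra-codegeneracy argument does show that the coaugmented cosimplicial \emph{collection} underlying $\Res_{op}^{\bullet}(\KOp)$ is contractible onto $\KOp$, but that computes the wrong object. Your closing remark that one must check that ``the totalization genuinely distributes over the tree summands'' names the obstacle, but the distribution you hope for does not hold: the $0$-coface mixes tree summands (it is the comonadic coproduct, which does not preserve cogenerators), so the end of cofree cooperads is not the cofree cooperad on the end of cogenerators.

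The paper resolves this differently, and with an ingredient absent from your proposal: the cooperadic bar-cobar coresolution $\BB_{op}\BB_{op}^c(\KOp)$. One builds a comparison map $\Tot\Res_{op}^{\bullet}(\KOp)\rightarrow\BB_{op}\BB_{op}^c(\KOp)$ from the chain of morphisms (projection to cogenerators, integration against $\DGOmega^*(\Delta^{\bullet})$, conormalization, projection onto maximal simplices landing in $\DGB_{op}^*(\KOp)$), then filters both sides by the weight of the \emph{front} cofree cooperad functor. On associated graded objects the totalization does become $\FreeOp^c\bigl(\int_{\underline{n}\in\Delta}\DGC_{op}^n(\KOp)\otimes\DGOmega^*(\Delta^n)\bigr)$, and the comparison reduces to a map of cofree cooperads induced by the change-of-framing equivalence and the Livernet/\cite[Proposition C.2.16]{FresseBook} equivalence $\DGN^*\DGC_{op}^{\bullet}(\KOp)\xrightarrow{\sim}\DGB_{op}^*(\KOp)$. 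Note also that your ``classical acyclicity of cobar coresolutions of coalgebras'' is not the right classical input here: the conormalized complex of the \emph{cogenerators} is the cooperadic cobar construction $\DGB_{op}^*(\KOp)$, whose homology is not $\KOp$; what one needs is that $\KOp\rightarrow\BB_{op}\BB_{op}^c(\KOp)$ is a weak-equivalence, i.e.\ the two-sided bar-cobar statement. Without the passage through $\BB_{op}\BB_{op}^c(\KOp)$ and the weight filtration, your argument does not close.
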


\begin{proof}
This lemma follows from the general result of \cite[Theorem II.9.4.11]{FresseBook}.
To be explicit, if we set $\COp^{\bullet} = \Res_{op}^{\bullet}(\KOp)$,
then we have a chain of weak-equivalences:
\begin{equation*}
\Tot(\COp^{\bullet})\xrightarrow{\sim}\Tot(\DGB^c\DGB(\COp^{\bullet}))\xrightarrow{\sim}\DGB^c\DGB(\DGN^*(\COp^{\bullet}))\xleftarrow{\sim}\DGN^*(\COp^{\bullet}),
\end{equation*}
where the morphism on the left-hand side is induces by the natural coaugmentation $\COp^{\bullet})\xrightarrow{\sim}\DGB^c\DGB(\COp^{\bullet})$
(which is a weak-equivalence by the homotopy invariance properties of totalization functors for Reedy fibrant cosimplicial objects),
whereas the rest of our chain is the zigzag of weak-equivalences
defined in \emph{loc. cit.}.
The object $\DGN^*(\COp^{\bullet})$ on the right-hand side is given by an arity-wise application
of the conormalization functor $\DGN^*(-): \cosimp\dg^*\Mod\rightarrow\dg^*\Mod$
to the cosimplicial cooperad $\COp^{\bullet} = \Res_{op}^{\bullet}(\KOp)$ (see \cite[Proposition II.9.4]{FresseBook}).
The result of \cite[Proposition II.9.4.12]{FresseBook} also implies that the morphism $\eta: \KOp\xrightarrow{\sim}\Tot(\COp^{\bullet})$
induced by the coaugmentation of the cotriple resolution $\COp^{\bullet} = \Res_{op}^{\bullet}(\KOp)$
corresponds to the same morphism with values in the conormalized cochain complex $\DGN^*(\COp^{\bullet}) = \DGN^*(\Res_{op}^{\bullet}(\KOp))$.
To complete the proof of the lemma, we mainly use that the triple coresolution $\COp^{\bullet} = \Res_{op}^{\bullet}(\KOp)$
is equipped with contracting extra-codegeneracies
when we pass to the category of symmetric collections,
where we form our conormalized cochain complex $\DGN^*(\COp^{\bullet}) = \DGN^*(\Res_{op}^{\bullet}(\KOp))$
(see~\cite[\S B.1.3 and \S II.8.5.1]{FresseBook} for the verification of the dual statement concerning the cotriple of operads).
This observation implies that the morphism $\eta: \KOp\xrightarrow{\sim}\DGN^*(\COp^{\bullet})$
defines a weak-equivalence (use~\cite[Proposition II.5.4.6]{FresseBook} and a standard spectral sequence argument),
and the conclusion follows.
\end{proof}

In~\S\ref{TripleCoresolution:Totalization}, we mentioned that the totalization of a Reedy fibrant cosimplicial object
defines a fibrant object in the ambient category. Therefore, from Proposition~\ref{TripleCoresolution:ReedyFibrant}
and Lemma~\ref{TripleCoresolution:CoresolutionClaim}
together, we get the following statement:

\begin{thm}\label{TripleCoresolution:MainResult}
Let $\KOp\in\dg^*\Hopf\Lambda\Op^c$.
The totalization of the triple coresolution $\QOp^{\bullet} = \Res_{op}^{\bullet}(\KOp)\in\cosimp\dg^*\Hopf\Lambda\Op^c$
defines a fibrant coresolution $\QOp = \Tot\Res_{op}^{\bullet}(\KOp)$
of the object $\KOp$
in the category of Hopf $\Lambda$-cooperads in cochain graded dg-modules $\dg^*\Hopf\Lambda\Op^c$ (without any further assumption on $\KOp$).
\qed
\end{thm}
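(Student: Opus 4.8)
The plan is to assemble Theorem~\ref{TripleCoresolution:MainResult} from the two structural inputs already established in this appendix, following the same pattern as the proof of the analogous statement for the cotriple resolution (the theorem immediately preceding \S\ref{TripleCoresolution} in the text). First I would recall that, by Proposition~\ref{TripleCoresolution:ReedyFibrant}, the cosimplicial object $\QOp^{\bullet} = \Res_{op}^{\bullet}(\KOp)$ is Reedy fibrant in $\dg^*\Hopf\Lambda\Op^c$ whenever $\KOp\in\dg^*\Hopf\Lambda\Op^c$, with no further hypothesis on $\KOp$. Then I would invoke the general fact recalled in Construction~\ref{TripleCoresolution:Totalization}: the totalization of a Reedy fibrant cosimplicial object in a model category is a fibrant object of that category, and is independent (up to the usual indeterminacy) of the choice of simplicial framing. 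Combining these, $\QOp = \Tot\Res_{op}^{\bullet}(\KOp)$ is a fibrant object of $\dg^*\Hopf\Lambda\Op^c$.

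It remains to check that the coaugmentation $\eta\colon\KOp\rightarrow\Tot\Res_{op}^{\bullet}(\KOp)$, described right before Lemma~\ref{TripleCoresolution:CoresolutionClaim}, is a weak-equivalence. This is exactly the content of Lemma~\ref{TripleCoresolution:CoresolutionClaim}, which I would simply cite. For the totalization to be computed concretely one needs a specific simplicial framing, and Proposition~\ref{TripleCoresolution:SimplicialFraming} supplies it: the objects $\Res_{op}^{\bullet}(\KOp)^{\Delta^m}$ of Construction~\ref{TripleCoresolution:SimplicialFramingConstruction} form a simplicial framing of $\QOp^{\bullet}$ in the Reedy model category of cosimplicial Hopf $\Lambda$-cooperads. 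So the statement of the theorem is obtained formally by quoting Proposition~\ref{TripleCoresolution:ReedyFibrant}, Proposition~\ref{TripleCoresolution:SimplicialFraming}, and Lemma~\ref{TripleCoresolution:CoresolutionClaim}, precisely as the \qed after the statement indicates.

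The genuine mathematical work — and hence the step I would expect to be the main obstacle, were it not already dispatched in the excerpt — is Lemma~\ref{TripleCoresolution:CoresolutionClaim}: identifying, after passing to the graded object associated to the weight filtration of the front cofree cooperad functor, the totalization tower with the bar-cobar resolution $\BB_{op}\BB_{op}^c(\KOp)$, and recognizing the two comparison maps (the change of simplicial framing induced by the integration map $\rho\colon\DGOmega^*(\Delta^{\bullet})\rightarrow\DGN^*(\Delta^{\bullet})$, and the projection onto maximal simplices $\DGN^*(\DGC_{op}^{\bullet}(\KOp))\rightarrow\DGB_{op}^*(\KOp)$) as weak-equivalences, the latter via Livernet's arguments. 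Given that this lemma is available, the only remaining care is bookkeeping: making sure that the forgetful functors to cooperads, Hopf cooperads, and coaugmented $\Lambda$-cooperads preserve fibrations, weak-equivalences, limits, and simplicial framings, so that Reedy fibrancy and the framing hypothesis really do transfer to the Hopf $\Lambda$-setting — but this too is recorded in Construction~\ref{TripleCoresolution:Totalization} and in the first assertions of Propositions~\ref{TripleCoresolution:ReedyFibrant} and~\ref{TripleCoresolution:SimplicialFraming}. Hence the proof is a one-line assembly, which I would present as such.
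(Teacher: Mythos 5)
Your assembly is exactly the paper's own argument: the theorem is stated with a \qed precisely because it follows from Proposition~\ref{TripleCoresolution:ReedyFibrant}, the general fact that totalization of a Reedy fibrant cosimplicial object is fibrant (recalled in \S\ref{TripleCoresolution:Totalization}), and Lemma~\ref{TripleCoresolution:CoresolutionClaim} for the weak-equivalence. You also correctly locate the real mathematical content in that lemma, so nothing is missing.
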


The weak-equivalence attached to the coresolution of this theorem
%$\QOp = |\Res_{op}^{\bullet}(\KOp)|$
is precisely defined as the morphism of Hopf $\Lambda$-cooperads
%\begin{equation*}
$\eta: \KOp\xrightarrow{\sim}\Tot\Res_{op}^{\bullet}(\KOp)$
%\end{equation*}
which we deduce from the coaugmentation $\eta: \KOp\rightarrow\Res_{op}^0(\KOp)$
of the cosimplicial object $\Res_{op}^{\bullet}(\KOp)\in\cosimp\dg^*\Hopf\Lambda\Op^c$
(as in Lemma~\ref{TripleCoresolution:CoresolutionClaim}).
We obviously have an analogous theorem in the context of plain cooperads (respectively, Hopf cooperads, coaugmented $\Lambda$-cooperads),
but we only use the Hopf $\Lambda$-cooperad case
of this statement.

\end{appendix}

\bibliographystyle{plain}
\bibliography{EnOperad-IntrinsicFormality}

\end{document}